\newtheorem{dummy}{anything}[section]
\newtheorem{theorem}[dummy]{Theorem}
\newtheorem{lemma}[dummy]{Lemma}
\newtheorem{proposition}[dummy]{Proposition}
\newtheorem{corollary}[dummy]{Corollary}
\theoremstyle{definition}%%Change Theoremstyle
\newtheorem{example}[dummy]{Example}
\newtheorem{remark}[dummy]{Remark}
\newtheorem*{acknowledgements}{Acknowledgements}
\DeclareMathOperator{\tor}{tor}
\DeclareMathOperator{\Tight}{Tight}
\DeclareMathOperator{\tw}{tw}
\DeclareMathOperator{\tb}{tb}
\DeclareMathOperator{\rot}{rot}
\DeclareMathOperator{\PD}{PD}
\newcommand*\bigcdot{\mathpalette\bigcdot@{0.6}}
\newcommand*\bigcdot@[2]{\mathbin{\vcenter{\hbox{\scalebox{#2}{$\m@th#1\bullet$}}}}}
\newcommand{\N}{\mathbb{N}}
\newcommand{\Z}{\mathbb{Z}}
\newcommand{\q}{\mathbf{q}}
\newcommand{\e}{\textrm{e}}
\newcommand{\dfn}[1]{{\em #1}}
\newcommand{{\def\svgwidth{1,6ex}\,\,\input{PushOff.pdf_tex}\,\,}}{{\def\svgwidth{1,6ex}\,\,\input{PushOff.pdf_tex}\,\,}} 
\def\bdy{\partial}
\title{Non-loose torus knots in $S^1\times S^2$}
\author{Jiaxin Huang}
\address{School of Mathematical Sciences, Shanghai Jiao Tong University, Shanghai, China}
\email{jiaxin\_h@sjtu.edu.cn}
\author{Youlin Li}
\address{School of Mathematical Sciences, Shanghai Jiao Tong University, Shanghai, China}
\email{liyoulin@sjtu.edu.cn}
\author{Zaiting Xu}
\address{School of Mathematical Sciences, Peking University, Beijing, China}
\email{xuzaiting@stu.pku.edu.cn}
\begin{document}

\maketitle

\begin{abstract}
In this paper, we present a complete coarse classification of non-loose Legendrian and transverse torus knots in any contact structure on $S^1\times S^2$.  
\end{abstract}

\section{Introduction}

A Legendrian or transverse knot in an overtwisted contact 3-manifold is called {\it non-loose} if its complement is tight. Two Legendrian knots $L_0$ and $L_1$ in a contact 3-manifold $(M,\xi)$ are {\it coarsely equivalent} if there exists a contactomorphism of $(M,\xi)$ which is smoothly isotopic to the identity and maps $L_0$ to $L_1$. 

The study of non-loose Legendrian and transverse knots and links in contact 3-manifolds has become an increasingly active area of research in contact topology. Examples include the trivial knot in $S^3$ \cite{ef, v}, torus knots in $S^3$ \cite{go, ma, emm} and in lens spaces \cite{emx},  rational unknots in lens spaces \cite{go2, cemm}, the Hopf link and the connected sum of two Hopf links in $S^3$ \cite{go1, lo}, as well as  Hopf links in lens spaces  \cite{cgo, c}. Such investigations are deeply intertwined with the study of tight contact structures on the link complements, and provide valuable tools for constructing and classifying tight contact structures on certain closed 3-manifolds \cite{emtv}.

In this paper, we investigate the non-loose Legendrian and transverse torus knots in contact $S^1\times S^2$.  In contrast to previous results concerning knots in 3-sphere and lens spaces,  the (rational) rotation number of a Legendrian knot in a contact $S^1\times S^2$ is generally not well-defined. 

Let $S^1\times S^2:=\{(\theta, x_1, x_2, x_3)\in S^1\times \mathbb{R}^3\mid \theta\in S^1, x_1^2+x_2^2+x_3^2=1\}$. Define $T_0 = \{(\theta, x_1, x_2, x_3) \in S^1 \times S^2 \mid x_3 = 0\}$. Then $T_0$ is a Heegaard torus, meaning that the closures of the components of $(S^1 \times S^2)\setminus T_0$ are two solid tori. Let $V_1$ be the solid torus $\{(\theta, x_1, x_2, x_3) \in S^1 \times S^2 \mid x_3 \leq 0\}$ and $V_2$ be the solid torus $\{(\theta, x_1, x_2, x_3) \in S^1 \times S^2 \mid x_3 \geq 0\}$. A knot in $S^1 \times S^2$ is called a torus knot if it is smoothly isotopic to a knot on $T_0$.  Consider the curve on $T_0 = \partial V_1$ given by $\theta = \theta_0$, oriented positively in the $x_1 x_2$-plane. This is a meridian of $V_1$, and we denote its homology class in $H_1(T_0)$ by $m_0$. Similarly, the curve given by $(x_1, x_2, x_3) = (1, 0, 0)$, oriented
by the parameter $\theta$, is a longitude, and we denote its homology class in $H_1(T_0)$ by $l_0$. Then $(l_0, m_0)$ is a basis for $H_1(T_0)$. An oriented knot in $S^1 \times S^2$ is called a $(p, q)$-torus knot if it is isotopic to an oriented knot on $T_0$ homologically equivalent
to $pm_0 + ql_0$, where $p$ and $q$ coprime. A $(\pm1, 0)$-torus knot is trivial, i.e. it bounds a
disk in $S^1 \times S^2$. A $(p, 1)$-torus knot is isotopic to $S^1 \times \{(0, 0, 1)\}$ oriented by the
parameter $\theta$.

By \cite[Theorem 4.10.1]{g}, there is a unique tight contact structure on $S^1\times S^2$, which we denote by $(S^1\times S^2, \xi_{std})$. In \cite{cdl}, Chen, Ding, and the second author classified the Legendrian torus knots in $(S^1\times S^2, \xi_{std})$ up to Legendrian isotopy. 

By Eliashberg's classification \cite{e}, the isotopy classes of overtwisted contact structures on $S^1\times S^2$ are in one-to-one correspondence with the homotopy classes of 2-plane fields. Since $H_{1}(S^1\times S^2)\cong\mathbb{Z}$ contains no 2-torsion, two 2-plane fields correspond to the same $spin^c$ structure if and only if their Euler classes (which are even elements in $\mathbb{Z}$) coincide. As explained in  \cite[Section 7.5]{eko}, all 2-plane fields in a fixed $spin^c$ structure can be obtained via connected summing the overtwisted contact structures on $S^3$. That is, the overtwisted contact structures on $S^1\times S^2$ with given Euler class are in one-to-one correspondence with the integer set $\mathbb{Z}$.

By \cite[Corollary 1.6]{cemm}, a Legendrian $(p, \pm1)$-torus knot in a contact $S^1\times S^2$ cannot be non-loose. We therefore restrict to the case $|q|\geq 2$. According to \cite[Proposition 2.2]{cdl}, for $|q|\geq 2$, a $(p_1, q)$-torus knot and a $(p_2, q)$-torus knot are smoothly isotopic in $S^1 \times S^2$ if and only if $p_2\equiv \pm p_1 \pmod {2q}$. Without loss of generality, we assume that $-q>p>0$. 

Suppose
\[
\frac{q}{p} = [a_1, \dots, a_m] = a_1 - \cfrac{1}{a_2 - \cfrac{1}{\cdots - \cfrac{1}{a_{m-1} - \cfrac{1}{a_m}}}},
\]
where $a_i\leq -2$ for $1\leq i\leq m$.  %Then we have $$|\Tight(S^{0}; \frac{q}{p})|=|(a_{1}+1)\cdots(a_{m-1}+1)a_{m}|.$$

Further, suppose $$(-1-\frac{p}{q})^{-1}=[b_1, \ldots, b_n],$$
%Suppose $$(\frac{q}{p}-\lceil{\frac{q}{p}}\rceil)^{-1}=[b_1, \ldots, b_n],$$
where $b_i\leq -2$ for $1\leq i\leq n$.
%Then $$|\Tight(S_{0}; \frac{q}{p})|=|\Tight(S^{\infty}; \frac{p}{q})|=|(b_{1}+1)\cdots(b_{n-1}+1)b_{n}|.$$

Let $$m(p,q)=|(a_{1}+1)\cdots(a_{m-1}+1)a_{m}|\cdot|(b_{1}+1)\cdots(b_{n-1}+1)b_{n}|,$$
$$n(p,q)=|(a_{1}+1)\cdots(a_{m-1}+1)(a_{m}+1)|\cdot|(b_{1}+1)\cdots(b_{n-1}+1)(b_{n}+1)|,$$
and $$l(p,q)=|(a_{1}+1)\cdots(a_{m-2}+1)(a_{m-1}+1)|\cdot|(b_{1}+1)\cdots(b_{n-2}+1)(b_{n-1}+1)|,$$
with the convention that $|(a_{1}+1)\cdots(a_{m-1}+1)|=1$ when $m=1$ and $|(b_{1}+1)\cdots(b_{n-1}+1)|=1$ when $n=1$.

%As will be shown in the Section~\ref{section:tori}, the quantity $m(p,q)$ equals the product of the numbers of tight contact structures on two solid tori. Both solid tori have convex boundaries with two dividing curves of slope $q/p$, but are distinguished by their meridians: one has an upper meridian of slope $0$, and the other has a lower meridian of slope $0$. Meanwhile, $n(p,q)$ is the number of tight contact structures on $L(q,p)\#L(-q,p)$. 
%$L(p,p')\#L(p,-p')$.

According to \cite[Proposition 2.1]{cdl}, for a Legendrian $(p,q)$-torus knot $L$ in a contact $S^1 \times S^2$, the framing induced by any Heegaard torus containing $L$ is independent of the choice of such a torus. This allows us to define the \textit{twisting number} $\tw(L)$ of $L$ as the difference between its contact framing and the framing induced by a Heegaard torus. We denote by $S_{+}(L)$ and $S_{-}(L)$ the positive and negative stabilizations of $L$, respectively. It follows immediately that $\tw(S_{\pm}(L))=\tw(L)-1$. %However, unlike the Legendrian knots in $(S^1\times S^2, \xi_{std})$, the rotation number is not well-defined for a Legendrian knot in a general overtwisted contact $S^1\times S^2$. 
We also employ this notation without signs to denote the stabilization of a transverse knot. %We also denote by $S(T)$ the stabilization of a transverse torus knot $T$ in a contact $S^1\times S^2$.

Let $(M,\xi)$ be a contact 3-manifold and $[T]$ an isotopy class of embedded tori in $M$. The \textit{convex Giroux torsion} of $(M,\xi)$ along $[T]$ is the supremum of $t\in \frac{1}{2}\mathbb{N}\cup\{0\}$ for which there is a contact embedding of $$(T^2\times [0,1], \ker(\sin(2t\pi z)dx+\cos(2t\pi z)dy))$$ into $(M,\xi)$, with $T^{2}\times \{z\}$ being in the class $[T]$. For a Legendrian or transverse torus knot $L$ in a contact $S^1\times S^2$, we say $\tor(L)=t$ if the complement of the standard neighborhood of $L$ has convex Giroux torsion $t$ along the isotopy class of the boundary torus.

%Before presenting the classification results, we analyze the destabilization behavior of non-loose Legendrian torus knots. Consequently, the classification of non-loose Legendrian $(p,q)$-torus knots with $\tw<0$ is entirely determined by the classification of those knots with $\tw=0$ and their stabilization properties.

%\begin{theorem}\label{thm:destabilization}
%    Any non-loose Legendrian $(p,q)$-torus knot destabilizes except when $\tw=0$. Non-loose Legendrian knots with $\tw=0$ sometimes destabilize and sometimes do not.
%\end{theorem}

Before presenting the classification results, we establish the complete range of Euler classes of contact structures that support non-loose torus knots. 

\begin{theorem}\label{thm:Eulerrange} 
    Suppose $-q>p>0$. Let $\xi$ and $\xi^T$ be any overtwisted contact structures on $S^1\times S^2$ supporting non-loose Legendrian and transverse $(p,q)$-torus knots with $\tor=t$, respectively. Then $e(\xi)=\pm2k$ and $\e(\xi^T)=2k$ with 
    \[k\in
    \begin{cases}
       \{1,2,\ldots,-q-1\}, &\text{ if } t=0,\\
       \{k_e+q+1,k_e+q+2,\ldots,-q-1\}, &\text{ if } t\in\N,\\
       \{k_e+2q+1,k_e+2q+2,\ldots,-1\}, &\text{ if } t\in\frac{1}{2}\N\setminus\N,\\
    \end{cases}
    \]
where \[k_e=
    \begin{cases}
    -q+(a_{m}+2)(q-q'), &\text{if }a_m<-2,\\
    -q+(b_{n}+2)q', &\text{if }a_m=-2.
    \end{cases}\]
    In particular, if a contact structure $\xi$ on $S^1\times S^2$ contains a non-loose $(p,q)$-torus knot, then $|e(\xi)|\leq -2q-2$.
\end{theorem}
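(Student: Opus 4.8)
The plan is to reduce the theorem to the standard convex-surface / continued-fraction machinery for tight contact structures on solid tori, applied to the complement of a standard neighborhood of the torus knot. Since a $(p,q)$-torus knot $L$ sits on the Heegaard torus $T_0$, removing a standard neighborhood $N(L)$ of $L$ from $S^1\times S^2$ leaves a manifold that decomposes along (a pushed-off copy of) $T_0$ into two pieces, each of which is a once-punctured solid torus; equivalently, the complement is the union of two solid tori $V_1', V_2'$ glued along an annulus, with $\partial N(L)$ recovered as part of the boundary. The hypothesis that $L$ is non-loose means exactly that a tight contact structure lives on $S^1\times S^2\setminus N(L)$, and the quantity $\tor(L)=t$ records the convex Giroux torsion along the boundary torus $\partial N(L)$.

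First I would recall from \cite{cdl} the normal form for the complement: after edge-rounding, the tight contact structures on each solid-torus piece are classified by Honda's theorem in terms of the continued-fraction expansions $[a_1,\dots,a_m]$ of $q/p$ and $[b_1,\dots,b_n]$ of $(-1-p/q)^{-1}$, and the possible boundary slopes and relative Euler classes are read off from the sequences $(a_i+1)$ and $(b_j+1)$ — this is precisely why the integers $m(p,q)$, $n(p,q)$, $l(p,q)$ were defined above. Next I would compute $e(\xi)$ as a sum of the relative Euler classes of the two tight solid tori plus a boundary correction term coming from how they are glued; the point is that $e(\xi)$ is a bilinear-type expression in the ``layer contributions'' of the two continued fractions. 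The convex Giroux torsion enters by inserting $t$ basic slices (a $T^2\times[0,1]$ layer of torsion $t$) along $\partial N(L)$, which shifts the available Euler class range and explains the case split on $t\in\{0\}$, $t\in\N$, and $t\in\frac12\N\setminus\N$: each unit of torsion ``uses up'' a fixed amount of Euler-class budget, and a half-integer of torsion forces a sign change / parity constraint that collapses the range to $\{\dots,-1\}$.

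Having established the exact ranges in the three cases, the final ``In particular'' statement is then a uniform bound: I would show that for every admissible $t$ the upper end of the interval for $k$ is $-q-1$ (in the $t=0$ and $t\in\N$ cases) or $-1$ (in the half-integer case), and since $-q>p>0$ forces $-q\geq 2$, in all cases $|k|\leq -q-1$, hence $|e(\xi)|=2|k|\leq -2q-2$; one must also check that the lower endpoints $k_e+q+1$ and $k_e+2q+1$ are genuinely $\geq$ the relevant negative values, i.e.\ that $k_e\geq -2q-2$ (resp.\ $k_e\geq -2q-2$), which follows from the explicit formula for $k_e$ together with $a_m\leq -2$ and $0<q'<q$ (here $q'$ is the unique integer with $0<q'<q$ and $pq'\equiv \pm1\pmod q$ determined by the continued fraction data). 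The transverse case reduces to the Legendrian case by the usual transverse push-off / Legendrian approximation correspondence, which kills the sign ambiguity $\pm2k\mapsto 2k$.

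The main obstacle I anticipate is bookkeeping the Euler class correctly: $H_1(S^1\times S^2)\cong\Z$ has no torsion, so $e(\xi)$ is a well-defined even integer, but pinning down the additive contribution of the gluing annulus between $V_1'$ and $V_2'$ — and the sign conventions relating the relative Euler classes of the two Honda-classified solid tori to the absolute Euler class of the closed manifold — is delicate, and getting the exact endpoints (rather than an off-by-one bound) requires carefully tracking which extremal tight structures on each piece are realized simultaneously. The torsion-layer argument also needs care: one must verify both that inserting a torsion-$t$ layer is possible for all $k$ in the claimed range (a construction) and that it is impossible outside it (an obstruction, via the classification of tight structures on $T^2\times I$ and the solid-torus pieces), so the proof is genuinely two-sided.
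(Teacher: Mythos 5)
Your overall framework---decompose the complement, reduce to Honda's classification of tight structures on the solid-torus pieces via the continued fractions of $q/p$ and $(-1-p/q)^{-1}$, and read off $e(\xi)$ from the signs of the basic slices---is the same one the paper uses (there the data is packaged as a decorated pair of paths $(P_1,P_2)$ in the Farey graph, and the $t=0$ range comes from an induction over continued fraction blocks in Lemma~\ref{lem:numberEuler}). But there are two genuine gaps.

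First, your mechanism for the torsion cases is wrong. You assert that ``each unit of torsion uses up a fixed amount of Euler-class budget'' and that a half-integer of torsion is a ``parity constraint.'' In fact adding an integer amount of convex Giroux torsion is a full Lutz twist and does \emph{not} change the Euler class at all; the range for $t\in\N$ is independent of $t$ and is strictly smaller than the $t=0$ range for a different reason: only the \emph{totally $2$-inconsistent} decorated pairs admit added torsion while staying tight (Lemmas~\ref{lemma:overtwisted} and~\ref{lemma:addtorsion}), and the Euler classes realized by exactly those pairs form the set $\{\pm 2k: k_e+q+1\le k\le -q-1\}$ (Lemma~\ref{lem:numberEuler2}). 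The passage to $t\in\frac12\N\setminus\N$ is a half Lutz twist, which shifts $e$ by $\pm 2q$ (Lemma~\ref{lem:halfLutz}, proved by a contact-surgery computation), giving the translated interval $\{k_e+2q+1,\dots,-1\}$. Your proposed mechanism would predict a range shrinking with $t$ and a sign/parity flip, neither of which occurs, so the case split cannot be recovered from it.

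Second, the heart of the theorem---that the achievable Euler classes are \emph{exactly} $\pm2k$ for $k\in\{1,\dots,-q-1\}$ when $t=0$, with $0$ excluded because $e(\xi_{P_1,P_2})=0$ forces $\xi_{P_1,P_2}=\xi_{std}$ (Lemma~\ref{lem:Euler})---is only gestured at. You correctly identify that the endpoint bookkeeping is delicate, but you give no argument that every even value in $[2q+2,-2q-2]$ is realized and no others; the paper needs a block-by-block induction showing the contributions of successive continued fraction blocks interleave with step size $2$. You also omit the reduction of the $\tw\neq 0$ cases to $\tw=0$ via stabilization/destabilization, which is what lets one restrict attention to the Legendrian divides $L_{P_1,P_2}$ in the first place. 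The ``in particular'' bound and the transverse reduction are fine in spirit, though the inequality actually needed at the lower endpoint of the half-integer range is $k_e\ge -q$, not $k_e\ge -2q-2$.
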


\begin{remark}
    For any contact structure $\xi$ on $S^1\times S^2$ supporting Legendrian torus knots with tight complement, $e(\xi)=0$ if and only if $\xi$ is tight.  Theorem~\ref{thm:Eulerrange} is a generalization of \cite[Corollary 1.6]{cemm}.
    %For the case $\tor=0$, the value set of Euler class remains identical when restricted to such knots with $\tw=0$.
\end{remark}

We classify the non-loose Legendrian $(p,q)$-torus knots with $\tor=0$ in contact $S^1\times S^2$ as follows.

\begin{theorem}\label{thm:tw>0}
Suppose $-q>p>0$ and $i>0$ are integers. The number of non-loose Legendrian $(p,q)$-torus knots with $\tw=i$ and $\tor=0$ in some contact $S^1\times S^2$ is exactly $2n(p,q)$, and any such Legendrian knot can be realized as a Legendrian knot shown in Figure~\ref{Figure:tw>0}.  These knots are denoted by $$L^{i}_{\pm, k}~\text{ for }~1\leq k\leq n(p,q).$$ Moreover, the following stabilization properties hold: $$S_{\pm}(L^{i}_{\pm,k})=L^{i-1}_{\pm,k}~\text{ for }~ i>1, S_{\mp}(L^{i}_{\pm,k})~\text{is loose for}~ i>0, ~\text{and}$$ $$S_{\pm}^{j}(L^{1}_{\pm,k})~\text{is non-loose for any}~ j>0.$$
\end{theorem}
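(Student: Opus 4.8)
\emph{Overall approach.} The plan is to study the complement of the knot by convex surface theory, using Honda's classification of tight contact structures on solid tori and on thickened tori together with his gluing theory, combined with the explicit Legendrian models of Figure~\ref{Figure:tw>0}. The realization half (producing at least $2n(p,q)$ knots) is done by hand from the figure; the uniqueness half (producing at most $2n(p,q)$) is done by decomposing the complement.

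\emph{Lower bound via explicit models.} First I would read off from the diagrams of Figure~\ref{Figure:tw>0} that each represents a Legendrian $(p,q)$-torus knot with $\tw=i$, and verify that its complement is tight by cutting along a convex torus isotopic to $T_0$ and recognizing the two resulting pieces as tight solid tori glued compatibly. The genuinely distinct such knots are then separated by the isotopy class of the induced tight contact structure on the complement, detected by its relative Euler class together with the Euler class of the ambient $\xi$ via Theorem~\ref{thm:Eulerrange}, and by the remaining combinatorial choices in the gluing, which are dictated by the continued-fraction data $q/p=[a_1,\dots,a_m]$ and $(-1-p/q)^{-1}=[b_1,\dots,b_n]$. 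Assembling these shows there are at least $2n(p,q)$ such knots, which I would label $L^{i}_{\pm,k}$.

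\emph{Upper bound via the complement.} Let $L$ be a non-loose Legendrian $(p,q)$-torus knot with $\tw(L)=i>0$ and $\tor(L)=0$, let $N=N(L)$ be a standard neighborhood, and set $M=S^1\times S^2\setminus N$, a tight contact manifold whose convex boundary carries two dividing curves of slope dictated by $i$ and along which the convex Giroux torsion is $0$. Since a Legendrian curve on a convex surface has twisting $\le 0$ there, and by \cite[Proposition~2.1]{cdl} every Heegaard torus induces the same framing, the condition $\tw(L)=i>0$ forces $L$ off every convex Heegaard torus, so the analysis must take place inside $M$. I would take the annulus $A=T_0\cap M$, make $\partial A$ Legendrian by the Legendrian realization principle (each component meets $\Gamma_{\partial M}$ in $2i>0$ points, so the collection is non-isolating), make $A$ convex, and argue that $\Gamma_A$ carries no boundary-parallel arcs — using tightness of $M$ and incompressibility of $A$, or absorbing any that appear via a standard bypass. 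Cutting along $A$ and rounding edges produces two solid tori $V_1\setminus N$ and $V_2\setminus N$ whose convex boundary slopes are governed by $q/p=[a_1,\dots,a_m]$ and $(-1-p/q)^{-1}=[b_1,\dots,b_n]$, shifted by the twisting $\tw=i>0$, which eats into each solid torus — a shift that becomes constant in $i$ and explains why the count is independent of $i$. Honda's classification then bounds the number of tight structures on the two pieces by the products of the form $|(a_1+1)\cdots|$ and $|(b_1+1)\cdots|$ occurring in the definition of $n(p,q)$; crucially, the hypothesis $\tor(L)=0$ is exactly what forbids inserting an additional rotative $T^2\times I$ layer into the decomposition, so that Honda's gluing theorem reassembles the pieces into precisely the tight structures already realized above, yielding the matching bound $2n(p,q)$ and hence equality.

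\emph{Stabilizations.} For the stabilization statements I would work on the complement side, where $S_{\pm}$ corresponds to gluing a signed basic slice onto $\partial M(L)$. When $i>1$ the resulting boundary slopes stay in the range controlled by Honda's gluing, and the bookkeeping of the previous step, carried out one twisting level lower, shows that attaching a $+$-slice (resp.\ $-$-slice) preserves the label $k$ and the sign, giving $S_{\pm}(L^{i}_{\pm,k})=L^{i-1}_{\pm,k}$. That $S_{\pm}^{j}(L^{1}_{\pm,k})$ stays non-loose for all $j>0$ follows because stacking further same-sign basic slices onto the consistently signed layers already present in $M(L^{1}_{\pm,k})$ keeps the glued manifold tight, while the shrinking neighborhood's boundary slope moves monotonically away from any value forcing an overtwisted disk. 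Finally, $S_{\mp}(L^{i}_{\pm,k})$ is loose because $M(L^{i}_{\pm,k})$, having $\tor=0$, lies at the extreme of its tight family: attaching the opposite-sign basic slice forces a Giroux-torsion layer incompatible with tightness, which one confirms either by exhibiting the overtwisted disk or by noting that one of the two solid-torus pieces acquires a boundary slope admitting no tight filling.

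\emph{The main difficulty.} The crux is the upper-bound step: keeping tight control of the convex annulus $A$ — ruling out or absorbing boundary-parallel dividing arcs, performing the edge-rounding correctly, and matching the boundary slopes of the two solid-torus pieces to the precise continued-fraction data — and then applying Honda's gluing theorem so that the enumeration comes out to exactly $2n(p,q)$, neither more nor less. Equally delicate is establishing that $\tor(L)=0$ corresponds precisely to the absence of an extra rotative $T^2\times I$ layer in this decomposition, since it is this equivalence that links the Giroux-torsion hypothesis both to the exact count and, via the same mechanism, to the looseness of the "wrong" stabilizations.
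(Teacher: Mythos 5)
Your overall architecture (explicit models from Figure~\ref{Figure:tw>0} for the lower bound, classification of tight structures on the complement for the upper bound, basic slices for the stabilizations) matches the paper's, but two of your central steps have genuine gaps.

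\emph{The upper bound.} You propose to cut $M$ along the convex annulus $A=T_0\cap M$ and read the count off from Honda's classification of the two resulting solid tori, with the twisting $i$ ``eating into'' the pieces. This skips the actual content. First, these knots destabilize indefinitely ($L^i_{\pm,k}=S_{\pm}(L^{i+1}_{\pm,k})$ for all $i$), so boundary-parallel arcs on $A$ cannot be ruled out and absorbing them thickens $N$ without terminating at a normal form; the paper instead proves (Lemma~\ref{lemma:s>0}) that any $\xi\in\Tight_0(C;i)$ with $i>0$ splits as an element of $\Tight_0(C;\infty)$ glued to a layer in $\Tight^{min}(T^2\times[0,1];i,\infty)$, via a case analysis with the Imbalance Principle on an annulus joining $\partial V_1$ to $\partial V_2$ (not $T_0\cap M$). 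Second, your sketch does not account for where the answer $2n(p,q)$ comes from: the factor $n(p,q)$ arises from solid tori with boundary slopes $(q/p)^a$ and $(q/p)^c$ --- hence the \emph{truncated} products $|(a_1+1)\cdots(a_m+1)|$, $|(b_1+1)\cdots(b_n+1)|$ rather than the products defining $m(p,q)$ --- and the factor $2$ is the sign of the basic slice in $\Tight^{min}(T^2\times[0,1];i,\infty)$ (Lemmas~\ref{lemma:s=infty} and \ref{lemma:s=n>0}). Third, ``Honda's gluing theorem reassembles the pieces into precisely the tight structures realized above'' is exactly the point that needs proof: the paper shows both that the glued structures are tight and torsion-free and that distinct pieces give distinct structures by embedding $(C,\eta\cup\zeta)$ into the tight $L(q,p)\#L(-q,p)$ and invoking the classification there; no off-the-shelf gluing statement does this.

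\emph{Distinguishing the $2n(p,q)$ models.} You propose to separate the knots by the relative Euler class of the complement together with $e(\xi)$ and ``remaining combinatorial choices.'' This is not sufficient: for general $p$ one has $n(p,q)>-q-1$, so several of the knots live in the same contact structure, and the relative Euler class is not a complete invariant of the tight structures on $C$ involved (the paper uses it only to separate the two basic-slice signs when $i=1$). The paper's working argument is different: perform Legendrian surgery on $L_{\pm}$, obtain Stein fillable structures on $L(q,p)\#L(-q,p)$ or $M(S^2;\frac{q'}{q},-\frac{q'}{q},-\frac{1}{i-1})$ (Lemma~\ref{lemma:surgerychar}), and distinguish them by rotation numbers via Lisca--Mati\'c \cite{lm}, then transfer this back to coarse inequivalence of the knots as in Lemma~\ref{lem:tw=0distinct}. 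Your stabilization paragraph is essentially correct in outline and close to the paper's argument, but it rests on the same splitting of $C$ into $(C;\infty)$ plus a signed basic-slice layer that your upper-bound step has not established.
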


\begin{figure}[htb]
\begin{overpic}%[grid,tics=10]
{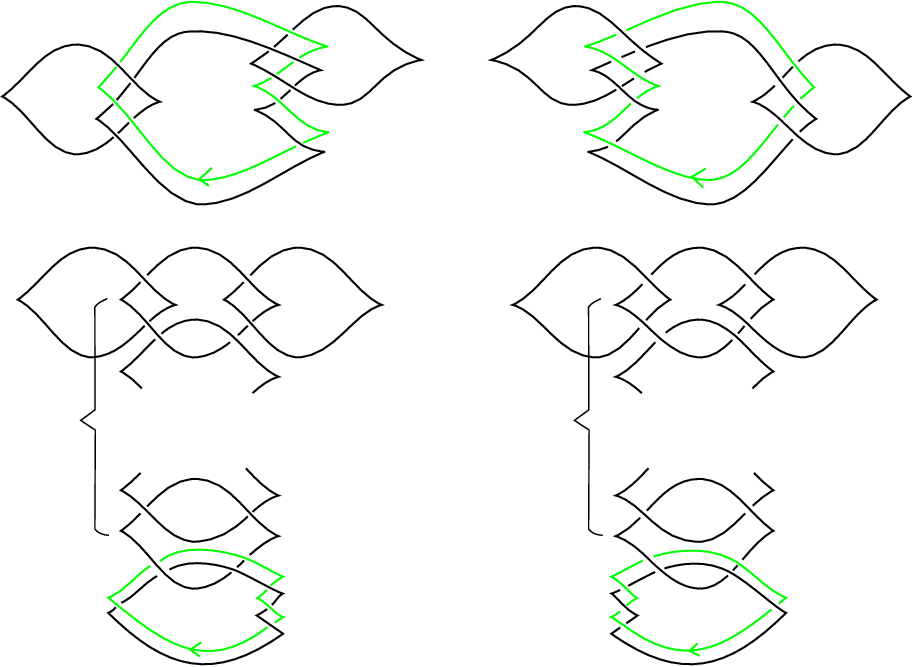}
\put(180, 265){$(\frac{q'}{q'-q})$}
\put(0, 305){$(\frac{q'-q}{q'})$}
\put(140, 230){$(+1)$}
%\put(120, 30){$(+1)$}
\put(50, 310){$L_{+}$}

\put(230, 265){$(\frac{q'-q}{q'})$}
\put(410, 305){$(\frac{q'}{q'-q})$}
\put(275, 230){$(+1)$}
%\put(120, 30){$(+1)$}
\put(370, 310){$L_{-}$}

\put(180, 160){$(\frac{q'}{q'-q})$}
\put(0, 200){$(\frac{q'-q}{q'})$}
\put(95, 100){$\cdot$}
\put(95, 120){$\cdot$}
\put(95, 110){$\cdot$}
\put(140, 10){$(+1)$}
\put(140, 60){$(-1)$}
\put(140, 80){$(-1)$}
\put(140, 130){$(-1)$}
\put(80, 186){$(-1)$}
\put(40, 40){$L_{+}$}
\put(10, 115){$i-1$}
\put(0, -5){$\text{Euler class}<0$.}

\put(235, 195){$(\frac{q'-q}{q'})$}
\put(415, 160){$(\frac{q'}{q'-q})$}
\put(335, 100){$\cdot$}
\put(335, 120){$\cdot$}
\put(335, 110){$\cdot$}
\put(380, 10){$(+1)$}
\put(375, 60){$(-1)$}
\put(375, 80){$(-1)$}
\put(375, 130){$(-1)$}
\put(320, 186){$(-1)$}
\put(280, 40){$L_{-}$}
\put(248, 115){$i-1$}
\put(230, -5){$\text{Euler class}>0$.}

\end{overpic}
\vspace{1mm}
\caption{Half of the $2n(p,q)$ non-loose Legendrian torus knots $T_{p,q}$ with $\tw=1$ and $\tor=0$ are displayed in the top left diagram, while the remaining half are shown in the top right diagram. Similarly, for cases where $\tw=i>1$ and $\tor=0$, half of the $2n(p,q)$ non-loose Legendrian torus knots $T_{p,q}$ appear in the bottom left diagram, with the other half presented in the bottom right diagram. Here $\frac{q'}{p'}$ ($p'>0$) denotes the largest rational number satisfying $pq'-p'q=1$. The Euler class is negative for the contact structures on the left and positive for those on the right. }
\label{Figure:tw>0}
\end{figure}

\begin{theorem}\label{thm:tw=0}
Suppose $-q>p>0$. The number of non-loose Legendrian $(p,q)$-torus knots with $\tw=0$ and $\tor=0$ in some contact $S^1\times S^2$ is exactly $m(p,q)-2$. Moreover, any such Legendrian knot can be realized as one of the knots shown in Figure~\ref{Figure:tw=0}.  %Moreover, exactly $2$ of those Legendrian knots are in $(S^1\times S^2, \xi_{std})$.
\end{theorem}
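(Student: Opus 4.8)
The plan is to adapt the convex-surface strategy used for torus knots in $S^3$ and lens spaces to the Heegaard decomposition of $S^1\times S^2$, as in \cite{cdl,emm,cemm}. Let $L$ be a non-loose Legendrian $(p,q)$-torus knot with $\tw(L)=0$ and $\tor(L)=0$ in some overtwisted contact structure $\xi$, and set $W=(S^1\times S^2)\setminus N(L)$, so $\xi|_W$ is tight by hypothesis. Since $\tw(L)=0$, the knot $L$ can be isotoped to a Legendrian divide on a convex Heegaard torus $T$ (isotopic to $T_0$) with $\#\Gamma_T=2$ and dividing slope equal to the slope of $L$; indeed $\tw(L)=0$ is exactly the condition permitting $L$ to be a divide. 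The torus $T$ splits $S^1\times S^2$ into solid tori $V_1,V_2$ with $L\subset T=\partial V_1=\partial V_2$, and the annulus $A=T\setminus N(L)$ splits $W$ into $\widetilde V_1=V_1\setminus N(L)$ and $\widetilde V_2=V_2\setminus N(L)$, each again a solid torus, glued along $A$. The knot $L$ is non-loose if and only if $W$ is tight.

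First I would make $A$ convex. The other dividing curve of $T$ survives as a core-parallel component of $\Gamma_A$; tightness of $\xi|_W$ rules out any homotopically trivial component of $\Gamma_A$, and $\tor(L)=0$ rules out extra core-parallel components (a cluster of such components thickens to a Giroux-torsion layer along a torus parallel to $\partial W$). Together with the parity constraint from $\partial\widetilde V_1$ this pins $\Gamma_A$ down to exactly one core-parallel curve, and forces the single dividing curve of $\partial N(L)$ on each side of $T$ to lie as expected. Cutting $W$ along $A$ and rounding edges, $\widetilde V_1$ and $\widetilde V_2$ become solid tori with convex boundary, $\#\Gamma=2$, and boundary slopes that one computes from the framing data (the continued-fraction bookkeeping using $q/p=[a_1,\dots,a_m]$, $(-1-p/q)^{-1}=[b_1,\dots,b_n]$ and the number $q'$, already carried out in \cite{cdl}). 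These are precisely the slopes for which Honda's classification of tight contact structures on a solid torus yields $|(a_1+1)\cdots(a_{m-1}+1)a_m|$ tight structures on $\widetilde V_1$ and $|(b_1+1)\cdots(b_{n-1}+1)b_n|$ on $\widetilde V_2$; hence $W$ carries at most $m(p,q)$ candidate tight contact structures, each a gluing of a tight structure on $\widetilde V_1$ to one on $\widetilde V_2$ across $A$.

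Next I would run the gluing/exclusion step. Using the gluing theorem for convex boundaries together with the classification of tight contact structures on $T^2\times[0,1]$, I would determine which of the $m(p,q)$ combinations glue to a tight $W$ sitting inside an overtwisted $S^1\times S^2$ (with Euler class in the range of Theorem~\ref{thm:Eulerrange}). I expect exactly two combinations to be excluded --- those whose union reconstructs the unique tight contact structure $\xi_{std}$ on $S^1\times S^2$, in which $L$ has tight complement but is not non-loose since $\xi_{std}$ is not overtwisted --- leaving $m(p,q)-2$. For the existence half of the statement I would exhibit each of these $m(p,q)-2$ Legendrian knots by the explicit surgery/front diagrams of Figure~\ref{Figure:tw=0}, reading off the convex decomposition above to certify that each has tight complement, $\tw=0$ and $\tor=0$. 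For the converse, the analysis shows every non-loose $L$ with $\tw(L)=0$, $\tor(L)=0$ produces one of these gluings; a bypass and Euler-class argument then shows the $m(p,q)-2$ resulting Legendrian knots are pairwise coarsely inequivalent, giving the count.

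The main obstacle is the gluing/exclusion step: carefully deciding which pairs of tight structures on $\widetilde V_1$ and $\widetilde V_2$ glue across $A$ to a tight manifold (rather than an overtwisted one, or one with positive Giroux torsion, which would fall outside this theorem), and proving that exactly two pairs must be discarded. This is delicate because the gluing slope along $A$ equals the slope of the core of $A$, the borderline case of convex surface theory; handling it requires tracking bypasses across $A$, using the $T^2\times[0,1]$ classification to detect overtwisted disks and torsion, and matching the surviving cases with Figure~\ref{Figure:tw=0}. Establishing the distinctness of the $m(p,q)-2$ knots --- as opposed to merely counting tight structures on $W$ --- is the other point needing care, since a priori one tight $W$ could bound coarsely inequivalent knots, or conversely.
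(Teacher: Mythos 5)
Your overall strategy is the same as the paper's: bound the count above by cutting the complement along the annulus $A$ on the Heegaard torus into two solid tori and invoking Honda's classification (this is the paper's Lemma~\ref{lemma:tw=0}, giving $m(p,q)$), realize all candidates by the surgery diagrams of Figure~\ref{Figure:tw=0}, and then subtract the two representatives that live in $\xi_{std}$ rather than in an overtwisted structure. Your identification of the two excluded classes is also correct in spirit: they are exactly the two gluings (all basic slices of one sign) with vanishing Euler class, and the paper gets the count "exactly $2$" from the classification of Legendrian torus knots in the tight $S^1\times S^2$ in \cite{cdl} (Remark~\ref{rmk:tw=0intight}) combined with the Euler class computation of Lemma~\ref{lem:Euler}.

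The genuine gap is in the distinctness step, which you flag as an obstacle but for which you propose an insufficient tool. A "bypass and Euler-class argument" cannot separate the $m(p,q)-2$ knots: by Lemma~\ref{lem:numberEuler} the ambient Euler classes take only the values $\pm 2k$ for $k\in\{1,\dots,-q-1\}$, far fewer than $m(p,q)-2$ in general, so many inequivalent knots share the same ambient contact structure; and within a fixed ambient structure one must rule out a contactomorphism smoothly isotopic to the identity that carries one knot to another \emph{without} preserving your Heegaard decomposition, so comparing relative Euler classes of the two solid-torus pieces is not by itself conclusive. The paper's mechanism (Lemma~\ref{lem:tw=0distinct}) is different and essential: perform contact $\left(-\tfrac{1}{2}\right)$-surgery on $L_{P_1,P_2}$, which cancels the two $(+1)$-surgeries and produces a tight contact structure on the lens space $L^{q^2/(pq-1)}_{q^2/(pq+1)}$; distinct decorated pairs $(P_1,P_2)$ give shortest paths that cannot be shortened further, hence non-isotopic tight structures on that lens space by Honda's classification, while a coarse equivalence of the knots would force the surgered structures to be contact isotopic. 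Without this (or an equivalent device), the argument only counts tight structures on the complement $W$, not coarse equivalence classes of knots, which is precisely the discrepancy you yourself note at the end.
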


\begin{remark} \label{rmk:tw=0intight}
As a corollary of the main theorem in  \cite{cdl}, we know that there are exactly $2$  Legendrian $(p,q)$-torus knots with $\tw=0$ in $(S^1\times S^2, \xi_{std})$ up to coarse equivalence. 
\end{remark}

\begin{figure}[htb]
\begin{overpic}%[grid,tics=10]
[scale=0.75]
{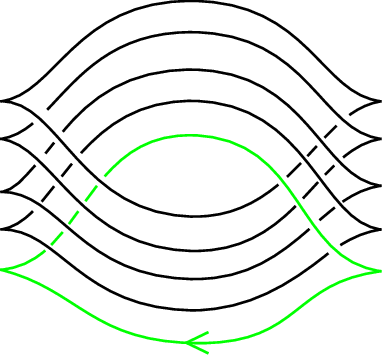}
\put(142, 92){$(-\frac{q}{q'})$}
\put(142, 75){$(-\frac{q}{q-q'})$}
\put(142, 57){$(+1)$}
\put(142, 42){$(+1)$}
\put(150, 20){$L$}
\end{overpic}
\caption{Non-loose Legendrian torus knots in $S^1\times S^2$ with $\tw=0$ and $\tor=0$. }
\label{Figure:tw=0}
\end{figure}

The classification results can be qualitatively described using mountain ranges, where each integer lattice point corresponds to a unique non-loose Legendrian knot, and its vertical coordinate gives the twisting number. While the rotation number is not well-defined here, we can still construct similar diagrams by modeling stabilizations as downward moves: positive stabilization corresponds to moving down a line of slope $-1$, and negative stabilization to moving down a line of slope $1$.

\begin{theorem}\label{thm:Legmountain}
The mountain range of non-loose Legendrian $(p,q)$-torus knots with $\tor=0$ consists of $n(p,q)$ pairs of components, each having the form shown in Figure~\ref{Figure:mountain}. In each such pair, the left component contains a line of slope $-1$ on its far left, and the right component contains a line of slope $+1$ on its far right; both of these lines can be extended infinitely upward. Moreover, the peaks occur at the horizontal line $\tw=0$.
\end{theorem}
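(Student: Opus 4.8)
The plan is to assemble the mountain range directly from the classifications already established, so that the only real work is combinatorial: tracking how the listed Legendrian knots are related by stabilization. Recall that the (non-loose, $\tor=0$) Legendrian mountain range of $(p,q)$-torus knots is the labelled graph whose vertices are coarse equivalence classes of non-loose Legendrian $(p,q)$-torus knots with $\tor=0$, drawn at height equal to the twisting number, with a slope $-1$ edge joining $L$ to $S_+(L)$ whenever $S_+(L)$ is non-loose and a slope $+1$ edge joining $L$ to $S_-(L)$ whenever $S_-(L)$ is non-loose; here one uses $\tw(S_\pm(L))=\tw(L)-1$ and the fact that stabilization does not change the convex Giroux torsion of the complement along the boundary class, so the graph stays inside the $\tor=0$ world. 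Call a vertex a \emph{peak} if it has no upward edge, i.e.\ if it is not a non-loose stabilization of any non-loose knot.

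I would first settle the part with $\tw\ge 1$, where Theorem~\ref{thm:tw>0} does essentially all the work. Level $\tw=i\ge 1$ consists of exactly the $2n(p,q)$ distinct knots $L^i_{\pm,k}$, and the stated stabilization relations pin down all incident edges: $L^i_{+,k}$ has a single downward edge, the slope $-1$ edge to $L^{i-1}_{+,k}$ (or to $S_+(L^1_{+,k})$ when $i=1$), since $S_-(L^i_{+,k})$ is loose; and a single upward edge, the slope $-1$ edge from $L^{i+1}_{+,k}$, because among the level-$(i+1)$ vertices only $L^{i+1}_{+,k}$ positively stabilizes to $L^i_{+,k}$, while $S_-(L^{i+1}_{-,j})=L^i_{-,j}\ne L^i_{+,k}$ and $S_-(L^{i+1}_{+,j}),\ S_+(L^{i+1}_{-,j})$ are loose. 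Thus $\{L^i_{+,k}:i\ge 1\}$ lies on a single edge-path of constant slope $-1$ extending infinitely upward, and symmetrically $\{L^i_{-,k}:i\ge 1\}$ lies on a slope $+1$ path extending infinitely upward; these are the far-left and far-right lines of the $k$-th pair, and in particular no vertex with $\tw\ge 1$ is a peak.

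Next I would descend to $\tw\le 0$. Positive stabilization carries $L^1_{+,k}$ to the non-loose knots $S_+^j(L^1_{+,k})$ at every level $\tw=1-j\le 0$, joined in a slope $-1$ chain, and $S_+(L^1_{+,k})$ is one of the $m(p,q)-2$ knots of Theorem~\ref{thm:tw=0}; dually for $L^1_{-,k}$. Invoking the classification of non-loose Legendrian $(p,q)$-torus knots with $\tw\le 0$ and $\tor=0$ established above, I would then (a) check that every non-loose knot with $\tw<0$ has an upward edge, so that, combined with the previous paragraph, all peaks lie on the line $\tw=0$, and identify the peaks among the $\tw=0$ knots (those not of the form $S_\pm(L^1_{\pm,k})$); (b) verify that each far-left slope $-1$ line, together with its downward continuation, meets the remainder of its component exactly along $\tw=0$, and likewise for the far-right lines; and (c) verify that the $n(p,q)$ left pieces and $n(p,q)$ right pieces, glued to the $\tw\le 0$ vertices, form exactly $n(p,q)$ pairwise disjoint pairs of components, each of the shape of Figure~\ref{Figure:mountain}. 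The count of $n(p,q)$ pairs is then forced, since each level $\tw=i\ge 1$ feeds $n(p,q)$ vertices into left components and $n(p,q)$ into right components without merging.

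I expect step (c), and the surrounding $\tw\le 0$ bookkeeping, to be the main obstacle. One must keep precise track of the $m(p,q)-2$ knots at $\tw=0$, prove that the relevant stabilization maps are injective so that distinct $k$ yield distinct, non-merging components, and show that the pairs over all $k$ exhaust the non-loose Legendrian $(p,q)$-torus knots with $\tor=0$ — all of which is purely combinatorial once the classification theorems are in place, requiring no new contact-geometric input.
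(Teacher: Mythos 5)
Your proposal is correct and follows essentially the same route as the paper: the published proof is likewise an assembly argument, citing Lemma~\ref{lemma:tw=0} for the peaks at $\tw=0$, Theorem~\ref{thm:tw>0} together with Corollary~\ref{coro:tw>0stabilize} for the two infinite lines above the peaks, Lemma~\ref{lemma:destabilization} and Proposition~\ref{prop:wings} for the $\tw<0$ part (the wings), and Corollary~\ref{coro:numberofwings} for the count of $n(p,q)$ pairs. The bookkeeping you defer in steps (a)--(c) is exactly the content of Proposition~\ref{prop:wings} and Corollary~\ref{coro:numberofwings}, so no new input is needed.
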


 % (see Step 2 of the algorithm in Section~\ref{section:algorithm}). 
     %In particular, the two mountain ranges in the figure are in distinct contact structures, whose Euler classes differ only by a sign. (See Step 2 of the algorithm in Section~\ref{section:algorithm}.)
%\end{remark}

\begin{figure}[htb]
\begin{overpic} %[scale=0.7,grid,tics=10]
[scale=0.7]
{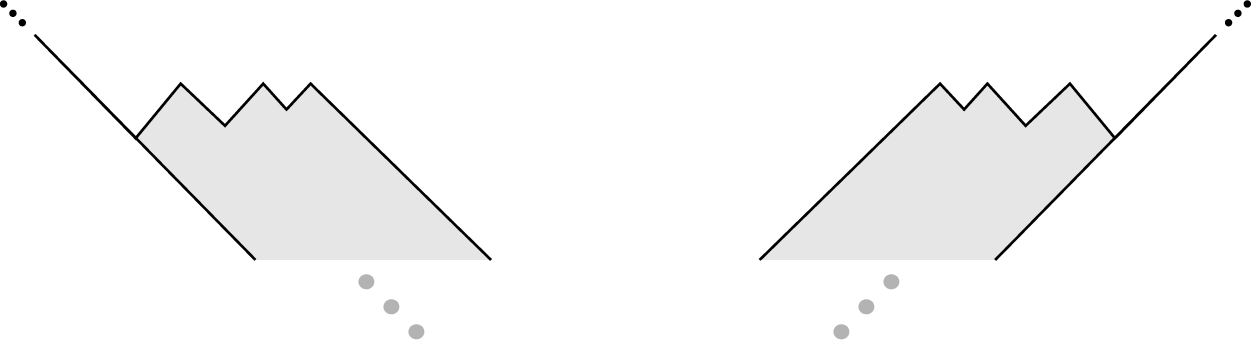}
\put(30, 100){$\text{Euler class}=-e<0.$}
\put(280, 100){$\text{Euler class}=e>0.$}
\end{overpic}
\caption{A pair of components in the mountain ranges for non-loose Legendrian $(p,q)$–torus knots with $\tor = 0$. The two components are symmetric and correspond to nonzero Euler classes with opposite signs. The left component lies in a contact $S^1 \times S^2$ with Euler class $-e < 0$, while the right component lies in a contact $S^1 \times S^2$ with Euler class $e > 0$.  
Each point is realized by a unique non-loose Legendrian knot. }
\label{Figure:mountain}
\end{figure}

The precise classification of non-loose Legendrian $(p,q)$-torus knots with $\tw\leq 0$ and $\tor=0$ will be carried out in Proposition~\ref{prop:wings} and Corollary~\ref{coro:numberofwings}.

%\begin{proposition}\label{prop:tor>0}
%Suppose $q<-1$, $k\in \mathbb{Z}$ and $0<l\in \frac{1}{2}\mathbb{N}$. 
%\begin{itemize}
    %\item The number of non-loose Legendrian torus knot $T_{1,q}$ in $S^1\times S^2$ with $\tw=k$ and $\tor=l$ is exactly $2$.
    %\item If $\frac{q}{p}=[a_1, a_2]$, where $a_1, a_2\leq-2$, then the number of non-loose Legendrian torus knot $T_{p,q}$ in $S^1\times S^2$ with $\tw=k$ and $\tor=l$ is exactly $2|a_1+1|\cdot|(b_1+1)\cdots(b_{n-1}+1)|$.
%\end{itemize}    
%\end{proposition}

We classify the non-loose Legendrian $(p,q)$-torus knots with $\tor>0$ in contact $S^1\times S^2$ as follows.
\begin{theorem}\label{thm:tor>0}
Suppose $-q>p>0$ and $0<t\in \frac{1}{2}\mathbb{N}$. The number of non-loose Legendrian $(p,q)$-torus knots in some contact $S^1\times S^2$ with $\tw=i$ and $\tor=t$ is exactly $2l(p,q)$.  These knots are denoted by $$L^{i, t}_{\pm, k}~\text{ for }~1\leq k\leq l(p,q).$$ Moreover, $$S_{\pm}(L^{i,t}_{\pm,k})=L^{i-1, t}_{\pm,k}~\text{ for }~ i\in\mathbb{Z},~\text{and}~S_{\mp}(L^{i, t}_{\pm,k})~\text{is loose for}~ i\in \mathbb{Z}.$$ %$$S_{\pm}^{j}(L^{1}_{\pm,k})~\text{is non-loose for any}~ j>0.$$
%$$2|(a_1+1)\cdots (a_{m-2}+1)(a_{m-1}+1)||(b_1+1)\cdots (b_{n-2}+1)(b_{n-1}+1)|.$$
\end{theorem}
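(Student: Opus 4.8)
The plan is to study the tight contact structures on the complement $M:=(S^1\times S^2)\setminus N(L)$ of a standard neighborhood of $L$, using crucially the hypothesis $\tor(L)=t>0$. Topologically $M$ is the union of two solid tori $V_1'$ and $V_2'$, obtained from $V_1$ and $V_2$ by deleting a neighborhood of $L\subset T_0$, glued along the annulus $A=T_0\setminus N(L)$; since $L$ is non-loose, $M$ is tight. As $\tor(L)=t$, there is a convex torsion layer $\mathcal T\cong T^2\times[0,1]$ in $M$, of thickness $t$, with each $T^2\times\{z\}$ isotopic to $\partial N(L)$ and with constant dividing slope equal to the contact framing slope. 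Making $\mathcal T$ maximal and collaring it off, write $M=N'\cup_{T^2}\mathcal T$, where $N'$ has the same topological type as $M$, is tight, carries no further Giroux torsion along the boundary–parallel torus, and has dividing slope on $\partial N'$ determined by the twisting number $i$. The first reduction is to count, up to the relevant coarse equivalence, the tight contact structures on $N'$ with this prescribed boundary data.

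Next I would put the Heegaard torus $T_0$ (now meeting $\partial N'$ in $A$) in convex standard form inside $N'$, round the edges along $\partial A$, and split $N'$ into two tight solid tori $V_1''$ and $V_2''$ with convex boundary; their dividing curves and meridional slopes are determined by $p$, $q$ and the auxiliary integer $q'$ (with $pq'-p'q=1$, as in Figures~\ref{Figure:tw>0} and~\ref{Figure:tw=0}). The key point is that the maximal torsion layer forces these boundary slopes one Farey step further in than in the $\tw>0$, $\tor=0$ situation of Theorem~\ref{thm:tw>0}: it absorbs the last continued–fraction coefficient. Consequently Honda's classification of tight contact structures on solid tori replaces each factor $|(a_1+1)\cdots(a_{m-1}+1)(a_m+1)|$ by $|(a_1+1)\cdots(a_{m-2}+1)(a_{m-1}+1)|$ (and likewise in the $b_i$), so the number of admissible tight structures on $N'$ is $l(p,q)$ for each sign of the Euler class of the filled–in contact structure. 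Summing over the two signs — equivalently over the two halves of the classification labelled by $\pm$ — gives the upper bound $2l(p,q)$ and organizes the knots as $L^{i,t}_{\pm,k}$, $1\le k\le l(p,q)$. A point requiring care is that the cases $t\in\N$ and $t\in\frac12\N\setminus\N$ must both yield exactly $l(p,q)$ admissible structures, even though by Theorem~\ref{thm:Eulerrange} they occupy different ranges of Euler classes; the half–integer case is handled by tracking the $\pi$–rotation of the dividing slope across the torsion layer.

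For the reverse inequality I would exhibit $2l(p,q)$ such knots via explicit contact surgery diagrams — built, as in the $\tor=0$ cases, by stabilizing and Legendrian surgering a standard representative to realize the two continued fractions, together with a fixed surgery that inserts a torsion layer of thickness $t$ — and then verify (i) that each has tight complement, by checking that the convex decomposition above reassembles tight pieces along compatible slopes, and (ii) that the $2l(p,q)$ knots are pairwise coarsely inequivalent, using the Euler class of the ambient contact structure together with the coarse invariant carried by the contact structure on $N'$ (the choice of tight filling of the two solid tori). Matching these representatives' complements with the list from the previous step shows they exhaust the classification.

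Finally, for the stabilization statements, $\tw(S_\pm(L))=\tw(L)-1$ is already known, so it remains to identify $S_{\pm}(L^{i,t}_{\pm,k})$ and to prove $S_{\mp}(L^{i,t}_{\pm,k})$ loose. A stabilization corresponds to attaching a bypass to $\partial N(L)$ from the complement side. For the ``good'' sign, the relevant bypass is compatible with both the torsion layer and the tight filling $N'$, so $S_{\pm}(L^{i,t}_{\pm,k})$ remains non-loose with $\tw=i-1$ and $\tor=t$; since there are exactly $2l(p,q)$ such knots and the surgery description varies consistently in $i\in\Z$, it must be $L^{i-1,t}_{\pm,k}$. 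For the ``wrong'' sign, the attached bypass has the opposite sign, and attaching it to the outer boundary of a maximal torsion layer either pushes the torsion past the value compatible with a tight complement or directly produces an overtwisted disk, so $S_{\mp}(L^{i,t}_{\pm,k})$ is loose. I expect the main obstacle to be the counting step — pinning down the exact boundary and meridional slopes of $V_1''$ and $V_2''$ after removing the maximal torsion layer so that Honda's count gives precisely $l(p,q)$ uniformly in $i$ and $t$ — and, closely related, making the ``wrong stabilization is loose'' argument rigorous, since it hinges on a precise understanding of how a bypass interacts with a maximal Giroux torsion layer.
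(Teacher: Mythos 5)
Your overall strategy (peel off the maximal torsion layer, reduce to a torsion-free piece, split along the Heegaard torus into solid tori, count via Honda, then realize and analyze stabilizations by bypass attachments) is in the same spirit as the paper, which deduces the theorem from Lemma~\ref{lemma:addtorsion} and Lemma~\ref{lemma:tor>0}. But two of the steps you treat as routine are exactly where the real work lies, and as written they are gaps. First, tightness of the constructed representatives: you propose to verify it ``by checking that the convex decomposition above reassembles tight pieces along compatible slopes,'' but gluing tight pieces along compatible slopes does not yield a tight structure in general --- indeed most of the candidate structures on the complement become overtwisted the moment any torsion is added (this is Lemma~\ref{lemma:overtwisted}). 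The paper isolates precisely which torsion-free structures survive (those described by \emph{totally $2$-inconsistent} pairs of paths) and then proves tightness after adding arbitrary torsion by an Isotopy Discretization argument (Lemma~\ref{lemma:addtorsion}, following \cite[Lemma 6.15]{emm}); nothing in your sketch substitutes for this. Second, distinctness: adding Giroux torsion is \emph{not} injective on contact structures, so ``the coarse invariant carried by the contact structure on $N'$'' does not automatically survive the gluing. The paper's Claim inside Lemma~\ref{lemma:tor>0} handles this by passing to homotopy classes of plane fields (where adding torsion is injective) and then invoking contact $(-\tfrac12)$-surgery together with Honda's result that non-isotopic tight structures on a lens space are non-homotopic. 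Your proposal has no mechanism for this step.

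There is also an imprecision in the counting mechanism. You say the torsion layer pushes the solid-torus boundary slopes ``one Farey step further in'' and thereby ``absorbs the last continued-fraction coefficient.'' The actual constraint imposed by positive torsion is a sign condition, not a one-step slope shift: the outermost continued fraction block of each path must be monochromatic, with the two blocks carrying opposite signs (total $2$-inconsistency). Equivalently, an entire continued fraction block --- all $|a_m+1|$ (resp.\ $|b_n+1|$) basic slices, not one --- is forced, which is why the factors $(a_m+1)$ and $(b_n+1)$ drop out of $n(p,q)$ to give $l(p,q)$, with the residual global sign $\epsilon=\pm$ accounting for the factor of $2$ (and, via Lemma~\ref{lem:Euler}, for the two signs of the Euler class, so your identification of the $\pm$ with the Euler class sign is consistent). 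Your bypass picture for the stabilization statements is the right idea and matches the dichotomy in Lemma~\ref{lemma:addtorsion} (one sign of basic slice glues tightly, the other creates torsion inside a solid torus and hence an overtwisted disk), but it inherits the tightness gap above: showing the ``good'' stabilization stays non-loose for \emph{all} $i\in\mathbb{Z}$ is again the Isotopy Discretization argument.
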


\begin{corollary}\label{coro:destabilization}
    Any non-loose Legendrian $(p,q)$-torus knot destabilizes except when $\tw=0$. Non-loose Legendrian knots with $\tw=0$ sometimes destabilize and sometimes do not.
\end{corollary}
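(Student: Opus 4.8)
The plan is to read off the statement from the classification Theorems~\ref{thm:tw>0}, \ref{thm:tw=0}, \ref{thm:Legmountain} and~\ref{thm:tor>0}, using one elementary input: \emph{a non-loose Legendrian knot can only destabilize to a non-loose one}. To justify the latter, observe that for any Legendrian $L'$ a standard neighborhood $N(S_{\pm}(L'))$ may be taken inside $N(L')$, so $\overline{(S^1\times S^2)\setminus N(L')}$ embeds into $\overline{(S^1\times S^2)\setminus N(S_{\pm}(L'))}$; if the former is overtwisted so is the latter, i.e.\ if $L'$ is loose then $S_{\pm}(L')$ is loose. Contrapositively, if $L=S_{\pm}(L')$ is non-loose then $L'$ is non-loose, so $L$ destabilizes if and only if it is not a peak of the mountain range of non-loose Legendrian $(p,q)$-torus knots.

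I would then split on $\tor(L)$. If $\tor(L)=t>0$, Theorem~\ref{thm:tor>0} gives $L=L^{i,t}_{\pm,k}=S_{\pm}(L^{i+1,t}_{\pm,k})$ for some $i\in\Z$, so $L$ destabilizes whatever its twisting number is; taking $i=0$ already produces a non-loose knot with $\tw=0$ that destabilizes. If $\tor(L)=0$ and $\tw(L)=i>0$, Theorem~\ref{thm:tw>0} gives $L=L^{i}_{\pm,k}=S_{\pm}(L^{i+1}_{\pm,k})$ (legitimate since $i+1>1$), so $L$ destabilizes. If $\tor(L)=0$ and $\tw(L)<0$, Theorem~\ref{thm:Legmountain} places $L$ in a mountain-range component whose peak lies on the line $\tw=0$; being strictly below that peak, $L$ is the stabilization of the non-loose knot one level above it and hence destabilizes. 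This establishes the first sentence of the corollary.

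For the second sentence I would exhibit both behaviors at $\tw=0$. On the one hand the knots $L^{0,t}_{\pm,k}$ of the previous paragraph destabilize; they exist because $l(p,q)\ge 1$, each of the factors $|(a_1+1)\cdots(a_{m-1}+1)|$ and $|(b_1+1)\cdots(b_{n-1}+1)|$ being a product of integers of absolute value $\ge 1$ (recall $a_i,b_j\le -2$). On the other hand, by Theorem~\ref{thm:Legmountain} each of the $n(p,q)\ge 1$ pairs of mountain-range components for $\tor=0$ has its peak on $\tw=0$, and by the first paragraph a peak does not destabilize; alternatively one may cite the two $\tw=0$ peaks in $(S^1\times S^2,\xi_{std})$ from Remark~\ref{rmk:tw=0intight}. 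Hence non-loose Legendrian $(p,q)$-torus knots with $\tw=0$ sometimes destabilize and sometimes do not.

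The only non-bookkeeping ingredient is the mountain-range picture of Theorem~\ref{thm:Legmountain} (made precise in Proposition~\ref{prop:wings}) in the range $\tw<0$, $\tor=0$: one must know that each component descends from a genuine peak on the line $\tw=0$, so that no non-loose knot with negative twisting number is a "local maximum" of its component. Granting the classification as stated, this is where the real content lies, and the rest of the argument is the case check above.
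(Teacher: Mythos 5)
Your treatment of the first sentence is sound and is essentially the paper's argument: the paper deduces it from Theorems~\ref{thm:Legmountain} and~\ref{thm:tor>0} (and for $\tw<0$, $\tor=0$ it also has the direct statement Lemma~\ref{lemma:destabilization}, which you could cite instead of unwinding the wing structure). Your preliminary observation that a stabilization of a loose knot is loose, so a non-loose knot can only destabilize to a non-loose one, is correct and is what makes the mountain-range bookkeeping legitimate.

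There is, however, a genuine gap in your proof of the second sentence, in the ``sometimes do not destabilize'' half. You claim that ``a peak does not destabilize,'' but the peaks of Theorem~\ref{thm:Legmountain} are just the $\tw=0$ points of the mountain range, and in each component the peak from which the infinite line of slope $\mp1$ ascends is $L_{P_1,P_2}$ for a $2$-inconsistent pair $(P_1,P_2)$; this knot \emph{does} destabilize (Corollary~\ref{coro:tw>0stabilize} exhibits it as an iterated stabilization of the $\tw>0$ knots above it). The paper's actual criterion, via Lemma~\ref{lemma:2-inconsistent}, is that a non-loose $L_{P_1,P_2}$ with $\tw=0$ destabilizes if and only if $(P_1,P_2)$ is $2$-inconsistent; the non-destabilizing examples are the $L_{P_1,P_2}$ whose decorations are $2$-consistent but not monochromatic, and these exist only when $\overline{P_1}\cup P_2$ has at least three continued fraction blocks. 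In particular, for $p=1$ \emph{every} non-loose $\tw=0$ knot destabilizes (Example~\ref{example:(1,q)}), so any argument producing a non-destabilizable peak in every component must be wrong. Your fallback of citing the two $\tw=0$ knots of Remark~\ref{rmk:tw=0intight} does not repair this: those knots live in $(S^1\times S^2,\xi_{std})$, which is tight, so they are not non-loose knots in the sense of the paper and say nothing about the dichotomy asserted in the corollary.
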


Building on the classification results of Legendrian knots, we now derive the corresponding results for transverse knots. The non-loose transverse torus knots are precisely the transverse push-offs of non-loose Legendrian torus knots that remain non-loose after any number of negative stabilizations. By considering transverse push-offs of the Legendrian knots shown in the right subfigure of Figure~\ref{Figure:mountain} -or equivalently, modulo negative stabilizations- we can represent the  range of non-loose transverse $(p,q)$-torus knots with $\tor=0$ as the union of the components illustrated in Figure~\ref{Figure:mountain1}.  %Meanwhile, any such knot with $\tor>0$ can be obtained by taking the transverse push-off of those Legendrian knots with negative sign in Theorem~\ref{thm:tor>0}. 

\begin{corollary}\label{coro:transmountain}
The non-loose transverse $(p,q)$-torus knots with $\tor=0$ can be described as the disjoint union of $n(p,q)$ intervals, each depicted in Figure~\ref{Figure:mountain1}.  
\end{corollary}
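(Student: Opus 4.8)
Here is a plan for proving Corollary~\ref{coro:transmountain}.

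The plan is to read the statement off the Legendrian classification by means of the dictionary, recalled in the paragraph preceding the corollary, between transverse knots and Legendrian knots modulo negative stabilization. Concretely: a transverse $(p,q)$-torus knot with $\tor=0$ is non-loose if and only if it is the transverse push-off of a non-loose Legendrian $(p,q)$-torus knot $L$ with $\tor=0$ such that $S_{-}^{j}(L)$ is non-loose for all $j\geq 0$, and, by the general properties of the transverse push-off, two such Legendrian knots have transversely isotopic push-offs precisely when they admit a common iterated negative stabilization. Since the push-off in question is unchanged by negative stabilization, and negative stabilization is modeled by sliding down a line of slope $+1$, the task reduces to identifying, inside the mountain range of Theorem~\ref{thm:Legmountain}, those Legendrian representatives that stay non-loose under all negative stabilizations, and then collapsing each component along its lines of slope $+1$.

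First I would discard the components lying in contact structures with negative Euler class. In such a component the distinguished infinite edge has slope $-1$ and is a chain of positive stabilizations — this is the content of $S_{+}(L^{i}_{+,k})=L^{i-1}_{+,k}$ together with the non-looseness of $S_{+}^{j}(L^{1}_{+,k})$ in Theorem~\ref{thm:tw>0} — while $S_{-}(L^{i}_{+,k})$ is loose for $i>0$. Feeding this and the description of the $\tw\le 0$ knots from Proposition~\ref{prop:wings} into the bookkeeping, one checks that every non-loose representative in a negative-Euler-class component becomes loose after finitely many negative stabilizations, so these components contribute nothing. This is exactly why the passage preceding the corollary involves only the right-hand subfigure of Figure~\ref{Figure:mountain}, i.e. the $n(p,q)$ components with positive Euler class.

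Next I would analyze a single component $C$ with Euler class $e>0$. Above the line $\tw=0$ the count in Theorem~\ref{thm:tw>0} (exactly $2n(p,q)$ knots at each level, spread over the $2n(p,q)$ components, each of which already meets every such level on its infinite edge) shows that $C$ is exactly its far-right edge, a single slope-$+1$ orbit which by that theorem stays non-loose arbitrarily far downward. Below $\tw=0$, Proposition~\ref{prop:wings} and Corollary~\ref{coro:numberofwings} describe the wings, and among the slope-$+1$ orbits running through them exactly a consecutive family — bounded on one side by the far-right edge — remains non-loose all the way down. Collapsing each such orbit to its transverse push-off realizes $C$ as the interval of Figure~\ref{Figure:mountain1}: all Legendrian knots along a fixed slope-$+1$ orbit share one push-off since consecutive ones differ by a negative stabilization, whereas Legendrian knots on distinct orbits admit no common iterated negative stabilization and hence have distinct push-offs. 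Running over the $n(p,q)$ components with positive Euler class, and noting that transverse knots coming from different components lie in contact structures with different Euler classes, yields the asserted disjoint union of $n(p,q)$ intervals.

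The crux is the third step: pinning down which wing Legendrians survive every negative stabilization, and checking that the surviving slope-$+1$ orbits of each positive-Euler-class component form a single interval with exactly the endpoint and parametrization drawn in Figure~\ref{Figure:mountain1}, with no further coincidences. This rests on the explicit combinatorics of the wings from Proposition~\ref{prop:wings} and Corollary~\ref{coro:numberofwings}; the vanishing of the negative-Euler-class components and the separation by Euler class are then routine.
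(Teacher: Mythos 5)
Your plan follows the same route as the paper: the paper proves this corollary via Corollary~\ref{coro:transversetor=0}, whose proof invokes Lemma~\ref{lemma:stablesimple} to reduce to Legendrian knots modulo negative stabilization, observes that the survivors are exactly the knots $S_+^i(L_{-P_1,-P_2})$ with $0\leq i\leq n_{k-1}-1$ in the wings of Proposition~\ref{prop:wings}, and counts one interval per wing of the $L_{-P_1,-P_2}$ type, i.e.\ $n(p,q)$ intervals by Corollary~\ref{coro:numberofwings}. Your elimination of the negative-Euler-class components (everything there dies under finitely many negative stabilizations, by Theorem~\ref{thm:tw>0} and Proposition~\ref{prop:wings}) and your collapsing of the surviving slope-$+1$ orbits are exactly the paper's argument phrased in mountain-range language.

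There is, however, one step that fails as stated: you justify the disjointness of the $n(p,q)$ intervals by claiming that transverse knots coming from different components lie in contact structures with different Euler classes. This is false in general. By Lemma~\ref{lem:numberEuler} the Euler class takes only the values $\pm 2k$ with $1\leq k\leq -q-1$, so there are at most $-q-1$ available positive Euler classes, while the number of positive-Euler-class components is $n(p,q)$, which typically exceeds $-q-1$ (e.g.\ for $q/p=[-3,-3]$ one has $n(p,q)=8$ but only $7$ positive Euler classes). So distinct components can, and do, share an ambient contact structure. The disjointness must instead come from the classification itself: distinct $2$-inconsistent pairs give Legendrian knots that are not coarsely equivalent (Lemma~\ref{lem:tw=0distinct}), and distinct wings are not connected by any stabilizations (Proposition~\ref{prop:wings} and Corollary~\ref{coro:numberofwings}), so by Lemma~\ref{lemma:stablesimple} no two of them share an iterated negative stabilization and hence their push-offs are distinct transverse knots. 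With that substitution the argument is the paper's.
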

%The mountain range of the non-loose transverse $(p,q)$-torus knots with $\tor=0$ consists of $n(p,q)$ components, each depicted in Figure~\ref{Figure:mountain1}.  

\begin{figure}[htb]
\begin{overpic} %[grid,tics=10]
[scale=0.9]
{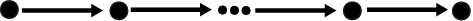}
%\put(30, 100){$e<0.$}
\put(220, 2){$\text{Euler class}>0.$}
\end{overpic}
\caption{An interval for non-loose transverse $(p,q)$–torus knots with $\tor = 0$. It lies in a contact $S^1 \times S^2$ with Euler class $e > 0$.  
Each point is realized by a unique non-loose transverse knot. Each arrow stands for a transverse stabilization.}
\label{Figure:mountain1}
\end{figure}

The precise classification of the non-loose transverse $(p,q)$-torus knots in $S^1\times S^2$ with $\tor=0$ will be carried out in Corollary~\ref{coro:transversetor=0}.

\begin{corollary} \label{coro:transversetor>0}
    Suppose $-q>p>0$ and $0<t\in \frac{1}{2}\mathbb{N}$. The number of non-loose transverse $(p,q)$-torus knot in $S^1\times S^2$ with $\tor=t$ is exactly $l(p,q)$.  These knots are denoted by $$T^{t}_{j}~\text{ for }~1\leq j\leq l(p,q).$$ Moreover, when stabilized $T^t_j$ becomes loose, and $T^{t+\frac{1}{2}}_{j}$ is obtained by a half Lutz twist of $T^{t}_{j}$.
\end{corollary}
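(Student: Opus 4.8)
The plan is to deduce the corollary from the Legendrian classification in Theorem~\ref{thm:tor>0} via the standard dictionary between transverse knots and their Legendrian approximations. The facts I would use are: a transverse knot $T$ is the transverse push-off of each of its Legendrian approximations; any two Legendrian approximations of $T$ differ by negative stabilizations; negative stabilization of a Legendrian knot does not change its transverse push-off, whereas positive stabilization realizes transverse stabilization of the push-off; and $\tor$ is computed from the knot complement, so it agrees for $T$ and for any of its Legendrian approximations. Finally, a standard tubular neighborhood of $T$ can be taken inside a standard neighborhood of any Legendrian approximation $L$, so the complement of $T$ contains the complement of $L$; since a contact submanifold of a tight manifold is tight and a contact manifold containing an overtwisted disk is overtwisted, $L$ loose forces $T$ loose and $T$ non-loose forces $L$ non-loose. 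Hence the non-loose transverse $(p,q)$-torus knots with $\tor=t$ are exactly the transverse push-offs of those non-loose Legendrian $(p,q)$-torus knots with $\tor=t$ that stay non-loose under arbitrarily many negative stabilizations, counted up to negative stabilization.

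First I would single out the relevant Legendrian knots. By Theorem~\ref{thm:tor>0}, for each $i\in\mathbb{Z}$ the non-loose Legendrian $(p,q)$-torus knots with $\tw=i$ and $\tor=t$ are the $2l(p,q)$ knots $L^{i,t}_{\pm,k}$, with $S_-(L^{i,t}_{-,k})=L^{i-1,t}_{-,k}$ again non-loose while $S_-(L^{i,t}_{+,k})$ is loose. Thus only the family $L^{i,t}_{-,k}$ survives negative stabilization; for fixed $k$ the knots $\{L^{i,t}_{-,k}:i\in\mathbb{Z}\}$ form a single bi-infinite chain under $S_-$ and therefore share one transverse push-off, which I would name $T^t_k$. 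Conversely, a non-loose transverse $(p,q)$-torus knot with $\tor=t$ has a non-loose Legendrian approximation which, remaining non-loose under all further negative stabilizations, must be one of the $L^{i,t}_{-,k}$; hence the knot is $T^t_k$. For distinctness: if $T^t_j$ and $T^t_{j'}$ were coarsely equivalent, their Legendrian approximations would be stably coarsely equivalent under negative stabilization, but $S_-$ preserves the index $k$ within the family $L^{i,t}_{-,k}$ and the $L^{i,t}_{-,k}$ are pairwise inequivalent by Theorem~\ref{thm:tor>0}, forcing $j=j'$. So there are exactly $l(p,q)$ such transverse knots. For the stabilization claim, the transverse stabilization of $T^t_j$ is the transverse push-off of $S_+(L^{i,t}_{-,k})$, which is loose by Theorem~\ref{thm:tor>0}, so the stabilization of $T^t_j$ is loose.

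The assertion that $T^{t+\frac{1}{2}}_j$ is a half Lutz twist of $T^t_j$ is the step I expect to be the main obstacle, since it is not formal: it requires the construction underlying Theorem~\ref{thm:tor>0}, in which raising the torsion parameter from $t$ to $t+\frac{1}{2}$ is implemented by inserting a half unit of convex Giroux torsion, i.e. a half Lutz twist, along the boundary torus of the knot complement. I would verify that this modification takes place away from the knot, so it carries transverse push-offs to transverse push-offs and leaves the combinatorial index $j$ unchanged; that the resulting complement is still tight and now has Giroux torsion exactly $t+\frac{1}{2}$ along its boundary, the value dictated by Theorem~\ref{thm:Eulerrange}; and that the Euler class shifts to the value the classification assigns to $T^{t+\frac{1}{2}}_j$. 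Matching these data would identify the half Lutz twist of $T^t_j$ with $T^{t+\frac{1}{2}}_j$, completing the proof.

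In summary, the first two paragraphs are routine bookkeeping on top of Theorem~\ref{thm:tor>0} and the transverse–Legendrian correspondence, and the genuine content lies in the last paragraph, where the half Lutz twist must be related to the $\tor$-increasing operation geometrically rather than by appealing only to the classification.
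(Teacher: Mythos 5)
Your argument is correct and follows essentially the same route as the paper, which deduces the corollary from Lemma~\ref{lemma:stablesimple} (the stable Legendrian--transverse correspondence) together with Theorem~\ref{thm:tor>0}; your observation that only the $S_-$-surviving chain $L^{i,t}_{-,k}$ contributes, so that each $k$ yields a single transverse knot, is exactly the paper's remark that each interval reduces to a single dot. The half Lutz twist assertion, which you rightly flag as the one non-formal step, is handled in the paper via Lemma~\ref{lemma:addtorsion} and Lemma~\ref{lem:halfLutz} (adding a half unit of convex Giroux torsion to the complement is realized by a half Lutz twist on the transverse push-off, with the Euler class shifting by $2q$ as required), so the verification you outline is precisely the intended one.
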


%\begin{theorem}\label{thm:transversetor=0}
%Suppose $-q>p>0$. The number of non-loose transverse torus knot $T_{p,q}$ in $S^1\times S^2$ with $\tor=0$ is exactly $\frac{1}{2}m(p,q)-1$.  %These knots are denoted by $$T_{k}~\text{for}~1\leq k\leq n(p,q).$$ %Moreover, $T^{l+\frac{1}{2}}_{k}$ is obtained by the half Lutz twist of $T^{l}_{k}$. %$$S_{\pm}(L^{i,l}_{\pm,k})=L^{i-1, l}_{\pm,k}~\text{ for }~ i\in\mathbb{Z},$$ $$S_{\mp}(L^{i, l}_{\pm,k})~\text{is loose for}~ i\in \mathbb{Z}.$$ %$$S_{\pm}^{j}(L^{1}_{\pm,k})~\text{is non-loose for any}~ j>0.$$
%$$2|(a_1+1)\cdots (a_{m-2}+1)(a_{m-1}+1)||(b_1+1)\cdots (b_{n-2}+1)(b_{n-1}+1)|.$$
%\end{theorem}

%\begin{theorem}\label{thm:transversetor>0}
%Suppose $-q>p>0$ and $0<l\in \frac{1}{2}\mathbb{N}$. The number of non-loose transverse torus knot $T_{p,q}$ in $S^1\times S^2$ with $\tor=l$ is exactly $l(p,q)$.  These knots are denoted by $$T^{l}_{k}~\text{ for }~1\leq k\leq l(p,q).$$ Moreover, when stabilized $T^l_k$ becomes loose, and $T^{l+\frac{1}{2}}_{k}$ is obtained by the half Lutz twist of $T^{l}_{k}$. %$$S_{\pm}(L^{i,l}_{\pm,k})=L^{i-1, l}_{\pm,k}~\text{ for }~ i\in\mathbb{Z},$$ $$S_{\mp}(L^{i, l}_{\pm,k})~\text{is loose for}~ i\in \mathbb{Z}.$$ %$$S_{\pm}^{j}(L^{1}_{\pm,k})~\text{is non-loose for any}~ j>0.$$
%$$2|(a_1+1)\cdots (a_{m-2}+1)(a_{m-1}+1)||(b_1+1)\cdots (b_{n-2}+1)(b_{n-1}+1)|.$$
%\end{theorem}

As a special case,  we give an explicit classification of non-loose Legendrian and transverse $(1,q)$-torus knots in contact $S^1\times S^2$.
\begin{example} \label{example:(1,q)}
    Suppose $q<-1$. The $(1,q)$-torus knot has non-loose Legendrian representatives only in the contact structures on $S^1\times S^2$ with Euler classes $e=\pm2k$ for $k\in\{1,2,\ldots,-q-1\}$. The classification in each of these contact structures is as follows.
     \begin{enumerate} 
     \item In contact $S^1\times S^2$ with $e=\mp2k$, the non-loose Legendrian $(1,q)$-torus knots are $L_{\pm,k}^{i}$ for $i\in\Z$ and $k\in\{1,2,\ldots,-q-2\}$, with $$\tw(L_{\pm,k}^{i})=i\text{ and }\tor(L_{\pm,k}^{i})=0 ,$$
     such that $$S_{\pm}(L^{i}_{\pm,k})=L^{i-1}_{\pm,k}\text{ and }S_{\mp}(L^{i}_{\pm,k})~\text{is loose} .$$
     Note that this case will not occur when $q=-2$.
     \item In contact $S^1\times S^2$ with $e=\pm 2(q+1)$, the non-loose Legendrian torus knots are $L_{\pm, -q-1}^{i,t}$ for $i\in \Z$ and $t\in \N\cup\{0\}$, with $$\tw(L_{\pm, -q-1}^{i,t})=i\text{ and }\tor(L_{\pm, -q-1}^{i,t})=t,$$ such that $$S_{\pm}(L^{i,t}_{\pm,-q-1})=L^{i-1, t}_{\pm,-q-1}\text{ and }S_{\mp}(L^{i, t}_{\pm,-q-1})~\text{is loose}.$$ 
     \item In contact $S^1\times S^2$ with $e=\pm 2$, the non-loose Legendrian torus knots are $L_{\pm, -q-1}^{i,t+\frac{1}{2}}$ for $i\in \Z$ and $t\in\N\cup\{0\}$, with $$\tw(L_{\pm, -q-1}^{i,t+\frac{1}{2}})=i\text{ and }\tor(L_{\pm, -q-1}^{i,t+\frac{1}{2}})=t+\frac{1}{2},$$ such that $$S_{\pm}(L^{i,t+\frac{1}{2}}_{\pm,-q-1})=L^{i-1, t+\frac{1}{2}}_{\pm,-q-1}\text{ and }S_{\mp}(L^{i, t+\frac{1}{2}}_{\pm,-q-1})~\text{is loose}.$$ 
     \end{enumerate}
\end{example}

See Figure~\ref{Figure:p=1non-loose} for the mountain ranges of non-loose Legendrian $(1,q)$-torus knots in $S^1\times S^2$.
\begin{figure}[htb]
\begin{overpic} %[scale=0.7,grid,tics=10]
[scale=0.7]
{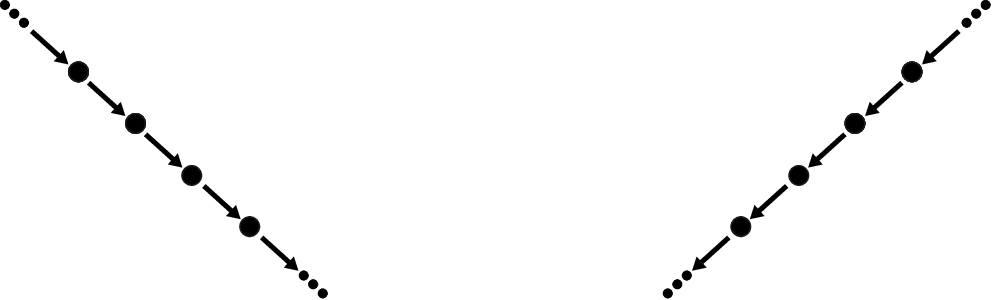}
\put(30, 90){$\text{Euler class}=-e<0.$}
\put(200, 90){$\text{Euler class}=e>0.$}
\end{overpic}
\caption{A pair of components in the mountain ranges of non-loose Legendrian $(1,q)$-torus knot with $\tor=0$. The two components are symmetric and correspond to nonzero Euler classes with opposite signs. The left component lies in a contact $S^1 \times S^2$ with Euler class $-e < 0$, while the right component lies in a contact $S^1 \times S^2$ with Euler class $e > 0$.} 
%The left subfigure corresponds to negative Euler class, while the right subfigure corresponds to positive Euler class.
%The left subfigure is the mountain range for $\e=2k$, while the right subfigure corresponds to $\e=-2k$, where $k\in\{1,2,\ldots,-q-1\}$.
\label{Figure:p=1non-loose}
\end{figure}

\begin{example} \label{example:(1,q)transverse}
    Suppose $q<-1$. The $(1,q)$-torus knot has non-loose transverse representatives only in $S^1\times S^2$ with Euler class $e=2k$ for $k\in\{-1\}\cup\{1,2,\ldots,-q-1\}$. The classification in each of these contact structures is as follows.
    \begin{enumerate}
     \item In contact $S^1\times S^2$ with $e=2k$, the non-loose transverse $(1,q)$-torus knots are $T_{k}$ for $k\in\{1,2,\ldots,-q-2\}$, with $\tor(T_{k})=0$, and when stabilized $T_k$ becomes loose.
     Note that this case will not occur when $q=-2$.
     \item In contact $S^1\times S^2$ with $e=-2q-2$, the non-loose transverse $(1,q)$-torus knots are $T_{ -q-1}^{t}$ for $t\in\N\cup\{0\}$, with $\tor(T_{-q-1}^{t})=t$, and when stabilized $T_{-q-1}^t$ becomes loose. Moreover, $T^{t+1}_{-q-1}$ is obtained by a full Lutz twist of $T^{t}_{-q-1}$ for $t\geq 0$.
     \item In contact $S^1\times S^2$ with $e=-2$, the non-loose transverse $(1,q)$-torus knots are $T_{-q-1}^{t+\frac{1}{2}}$ for $t\in\N\cup\{0\}$, with $\tor(T_{-q-1}^{t+\frac{1}{2}})=t+\frac{1}{2}$, and when stabilized $T_{-q-1}^{t+\frac{1}{2}}$ becomes loose. Moreover, $T^{t+\frac{1}{2}}_{-q-1}$ is obtained by a half Lutz twist of $T^{t}_{-q-1}$ for $t\geq 0$.
     \end{enumerate}
\end{example}

\begin{acknowledgements}
    The authors would like to thank Hyunki Min for valuable comments. The first and second authors were partially supported by Grant No. 12271349 from the National Natural Science Foundation of China. The third author was partially supported by National Key R\&D Program of China (No.2020YFA0712800) and Grant No. 12131009 of the NSFC.
\end{acknowledgements}

\section{Background and preliminaries}\label{section:pre}

%\label{sec:paths}

\subsection{Farey graph} In this paper, we assume that every point \(q/p\) in the Farey graph satisfies \(p \geq 0\). For a point \(q/p\) in the Farey graph, define:
\begin{itemize}
    \item \(\left(q/p\right)^c = q'/p'\) as the farthest clockwise point from \(q/p\) that is greater than \(q/p\) and connected to \(q/p\) by an edge;
    \item \(\left(q/p\right)^a = q''/p''\) as the farthest anti-clockwise point from \(q/p\) that is less than \(q/p\) and connected to \(q/p\) by an edge.
\end{itemize}
%- \(\left(\frac{q}{p}\right)^c = \frac{q'}{p'}\) as the farthest clockwise point from \(\frac{q}{p}\) that is greater than \(\frac{q}{p}\) and connected to \(\frac{q}{p}\) by an edge;
%- \(\left(\frac{q}{p}\right)^a = \frac{q''}{p''}\) as the farthest anticlockwise point from \(\frac{q}{p}\) that is less than \(\frac{q}{p}\) and connected to \(\frac{q}{p}\) by an edge.

For two points \(b/a\) and \(d/c\) in the Farey graph, define their \textit{dot product} as
\[
\frac{b}{a} \bigcdot \frac{d}{c} = bc - ad,
\] and if $bd\geq0$, then the \textit{Farey sum} is defined as
\[
\frac{b}{a} \oplus \frac{d}{c} = \frac{b + d}{a + c}.
\]

Note that \(\frac{q'}{p'} \oplus \frac{q''}{p''} = \frac{q}{p}\).

\subsection{Tight contact structures on thickened tori and solid tori.}\label{section:tori}
As in \cite[Section 2]{emm}, we denote by $\Tight^{min}(T^2\times[0,1]; s_0, s_1)$ the set of isotopy classes of minimally twisting tight contact structures on $T^2\times[0,1]$ with convex boundary, where $T^2\times \{i\}$ has two dividing curves of slope $s_i$ for $i=0,1$. We further denote by $\Tight(S_s; r)$ (respectively $\Tight(S^s; r)$) the set of isotopy classes of tight contact structures on the solid torus $S_s$ (respectively $S^s$) with lower (respectively upper) meridian of slope $s$ and convex boundary having two dividing curves of slope $r$.

\begin{lemma} \cite{h}\label{lemma:upper0}
Suppose $-q>p>0$ and $\frac{q}{p}=[a_1,\ldots, a_m]$, where $a_i\leq -2$ for $1\leq i\leq m$. Then we have $$|\Tight(S^0; \frac{q}{p})| =|(a_{1}+1)\cdots(a_{m-1}+1)a_{m}|.$$    
\end{lemma}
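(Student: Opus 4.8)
The plan is to derive this from Honda's classification of tight contact structures on solid tori \cite{h}, either by re-running the convex-surface argument in the coordinates at hand or, more cheaply, by an $SL_2(\mathbb{Z})$ change of framing. I will sketch the direct argument. First, put $\del S^0$ in standard convex form with two dividing curves of slope $\frac qp$ and choose a convex meridian disk $D$ whose Legendrian boundary meets $\Gamma_{\del S^0}$ efficiently; since the meridian has slope $0$ and $\frac 01 \bigcdot \frac qp = -q$, the curve $\del D$ meets $\Gamma_{\del S^0}$ in $2|q|$ points. By the Giroux criterion, the contact structure is tight only if $\Gamma_D$ has no closed component, so $\Gamma_D$ consists of $|q|=-q$ boundary-parallel arcs. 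Repeatedly peeling off an outermost such arc is a sequence of bypass attachments, and tracking the induced boundary slope shows that $S^0$ decomposes as a layered union $L_1\cup\cdots\cup L_N$ of basic slices whose successive boundary slopes run along the minimal edge-path in the Farey graph from $\frac qp$ to the meridian slope $0$, terminating in a standard tight neighborhood of a Legendrian core. The continued fraction $\frac qp=[a_1,\dots,a_m]$ is precisely the combinatorial record of where this path turns, and it organizes the basic slices into $m$ groups (Honda's ``continued-fraction blocks''), whose sizes are read off from the $|a_i|$.

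Second, each basic slice $L_j$ carries a sign (its relative half-Euler class is $\pm1$), so there are at most $2^N$ tight structures before identifications. The key input, Honda's continued-fraction block lemma, is that basic slices lying in a common block may be shuffled past one another without changing the contact structure, so within a block only the number of positively signed slices matters; combined with the analysis of how the terminal block attaches to the core, this collapses the $2^N$ sign sequences to exactly $|(a_1+1)\cdots(a_{m-1}+1)a_m|$ of them. For the matching lower bound I would exhibit each of these as a genuine tight — indeed Stein fillable — contact structure, for instance by realizing $S^0$ as (the complement of) a Legendrian surgery diagram on a linear chain of unknots with framings $a_1,\dots,a_m$ together with the admissible stabilizations, and then show the resulting structures are pairwise non-isotopic rel boundary by computing their relative half-Euler classes in $H^2(S^0,\del S^0)\cong\mathbb{Z}$, checking that these realize the $|(a_1+1)\cdots(a_{m-1}+1)a_m|$ prescribed values.

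The hard part is the middle step: proving that the shuffling moves really do produce contactomorphic (rel boundary) structures, and that no further identifications occur. The first half is Honda's factoring argument — a $T^2\times[0,1]$ with equal boundary slopes and minimal twisting is a product, so two basic slices with compatible slopes can be reglued in the opposite order — while the second half is the Euler-class computation above, which must be done carefully enough to see the asymmetry between $a_m$ (the terminal block) and $a_i+1$ (the interior blocks), an asymmetry that originates in there being no further layer to shuffle against on the core side. A clean alternative that avoids re-deriving all of this is to note that the number of tight contact structures on a solid torus with convex boundary depends only on the $SL_2(\mathbb{Z})$-orbit of the pair (meridian slope, boundary slope): apply the coordinate change sending $0\mapsto\infty$, quote Honda's theorem in its standard normalization, and only verify that the continued-fraction data $[a_1,\dots,a_m]$ is carried to the expansion that appears in his formula.
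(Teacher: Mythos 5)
Your sketch is correct, but note that the paper does not prove this lemma at all: it is quoted verbatim (after a change of framing) from Honda's classification of tight contact structures on solid tori \cite{h}, and your argument --- convex meridian disk, bypass layering into basic slices, shuffling within continued fraction blocks for the upper bound, Stein-fillable realizations distinguished by relative Euler class for the lower bound --- is precisely Honda's proof of that cited theorem. So the proposal is sound and takes essentially the same route as the paper, namely reducing to (or in your case, re-deriving) Honda's Theorem 2.3 in the appropriate coordinates.
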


\begin{lemma}\label{lemma:lower0}
Suppose $-q>p>0$ and $(-1-\frac{p}{q})^{-1}=[b_1, \ldots, b_n],$ where $b_i\leq -2$ for $1\leq i\leq n$. Then we have $$|\Tight(S_0; \frac{q}{p})| =|\Tight(S^{\infty}; \frac{p}{q})| =|(b_{1}+1)\cdots(b_{n-1}+1)b_{n}|.$$ 
\end{lemma}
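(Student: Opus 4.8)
The plan is to reduce both quantities in the statement to the count provided by Lemma~\ref{lemma:upper0} — that is, to a count of tight contact structures on $S^0$ whose boundary dividing slope lies in the interval where that lemma applies — and then simply quote the lemma. Concretely, there are two identifications to carry out: first $\Tight(S_0;q/p)$ with $\Tight(S^\infty;p/q)$, and then $\Tight(S^\infty;p/q)$ with $\Tight(S^0;(-1-p/q)^{-1})$.

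First I would establish $|\Tight(S_0;q/p)|=|\Tight(S^\infty;p/q)|$. This is an instance of the symmetry exchanging the two ways a thickened torus may be Dehn filled: the involution that interchanges the two factors of $T^2$ and the two boundary components of $T^2\times[0,1]$ (on the nose, $(x_1,x_2,z)\mapsto(x_2,x_1,1-z)$) carries a lower filling to an upper filling, sends a curve of slope $s$ to one of slope $1/s$, and in particular takes the meridian of slope $0$ to the meridian of slope $\infty$. Hence it restricts to a diffeomorphism $S_0\to S^\infty$ carrying the dividing slope $q/p$ to the dividing slope $p/q$; since convexity of the boundary, the dividing set, and tightness are all preserved by an arbitrary diffeomorphism (tightness being insensitive to orientation), this yields a bijection between $\Tight(S_0;q/p)$ and $\Tight(S^\infty;p/q)$. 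This is essentially the reflection symmetry already used in \cite{emm}.

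Next I would run the coordinate change for the second identification. One chooses $A\in SL(2,\mathbb{Z})$ taking the meridian slope $\infty$ to the meridian slope $0$ (for instance $A=\begin{pmatrix}1&1\\-1&0\end{pmatrix}$, in the convention where a curve of slope $s$ has direction $(1,s)$); then $\Tight(S^\infty;p/q)$ and $\Tight(S^0;A(p/q))$ are the same set of contact structures written in two bases of $H_1(T^2)$ related by $A$. A direct computation gives
\[
A\!\left(\tfrac{p}{q}\right)=\frac{q}{-q-p}=\Bigl(-1-\tfrac{p}{q}\Bigr)^{-1}.
\]
Using $-q>p>0$ one checks that $\tfrac{q}{-q-p}$ is in lowest terms with positive denominator and lies in $(-\infty,-1)$, so it satisfies the hypotheses of Lemma~\ref{lemma:upper0}, with negative continued fraction expansion $[b_1,\dots,b_n]$ by assumption. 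Applying Lemma~\ref{lemma:upper0} (i.e.\ Honda's classification, \cite{h}) to $S^0$ with this boundary slope then gives
\[
|\Tight(S^\infty;p/q)|=|\Tight(S^0;(-1-\tfrac{p}{q})^{-1})|=|(b_1+1)\cdots(b_{n-1}+1)b_n|,
\]
and combining this with the first step completes the argument.

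The part that needs care is the bookkeeping of conventions: exactly how the meridian, the longitude, and the dividing slope transform under the exchange involution and under the change of basis $A$, and in particular the orientation conventions encoded in the notation $S_s$ versus $S^s$. It is essential that $S_s$ and $S^s$ are genuinely related by a flip and not merely a relabelling, since otherwise the two counts in the statement would not in general coincide (compare Lemma~\ref{lemma:upper0}, which would instead give $|(a_1+1)\cdots a_m|$). The remaining ingredient, the identity $(-1-p/q)^{-1}=\tfrac{q}{-q-p}=A(p/q)$ together with the verification that this fraction lies in the range where Lemma~\ref{lemma:upper0} applies, is an elementary calculation; everything else is an immediate application of Honda's classification via Lemma~\ref{lemma:upper0}.
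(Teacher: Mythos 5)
Your proposal is correct and follows essentially the same route as the paper: the first identification is exactly the paper's orientation-preserving flip acting by $\begin{pmatrix}0&1\\1&0\end{pmatrix}$ on $T^2$ and reversing $[0,1]$, and your single change of basis $A=\begin{pmatrix}1&1\\-1&0\end{pmatrix}$ is precisely the composite of the paper's two moves (a meridional twist giving $|\Tight(S^{\infty};\tfrac{p}{q})|=|\Tight(S^{\infty};\tfrac{p}{q}+1)|$, followed by the rotation $\begin{pmatrix}0&1\\-1&0\end{pmatrix}$), after which both arguments conclude by quoting Lemma~\ref{lemma:upper0}. The slope computation $(-1-\tfrac{p}{q})^{-1}=\tfrac{q}{-q-p}$ and the verification that it satisfies the hypotheses of Lemma~\ref{lemma:upper0} check out.
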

\begin{proof}
The first equality follows from the existence of an orientation-preserving self-diffeo-morphism of $T^2\times[0,1]$ that acts by $\begin{pmatrix} 
0 & 1  \\
1 & 0
\end{pmatrix}$ on $T^2$ and reverses the interval $[0,1]$. 

By cutting the solid torus along a meridian disk, adding one full twist, and regluing, we obtain a diffeomorphism showing that $|\Tight(S^{\infty}; \frac{p}{q})|=|\Tight(S^{\infty}; \frac{p}{q} +1)|$. Furthermore, via an orientation-preserving self-diffeomorphism of $T^2\times[0,1]$ that acts by $\begin{pmatrix} 
0 & 1  \\
-1 & 0
\end{pmatrix}$ on $T^2$ and is the identity on $[0,1]$, we have $|\Tight(S^{\infty}; \frac{p}{q} +1)|=|\Tight(S^{0}; (-\frac{p}{q} -1)^{-1})|.$ The second equality then follows from Lemma~\ref{lemma:upper0}.
\end{proof}

\begin{lemma} \label{lemma:L(q,p)}
    With the notation above, we have $$|\Tight(L(-q,p))|=|(a_{1}+1)\cdots(a_{m-1}+1)(a_{m}+1)|$$ and 
    $$|\Tight(L(q,p))|=|(b_{1}+1)\cdots(b_{n-1}+1)(b_{n}+1)|.$$
In particular, $n(p,q)$ is the number of tight contact structures on $L(q,p)\#L(-q,p)$. 
\end{lemma}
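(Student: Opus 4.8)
My plan is to read off both cardinalities from Honda's classification of tight contact structures on lens spaces \cite{h}, after identifying the two lens spaces and their normalized continued fractions, and then to deduce the connected-sum statement from the multiplicativity of the tight-contact count under connected sum. Throughout, $\Tight(\cdot)$ denotes isotopy classes of (positive) tight contact structures, which is precisely the quantity Honda's product formula computes.

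For $L(-q,p)$: since $-q>p>0$ are coprime, this is a genuine lens space, and
\[
-\frac{-q}{p}=\frac{q}{p}=[a_1,\dots,a_m],\qquad a_i\le-2,
\]
so Honda's theorem gives $|\Tight(L(-q,p))|=|(a_1+1)(a_2+1)\cdots(a_m+1)|$. (One could instead avoid re-citing the lens-space case and stay inside the paper: writing $L(-q,p)$ as $S^0$ capped off by a standardly embedded solid torus and invoking Lemma~\ref{lemma:upper0}, the usual shuffling argument for the outermost continued-fraction block replaces the factor $|a_m|$ appearing there by $|a_m+1|$.)

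For $L(q,p)$: since $q<0$, under the standard orientation convention $L(q,p)=-L(-q,p)=L(-q,-q-p)$ as oriented $3$--manifolds, and $0<-q-p<-q$, so $L(-q,-q-p)$ is again a genuine lens space. A short computation identifies its normalized continued fraction,
\[
-\frac{-q}{-q-p}=\frac{q}{-q-p}=\Bigl(-1-\frac{p}{q}\Bigr)^{-1}=[b_1,\dots,b_n],\qquad b_i\le-2,
\]
so Honda's theorem yields $|\Tight(L(q,p))|=|\Tight(L(-q,-q-p))|=|(b_1+1)(b_2+1)\cdots(b_n+1)|$. (Alternatively, start from Lemma~\ref{lemma:lower0} and apply the same shuffling step.)

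Finally, for the connected sum I would use that the natural gluing map $\Tight(L(q,p))\times\Tight(L(-q,p))\to\Tight\bigl(L(q,p)\#L(-q,p)\bigr)$ is a bijection: surjectivity comes from making the separating $2$--sphere convex (it then carries a single dividing curve, hence is standard), cutting along it, and capping off with Eliashberg's unique tight ball, while well-definedness and injectivity are Colin's gluing theorem for connected sums. Combining this with the two formulas above gives $|\Tight(L(q,p)\#L(-q,p))|=|\Tight(L(q,p))|\cdot|\Tight(L(-q,p))|=n(p,q)$. The only genuine content here is bookkeeping, and I expect the points needing the most care to be the interpretation of $L(q,p)$ for $q<0$, the orientation identity $L(q,p)=L(-q,-q-p)$, and the arithmetic identity $-(-q)/(-q-p)=(-1-p/q)^{-1}$; the connected-sum multiplicativity, though it requires a nontrivial citation, is entirely standard for (connected sums of) lens spaces.
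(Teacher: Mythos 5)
Your proposal is correct, and both routes ultimately rest on Honda's classification; the difference is only in how the two lens-space counts are extracted. The paper identifies $L(-q,p)$ and $L(q,p)$ with the Dehn fillings $L^0_{q/p}$ and $L^{q/p}_0$, reduces each count to a solid-torus count $|\Tight(S^0;(q/p)^c)|$ resp.\ $|\Tight(S_0;(q/p)^a)|$, and then gets the second formula by running the Farey-graph diffeomorphism from the proof of Lemma~\ref{lemma:lower0} to convert $(q/p)^a$ into $\bigl((-1-p/q)^{-1}\bigr)^c$ before applying Lemma~\ref{lemma:upper0}. You instead normalize the lens spaces directly, via $L(q,p)=-L(-q,p)=L(-q,-q-p)$ and the arithmetic identity $-(-q)/(-q-p)=(-1-p/q)^{-1}$, and quote Honda's product formula; this is a legitimate shortcut that buys you a cleaner computation at the cost of handling the orientation convention for $L(q,p)$ with $q<0$ carefully (which you do correctly -- note that $|\Tight(M)|\neq|\Tight(-M)|$ in general, so this identification is the one point where an error would be fatal). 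For the connected-sum claim you spell out the convex-sphere/Colin gluing argument for multiplicativity, which the paper asserts without comment; that is the standard justification and fills a gap the paper leaves implicit.
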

\begin{proof}
    Note that $$|\Tight(L(-q,p))|=|\Tight(L_{q/p}^0)|=|\Tight(S^0;(\frac{q}{p})^c)|$$ and $$|\Tight(L(q,p))|=|\Tight(L^{q/p}_0)|=|\Tight(S_0;(\frac{q}{p})^a)|.$$

    We obtain the first equality by Lemma~\ref{lemma:upper0}, since $(q/p)^c=[a_1,\ldots,a_{m-1},a_m+1]$.
    
    In the proof of Lemma~\ref{lemma:lower0}, we actually use the orientation-preserving self-diffeomorphism of $T^2\times[0,1]$ that is $\begin{pmatrix} 
    1 & 0  \\
    -1 & -1
    \end{pmatrix}$ on $T^2$ and reverses the interval $[0,1]$, to transform the minimal path from $0$ anti-clockwise to $q/p$ into a minimal path from $(-1-\frac{p}{q})^{-1}$ clockwise to $0$. Notice that $(q/p)^a$ is the second-to-last vertex in the former path, while $((-1-\frac{p}{q})^{-1})^c$ is the second vertex in the latter path. Hence, the same diffeomorphism will transform the minimal path from $0$ anti-clockwise to $(q/p)^a$ into a minimal path from $((-1-\frac{p}{q})^{-1})^c$ clockwise to $0$. Then the second equality follows from Lemma~\ref{lemma:lower0}.
    %Note that $|\Tight(L(q,p))|=|\Tight(L^{q/p}_0)|=|\Tight(S_0;(\frac{q}{p})^a)|$. Since $(\frac{q}{p})^a=\frac{q''}{p''}$ is the smallest rational number satisfying $p''q-pq''=1$, the largest rational number $\frac{u}{v}$ satisfying $1=(-p-q)u-vq=(-u-v)q-pu$ is $\frac{u}{v}=\frac{q''}{-p''-q''}$. Then, $|\Tight(S_0;(\frac{q''}{p''}))|=|\Tight(S^{0}; (-\frac{p''}{q''} -1)^{-1})|=|\Tight(S^{0}; ((-\frac{p''}{q''} -1)^{-1})^c)|$. By Lemma~\ref{lemma:lower0}, we obtain the second equality.
\end{proof}

\subsection{Pairs of paths describing contact structures on \texorpdfstring{$S^1\times S^2$}{S1 x S2}}\label{section:paths}
We can describe contact structures on $S^1\times S^2$ using pairs of paths in the Farey graph. Given any rational number $q/p$ with $-q > p > 0$, let $P_1$ be a path that describes a contact structure on a solid torus $V_1$ with lower meridian $0$ and boundary slope $q/p$, i.e. an element in $\Tight(S_{0}; q/p)$. Similarly, let $P_2$ be a path that describes a contact structure on a solid torus $V_2$ with upper meridian $0$ and boundary slope $q/p$, i.e. an element in $\Tight(S^{0}; q/p)$. When we view $S^1\times S^2=L^0_0$ as the union of $V_1$ and $V_2$, the pair of paths $(P_1, P_2)$ describes a contact structure $\xi_{P_1,P_2}$ on $S^1\times S^2$. Let $L_{P_1,P_2}$ be a Legendrian divide on $\bdy V_1=\bdy V_2$. 

\begin{lemma}\label{lem:divide}
Each such Legendrian divide $L_{P_1,P_2}$ can be represented via a contact surgery diagram as shown in Figure~\ref{Figure:tw=0}.% (see the proof of Lemma~\ref{lemma:tw=0}).    
\end{lemma}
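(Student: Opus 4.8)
### Proof proposal for Lemma~\ref{lem:divide}

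The plan is constructive: I will exhibit an explicit contact surgery link in $(S^3,\xi_{std})$, verify by Kirby calculus that it produces $S^1\times S^2$ with $L$ lying on the Heegaard torus $T_0$, and then identify the contact pieces with $V_1$, $V_2$ and their prescribed contact structures. Set $q'/p'=(q/p)^c$ and $q''/p''=(q/p)^a$, so that $pq'-p'q=1$, $q''=q-q'$, $p''=p-p'$, and $q'/p'\oplus q''/p''=q/p$. The diagram of Figure~\ref{Figure:tw=0} consists of two Legendrian unknots carrying contact $(+1)$-surgeries, two Legendrian knots carrying rational contact surgeries with coefficients $-q/q'$ and $-q/(q-q')=-q/q''$, and the knot $L$. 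Forgetting the contact structure, a contact $(+1)$-surgery on a $\tb=-1$ unknot is a smooth $0$-surgery; Rolfsen twists and blow-downs against the rational-coefficient components then reduce the diagram to the standard presentation of $S^1\times S^2$ as $0$-surgery on an unknot, and tracking $L$ through these moves shows that it lies on $T_0$ in the homology class $pm_0+ql_0$. This settles the underlying smooth picture.

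Next I would interpret the two rational surgeries contact-geometrically. A rational contact surgery with coefficient $-q/q'$ on a Legendrian knot removes a standard neighborhood and glues back a solid torus whose meridian has a prescribed slope; the tight contact structures one may install on this solid torus are classified as in Section~\ref{section:tori}, and a change of basis by $\left(\begin{smallmatrix} q & q' \\ p & p'\end{smallmatrix}\right)\in SL_2(\Z)$ (integral since $pq'-p'q=1$) identifies the resulting set with $\Tight(S^0;(q/p)^c)$, hence of cardinality $|\Tight(L(-q,p))|$ by Lemmas~\ref{lemma:upper0} and~\ref{lemma:L(q,p)}; symmetrically the other rational surgery contributes $\Tight(S_0;(q/p)^a)$, of cardinality $|\Tight(L(q,p))|$. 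Thus, as the contact structures on the two glued-in solid tori range over all possibilities, the diagram produces a family of contact structures on $S^1\times S^2$ indexed by exactly the pairs of paths $(P_1,P_2)$ from Section~\ref{section:paths}.

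The technical heart is then to match the pieces. Cutting $S^1\times S^2$ along $\bdy N(L)$, I would show that the solid torus glued in by the $-q/q'$-surgery, together with the region carved out by the $(+1)$-surgery on the adjacent unknot, reassembles into $V_1$, with induced contact structure precisely the one described by $P_1$; symmetrically the other side gives $V_2$ with $P_2$. Concretely this requires checking that the continued fraction $[a_1,\dots,a_m]$ of $q/p$ governing $V_1$ is carried, under the above change of basis, to the continued fraction of the surgery coefficient $-q/q'$ governing the glued-in torus, and that the outermost basic slice of $P_1$ adjacent to $T_0$ matches the last Farey edge from $(q/p)^c$ (resp.\ $(q/p)^a$) to $q/p$; the identities $(q/p)^c=[a_1,\dots,a_{m-1},a_m+1]$ and $q'/p'\oplus q''/p''=q/p$ are exactly what makes this work. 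Since $L$ is then a Legendrian divide on the convex torus $T_0=\bdy N(L)$ with dividing slope $q/p$, its contact framing agrees with the $T_0$-framing, so $\tw(L)=0$, and no Giroux torsion layer is introduced, so $\tor(L)=0$; hence $L$ is Legendrian isotopic to $L_{P_1,P_2}$, as claimed.

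The main obstacle is the bookkeeping in this last step: the coefficients $-q/q'$ and $-q/(q-q')$ look asymmetric, and one must verify carefully — using the change-of-basis matrix together with the continued-fraction identities above and the classification of tight contact structures on solid and thickened tori from Section~\ref{section:tori} — that the glued-in solid tori realize exactly the prescribed contact structures on $V_1$ and $V_2$, with no spurious basic slice or torsion layer. Everything else reduces to routine Kirby calculus and the counting already recorded in Lemmas~\ref{lemma:upper0}–\ref{lemma:L(q,p)}.
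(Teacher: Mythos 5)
Your overall strategy---realize the diagram of Figure~\ref{Figure:tw=0} smoothly as $S^1\times S^2$ with $L$ on a Heegaard torus, then identify the two rational-surgery solid tori with $V_1$ and $V_2$ carrying the contact structures encoded by $P_1$ and $P_2$---is the same in spirit as the paper's, which passes through the Hopf-link diagram of Figure~\ref{Figure:ContactS1xS2}, observes that the complement of the Hopf link is an $I$-invariant neighborhood of a convex torus of dividing slope $-1$, changes coordinates by $\left(\begin{smallmatrix} q'-q & q'\\ p'-p & p'\end{smallmatrix}\right)$ so that this slope becomes $q/p$, and then applies the Ding--Geiges algorithm.

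However, there is a genuine error in your second paragraph. The two solid tori filled in by the rational surgeries are glued along the two boundary components of that $I$-invariant layer, whose dividing slope is $q/p$ (the slope of the convex Heegaard torus carrying the Legendrian divide); hence they are the whole of $V_1$ and $V_2$ with boundary slope $q/p$, and the sets of tight contact structures they can carry are $\Tight(S_0;q/p)$ and $\Tight(S^0;q/p)$, of cardinalities $|(b_1+1)\cdots(b_{n-1}+1)b_n|$ and $|(a_1+1)\cdots(a_{m-1}+1)a_m|$ by Lemmas~\ref{lemma:lower0} and~\ref{lemma:upper0}, with product $m(p,q)$. You instead identify them with $\Tight(S^0;(q/p)^c)$ and $\Tight(S_0;(q/p)^a)$, whose cardinalities multiply to $n(p,q)<m(p,q)$. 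This contradicts your own concluding claim that the construction is ``indexed by exactly the pairs $(P_1,P_2)$'': with your identification the diagram would realize only $n(p,q)$ of the $m(p,q)$ pairs, and the lemma---which must produce $L_{P_1,P_2}$ for \emph{every} pair, since this is what furnishes the lower bound in Lemma~\ref{lemma:tw=0}---would fail for the remaining ones. The source of the discrepancy is your attempt to peel off the $(+1)$-surgery region as a fixed outermost basic slice: the outermost continued fraction block of $P_i$ generally has length greater than one, and because signs may be shuffled within a block, $\Tight(S_0;q/p)$ does not factor as a fixed basic slice from $(q/p)^a$ to $q/p$ times $\Tight(S_0;(q/p)^a)$ (compare $|a_m|$ with $|a_m+1|$). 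The correct bookkeeping, recorded in Section~\ref{section:paths} after Lemma~\ref{lemma:paths}, is that the stabilization choices arising when the coefficients $-q/q'$ and $-q/(q-q')$ are converted into chains of contact $(-1)$-surgeries correspond exactly to the sign choices on the basic slices of $P_1$ and $P_2$, respectively.
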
 

\begin{proof}

Consider the smooth surgery diagram for $S^1 \times S^2$ on the left of Figure~\ref{Figure:ContactS1xS2}. This diagram can be interpreted as being obtained by Dehn filling on $T^2 \times [0,1]$, where the filling is performed along a curve of slope $\frac{q'}{q-q'}$ on $-T^2 \times {0}$ and a curve of slope $\frac{q'}{q-q'}$ on $T^2 \times {1}$. We convert this into the contact surgery diagram shown on the right of Figure~\ref{Figure:ContactS1xS2}. The complement of the Legendrian Hopf link in this diagram is an $I$-invariant neighborhood of a convex torus with two dividing curves of slope $-1$.

Let $\phi_1$ be the diffeomorphism of $T^2$ represented by  the matrix $$\begin{pmatrix}
        q'-q & q' \\ p'-p & p'
    \end{pmatrix}.$$ We have   
    $$\begin{pmatrix}
        q'-q & q' \\ p'-p & p'
    \end{pmatrix}\begin{pmatrix}
        q' \\ q-q'
    \end{pmatrix}=\begin{pmatrix}
        0 \\ -1
    \end{pmatrix},$$and
    $$\begin{pmatrix}
        q'-q & q' \\ p'-p & p'
    \end{pmatrix}\begin{pmatrix}
        -1 \\ 1
    \end{pmatrix}=\begin{pmatrix}
        q \\ p
    \end{pmatrix}.$$ 

Applying the coordinate change via $\phi_1$, we obtain an $I$-invariant neighborhood of a convex torus whose two dividing curves now have slope $q/p$. Consequently, the two solid tori attached along $T^2 \times {0}$ and $T^2 \times {1}$ correspond to an element $P_1$ in $\Tight(S_{0}; q/p)$ and an element $P_2$ in $\Tight(S^{0}; q/p)$, respectively. Finally, applying the Ding-Geiges algorithm from \cite{dg1}, we convert the contact surgery diagram in Figure~\ref{Figure:ContactS1xS2} into the one presented in Figure~\ref{Figure:tw=0}.
\end{proof}

\begin{figure}[htb]
\begin{overpic}%[grid,tics=10]
{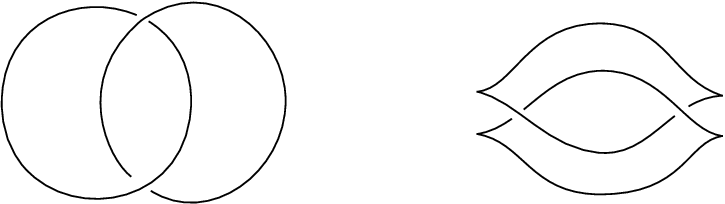}
\put(120, 90){$\frac{q-q'}{q'}$}
\put(-5, 90){$\frac{q'}{q-q'}$}

\put(220, 80){$(\frac{q}{q-q'})$}
\put(220, 10){$(\frac{q}{q'})$}
\end{overpic}
\caption{Left: smooth surgery diagram of $S^1\times S^2$. Right: contact surgery diagram of a contact $S^1\times S^2$. }
\label{Figure:ContactS1xS2}
\end{figure}

In the Farey graph, $P_1$ is a decorated path from $q/p$ anti-clockwise to $0$ with the last edge from $\infty$ to $0$ unsigned, and $P_2$ is a decorated path from $q/p$ clockwise to $0$ with the last edge from $-1$ to $0$ unsigned. Hence, we will also denote by $P_1$ the decorated path from $q/p$ anti-clockwise to $\infty$, and $P_2$ the decorated path from $q/p$ clockwise to $-1$. Suppose the vertices of \(P_1\) are \(p_1, p_2, \dots, p_k\), where \(p_1 = q/p\) and \(p_k = \infty = -1/0\), and those of \(P_2\) are \(q_1, q_2, \dots, q_l\), where \(q_1 = q/p\) and \(q_l = -1\). By \cite[Lemma 2.9]{emm}, if \(p_i = [b_1, \dots, b_{j-1}, b_j]\), then \(p_{i+1} = [b_1, \dots, b_{j-1}]\) for \(i = 1, \dots, k-2\); and if \(q_i = [c_1, \dots, c_j]\), then \(q_{i+1} = [c_1, \dots, c_j + 1]\).

\begin{lemma}\label{lemma:paths}
In the paths $P_1$ and $P_2$, we have
\begin{enumerate}
\item $p_2=(q/p)^a=q''/p''$ and $q_2=(q/p)^c=q'/p'$, and
\item the numerators of \(p_1, p_2, \dots, p_k\) are strictly increasing, so are those of \(q_1, q_2, \dots, q_l\).
\end{enumerate}    
\end{lemma}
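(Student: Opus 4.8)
The plan is to reduce both statements to bookkeeping with the Hirzebruch--Jung convergents of $q/p=[a_1,\dots,a_m]$. Introduce $(P_j,Q_j)$ by $(P_0,Q_0)=(1,0)$, $P_{-1}=0$, $Q_{-1}=-1$, and $(P_j,Q_j)=a_j(P_{j-1},Q_{j-1})-(P_{j-2},Q_{j-2})$, so that $[a_1,\dots,a_j]=P_j/Q_j$ and $P_jQ_{j-1}-Q_jP_{j-1}=\pm1$. The first and essentially only computational step is an induction on $j$, using $a_i\le-2$, proving that $\operatorname{sgn}P_j=(-1)^j$ and $\operatorname{sgn}Q_j=(-1)^{j+1}$ for $j\ge1$, while $1=|P_0|<|P_1|<\cdots$ and $0=|Q_0|<|Q_1|<\cdots$; each inequality holds because in the recursion $|a_jP_{j-1}|\ge 2|P_{j-1}|>|P_{j-1}|+|P_{j-2}|$ dominates $|P_{j-2}|$, and likewise for $Q$. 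The payoff is that, writing a Farey vertex with nonnegative denominator, $[a_1,\dots,a_j]$ has numerator $-|P_j|$ and denominator $|Q_j|$; in particular $-q=|P_m|$, $p=|Q_m|$, and $\infty=-1/0$ has numerator $-1$.

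Granting this, part (2) is a matter of reading off numerators. By \cite[Lemma~2.9]{emm}, the vertices of $P_1$ are $[a_1,\dots,a_{m+1-i}]$ for $1\le i\le m$, followed by $\infty$, so their numerators are $-|P_m|,-|P_{m-1}|,\dots,-|P_1|,-1$, which is strictly increasing. Along $P_2$, the move $q_i\mapsto q_{i+1}$ replaces the last continued-fraction entry $c$ of $q_i=[c_1,\dots,c_{j-1},c]$ by $c+1$; writing $P'_t/Q'_t$ for the convergents of $[c_1,c_2,\dots]$ (whose entries are all $\le-2$, since a trailing $-1$ is always absorbed without changing the rational number), one has $q_{i+1}=(P'_j+P'_{j-1})/(Q'_j+Q'_{j-1})$, again in lowest terms, and since $P'_j,P'_{j-1}$ have opposite signs with $|P'_j|>|P'_{j-1}|$ its normalized numerator is $-(|P'_j|-|P'_{j-1}|)$, strictly larger than the numerator $-|P'_j|$ of $q_i$. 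Thus the numerators strictly increase along $P_2$ as well.

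For part (1), \cite[Lemma~2.9]{emm} gives $q_2=[a_1,\dots,a_{m-1},a_m+1]$ and $p_2=[a_1,\dots,a_{m-1}]$ (read $p_2=\infty$ when $m=1$), both adjacent to $v:=q/p$. From $q_2=(P_m+P_{m-1})/(Q_m+Q_{m-1})$ and $p_2=P_{m-1}/Q_{m-1}$, normalizing with the first step shows that $q_2$ has numerator $-(|P_m|-|P_{m-1}|)$ and denominator $|Q_m|-|Q_{m-1}|$, while $p_2$ has numerator $-|P_{m-1}|$ and denominator $|Q_{m-1}|$; these add up to $(-|P_m|,|Q_m|)=(q,p)$, so $q_2\oplus p_2=v$, and combined with the adjacency of $q_2$ and $v$ this forces $q_2$ and $p_2$ to be adjacent as well. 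To identify them as the extremal neighbours, I would parametrize the neighbours of $v$ as the bi-infinite family $w_k$ with $w_0=p_2$ generated from the adjacency relation; then $w_{-1}=q_2$, and a direct computation gives $w_k-v=\pm1/\bigl(\operatorname{den}(v)(\operatorname{den}(p_2)+k\operatorname{den}(v))\bigr)$, from which $w_{-1}=q_2$ is the largest neighbour of $v$ exceeding $v$ and $w_0=p_2$ the smallest neighbour below $v$ (with $\infty$ counted below every real, the relevant case when $m=1$). Therefore $q_2=(q/p)^c$ and $p_2=(q/p)^a$; equivalently, $q_2$ and $p_2$ are the two Farey parents of $q/p$, consistent with $q'/p'\oplus q''/p''=q/p$.

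The one delicate point is the sign bookkeeping: since $P_j$ and $Q_j$ alternate in sign, each ``$\oplus$'' or ``$>$'' assertion is secretly a statement about the magnitudes $|P_j|,|Q_j|$ once denominators are normalized to be nonnegative, and the inductive first step is exactly what makes this transparent. After it, both parts are formal.
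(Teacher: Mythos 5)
Your proof is correct, but it takes a genuinely different route from the paper's. For part (1) the paper simply cites \cite[Lemma 2.1]{emm}, whereas you reprove it from scratch: you verify $p_2\oplus q_2=q/p$ together with adjacency via the Hirzebruch--Jung convergents, and then single out $p_2$ and $q_2$ among all neighbours of $q/p$ by the parametrization $w_k$ and the distance formula; this is more work but makes the statement self-contained. For part (2) the paper's argument is softer and much shorter: by \cite[Subsection 2.3]{emm}, consecutive vertices inside a continued fraction block satisfy $p_i=p_{i+1}\oplus q_j$ (and $q_j=q_{j+1}\oplus p_i$), and since every vertex on both paths has negative numerator, the Farey sum forces the numerators to increase strictly --- no convergent computations are needed. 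Your explicit identification of the numerators as $-|P_j|$ along $P_1$, and as $-(|P'_j|-|P'_{j-1}|)$ after the move $c\mapsto c+1$ along $P_2$, is a correct and more elementary substitute, with the side benefit of producing the numerators exactly rather than only their monotonicity. One small point to tidy up: the displayed inequality $|a_jQ_{j-1}|\ge 2|Q_{j-1}|>|Q_{j-1}|+|Q_{j-2}|$ fails at $j=1$ (where $Q_0=0$ and $Q_{-1}=-1$), so the base case $|Q_1|=1>0=|Q_0|$ must be recorded by direct computation, as the chain of inequalities you state already implicitly assumes.
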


\begin{proof}
 The first part follows from \cite[Lemma 2.1]{emm}. We now  prove the second part.  Suppose $i=1,\ldots, k-1$, then $p_i$ and $p_{i+1}$ are two vertices in a continued fraction block. By the discussion in \cite[Subsection 2.3]{emm}, we have $p_i=p_{i+1}\oplus q_{j}$ for some $j\in\{1,\ldots, l\}$. Since the numerators of $ p_1, p_2, \dots, p_k$ and \(q_1, q_2, \dots, q_l\)  are all negative, the numerator of $p_i$ is strictly less than that of $p_{i+1}$.  On the other hand, suppose $j=1,\ldots, l-1$, we have $q_j=q_{j+1}\oplus p_{i}$ for some $i\in\{1,\ldots, k\}$. For the same reason, the numerator of $q_j$ is strictly less than that of $q_{j+1}$. 
\end{proof}

In the Farey graph, starting from \(q/p\), we divide each path \(P_i\) (\(i = 1, 2\)) into continued fraction blocks. Say \((P_1, P_2) = (P'_1, P'_2)\) if each \(P'_i\) can be obtained from \(P_i\) by shuffling the signs of basic slices within its continued fraction blocks for $i=1,2$. The \dfn{length} of a continued fraction block $C$ is the number of basic slices in $C$, denoted by $|C|$.

\begin{lemma}\label{lem:length1}
    Let $A$ and $B$ be the first continued fraction blocks in $P_1$ and $P_2$ starting at $q/p$, respectively. Then the length of either $A$ or $B$ is $1$. Moreover, $A$ and $B$ both have length $1$ if and only if $q/p=-2$.
    %either $|A|=1$ or $|B|=1$. Moreover, $|A|=|B|=1$ if and only if $q/p=-2$.
\end{lemma}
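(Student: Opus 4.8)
The plan is to analyze the continued fraction expansions governing the two blocks directly. Recall that $A$ is the first continued fraction block along $P_1$, which is the minimal path from $q/p$ anti-clockwise toward $\infty$, and $B$ is the first continued fraction block along $P_2$, the minimal path from $q/p$ clockwise toward $-1$. By Lemma~\ref{lemma:paths}, the second vertex of $P_1$ is $p_2 = (q/p)^a$ and the second vertex of $P_2$ is $q_2 = (q/p)^c$. Writing $q/p = [a_1,\dots,a_m]$ with all $a_i \le -2$, the recursion quoted from \cite[Lemma 2.9]{emm} gives that the vertices of $P_1$ obtained by truncating the continued fraction are $[a_1,\dots,a_m], [a_1,\dots,a_{m-1}], \dots$, so the first continued fraction block $A$ of $P_1$ has length $|A| = |a_m| - 1$, coming from the run of vertices $[a_1,\dots,a_{m-1},a_m], [a_1,\dots,a_{m-1},a_m+1],\dots,[a_1,\dots,a_{m-1}]$ (the block ends when $a_m+1$ reaches $-1$, i.e. after $|a_m|-1$ steps). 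Here I should double-check the edge case $m=1$, where $q/p=a_1$ and $A$ runs from $a_1$ to $\infty$ with length $|a_1|-1$; this is consistent.

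Next I would carry out the same computation for $B$. With $(-1-\frac{p}{q})^{-1} = [b_1,\dots,b_n]$, Lemma~\ref{lemma:lower0} (and the diffeomorphism described there and in Lemma~\ref{lemma:L(q,p)}) identifies the clockwise path from $q/p$ to $-1$ with, up to relabeling, a path whose first continued fraction block has length $|b_n|-1$. Alternatively, and perhaps more cleanly, I can use the relation $(q/p)^a \oplus (q/p)^c = q/p$ from Subsection~2.1 together with $(q/p)^c = q'/p'$ being the far clockwise neighbor: the length of $B$ equals $-1-\lfloor \text{something}\rfloor$ — concretely, if $q/p = q'/p' \oplus q''/p''$ then the first block of $P_2$ has length equal to the largest $j$ with $q/p \oplus j\cdot(q''/p''-\text{...})$ still on the correct side, which one computes to be $\lceil q/p \rceil - (q/p)$-type quantity. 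I'll phrase it via continued fractions: $q/p$ lies strictly between the consecutive integers $\lceil q/p\rceil - 1$ and $\lceil q/p \rceil$, and I'd show $|A| = -a_m - 1$ and $|B| = -b_n - 1$, then reduce to an arithmetic identity.

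The crux is then the arithmetic claim: $\min(|A|,|B|) = 1$ always, with both equal to $1$ exactly when $q/p = -2$. Equivalently, $\min(-a_m, -b_n) = 2$, with equality to $2$ in both coordinates iff $q/p=-2$. I expect the cleanest route is to relate $a_m$ and $b_n$ directly: from $q/p=[a_1,\dots,a_m]$ one can read off that $a_m = \lceil q/p \rceil$ or is controlled by the last partial quotient, while $b_n$ is controlled by the last partial quotient of $(-1-p/q)^{-1} = -\frac{q}{p+q}$; since $-q>p>0$ we have $0 < p+q$... wait, $p+q<0$ here as $-q>p$, so $-\frac{q}{p+q} = \frac{q}{-(p+q)}$ with $-(p+q)>0$ and $q<0$, a negative rational. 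The key structural fact is that the continued fraction $[a_1,\dots,a_m]$ has $a_m = -2$ precisely when the path $P_1$ can be shortened at the end, and a short computation shows $a_m=-2 \iff$ the penultimate vertex of $P_1$ already has the form forcing $b_n < -2$. I would prove the contrapositive of "both length $1$": if $|A|\ge 2$ then $a_m \le -3$, and show this forces $(-1-p/q)^{-1}$ to be an integer (so $n=1$) equal to $-2$, giving $|B|=1$; symmetrically if $|B|\ge 2$. The main obstacle is pinning down the precise dictionary between the last partial quotient $a_m$ of $q/p$ and the last partial quotient $b_n$ of $(-1-p/q)^{-1}$; I anticipate this follows from the "Riemenschneider point rule" / duality between the two continued fraction expansions (the expansions of $q/p$ and of the dual slope are dual Hirzebruch–Jung strings), which makes $a_m \le -3 \Rightarrow n=1, b_1=-2$ and vice versa transparent, and I would cite or re-derive exactly that duality to close the argument.
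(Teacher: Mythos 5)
Your overall strategy --- express the two first-block lengths as $-a_m-1$ and $-b_n-1$ via the last entries of the two dual continued-fraction expansions, then reduce the lemma to the arithmetic fact that at least one of $a_m,b_n$ equals $-2$, with both equal to $-2$ exactly when $q/p=-2$ --- is viable, and it is genuinely different from the paper's argument: the paper simply quotes \cite[Lemma 2.10]{emm} for the first assertion and proves the ``moreover'' by a short case analysis on $q/p$ (the cases $q/p=-2$; $q/p=-n$ with $n>2$; $m\ge2$ with $a_m<-2$; $m\ge2$ with $a_m=-2$), in each non-exceptional case exhibiting one block of length greater than $1$. Your route would in addition reprove the first assertion from scratch, which is more work but self-contained.

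As written, however, the proposal has concrete gaps. First, you conflate the two paths: $P_1$ is the truncating path ($p_{i+1}=[b_1,\dots,b_{j-1}]$) and $P_2$ is the incrementing one, so the run $[a_1,\dots,a_m],[a_1,\dots,a_m+1],\dots$ of length $|a_m|-1$ is the first block of $P_2$, not of $P_1$. For $q/p=[-3,-2]=-5/2$ one has $P_1=(-5/2,-3,\infty)$, a single block of length $2=-b_n-1$ (here $[b_1,b_2]=[-2,-3]$), whereas $|a_m|-1=1$. The conclusion survives because the statement is symmetric in $A$ and $B$, but the length formula for the truncating path's first block --- the one governed by $b_n$ --- is never actually derived; that is precisely the step you leave as ``a $\lceil q/p\rceil-(q/p)$-type quantity.'' Second, the key implication is misstated in a way that is false: $a_m\le-3$ does \emph{not} force $(-1-p/q)^{-1}$ to be the integer $-2$ (take $q/p=[-3,-3]=-8/3$, whose dual slope is $-8/5=[-2,-3,-2]$); Riemenschneider duality only controls the last entry, giving $b_n=-2$. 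That weaker statement does suffice for $|B|=1$, so the error is repairable, but the step fails as stated. Third, the ``moreover'' direction requires identifying exactly when the dichotomy ``$a_m\le-3$ iff $b_n=-2$'' degenerates to $a_m=b_n=-2$, namely only for $q/p=-2$; this exceptional case of the duality is the real content of the second assertion and is not addressed in the proposal.
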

\begin{proof}
The first part of this lemma follows from \cite[Lemma 2.10]{emm}. We now turn to the second part. If $q/p=-2$, then $P_1=\{-2,\infty\}$ is a continued fraction block, as does  $P_2=\{-2,-1\}$. In this case, we have $|A|=|B|=1$. If $q/p=-n$ for some integer $n>2$, then $P_2=\{-n,-(n-1),\ldots,-1\}$ is a continued fraction block, so $|B|=n-1>1$. Excluding the above cases, we may assume $q/p=[a_1,\ldots,a_m]$, where $m\geq2$ and $a_i\leq-2$ for $1\leq i\leq m$. When $a_m<-2$, we have $A=\{[a_1,\ldots,a_m],[a_1,\ldots,a_m+1],\ldots,[a_1,\ldots,a_{m-1}+1]\}$, and hence $|A|=|a_m+1|>1$. When $a_m=-2$ and $m=2$, we have $B=\{[a_1,-2],[a_1], \infty\}$, which has length $2$. When $a_m=-2$ and $m>2$, we have $B=\{[a_1, ..., a_{m-2}, a_{m-1}, -2],[a_1, ..., a_{m-2}, a_{m-1}],[a_1, ..., a_{m-2}], \ldots\}$, whose length is at least $2$. 
%we have $A=\{[a_1,\ldots, a_{m-1}, -2],[a_1,\ldots,a_{m-1}+1]\}$, and hence $|A|=1$. 
%It also means that the vertex $v=q_2\ominus p_2<p_2$ in the Farey graph, where $q_2=[a_1,\ldots,a_{m-1}+1]$ and $p_2=[a_1,...,a_{m-1}]\neq\infty$ since $m\geq2$. Thus $\{[a_1,...,a_{m-1},-2],[a_1,...,a_{m-1}],v\}$ is a part of $B$, which implies $|B|>1$.
\end{proof}

The special case $q/p=-2$ will be treated separately in Example~\ref{example:(1,q)} and \ref{example:(1,q)transverse}, which provide a complete classification when $p=1$. Then, if $|A|=1$, we denote $P_1=(A_1,A_3,\ldots,A_{2n-1})$ and $P_2=(B_2,B_4, \ldots, B_{2m})$, where $A_1=A$; if $|B|=1$, we denote $P_1=(A_2,A_4,\ldots,A_{2n})$ and $P_2=(B_1,B_3, \ldots, B_{2m-1})$, where $B_1=B$.

According to the proofs of \cite[Proposition 3 and 7]{dg1}, when converting the first and second upper contact surgery components in Figure~\ref{Figure:tw=0} into contact $(-1)$-surgeries, the choice of stabilizations corresponds to the choice of signs on basic slices in $P_1$ and $P_2$, respectively. More precisely, suppose \[-\frac{q}{q'}=[x_1+1,x_2,\ldots,x_u], \text{ where } x_i\leq -2 \text{ for } 1\leq i\leq u,\]and
\[-\frac{q}{q-q'}=[y_1+1,y_2,\ldots,y_v],\text{ where } y_j\leq-2\text{ for }1\leq j \leq v.\]
When using Ding-Geiges algorithm in \cite{dg1} to convert the two contact surgeries above into contact $(-1)$-surgeries, only terms less than $-2$ produce the choice of stabilizations. We sequentially denote these terms as $x_{k_1},\ldots,x_{k_n}$ with $x_{k_i} < -2$ for $1\leq i\leq n$, and $y_{l_1},\ldots,y_{l_m}$ with $y_{l_j} < -2$ for $1\leq j\leq m$, respectively. Then, 
\begin{itemize}
    \item the number of positive stabilizations in the $|x_{k_i}+2|$ stabilizations equals to the number of positive basic slices in $A_{2i-1}$, for $1\leq i\leq n$;
    \item the number of positive stabilizations in the $|y_{l_j}+2|$ stabilizations equals to the number of negative basic slices in $B_{2j}$, for $1\leq j\leq m$.
\end{itemize}
In fact, we always combine those contact $(-1)$-surgeries on Legendrian push-offs of some component $L$ without stabilization into a single contact $(-1/k)$-surgery on $L$ for convenience. See Figure~\ref{Figure:p=1tw=0} for examples.

Now consider the concatenated path \(\overline{P_1} \cup P_2\), where \(\overline{P_1}\) is the reverse of \(P_1\). According to the method in \cite[Observation 2.13]{emm}, this path can be uniquely shortened to a one-edge path from \(\infty\) to \(-1\).

%Starting at $\frac{q}{p}$, we divide $P_i$ into continued fraction blocks for $i=1,2$. We say $(P_1, P_2)=(P'_1, P'_2)$ if $P'_i$ can be obtained from $P_i$ by shuffling the signs of basic slices within the continued fraction blocks in $P_i$.

%For the concatenated path $\bar{P_1}\cup P_2$, where $\bar{P_1}$ is obtained by traversing $P_1$ backward, there is a unique way to shorten this path to an edge from $\infty$ to $-1$, via the method in \cite[Section 2.2]{emm}.

\begin{lemma}\label{lem:tw=0distinct}
    Given two pairs of decorated paths $(P_1,P_2)$ and $(P'_1,P'_2)$ that representing $q/p$. If $(P_1,P_2)\neq(P'_1,P'_2)$ then $L_{P_1,P_2}\neq L_{P'_1,P'_2}$; that is, $L_{P_1,P_2}$ and  $L_{P'_1,P'_2}$ are not coarsely equivalent. 
\end{lemma}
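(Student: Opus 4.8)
The plan is to distinguish $L_{P_1,P_2}$ from $L_{P'_1,P'_2}$ by exhibiting an invariant of the coarse equivalence class that remembers the pair of paths $(P_1,P_2)$ up to shuffling of signs within continued fraction blocks. The natural candidate is the contact structure on the complement of a standard neighborhood $N(L_{P_1,P_2})$, together with the contact framing data. Recall that $L_{P_1,P_2}$ is a Legendrian divide on the Heegaard torus $\partial V_1 = \partial V_2$, so its complement decomposes as the union of a thickened torus $T^2\times[0,1]$ (a neighborhood of the convex Heegaard torus, with boundary slopes recording the contact framing) and the two solid tori $V_1$, $V_2$, whose tight contact structures are exactly those encoded by $P_1\in\Tight(S_0;q/p)$ and $P_2\in\Tight(S^0;q/p)$. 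First I would record that a contactomorphism of $(S^1\times S^2,\xi_{P_1,P_2})$ smoothly isotopic to the identity and carrying $L_{P_1,P_2}$ to $L_{P'_1,P'_2}$ restricts, after an isotopy making it send standard neighborhood to standard neighborhood, to a contactomorphism of the complements; since it is smoothly isotopic to the identity it preserves the isotopy class of the Heegaard torus and of the two solid-torus pieces (the two sides of the Heegaard splitting of $S^1\times S^2$ are not interchangeable by an isotopy), and it preserves the contact framing.

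The key step is then to argue that the isotopy class of the tight contact structure on each solid torus $V_i$, as an element of $\Tight(S_0;q/p)$ or $\Tight(S^0;q/p)$, is precisely the datum $P_i$ up to sign-shuffling within continued fraction blocks — this is exactly Honda's classification of tight contact structures on solid tori, where the continued-fraction-block structure of the minimal path and the distribution of signs on basic slices across distinct blocks is the complete invariant, while signs within a single block can be permuted. So the pair $(P_1,P_2)$ taken up to the equivalence $(P_1,P_2)\sim(P'_1,P'_2)$ defined earlier is a well-defined invariant of the coarse equivalence class of $L_{P_1,P_2}$. Hence $(P_1,P_2)\neq(P'_1,P'_2)$ in that equivalence already gives $L_{P_1,P_2}\not\simeq L_{P'_1,P'_2}$, which is what we want — note the statement's "$(P_1,P_2)\neq(P'_1,P'_2)$" should be read as inequality of the sign-shuffling equivalence classes, consistent with the paper's convention "$(P_1,P_2)=(P'_1,P'_2)$" introduced just before Lemma~\ref{lem:length1}.

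I expect the main obstacle to be the bookkeeping needed to guarantee that the ambient contactomorphism really does restrict to a product-respecting map: one must check that the standard neighborhood of $L_{P_1,P_2}$ can be taken so that its complement meets $V_1$ and $V_2$ in the expected pieces, and that no "rotation" of the thickened-torus collar can move basic slices between $P_1$ and $P_2$ — i.e. that the contact framing together with the requirement of being smoothly isotopic to the identity pins down how much of the invariant lives in the $V_1$ side versus the $V_2$ side. A clean way to handle this is via the concatenated path $\overline{P_1}\cup P_2$ from $\infty$ to $-1$ and its unique shortening (as recalled just before the statement): the contact framing determines where to cut this concatenated path, and the two halves recover $P_1$ and $P_2$ up to the allowed shuffling. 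Once that is in place, the remaining arguments are a direct appeal to the uniqueness part of Honda's solid-torus classification and the gluing/edge-rounding conventions already set up in Section~\ref{section:paths}. A subsidiary point to verify is that distinct $(P_1,P_2)$ in the same $q/p$ cannot accidentally yield the same element of $\Tight(S_0;q/p)\times\Tight(S^0;q/p)$ — but this is immediate from the definition of the equivalence "$(P_1,P_2)=(P'_1,P'_2)$", since that equivalence was defined to be exactly sign-shuffling within continued fraction blocks, matching Honda's invariant.
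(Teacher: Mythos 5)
Your overall strategy --- recover the pair $(P_1,P_2)$ as an invariant of the tight contact structure on the knot complement and quote Honda's solid-torus classification --- is not the route the paper takes, and the step you yourself flag as ``the main obstacle'' is a genuine gap that your proposed fix does not close. The danger is not that a contactomorphism swaps $V_1$ and $V_2$ (being smoothly isotopic to the identity rules that out); it is that the decomposition of the complement $C$ into $V_1\cup(S^1\times P)\cup V_2$ is not canonical up to contact isotopy: a basic slice could a priori migrate across the junction at slope $q/p$, so two different pairs could give isotopic contact structures on $C$. Your suggested remedy --- reading off where to cut the concatenated path $\overline{P_1}\cup P_2$ --- cannot work as stated, because that path runs from $\infty$ to $-1$ and, as recalled just before the lemma, it shortens \emph{uniquely to a single edge} for every choice of $(P_1,P_2)$; so the concatenated path in the ambient manifold forgets the pair entirely, and nothing in your argument prevents the shortening (i.e.\ the migration of basic slices past $q/p$) from happening inside the complement. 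You would also need to say why a coarse equivalence of the knots forces the induced contact structures on the capped-off or glued-up manifolds to be \emph{isotopic} rather than merely contactomorphic; the paper handles this via uniqueness of the tight extension and Gray stability.

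The paper's proof sidesteps all of this by performing contact $(-\tfrac12)$-surgery on $L_{P_1,P_2}$: the two $(+1)$-components cancel and the result is a tight lens space $L^{q^2/(pq-1)}_{q^2/(pq+1)}$, described by a concatenated path $\overline{\tilde P_1}\cup\tilde P_2$ whose endpoints $q^2/(pq+1)$ and $q^2/(pq-1)$ satisfy $\frac{q''}{p''}<\frac{q^2}{pq+1}<\frac{q}{p}<\frac{q^2}{pq-1}<\frac{q'}{p'}$. This pinching forces the concatenated path to be already shortest, so no shuffling across $q/p$ is possible, and Honda's classification of tight contact structures on lens spaces distinguishes the two surgered manifolds, hence the two knots. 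If you want to salvage your direct approach, you need an analogous mechanism --- some capping-off whose meridian slopes straddle $q/p$ tightly enough to forbid shortening there --- which is exactly what the $(-\tfrac12)$-surgery provides.
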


\begin{proof}
    Given a pair of decorated paths $(P_1,P_2)$ representing $q/p$. We perform a contact $(-\frac{1}{2})$-surgery on $L_{P_1,P_2}$ (that is, two Legendrian surgery along two copies of $L_{P_1,P_2}$) in Figure~\ref{Figure:tw=0}. We will convert the resulting surgery diagram back into the Farey graph and claim that the resulting contact manifold is a tight lens space. Firstly a contact $(-\frac{1}{2})$-surgery on $L_{P_1,P_2}$ will cancel the two surgery components with a $(+1)$-coefficient. Since the left contact surgeries both have negative coefficients, the resulting surgery diagram represents a tight contact structure. Secondly, the left components can be considered as the result of gluing a solid torus $S_1$ with a tight contact structure $\xi_1 \in \Tight(S_{-\frac{q}{q'}-1};-1)$ and another one $S_2$ with a tight contact structure $\xi_2 \in \Tight(S^{-\frac{q}{q''}-1};-1)$ along the lower and upper boundary, respectively, of an $I$-invariant thickened torus with the boundary slope $-1$. Set \[\psi=\begin{pmatrix}
        -q'' & q'\\-p''&p'
    \end{pmatrix}.\] Note that \[\begin{pmatrix}
        -q'' & q'\\-p''&p'
    \end{pmatrix}\begin{pmatrix}
        q+q'\\-q'
    \end{pmatrix}=\begin{pmatrix}
        -q^2\\-pq-1
    \end{pmatrix},\] and \[\begin{pmatrix}
        -q'' & q'\\-p''&p'
    \end{pmatrix}\begin{pmatrix}
        -q''\\q+q''
    \end{pmatrix}=\begin{pmatrix}
        q^2\\pq-1
    \end{pmatrix}.\] Then $\psi$ converts $(S_1,\xi_1)$ into an element in $\Tight(S_\frac{q^2}{pq+1};\frac{q}{p})$ and $(S_2,\xi_2)$ into an element in $\Tight(S^{\frac{q^2}{pq-1}};\frac{q}{p})$. The boundary slope of the thickened torus now becomes $\frac{q}{p}$. Therefore, the resulting manifold is the lens space $L^{\frac{q^2}{pq-1}}_{\frac{q^2}{pq+1}}$.

    Let $\tilde{P_1}$ and $\tilde{P_2}$ be the decorated paths that represent $(S_1,\xi_1)$ and $(S_2,\xi_2)$, respectively. By the algorithm to convert any contact surgery into a series of contact $(\pm1)$-surgeries (see \cite{dg1}), continued fraction blocks and the decoration of $P_1$ (respectively, $P_2$) are in one-to-one correspondence with those of $\tilde{P_1}$ (respectively, $\tilde{P_2}$). Thus $(P_1,P_2)\neq (P'_1,P'_2)$ implies that $(\tilde{P_1},\tilde{P_2})\neq (\tilde{P'_1},\tilde{P'_2})$.  Since $q'p-p'q=qp''-pq''=1$ and $q',q''>q$, we have \[\frac{q''}{p''}<\frac{q^2}{pq+1}<\frac{q}{p}<\frac{q^2}{pq-1}<\frac{q'}{p'},\] which means there is no chance for $\overline{\tilde{P_1}}\cup \tilde{P_2}$ and $\overline{\tilde{P'_1}}\cup \tilde{P'_2}$ to be shortened consistently at $\frac{q}{p}$. In other words, both $\overline{\tilde{P_1}}\cup \tilde{P_2}$ and $\overline{\tilde{P'_1}}\cup \tilde{P'_2}$ are shortest paths. Therefore, $\overline{\tilde{P_1}}\cup \tilde{P_2}$ and $\overline{\tilde{P'_1}}\cup \tilde{P'_2}$ are distinct decorated paths, up to shuffling within continued fraction blocks. By the classification of tight contact structures on lens spaces (see \cite{h}), these two paths correspond to non-isotopic tight contact structures on the lens space $L^{\frac{q^2}{pq-1}}_{\frac{q^2}{pq+1}}$. However, if $L_{P_1,P_2}$ is coarsely equivalent to $L_{P'_1,P'_2}$ then their complements are related by a contactomorphism which is smoothly isotopic to the identity. Performing contact $(-\frac{1}{2})$-surgery on $L_{P_1,P_2}$ and $L_{P'_1,P'_2}$ yields two tight contact structures on $L^{\frac{q^2}{pq-1}}_{\frac{q^2}{pq+1}}$, which extend uniquely (in a tight way) from the complement. These two structures must therefore be related by a contactomorphism smoothly isotopic to the identity and connected by a one‑parameter family of contact structures; hence they are contact isotopic. This yields a contradiction.
    %%    Since there is a unique way to extend, in a tight fashion, the contact structure on the complement into the whole lens space $L^{\frac{q^2}{pq-1}}_{\frac{q^2}{pq+1}}$ if one perform a contact $(-\frac{1}{2})$-surgery on $L_{P_1,P_2}$ and $L_{P'_1,P'_2}$, the resulting two tight contact structures must be related by a contactomorphism which is smoothly isotopic to the identity, and are connected by a one-parameter family of contact structures. Hence they are contact isotopic. So we obtain a contradiction.
\end{proof}

%\begin{remark}\label{remark:signchange}
%For a pair of paths $P_1$ and $P_2$,  the contact structure $\xi_{-P_1, -P_2}$ is contactomorphic to $\xi_{P_1, P_2}$ through the contactomorphism $\eta$ described in Subsection~\ref{subsection:tight}.
%\end{remark}

Let $(P_1,P_2)$ be a decorated pair of paths and subdivide $P_1$ and $P_2$ into continued fraction blocks. Suppose $2\leq k\in\N$. Following \cite[Section 2.3]{emm}, we call $(P_1,P_2)$ \dfn{$k$-consistent} if the signs of basic slices in $A_i$ and $B_i$ for $i\leq k$ are all the same. We call $(P_1,P_2)$ \dfn{$k$-inconsistent} if $(P_1,P_2)$ is $(k-1)$-consistent but not $k$-consistent. If all basic slices in $A_i$ for $i\leq k$ have the same sign and all basic slices in $B_i$ for $i\leq k$ have the opposite sign, then $(P_1,P_2)$ is called \dfn{totally $k$-inconsistent}. Moreover, we can convert a $k$-inconsistent pair of paths into a unique $(k-1)$-inconsistent pair of paths, and hence into an $i$-inconsistent one for all $i=2,3,\ldots,k$. All these decorated pairs of paths define the same contact structure on $S^1\times S^2$ and are said to be \dfn{compatible}.

\subsection{Surgery on torus knots in \texorpdfstring{$S^1\times S^2$}{S1 x S2}.}
\begin{lemma}\label{lemma:surgerychar}
The $r\lambda+s\mu$ surgery along the torus knot $T_{p,q}$ in $S^1\times S^2$ is diffeomorphic to the closed Seifert fibered space $M(S^2; \frac{q'}{q}, -\frac{q'}{q}, -\frac{r}{s})$ for $s\neq0$, and $L(q,p)\#L(-q,p)$ for $s=0$.
\end{lemma}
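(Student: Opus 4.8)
The plan is to compute the surgered manifold by viewing $T_{p,q}$ on the Heegaard torus $T_0$ and analyzing how the surgery interacts with the genus-one Heegaard splitting $S^1\times S^2 = V_1\cup_{T_0} V_2$. First I would set up coordinates: with the basis $(l_0,m_0)$ on $H_1(T_0)$ from the introduction, $T_{p,q}$ is the curve of class $pm_0+ql_0$ sitting on $T_0$. Pick a dual curve, i.e.\ a primitive class $p'm_0+q'l_0$ on $T_0$ with $pq'-p'q=1$ (as in the statement of Theorem~\ref{thm:tw>0}), so that $\{T_{p,q},\, p'm_0+q'l_0\}$ is a basis for $H_1(T_0)$ and the surface framing of $T_{p,q}$ is the one induced by $T_0$. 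A tubular neighborhood $N$ of $T_{p,q}$ in $T_0\times(-\epsilon,\epsilon)$ shows that $S^1\times S^2\setminus N$ is obtained by gluing $V_1$ and $V_2$ along $T_0$ minus an annulus neighborhood of $T_{p,q}$; equivalently, $S^1\times S^2\setminus N$ fibers as $T^2\times[0,1]$ Dehn-filled on both sides. The key point is that the complement of the Heegaard torus knot $T_{p,q}$ is Seifert fibered over the annulus (or disk with two exceptional fibers, after filling), with the two exceptional fibers coming from the cores of $V_1$ and $V_2$.

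Next I would identify the Seifert invariants. Foliate $T_0$ by parallel copies of $T_{p,q}$; this foliation extends over $V_1$ to a Seifert fibration with a single exceptional fiber at the core of $V_1$, whose multiplicity is $|q|$ (the intersection number of $T_{p,q}$ with the meridian $m_0$ of $V_1$), and similarly over $V_2$ with an exceptional fiber of multiplicity $|q|$ at its core. Thus $S^1\times S^2$ itself is Seifert fibered over $S^2$ with (at most) two exceptional fibers coming from the cores, and $T_{p,q}$ is a regular fiber. Doing $r\lambda+s\mu$ surgery on a regular fiber of a Seifert fibration, when $s\neq 0$, produces a new Seifert fibered space over the same base orbifold with one additional exceptional fiber whose invariant is governed by $-r/s$ (with the usual sign convention dictated by the choice of the fiber framing $\lambda$, which here is the $T_0$-framing). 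Tracking the two $\pm q'/q$ invariants of the core fibers — these signs record that $V_1$ and $V_2$ are glued with opposite orientations in the Heegaard splitting — gives $M\!\left(S^2;\ \frac{q'}{q},\ -\frac{q'}{q},\ -\frac{r}{s}\right)$. When $s=0$ the surgery is the $\lambda$-framed (i.e.\ surface-framed) surgery, which cuts $T_0$ open along $T_{p,q}$ and reglues by a twist; this reassembles the two solid tori $V_1$ and $V_2$ into two lens space summands, and a direct homology/Heegaard-diagram computation identifies the result as $L(q,p)\#L(-q,p)$ — consistent with Lemma~\ref{lemma:L(q,p)}, where $n(p,q)$ counts the tight contact structures on exactly this manifold.

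Concretely, for the $s\neq 0$ case I would realize everything by a Kirby/Rolfsen calculus computation from the surgery diagram of $S^1\times S^2$ used in Lemma~\ref{lem:divide} (the $0$-framed unknot, or equivalently the Dehn filling of $T^2\times[0,1]$ along slopes $\frac{q'}{q-q'}$ on both ends as in Figure~\ref{Figure:ContactS1xS2}): add $T_{p,q}$ with framing $r/s$ to this picture, slam-dunk and blow down/rotate to convert it into the standard plumbing/star-shaped diagram for a Seifert fibered space over $S^2$, and read off the three exceptional fibers. The arithmetic identity $pq'-p'q=1$ is what makes the two core-fiber contributions collapse to $\pm q'/q$. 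The $s=0$ case I would instead do by hand at the level of the Heegaard splitting: surface-framed surgery on $T_{p,q}\subset T_0$ replaces a neighborhood $T_0\times(-\epsilon,\epsilon)$ by two solid tori glued to $V_1$ and $V_2$ along their respective meridian disks after a $p/q$-type twist, yielding the connected sum, and the lens space parameters follow from computing $H_1$ of each summand.

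The main obstacle I expect is the bookkeeping of orientations and sign conventions: getting the \emph{signs} of the two core-fiber invariants right (so that they appear as $+\frac{q'}{q}$ and $-\frac{q'}{q}$ rather than, say, both equal), and pinning down the precise slope $-\frac{r}{s}$ rather than $\frac{r}{s}$ or $\frac{s-r}{s}$, since these depend on a consistent choice of the regular-fiber framing, the orientation of the Heegaard torus, and the identification of $\lambda$ with the $T_0$-framing. I would resolve this by fixing all orientations at the start via the explicit coordinates on $T_0$ in the introduction and checking the answer against the two sanity cases: $s=0$ must give $L(q,p)\#L(-q,p)$ (matching the $H_1=\Z/q\oplus\Z/q$ computation and Lemma~\ref{lemma:L(q,p)}), and, when $T_{p,q}$ is a regular fiber so that $\frac{r}{s}=\infty$ (the surface framing with $s=1$, $r=0$... more precisely the fiber slope), one must recover $S^1\times S^2 = M(S^2;\frac{q'}{q},-\frac{q'}{q})$ possibly with a third trivial fiber.
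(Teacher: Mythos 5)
Your proposal follows essentially the same route as the paper: identify the complement of $T_{p,q}$ as the Seifert fibered space $M(D^2;\frac{q'}{q},-\frac{q'}{q})$ via the decomposition $V_1\cup(S^1\times P)\cup V_2$, identify $\lambda$ with the regular fiber and $\mu$ with $\{\theta\}\times P\cap\partial C$, then for $s\neq 0$ fill in to create the third exceptional fiber of invariant $-\frac{r}{s}$, and for $s=0$ observe that the surgered manifold splits along a separating sphere into the two punctured lens spaces $L(q,p)$ and $L(-q,p)$. The orientation/sign bookkeeping you flag is exactly what the paper handles via the convention $-T_3=\partial N$, so the argument is correct and matches the paper's proof.
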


\begin{proof}
The complement $C$ of $T_{p,q}$ in $S^1\times S^2$ is diffeomorphic to $M(D^2; \frac{q'}{q}, -\frac{q'}{q})$. It can be decomposed into $C=V_{1}\cup(S^1\times P)\cup V_2$, where $P$ is a pair of pants, $\partial(S^1\times P)=T_{1}\cup T_{2}\cup T_{3}$, $T_{i}$ is identified with $\partial V_i$ for $i=1,2$ and $T_3=\partial C$. By \cite[Page 40]{Ha}, $M(S^2; \frac{q'}{q}, -\frac{q'}{q})$ is diffeomorphic to $S^1\times S^2$, and $T_3\cap \{\theta\}\times P$ can be identified with the meridian $\mu$ of the knot $T_{p,q}$. Recall that $\lambda$ corresponds to the regular fiber, i.e., $S^{1}\times\{p\}$ in $T_3$. Since $-T_3=\partial N$, the $r\lambda+s\mu$ curve in $\partial N$ is identified to a $-\frac{s}{r}$ curve on $T_3$. 

If $s\neq0$, then the lemma follows from the construction of Seifert fibered space in \cite[Section 2.1]{Ha}. If $s=0$, then it is easy to see that there is a separating sphere which cuts the surgered manifold into two punctured lens spaces $L(q,q')$ and $L(q,-q')$ which are $L(q,p)$ and $L(-q,p)$, respectively.  
\end{proof}

%\begin{remark}
%The resulted Seifert fibered space $M(S^2; \frac{q'}{q}, -\frac{q'}{q}, -\frac{r}{s})$ has complementary legs whose symplectic rational ball fillings were studied in \cite{eot}. The results in this paper may be helpful to the classification of tight contact structures on it.
%\end{remark}

\subsection{Legendrian torus knots in tight \texorpdfstring{$S^1\times S^2$}{S1 x S2}.}\label{subsection:tight} The standard tight contact structure $\xi_{std}$ on $S^1\times S^2$ is defined as the kernel of $(x_{3}d\theta+x_{1}dx_{2}-x_{2}dx_{1})|_{S^1\times S^2}.$ %There is an orientation-preserving diffeomorphism \begin{align*}
  %\eta\colon S^1 \times S^2 &\to S^1 \times S^2,\\ 
  %(\theta, x_1, x_2, x_3) &\mapsto (-\theta, -x_1, -x_2, -x_3).
%\end{align*}
 Let $r_{\theta}$ be the rotation matrix of $\mathbb{R}^3$ about the $x_3$-axis:
$$\begin{pmatrix} 
\cos\theta & \sin\theta & 0 \\
-\sin\theta & \cos\theta & 0 \\
0 & 0 & 1
\end{pmatrix}$$
Then \begin{align*}
  \delta\colon S^1 \times S^2 &\to S^1 \times S^2,\\ 
  (\theta,x_1, x_2, x_3) &\mapsto (\theta, r_\theta(x_1, x_2, x_3)).
\end{align*} is an orientation-preserving diffeomorphism.  Let $Cont_{0}(S^1\times S^2, \xi_{std})$ denote the set of contactomorphisms of $(S^1\times S^2, \xi_{std})$ which are smoothly isotopic to the identity.  By \cite{dg} and \cite{m}, $\delta^2$ generates $$\pi_{0}(Cont_{0}(S^1\times S^2, \xi_{std}))\cong\mathbb{Z}.$$ %Let $Cont(S^1\times S^2, \xi_{std})$ denote the set of contactomorphisms of $(S^1\times S^2, \xi_{std})$. Then, by \cite[Theorem 1.3]{m}, $$\pi_{0}(Cont(S^1\times S^2, \xi_{std}))\cong\mathbb{Z}\oplus\mathbb{Z}_2,$$ where the summand $\mathbb{Z}$ is generated by $\delta$ and the summand $\mathbb{Z}_2$ is generated by $\eta$.

We explain Remark~\ref{rmk:tw=0intight}.  The Euler class of the tight contact structure on $S^1\times S^2$ is $0$. So one can define rotation number $\rot(L)$ for any Legendrian knot $L$ in $(S^1\times S^2, \xi_{std})$. According to \cite[Proof of Theorem 1.1]{cdl}, $\rot(\delta^{2}(L))=\rot(L)+2q$ for a Legendrian torus knot $T_{p,q}$ in $(S^1\times S^2, \xi_{std})$. By \cite[Proposition 1.4]{cdl}, there are exactly $2$ Legendrian $(p,q)$-torus knots with $\tw=0$ in $(S^1\times S^2, \xi_{std})$ up to coarse equivalence. The rotation numbers of these two Legendrian knots are $p \pmod {2q}$ and $-p\pmod {2q}$, respectively.

\section{Tight contact structures on torus knot complements in \texorpdfstring{$S^1\times S^2$}{S1 x S2}}
Let $N$ denote a neighborhood of the torus knot $T_{p,q}$ in $S^1\times S^2$, and let $C$ be the closure of $S^1\times S^2\setminus N$. Then $C$ is the Seifert fibered space $M(D^2; \frac{q'}{q}, -\frac{q'}{q})$ \cite{Ha}. We fix a framing convention for the torus $-\partial C=\partial N$. Let $\lambda$ be the framing induced by the Heegaard torus containing $T_{p,q}$, and $\mu$ the meridian of $T_{p,q}$. A simple closed curve on $\partial N$ homologous to $a\lambda+b\mu$ is said to have slope $b/a$. Note that $\lambda$ is a regular fiber of $C$. A regular fiber of $C$ is also referred to a {\it vertical circle}. The {\it twisting number} of a Legendrian vertical circle is defined as the difference of its contact framing and the framing induced by the Seifert fibration.

Now we build two different topological models of $C$. Consider $S^1\times S^2=\{(\theta, x_1, x_2, x_3)\in S^1\times \mathbb{R}^3\mid \theta\in S^1, x_1^2+x_2^2+x_3^2=1\}$. The Heegaard torus $T_0 = \{(\theta, x_1, x_2, x_3) \in S^1 \times S^2 \mid x_3 = 0\}$. Then $S^1\times S^2\setminus T_0$ is the union of two solid tori: $V_1=\{(\theta, x_1, x_2, x_3) \in S^1 \times S^2 \mid x_3 \leq 0\}$ and $V_2=\{(\theta, x_1, x_2, x_3) \in S^1 \times S^2 \mid x_3 \geq 0\}$. Slightly thickening $T_0$, we can view $S^1\times S^2=V_1\cup(T_0\times [0,1])\cup V_2$. Here we use the coordinate system coming from the Seifert framing of $V_1$, so that the meridians of $V_1$ and $V_2$ both have slope $0$. Take the neighborhood $N$ of $T_{p,q}$ to be contained in the interior of $T_0\times [0,1]$. In $T_0\times [0,1]\setminus N$, we can find an annulus $A$, whose boundary components are two $(p,q)$-curves on $V_1$ and $V_2$. Hence we have a model $C=V_1\cup N(A) \cup V_2$, where $N(A)$ is a neighborhood of $A$ in $C$.

We can also view $T_0\times [0,1]\setminus N$ as $S^1\times P$, where $P$ is a pair of pants. Then we have another model $C=V_1\cup (S^1\times P)\cup V_2$. Denote $\bdy (S^1\times P)=T_1\cup T_2\cup T_3$, where $T_i$ is identified with $\bdy V_i$ for $i=1,2$. Take coordinates on each $T_i$ so that $S^1\times \{\text{pt}\}$ has slope $0$ and $(\{\theta\}\times P)\cap T_i$ has slope $\infty$, for $i=1,2,3$. Then we can convert the first coordinate system to the second by the map \[\phi=\begin{pmatrix}
        p&-q\\-p'&q'
    \end{pmatrix}.\]

Given the complement $C$ of a $(p, q)$-torus knot and a slope $s$ on the boundary of C, denote $\Tight_t(C; s)=$\{Tight contact structures in $C$ up to isotopy, with convex boundary having two dividing curves of slope $s$ and convex $l$ Giroux torsion for $t\in\frac{1}{2}\N\cup\{0\}$\}. 

\begin{lemma}\label{lemma:s>0}
    %Consider the complement of a $(p,q)$--torus knot. Suppose $q/p\in(\infty,-1)$ and let $s$ be a boundary slope of $\partial C$ with respect to the torus framing such that 
    Suppose $s>0$. For any tight contact structure $\xi\in \Tight_0(C;s)$, there is a contact structure $\xi'\in\Tight_0(C;\infty)$ and a contact structure $\xi''\in \Tight^{min}(T^2\times[0,1];s,\infty)$ such that $\xi$ is isotopic to $\xi'\cup \xi''$ under the natural identification $C\cong C\cup (T^2\times[0,1])$.
\end{lemma}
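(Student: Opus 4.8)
The plan is to find inside $(C,\xi)$ a convex torus $T$, isotopic to $\partial C$, with dividing slope $\infty$, bounding together with $\partial C$ a region $R \cong T^2 \times [0,1]$ on which $\xi$ restricts to a minimally twisting tight contact structure with boundary slopes $s$ and $\infty$. Writing $C'$ for the closure of $C \setminus R$, the restrictions $\xi'' := \xi|_R$ and $\xi' := \xi|_{C'}$ will then be the two pieces in the statement. The preliminary step is routine convex surface theory: make $\partial C$ convex with dividing slope $s$, pass to the pair-of-pants model $C = V_1 \cup (S^1 \times P) \cup V_2$, and make $\partial V_1$, $\partial V_2$ and $\partial C$ simultaneously convex, so that $\xi$ restricts to tight contact structures on the solid tori $V_i$ and on $S^1 \times P$.

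The core of the argument is producing $T$. Because $s > 0$ while a regular fiber of $C$ has slope $0$ (recall from Lemma~\ref{lemma:surgerychar} that the fiber slope $0$ is distinguished, being the slope whose Dehn filling yields the reducible manifold $L(q,p)\#L(-q,p)$), one can fix a minimal path in the Farey graph from $s$ to $\infty$ that avoids the fiber slope $0$; indeed it can be taken to consist only of positive slopes together with $\infty$. I would then peel off a collar of $\partial C$ realizing exactly this path: from convex vertical annuli in $S^1 \times P$ joining $\partial C$ to $\partial V_1$ or $\partial V_2$ (after thickening the $V_i$ as much as possible), together with the Imbalance Principle, one produces bypasses for $\partial C$; attaching such a bypass from the inside pushes $\partial C$ inward and, for an appropriate choice of Legendrian ruling slope, moves its dividing slope one step along the fixed minimal path toward $\infty$. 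Iterating finitely many times produces the convex torus $T$ of slope $\infty$, and the region $R$ swept out is a concatenation of basic slices along a minimal path, hence minimally twisting, with boundary slopes $s$ and $\infty$.

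I expect this to be the main obstacle, and it has two ingredients. First, one must verify that at each stage the required bypass genuinely exists: here tightness of $\xi$ and the hypothesis $s > 0$ are used, the sign condition keeping the convex vertical annuli transverse to the dividing set and confining the entire peeling to a collar of $\partial C$ that is disjoint from the exceptional fibers, so that there is always room to continue. Second, one must check that the process terminates exactly at slope $\infty$ rather than stalling at an intermediate slope or overshooting; this is ensured by never leaving the finite minimal path chosen at the outset. For the write-up I would follow the standard thickening arguments for Seifert fibered pieces, as in \cite{h} and \cite{emm}.

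Granting this, it remains to identify the three pieces. The collar $R$ is a sub-domain of the tight $\xi$, hence tight, and it is minimally twisting with boundary slopes $s$ and $\infty$ by construction, so $\xi'' \in \Tight^{min}(T^2 \times [0,1]; s, \infty)$ once the orientation convention is matched. The inner region $C'$ is diffeomorphic to $C$ (being $C$ with a boundary collar removed), has convex boundary $T$ of slope $\infty$, and carries the tight contact structure $\xi'$. Finally $\xi'$ has vanishing convex Giroux torsion along $[\partial C']$: since $T$ is isotopic to $\partial C$ inside $C$, any Giroux torsion domain in $C'$ with leaves in the class $[T]$ would also be a Giroux torsion domain in $C$ with leaves in the class $[\partial C]$, contradicting $\xi \in \Tight_0(C;s)$; hence $\xi' \in \Tight_0(C;\infty)$. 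Writing $C = C' \cup R$ then exhibits $\xi$, up to isotopy, as $\xi' \cup \xi''$ under the natural identification $C \cong C \cup (T^2 \times [0,1])$, which is the claim.
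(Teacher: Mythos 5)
The reduction to finding a convex torus $T$ parallel to $\partial C$ with dividing slope $\infty$, and the identification of the two pieces afterwards (minimal twisting of the collar forced by absence of Giroux torsion, $\xi'\in\Tight_0(C;\infty)$ because $T$ is isotopic to $\partial C$), are fine and agree with the paper. The gap is in the core step. You assert that convex vertical annuli from $\partial C$ to $\partial V_i$ together with the Imbalance Principle produce bypasses attached to $\partial C$ that move its dividing slope ``one step along the fixed minimal path toward $\infty$,'' but nothing in your argument guarantees (i) that the imbalance points the right way, so that the bypass lies on the $\partial C$ side rather than further thickening $V_i$ (or that any bypass exists at all once the annulus is balanced); (ii) that the bypass attachment moves the slope along the arc of the Farey graph through $\infty$ rather than along the complementary arc $(0,s)$ of smaller positive slopes --- the direction is dictated by the side of attachment and the contact structure, not by a path you choose in advance; or (iii) that the process terminates at $\infty$. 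Indeed, if such peeling bypasses always existed one could continue past $\infty$ to slope $0$ and produce a $0$-twisting vertical Legendrian curve, which contact structures in $\Tight_0(C;\infty)$ do not admit (see the proof of Lemma~\ref{lemma:s=infty}); so the peeling must be obstructed somewhere, and locating that obstruction is the actual content of the lemma. Saying that tightness and $s>0$ ``keep the annuli transverse to the dividing set'' does not establish any of this.

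The paper closes this gap with a dichotomy plus a case analysis that your write-up omits entirely. If $C$ contains a $0$-twisting vertical Legendrian curve, then a convex torus of slope $0$ parallel to $\partial C$ exists, and since the boundary slope is positive a slope-$\infty$ torus sits between it and $\partial C$. Otherwise, one maximally thickens $V_1$ and $V_2$ using a convex annulus between $\partial V_1$ and $\partial V_2$ with ruling slope $q/p$ (not vertical annuli to $\partial C$), reaches the balanced configuration $|\frac{q}{p}\bigcdot\frac{n_1}{m_1}|=|\frac{q}{p}\bigcdot\frac{n_2}{m_2}|$ with all dividing curves of the annulus running across, enumerates the finitely many solutions (Cases 1--4, which essentially force $(\frac{n_1}{m_1},\frac{n_2}{m_2})=(\frac{q''}{p''},\frac{q'}{p'})$ apart from the $(p,q)=(1,-2)$ exception), and verifies by edge-rounding that $\partial(V_1\cup N(A)\cup V_2)$ has slope $\infty$ in every case. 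This enumeration is where the hypothesis $s>0$ and the Seifert structure of $C$ genuinely enter; without some substitute for it, your argument assumes the conclusion.
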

\begin{proof}
    We proceed via a strategy analogous to that in \cite[Lemma 6.2]{emm}. Suppose that the complement $C$ contains a vertical Legendrian circle $L$ with twisting number $0$. Then there exists a torus $T$, parallel to $\partial C$, that contains $L$. Perturb $T$ to be convex, then the dividing slope of $T$ is $0$. Since $C$ has positive boundary slope, there also exists a convex torus $T'$, parallel to $\partial C$, with dividing slope $\infty$. This torus $T'$ thus yields the desired splitting of $C$.

    Now we assume that $C$ contains no $0$-twisting vertical Legendrian curves. Consider the model for the complement $C$ as $V_1\cup N(A) \cup V_2$. We perturb $\partial V_i$ ($i=1,2$) to be convex and let $\frac{n_i}{m_i}$ be the boundary slope of $V_i$, where $m_i\geq0$. By the assumption, the slope $\frac{n_1}{m_1}\in (0,\infty)\cup (\infty, \frac{q}{p})$ and $\frac{n_2}{m_2}\in (\frac{q}{p},0)$. We can arrange both ruling slopes of $\partial V_i$ are $\frac{q}{p}$ and the annulus $A$ to be convex whose boundary are two ruling curves on $\partial V_i$. Now we start with $V_i$ as a standard neighborhood of a Legendrian knot with a very negative twisting. If two boundary components of $A$ have different twisting numbers then by Imbalance Principle there exists a bypass in $A$. After a bypass attachment, $V_1$ or $V_2$ will thicken. Iterating this process until we have a final 'stage' where
    \begin{align}\label{eq0}
        |\frac{q}{p}\bigcdot \frac{n_1}{m_1}|=|\frac{q}{p}\bigcdot \frac{n_2}{m_2}|
    \end{align}
    
    and all the dividing curves on $A$ run from one boundary component to the other. Since $\frac{n_2}{m_2}\in  (\frac{q}{p},0)$,  if $\frac{n_1}{m_1}\in (0,\infty)$, then Equation~\ref{eq0} becomes
      \begin{align}\label{eq1}
    \frac{n_1}{m_1}\bigcdot \frac{q}{p}=\frac{n_2}{m_2}\bigcdot \frac{q}{p};
    \end{align}
    if  $\frac{n_1}{m_1}\in (\infty, \frac{q}{p})$, then Equation~\ref{eq0} becomes \begin{align}\label{eq2}
    \frac{q}{p}\bigcdot \frac{n_1}{m_1}=\frac{n_2}{m_2}\bigcdot\frac{q}{p}.
    \end{align}

    Since the slope $\frac{n_1}{m_1}$ is obtained from the slope of $\partial V_1$ after some amount of bypass attachments along $q/p$-sloped curves, the slope $\frac{n_1}{m_1}$ must be one of the vertices in the shortest path from the slope of $\partial V_1$ clockwise to $q/p$. That is to say the possible slope of $\partial V_1$ is an element in one of the following sets:
    \begin{itemize}
        \item $\mathcal{S}_1$ consisting of slopes in the form of $\frac{1}{m_1}$ with $m_1\geq 0$;
        \item $\mathcal{T}_1$ consisting of vertices from $\frac{q}{p}$ anti-clockwise to $\lfloor \frac{q}{p} \rfloor$ in the path $P_1$ on the Farey graph.
    \end{itemize}
    
    Similarly, the possible boundary slope of $V_2$ is an element in one of the following sets:
    \begin{itemize}
        \item $\mathcal{S}_2$ consisting of slopes in the form of $\frac{-1}{m_2}$ with $m_2\geq 1$;
        \item $\mathcal{T}_2$ consisting of vertices from $\frac{q}{p}$ clockwise to $-1$ in the path $P_2$ on the Farey graph.
    \end{itemize}

    {\textbf{Case 1:} $\frac{n_1}{m_1}\in \mathcal{S}_1$ \text{and} $\frac{n_2}{m_2}\in \mathcal{S}_2$.}
    
    Equation~\ref{eq1} now becomes $p-qm_1=-p-qm_2$. Then we have that $q(m_1-m_2)=2p$. Thus $q=-1$ or $q=-2$. Since $-q>p\geq 1$ we have $(p,q)$ must be $(1,-2)$ and all possible solutions in this case must be $(\frac{n_1}{m_1},\frac{n_2}{m_2})=(\frac{1}{m_1},\frac{-1}{m_2})$ where $m_2=m_1+1$. 
    
    We claim that any such solution results in a torus knot complement $C$ with $\infty$ boundary slope (with respect to the torus framing). In fact, we consider a diffeomorphism $\phi\in SL(2;\mathbb{Z})$ to change the coordinate system. Set\[\phi=
    \begin{pmatrix}
        1&2\\-1&-1
    \end{pmatrix}.\] Clearly, the map $\phi$ changes slope $\frac{q'}{p'}=\frac{-1}{1}$ into $\frac{1}{0}$, $\frac{q}{p}=\frac{-2}{1}$ into $\frac{0}{1}$ and $\frac{q''}{p''}=\frac{-1}{0}$ into $\frac{-1}{1}$. Especiallly, we have that\[\frac{1}{m_1}\xrightarrow{\phi}-\frac{1+2m_1}{1+m_1}=-\frac{2m_2-1}{m_2},\] and \[\frac{-1}{m_2}\xrightarrow{\phi}-\frac{2m_2-1}{m_2-1}.\]
    Rounding edges in the model $C\cong V_1 \cup (S^1\times P)\cup V_2$, we obtain the boundary slope of $C$ as \[s(\partial C)=\frac{2m_2-1}{-(m_2-1)+m_2-1}=\infty.\] 
    
    {\textbf{Case 2:} $\frac{n_1}{m_1}\in \mathcal{S}_1$ \text{and} $\frac{n_2}{m_2}\in \mathcal{T}_2$.}

    (1) Suppose that $\frac{n_2}{m_2}$ is an integer  between $-1$ and $\lceil \frac{q}{p} \rceil$. Then $m_2=1$ and $n_2<0$, and Equation~
    \ref{eq1} becomes $p-qm_1=pn_2-q$. We obtain $m_1=pK+1$ and $n_2=1-qK$ for some $K\in\mathbb{Z}$. If $m_1=0$, then necessarily $p=1$ and $K=-1$. This yields a unique solution in the case when $p=1$, namely $(\frac{n_1}{m_1},\frac{n_2}{m_2})=(\infty,q+1)$, which will be addressed in Case 4. If $m_1>0$, then we must have $K\geq 0$, while $n_2<0$ implies $K<0$-a contradiction. 
    
    (2) Otherwise, suppose $\frac{n_2}{m_2}$ is a vertex of $P_2$ lying between $\lceil \frac{q}{p}\rceil$ and $\frac{q}{p}$. Then  \[1=\frac{q'}{p'}\bigcdot \frac{q}{p}\leq \frac{n_2}{m_2}\bigcdot \frac{q}{p}<\cdots\leq\lceil\frac{q}{p}\rceil\bigcdot\frac{q}{p}=p\lceil \frac{q}{p}\rceil-q<p.\] On the other hand \[\frac{1}{m_1}\bigcdot \frac{q}{p}=p-qm_1\geq p.\] This contradicts to Equation~\ref{eq1}.
    
    Therefore, there is no solution for $(\frac{n_1}{m_1},\frac{n_2}{m_2})$ in this case.

    {\textbf{Case 3:} $\frac{n_1}{m_1}\in \mathcal{T}_1$ \text{and} $\frac{n_2}{m_2}\in \mathcal{S}_2$.}
    
    Now we have $n_2=-1$, and Equation~\ref{eq2} becomes $q(m_1+m_2)=p(n_1-1)$.  Since $n_1<0$, Equation~\ref{eq2} implies that $n_1=Kq+1$ for some positive integer $K$. Because $\frac{n_1}{m_1}$ is a vertex in $P_1$ lying between $\frac{q}{p}$ and $\lfloor \frac{q}{p} \rfloor$, Lemma~\ref{lemma:paths} gives $q\leq n_1\leq \lfloor \frac{q}{p} \rfloor$, which forces $K=1$. Thus $n_1=q+1$ and $m_1+m_2=p$. So $\frac{n_1}{m_1}=\frac{q+1}{p-m_2}$. By Lemma~\ref{lemma:paths}, we have $\frac{n_1}{m_1}=\frac{q''}{p''}$, which leads to $q(p-m_2)-p(q+1)=1$. %which must connect with $\frac{-1}{1}$ in the Farey graph with an edge and satisfies $\frac{-1}{1}\oplus \frac{q+1}{p-m_2}=\frac{q}{p}$. 
    Solving this yields $m_2=1$ and $q=-p-1$. Hence, a solution exists only when  $(p,q)=(p,-p-1)$, in which case Equation~\ref{eq2} is solved by $(\frac{n_1}{m_1},\frac{n_2}{m_2})=(\frac{-p}{p-1}, \frac{-1}{1})$. This situation will be addressed in Case 4.
    
    {\textbf{Case 4:} $\frac{n_1}{m_1}\in \mathcal{T}_1$ \text{and} $\frac{n_2}{m_2}\in \mathcal{T}_2$.}

    In this case, by Lemma~\ref{lemma:paths}, we have $$q''\leq n_1\leq  \lfloor \frac{q}{p} \rfloor ~\text{and}~ q'\leq n_2\leq  -1.$$ It follows that $$q=q''+q'\leq n_1+ n_2\leq \lfloor \frac{q}{p} \rfloor-1<0.$$ Note that Equation~\ref{eq2} is equivalent to $q(m_1+m_2)=p(n_1+n_2)$ which implies that $q\mid n_1+n_2$. So $n_1=q''$ and $n_2=q'$. Therefore Equation~\ref{eq2} has a unique solution in this case: $(\frac{n_1}{m_1},\frac{n_2}{m_2})=(\frac{q''}{p''},\frac{q'}{p'})$. As in Case 1, we set \[\phi=\begin{pmatrix}
        p&-q\\-p'&q'
    \end{pmatrix}.\] Then $\phi$ sends $\frac{q'}{p'}$ to $\infty$, $\frac{q}{p}$ to $0$ and $\frac{q''}{p''}$ to $-1$. Rounding edges in the model $C\cong V_1 \cup (S^1\times P) \cup V_2$, we obtain the boundary slope of $C$ as \[s(\partial C)=\frac{1}{0+1-1}=\infty.\]

    In summary, the complement $C$ contains a convex torus with two dividing curves of slope $\infty$ in every case, thereby yielding the claimed splitting.
\end{proof}

\begin{lemma}\label{lemma:destabilization}
If $L$ is a non-loose Legendrian torus knot $T_{p,q}$ in an overtwisted $S^1\times S^2$ with $\tw(L)<0$, then $L$ destabilizes.    
\end{lemma}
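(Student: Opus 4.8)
The plan is to establish the equivalent statement that the standard contact neighborhood $N$ of $L$ can be enlarged, inside $S^1\times S^2$, to a neighborhood $N'\supset N$ of $L$ across a single basic slice. Write $\tw(L)=-k$ with $k\ge 1$; then $C:=\overline{(S^1\times S^2)\setminus N}$ is tight, since $L$ is non-loose, and $\partial C$ is convex with two dividing curves of slope $-k$ in the torus-framing coordinates fixed at the beginning of this section. As $-k$ and $-k+1$ are joined by an edge of the Farey graph, it is enough to produce a convex torus $T$ in the interior of $C$, isotopic to $\partial C$, with two dividing curves of slope $-k+1$: the region of $C$ between $\partial C$ and $T$ is tight, so it contains, adjacent to $\partial C$, a basic slice from slope $-k$ to slope $-k+1$, and the union of this basic slice with $N$ is a solid-torus neighborhood $N'$ of $L$ whose Legendrian core $L'$ has $\tw(L')=-k+1$ and satisfies $L=S_+(L')$ or $L=S_-(L')$ according to the sign of that basic slice.

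To produce $T$, I would follow the strategy of the proof of Lemma~\ref{lemma:s>0}, working in the model $C=V_1\cup(S^1\times P)\cup V_2$ of this section, with $T_3=\partial C$ and $\partial V_i=T_i$. If $C$ contains a vertical Legendrian circle of twisting number $0$, then it lies on a convex torus parallel to $\partial C$ with dividing slope $0$; the $T^2\times[0,1]$ between that torus and $\partial C$ has boundary slopes $-k$ and $0$, so it contains a convex torus of every slope on the Farey path $-k,\,-k+1,\,\dots,\,-1,\,0$, in particular one of slope $-k+1$, which serves as $T$. If $C$ has no $0$-twisting vertical circle, I would perturb $\partial V_1$ and $\partial V_2$ to be convex, with slopes $n_1/m_1$ and $n_2/m_2$, thicken $V_1$ and $V_2$ inside $C$ as far as possible, and take a convex vertical annulus between $T_3$ and one of the $T_i$ whose Legendrian boundary consists of vertical (slope-$0$) ruling curves. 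Since the slope-$0$ ruling curve on $\partial C$ has twisting $-k<0$, the Imbalance Principle forces a boundary-parallel dividing arc on this annulus based at $\partial C$, and the resulting bypass, attached to $\partial C$ from inside $C$, moves the dividing slope of $\partial C$ one step along the Farey graph; the case-by-case bookkeeping of the proof of Lemma~\ref{lemma:s>0} is then used to conclude that the move is from $-k$ to $-k+1$, yielding $T$. The degenerate case $q/p=-2$, i.e.\ $(p,q)=(1,-2)$, is treated separately and is covered by the explicit classification in Example~\ref{example:(1,q)}.

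The step I expect to be the main obstacle is the case in which $C$ has no $0$-twisting vertical Legendrian circle. Here one must rule out that the tight contact structure on $C$ is ``non-thickenable'' at boundary slope $-k$, and, crucially, that the bypass produced by the Imbalance Principle is attached to $\partial C$ from the side that moves its dividing slope from $-k$ to $-k+1$ rather than to the Farey neighbor on the other side, which would not yield a destabilization. Controlling the sign of this bypass is exactly the purpose of the Farey-graph case analysis (Cases 1--4) in the proof of Lemma~\ref{lemma:s>0}: once $V_1$ and $V_2$ are maximally thickened inside $C$, the possibilities for their boundary slopes $n_1/m_1$ and $n_2/m_2$ are very restricted, and one checks case by case that the bypass has the required sign and that the enlargement of $N$ does not collide with either exceptional fiber of the Seifert fibration on $C$.
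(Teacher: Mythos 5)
Your reduction to producing a convex torus of dividing slope $\tw(L)+1$ parallel to $\partial C$, and your treatment of the case where $C$ contains a $0$-twisting vertical Legendrian circle, coincide with the paper's argument (the paper additionally splits off the case of positive convex Giroux torsion first, but a torsion layer contains convex tori of every slope and hence a $0$-twisting vertical curve, so your dichotomy absorbs that case). The divergence, and the gap, is in the case where no $0$-twisting vertical circle exists. There you propose to extract from the Imbalance Principle a single bypass attached to $\partial C$ from inside $C$ along a slope-$0$ ruling curve, and to use ``the case-by-case bookkeeping of the proof of Lemma~\ref{lemma:s>0}'' to show this bypass moves the dividing slope from $-k$ to $-k+1$. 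This misreads what that case analysis does: in Lemma~\ref{lemma:s>0} the bypasses are attached to $\partial V_1$ and $\partial V_2$ (thickening the $V_i$), never to $\partial C$, and Cases 1--4 determine the terminal slopes of the maximally thickened $V_i$, not the location or effect of any bypass on $\partial C$. Moreover, the Imbalance Principle places the bypass on the boundary component of the annulus whose Legendrian boundary has the more negative twisting, which need not be the $\partial C$ side; and even when a bypass does land on $\partial C$, attaching it along a slope-$0$ ruling to a torus of dividing slope $-k$ sends the slope to one of the two Farey neighbours $-k+1$ or $\infty$ depending on the side of attachment --- exactly the ambiguity you flag but do not resolve.

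The paper handles this case differently and more cleanly: running the procedure of Lemma~\ref{lemma:s>0} to its conclusion produces a convex torus parallel to $\partial C$ with dividing slope $\infty$; the thickened torus between it and $\partial C$ (of slope $\tw(L)<0$) then contains a convex torus of slope $0$, contradicting the assumed absence of a $0$-twisting vertical Legendrian circle. So this case is vacuous, and no bypass on $\partial C$ ever needs to be produced or signed. (Equivalently, that same intermediate layer contains a convex torus of slope $\tw(L)+1$, so one could also read off the destabilization directly from it.) To repair your proof, replace the single-bypass argument of your third case by this observation.
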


\begin{proof}
The tight contact structure on the complement $C$ of a standard neighborhood of $L$ belongs to $\Tight_i(C;\tw(L))$, where $i$ is the amount of Giroux torsion. If $i>0$, then there exists a convex torus $T$ paralleled to $\partial C$ with two dividing curves of slope $\tw(L)+1$, and the basic slice co-bounded by $T$ and $\partial C$ yields a destabilization of $L$. Now suppose $i=0$. As in the proof of Lemma~\ref{lemma:s>0}, if there exists a $0$-twisting vertical Legendrian curve, then we can find a convex torus with two dividing curves of slope $0$, and subsequently a convex torus $T$ with dividing slope $\tw(L)+1$. The basic slice bounded by this torus and $\partial C$ then gives a destabilization of $L$.  If no such $0$-twisting vertical Legendrian curve exists, we repeat the procedure from the proof of Lemma~\ref{lemma:s>0} to obtain a convex torus in $C$ parallel to $\partial C$ with slope $\infty$, and hence obtain a convex torus in $C$ parallel to $\partial C$ with slope $0$. This contradicts with the nonexistence of $0$-twisting vertical Legendrian curve.
\end{proof}

\begin{lemma}\label{lemma:tw=0}
    %Let $m(p,q)=|\Tight(S_0; \frac{q}{p})|\cdot|\Tight(S^0; \frac{q}{p})|$. 
    Up to coarse equivalence, the number of Legendrian $(p, q)$-torus knots with $\tw=0$, $\tor=0$ and tight complement in some contact structure on $S^1\times S^2$ is exactly $m(p, q)$.
\end{lemma}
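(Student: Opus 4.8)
The plan is to set up a bijection between coarse equivalence classes of Legendrian $(p,q)$-torus knots with $\tw=0$, $\tor=0$ and tight complement and pairs $(P_1,P_2)$ of decorated paths with $P_1\in\Tight(S_0;q/p)$ and $P_2\in\Tight(S^0;q/p)$, taken up to shuffling signs within continued fraction blocks. By Lemmas~\ref{lemma:lower0} and \ref{lemma:upper0} the number of such pairs is
\[
|\Tight(S_0;q/p)|\cdot|\Tight(S^0;q/p)|=|(b_1+1)\cdots(b_{n-1}+1)b_n|\cdot|(a_1+1)\cdots(a_{m-1}+1)a_m|=m(p,q),
\]
so it suffices to exhibit $m(p,q)$ pairwise non-coarsely-equivalent such knots and to show that every such knot is coarsely equivalent to one of them.

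For the lower bound I would use the Legendrian divides of Subsection~\ref{section:paths}. For each pair $(P_1,P_2)$ the divide $L_{P_1,P_2}$ on the convex Heegaard torus of $\xi_{P_1,P_2}$ is a Legendrian $(p,q)$-torus knot whose contact framing is the Heegaard framing, so $\tw(L_{P_1,P_2})=0$; its complement is obtained by gluing the tight solid tori $(V_1,P_1)$ and $(V_2,P_2)$ across the $I$-invariant neighborhood of a convex annulus of slope $q/p$, which one checks is tight and has vanishing Giroux torsion along its boundary, so $L_{P_1,P_2}$ has tight complement and $\tor(L_{P_1,P_2})=0$. By Lemma~\ref{lem:tw=0distinct} distinct pairs yield non-coarsely-equivalent knots, producing the required $m(p,q)$ classes.

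For the upper bound, let $L$ be a Legendrian $(p,q)$-torus knot with $\tw(L)=\tor(L)=0$ and tight complement, and let $(C,\xi_C)$ be the complement of a standard neighborhood $N$; thus $\xi_C$ is tight, $\partial C$ is convex with two dividing curves of slope $0$, and $\xi_C$ has no Giroux torsion along $\partial C$. Working in the model $C=V_1\cup(S^1\times P)\cup V_2$, I would perturb $\partial V_1,\partial V_2$ to be convex, make a vertical annulus in $S^1\times P$ convex, and use the Imbalance Principle to attach bypasses thickening $V_1$ and $V_2$ maximally, as in the proof of Lemma~\ref{lemma:s>0}, the difference being that the outer slope is now $0$ instead of $s>0$. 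The combination of outer slope $0$ and absence of Giroux torsion should force the only surviving possibility to be that $V_1,V_2$ thicken to solid tori of boundary slope $q/p$, i.e. that $C$ contains a convex torus isotopic to the Heegaard torus with two dividing curves of slope $q/p$. Cutting along it writes $\xi_C$ as $(V_1,\xi_1)\cup(V_2,\xi_2)$ glued across an $I$-invariant piece of slope $q/p$, with $\xi_1\in\Tight(S_0;q/p)$ and $\xi_2\in\Tight(S^0;q/p)$; choosing decorated paths $P_1,P_2$ representing $\xi_1,\xi_2$ (well defined up to shuffling within continued fraction blocks) identifies $(C,\xi_C)$ with the complement of $L_{P_1,P_2}$, and since both $L$ and $L_{P_1,P_2}$ are Legendrian divides on this convex Heegaard torus, $L$ is coarsely equivalent to $L_{P_1,P_2}$. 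Combined with the lower bound, this gives exactly $m(p,q)$ classes.

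The main obstacle is the convex-surface step in the upper bound: showing that slope $0$ on $\partial C$ together with zero Giroux torsion genuinely forces the singular-fiber neighborhoods $V_1,V_2$ to thicken to boundary slope exactly $q/p$, so that $C$ contains a convex Heegaard torus of slope $q/p$ and the leftover $S^1\times P$ piece carries a boundary-determined tight contact structure. This requires an Imbalance-Principle bypass analysis parallel to, but separate from, the one in Lemma~\ref{lemma:s>0}, keeping track of which of the candidate slope sets $\mathcal{S}_i,\mathcal{T}_i$ the intermediate boundary slopes of $V_1,V_2$ may lie in. A secondary point to verify is that passing between a Legendrian knot with tight complement and the contact structure on $C$ introduces no extra identifications coming from $\pi_0(\mathrm{Cont}_0(S^1\times S^2))$ or from the choice of regluing of $N$; since $\tw=0$ fixes the meridian slope of $N$ and the divide description matches on the nose, none arise, and in particular the two Legendrian $(p,q)$-torus knots with $\tw=0$ in $\xi_{std}$ (Remark~\ref{rmk:tw=0intight}) are among the $m(p,q)$ counted here.
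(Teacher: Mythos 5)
Your overall architecture (upper bound from a convex decomposition of the complement, lower bound from the Legendrian divides $L_{P_1,P_2}$ distinguished by Lemma~\ref{lem:tw=0distinct}) matches the paper's, but both halves have gaps as written. In the upper bound, the step you flag as ``the main obstacle'' is exactly the step you have not done, and the route you sketch for it is the wrong one. You propose to rerun the Lemma~\ref{lemma:s>0} analysis with a convex annulus joining $\partial V_1$ to $\partial V_2$ and to track the candidate slope sets $\mathcal{S}_i,\mathcal{T}_i$; but that analysis is designed for the case where $C$ contains \emph{no} $0$-twisting vertical Legendrian curve. Here the hypothesis $\tw(L)=0$ means $\partial C=T_3$ has dividing slope $0$ in the $S^1\times P$ coordinates, i.e.\ $\partial C$ already carries $0$-twisting vertical Legendrian curves. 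The paper exploits this directly: take convex annuli in $S^1\times P$ from $T_3$ to each $T_i$ with vertical boundary; the $T_3$ side has zero twisting, so the Imbalance Principle produces bypasses that thicken each $V_i$ until its boundary slope is $0$ in these coordinates (which is $q/p$ in the Heegaard framing), and then \cite[Lemma 2.21]{emm} gives a unique tight structure on $S^1\times P$ with all boundary slopes $0$. No case analysis over $\mathcal{S}_i,\mathcal{T}_i$ is needed, and without this argument your upper bound is not established.

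In the lower bound, your tightness claim for the complement of $L_{P_1,P_2}$ — that gluing the tight solid tori $(V_1,P_1)$ and $(V_2,P_2)$ across an $I$-invariant neighborhood of a convex annulus ``one checks is tight'' — is not a valid argument: gluing tight pieces along convex surfaces can produce overtwisted structures (this failure mode is precisely the content of Lemma~\ref{lemma:overtwisted} elsewhere in the paper). The paper instead invokes Lemma~\ref{lem:divide} to realize $L_{P_1,P_2}$ in the contact surgery diagram of Figure~\ref{Figure:tw=0}, and then observes that contact $(-1)$-surgery on $L_{P_1,P_2}$ cancels the two $(+1)$-surgery components, so the complement embeds in a Stein fillable (hence tight) manifold; tightness and $\tor=0$ of the complement follow. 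You should route your tightness claim through that surgery argument (or some equivalent embedding into a tight manifold) rather than through gluing. The remaining ingredients — the count $m(p,q)=|\Tight(S_0;q/p)|\cdot|\Tight(S^0;q/p)|$ and the use of Lemma~\ref{lem:tw=0distinct} for pairwise inequivalence — agree with the paper.
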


\begin{proof}
We first derive an upper bound through a discussion of tight contact structures on the complement. In the model $C = V_1 \cup (S^1 \times P) \cup V_2$, we set $\partial(S^1 \times P) = T_1 \cup T_2 \cup T_3$ and adopt the coordinate system on the $T_i$. For $i = 1, 2$, we take convex annuli in $S^1 \times P$ that connect $T_3 = \partial C$ to $T_i = \partial V_i$, and whose boundaries are isotopic to $S^1 \times \{\text{pt}\} \subset S^1 \times P$. Because the dividing curves on $\partial C = T_3$ have slope 0, the Imbalance Principle guarantees the existence of bypasses on these annuli, allowing us to thicken $V_i$ until its boundary slope becomes 0. Note that by \cite[Lemma 2.21]{emm}, there is a unique isotopy class of contact structures on $S^1 \times P$ when all boundary slopes are 0. It follows that the contact structure on $C$ is uniquely determined, up to isotopy, by the contact structures on $V_1$ and $V_2$.
    
Back to the original model $S^1\times S^2=V_1\cup (T_0\times [0,1])\cup V_2$, the slope $0$ in the framing on $T_1$ corresponds to slope $q/p$ in the framing of $\bdy V_1$. By Lemma~\ref{lemma:upper0} and Lemma~\ref{lemma:lower0}, we have $$|\Tight(V_1)|=|\Tight(S_0; \frac{q}{p})|=|(b_{1}+1)\cdots(b_{n-1}+1)b_{n}|$$ and $$|\Tight(V_2)|=|\Tight(S^0; \frac{q}{p})| =|(a_{1}+1)\cdots(a_{m-1}+1)a_{m}|.$$ Hence $$m(p, q)=|(a_{1}+1)\cdots(a_{m-1}+1)a_{m}|\cdot |(b_{1}+1)\cdots(b_{n-1}+1)b_{n}|$$ is an upper bound.

We now establish a lower bound by constructing the desired Legendrian $(p, q)$-torus knots. Given a path $P_1$ representing an element of $\Tight(S_0; q/p)$ and a path $P_2$ representing an element of $\Tight(S^0; q/p)$, the pair $(P_1, P_2)$ defines a contact structure $\xi_{P_1,P_2}$ on $S^1\times S^2$. Let $L_{P_1,P_2}$ be a Legendrian divide on $T_0$. By Lemma~\ref{lem:divide}, any such $L_{P_1,P_2}$ can be represented by a contact surgery diagram as shown in Figure~\ref{Figure:tw=0}.

    \begin{comment}

    Consider a smooth surgery diagram for $S^1\times S^2$ shown in the left of Figure~\ref{Figure:ContactS1xS2}. It can be seen as a result of Dehn filling $T^2\times[0,1]$ along a curve on $-T^2\times\{0\}$ of slope $\frac{q'}{q-q'}$ and a curve on $T^2\times \{1\}$ of slope $\frac{q-q'}{q'}$. Convert this diagram into a contact surgery diagram shown in the right of Figure~\ref{Figure:ContactS1xS2}. The complement of the Legendrian Hopf link is a $I$-invariant neighborhood of a convex torus with two dividing curves of slope $-1$. Let $\phi_1$ be the diffeomorphism of $T^2$ corresponding to the matrix $$\begin{pmatrix}
        p'&p'-p\\q'&q'-q
    \end{pmatrix}.$$ We have $$\begin{pmatrix}
        p'&p'-p\\q'&q'-q
    \end{pmatrix}\begin{pmatrix}
        1\\-1
    \end{pmatrix}=\begin{pmatrix}
        p\\-q
    \end{pmatrix}.$$ 
    Changing the coordinates by $\phi_1$, we obtain an $I$-invariant neighborhood of a convex torus with two dividing curves of slope $\frac{q}{p}$. So the two solid tori glued on $T^2\times\{0\}$ and $T^2\times\{1\}$ are elements of $\Tight(S_{0};\frac{q}{p})$ and $\Tight(S^{0};\frac{q}{p})$. Using Ding-Geiges algorithm in \cite{dg}, we convert the contact surgery diangram in Figure~\ref{Figure:ContactS1xS2} to that in Figure~\ref{Figure:tw=0}.
    \end{comment}

    A contact $(-1)$-surgery along $L_{P_1,P_2}$ cancels the $(+1)$-surgery on the parallel knot, yielding a tight contact structure. Thus, the complement of $L_{P_1,P_2}$ is tight and torsion-free. Moreover, by Lemma~\ref{lem:tw=0distinct}, distinct pairs $(P_1, P_2) \neq (P_1', P_2')$ yield knots $L_{P_1,P_2}$ and $L_{P_1',P_2'}$ that are not coarsely equivalent. Hence, $m(p, q)$ serves as a lower bound, and the proof is finished.
    \end{proof}

\begin{lemma}\label{lemma:s=infty}
%Let $n(p,q)$ be the number of tight contact structures on $L(q,p)\#L(-q,p)$. Then 
$|\Tight_0(C;\infty)|=n(p,q)$.
\end{lemma}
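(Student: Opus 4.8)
The plan is to compute $|\Tight_0(C;\infty)|$ by decomposing $C$ along the two vertical tori $T_1 = \partial V_1$ and $T_2 = \partial V_2$ in the model $C = V_1 \cup (S^1\times P)\cup V_2$, and counting the contributions from each piece. First I would make the boundary slope of $C$ equal to $\infty$, which under the coordinate change $\phi = \begin{pmatrix} p & -q \\ -p' & q' \end{pmatrix}$ from the Seifert framing corresponds to arranging that, in the $S^1\times P$ coordinates, all three boundary tori $T_1, T_2, T_3$ carry dividing curves of slope $0$ (the vertical-fiber slope). As in the proof of Lemma~\ref{lemma:tw=0}, one takes convex annuli in $S^1\times P$ joining $T_3$ to $T_i$ with $\partial$-curves isotopic to $S^1\times\{\mathrm{pt}\}$; since the dividing slope on $T_3$ is $0$, the Imbalance Principle produces bypasses that thicken $V_i$ until $\partial V_i$ has slope $0$ as well. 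By \cite[Lemma 2.21]{emm} there is a unique tight contact structure on $S^1\times P$ with all boundary slopes $0$, so the contact structure on $C$ is determined up to isotopy by the tight contact structures on $V_1$ and $V_2$, which — after translating slope $0$ in the $T_i$-framing back to slope $q/p$ in the $\partial V_i$-framing — count $|\Tight(S_0;q/p)|\cdot|\Tight(S^0;q/p)|$. But this product is $m(p,q)$, not $n(p,q)$, so the point is that not all of these extend to \emph{torsion-free} contact structures on $C$, or equivalently that the two tori $T_1,T_2$ can fail to be incompressible/convex-decomposing in the required way; the genuine count is the number of pairs $(P_1,P_2)$ that survive after identifying those which differ by shuffling signs within their first continued fraction blocks, which by Lemma~\ref{lem:length1} have combined length one more than their free part, producing exactly $n(p,q) = |\Tight(L(-q,p))|\cdot|\Tight(L(q,p))|$ (Lemma~\ref{lemma:L(q,p)}).

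More concretely, the upper bound argument I expect is: take a torus knot complement $C$ with boundary slope $\infty$ and zero Giroux torsion, find a $0$-twisting vertical Legendrian circle (one exists, since otherwise the thickening argument of Lemma~\ref{lemma:s>0}/Lemma~\ref{lemma:destabilization} forces a convex torus of slope $0$ inside $C$, hence such a circle after all), split off a convex torus of slope $0$, and reduce to understanding tight contact structures on the two solid-torus pieces together with the intervening $S^1\times P$ region — but now with the constraint that the whole thing is tight with $\infty$ boundary slope. The combinatorics of which $(P_1,P_2)$-pairs this allows should match the count of tight contact structures on the lens-space surgery $L(q,p)\# L(-q,p)$, via Lemma~\ref{lemma:surgerychar} with $s=0$: performing the $0$-slope (i.e. $\mu$) surgery on the Legendrian divide gives a contact structure on $L(q,p)\#L(-q,p)$, and distinct $\Tight_0(C;\infty)$ structures give distinct tight structures on the connected sum, whose count is $n(p,q)$ by Lemma~\ref{lemma:L(q,p)}.

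For the lower bound I would exhibit $n(p,q)$ explicitly non-isotopic tight, torsion-free contact structures on $C$ with $\infty$ boundary slope. These are built from the $(P_1,P_2)$-pairs that are \emph{$2$-consistent} (in the terminology of \cite{emm}): a pair with matching signs on the first continued fraction blocks $A_1$ and $B_1$ glues up to a torsion-free structure, and by Lemma~\ref{lem:length1} exactly one of $A_1,B_1$ has length $1$, so the number of distinct such pairs — after accounting for shuffling within blocks — is precisely $|(a_1+1)\cdots(a_{m-1}+1)(a_m+1)|\cdot|(b_1+1)\cdots(b_{n-1}+1)(b_n+1)| = n(p,q)$. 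Distinctness of the resulting contact structures on $C$ follows by the same surgery trick as in Lemma~\ref{lem:tw=0distinct}: perform a contact $(-1/2)$-surgery (or the $0$-surgery) on the Legendrian divide and use the classification of tight contact structures on the resulting lens space to separate the pairs.

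The main obstacle, I expect, is the bookkeeping in the upper bound: showing rigorously that a tight \emph{torsion-free} structure on $C$ with $\infty$ boundary slope \emph{forces} $2$-consistency of the associated pair of paths — that is, ruling out the $m(p,q) - n(p,q)$ ``inconsistent'' pairs because they would either create Giroux torsion or an overtwisted disk in $C$. This is exactly the place where the distinction between $m(p,q)$ (Lemma~\ref{lemma:tw=0}, where one only needs tightness and $\tor=0$) and $n(p,q)$ appears, and handling it carefully — presumably by a bypass/state-traversal argument showing that an inconsistent pair admits a convex torus forcing torsion, or by pushing the surgery description to a non-tight lens space — is the technical heart of the lemma.
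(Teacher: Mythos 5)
There is a genuine gap, and it sits at the heart of your upper bound. You assert that every $\xi\in\Tight_0(C;\infty)$ contains a $0$-twisting vertical Legendrian circle and then split off a convex torus of slope $0$; the paper proves exactly the opposite. If such a circle existed, one would split $C$ into a copy $C'$ with boundary slope $0$ and a basic slice in $\Tight^{min}(T^2\times[0,1];\infty,0)$, and then either $\xi|_{C'}$ corresponds to a $2$-consistent pair, in which case Lemma~\ref{lemma:overtwisted} makes $\xi$ overtwisted, or $C'$ contains a convex torus of nonzero slope, in which case Lemma~\ref{lemma:s>0} produces a second slope-$\infty$ torus and hence convex Giroux torsion --- both contradicting $\xi\in\Tight_0(C;\infty)$. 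Your parenthetical justification is also circular: the thickening arguments of Lemmas~\ref{lemma:s>0} and~\ref{lemma:destabilization} produce a convex torus of slope $\infty$, not $0$, when no $0$-twisting vertical circle exists. The non-existence of such a circle is precisely what pins the boundary slopes of $V_1$ and $V_2$ at $(q/p)^a$ and $(q/p)^c$ (with $N(A)$ an $I$-invariant neighborhood), and the count $n(p,q)=|\Tight(V_1;(q/p)^a)|\cdot|\Tight(V_2;(q/p)^c)|$ then falls out of Lemma~\ref{lemma:L(q,p)}. This also explains why the passage from $m(p,q)$ to $n(p,q)$ is not ``identifying pairs that differ by shuffling in the first blocks'': it comes from each path losing its first Farey vertex, turning $|a_m|,|b_n|$ into $|a_m+1|,|b_n+1|$.

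Your lower bound has the consistency condition backwards. By Lemma~\ref{lemma:2-inconsistent}, a $2$-\emph{consistent} pair $(P_1,P_2)$ yields a complement in which \emph{every} boundary-parallel convex torus has slope $0$, so it contains no slope-$\infty$ sub-complement at all; moreover there are only two $2$-consistent decorations up to shuffling (all positive and all negative), both giving $\xi_{std}$, not $n(p,q)$ of them. The elements of $\Tight_0(C;\infty)$ instead arise from the $2$-\emph{inconsistent} pairs, each element extending by a positive or a negative basic slice to two such pairs (which is why $m_2=n(p,q)$ is \emph{half} their number). What does survive from your sketch is the correct lower-bound mechanism: decompose $L(q,p)\#L(-q,p)=C\cup(S^1\times D^2)$ via Lemma~\ref{lemma:surgerychar} with $s=0$ and restrict its $n(p,q)$ tight structures to $C$; restriction of tight is tight and torsion-free, and the solid-torus piece is unique, giving $n(p,q)\le|\Tight_0(C;\infty)|$. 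Note that running this map in the other direction for the upper bound, as you suggest, would require knowing that the glued-up structure on the connected sum is tight, which is not automatic --- the paper avoids this by obtaining the upper bound from the convex decomposition instead.
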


\begin{proof}
    In the model $C = V_1 \cup N(A) \cup V_2$, where $N(A) = A \times [-1,1]$ is a neighborhood of the annulus $A$, consider any contact structure $\xi \in \Tight_0(C; \infty)$. We first show that $(C, \xi)$ cannot contain a vertical Legendrian curve with twisting number $0$. Suppose, to the contrary, that such a curve exists. Then there is a copy $C' \subset C$ such that $C \setminus C' \cong T^2 \times [0,1]$, with $\xi|_{C'} \in \Tight_0(C; 0)$ and $\xi|_{T^2 \times [0,1]} \in \Tight^{\text{min}}(T^2 \times [0,1]; \infty, 0)$. If every convex torus in $C'$ parallel to the boundary has dividing slope $0$, then by Lemma~\ref{lemma:2-inconsistent} and the proof of Lemma~\ref{lemma:tw=0}, the structure $\xi|_{C'} \in \Tight_0(C; 0)$ can be described by a $2$-consistent pair of paths $(P_1, P_2)$. It then follows from Lemma~\ref{lemma:overtwisted} that $\xi$ is overtwisted—a contradiction to the assumption that $\xi \in \Tight_0(C; \infty)$. If, instead, $\xi|_{C'}$ contains a convex torus parallel to the boundary with a nonzero dividing slope, then by Lemma~\ref{lemma:s>0}, it contains a further copy $C'' \subset C'$ such that $\partial C''$ has dividing slope $\infty$. In this case, $C \setminus C''$ forms a convex Giroux torsion layer, again contradicting the hypothesis that $\xi \in \Tight_0(C; \infty)$.

    Since $(C,\xi)$ contains no  $0$-twisting vertical Legendrian curve, the proof of Lemma~\ref{lemma:s>0} implies only two possibilities: either $(p,q) = (1,-2)$, or we have $\xi|_{V_1} \in \Tight(V_1, (q/p)^a)$, $\xi|_{V_2} \in \Tight(V_2, (q/p)^c)$, and $N(A)$ is an $I$-invariant neighborhood of a convex annulus $A$ with two dividing curves. We claim that the first case is contained in the second.

    %Since $(C,\xi)$ does not contain 0-twisting vertical Legendrian curve, there are only two possible cases as follows, by the proof of Lemma~\ref{lemma:s>0}. One is $(p,q)=(1,-2)$. The other is that $\xi|_{V_1}\in \Tight(V_1, (q/p)^a)$, $\xi|_{V_2} \in \Tight(V_2, (q/p)^c)$, and $N(A)$ is an $I$-invariant neighborhood of a convex annulus $A$ with two dividing curves. We claim that the former case can be included in the latter.

    We now prove the claim. Consider the model \( C = V_1 \cup (S^1 \times P) \cup V_2 \), where \( V_1 \) has boundary slope \( -\frac{2m_2 - 1}{m_2} \) and \( V_2 \) has boundary slope \( -\frac{2m_2 -  1}{m_2 - 1} \), with \( m_2 \geq 1 \). Let \( A_1, A_2 \) be convex annuli in \( S^1 \times P \) connecting \( \partial V_1 \) and \( \partial V_2 \), respectively, to \( \partial C \), such that their boundaries are ruling curves isotopic to \( S^1 \times \{\text{pt}\} \).  Observe  that $$|\frac{0}{1} \bigcdot (-\frac{2m_2-1}{m_2})|=2m_2-1\geq|\frac{0}{1}\bigcdot \frac{1}{0}|=1$$and $$|\frac{0}{1} \bigcdot (-\frac{2m_2-1}{m_2-1})|=2m_2-1\geq|\frac{0}{1}\bigcdot \frac{1}{0}|=1.$$ By the Imbalance Principle, bypasses exist that allow thickening of \( V_1 \) and \( V_2 \), unless \( m_2 = 1 \). In the case $m_2=1$, $V_1$ has boundary slope $\frac{1}{m_2-1}=\infty=(\frac{q}{p})^a$, and $V_2$ has boundary slope $\frac{-1}{m_2}=\frac{-1}{1}=(\frac{q}{p})^c$. In this situation, when rounding edges, the boundary slope of \( C' = V_1 \cup N(A) \cup V_2 \) becomes \( \infty \), and hence \( C \setminus C' \) is \( I \)-invariant.

    %Now note that in this case, we will have boundary slope $\infty$ on $C'=V_1\cup N(A)\cup V_2$ when rounding edges, and hence $C\setminus C'$ is $I$-invariant.
    
    Thus $|\Tight_0(C;\infty)|$ is bounded above by the product of the number of tight contact structures on $V_1$ with boundary slope $(q/p)^a$ and the number of tight contact structures on $V_2$ with boundary slope $(q/p)^c$. By Lemma~\ref{lemma:L(q,p)}, we have
$$|\Tight(V_1; (\frac{q}{p})^a)|\cdot|\Tight(V_2; (\frac{q}{p})^c)|=|\Tight(L(q,p)\#L(-q,p))|=n(p,q).$$
    %\begin{align*}
        %|\Tight(L(q,p)\#L(-q,p))|=&|\Tight(S_0;(\frac{q}{p})^a)|\cdot|\Tight(S^0;(\frac{q}{p})^c)|\\=&|\Tight(V_1; (\frac{q}{p})^a)|\cdot|\Tight(V_2; (\frac{q}{p})^c)|.
    %\end{align*}
    Hence, $n(p,q)\geq |\Tight_0(C;\infty)|$.

    By Lemma~\ref{lemma:surgerychar}, the manifold $L(q,p) \# L(-q,p)$ can always be decomposed as the union of $C$ and a solid torus $S^1 \times D^2$, where the meridian $\{\theta\} \times \partial D^2$ is identified with a curve of slope $0$ on $\partial C$ (with respect to the torus framing). The tight contact structure on $L(q,p) \# L(-q,p)$, when restricted to $C$, has boundary slope $\infty$ and contains no convex Giroux torsion. Indeed, if it did contain such torsion, it would produce a solid torus with a convex torsion layer, which in turn contains an overtwisted disk—a contradiction. Thus, we have %$$|\Tight(L(q,p)\#L(-q,p))| 
    $$n(p,q)\leq |\Tight_0(C;\infty)|\cdot |\Tight(S^1\times D^2; 0)|=|\Tight_0(C;\infty)|.$$
Therefore, we have $|\Tight_0(C;\infty)|=n(p,q) $.\end{proof}

\begin{lemma}\label{lemma:s=n>0}
%Let $n(p,q)$ be the number of tight contact structures on $L(q,p)\#L(-q,p)$. Then 
For any positive integer $i$,  $|\Tight_0(C;i)|=2n(p,q)$.
\end{lemma}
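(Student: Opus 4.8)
The plan is to establish the two inequalities $|\Tight_0(C;i)|\le 2n(p,q)$ and $|\Tight_0(C;i)|\ge 2n(p,q)$ separately.

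For the upper bound I would take an arbitrary $\xi\in\Tight_0(C;i)$ and, using $i>0$, apply Lemma~\ref{lemma:s>0} to write $\xi$ up to isotopy as $\xi'\cup\xi''$, with $\xi'\in\Tight_0(C;\infty)$ and $\xi''\in\Tight^{min}(T^2\times[0,1];i,\infty)$, under the identification $C\cong C\cup(T^2\times[0,1])$. Since $i$ and $\infty$ cobound an edge of the Farey graph, $T^2\times[0,1]$ with these boundary slopes is a basic slice, so $|\Tight^{min}(T^2\times[0,1];i,\infty)|=2$; together with $|\Tight_0(C;\infty)|=n(p,q)$ from Lemma~\ref{lemma:s=infty}, every element of $\Tight_0(C;i)$ arises from one of only $2n(p,q)$ pairs $(\xi',\xi'')$, which gives $|\Tight_0(C;i)|\le 2n(p,q)$.

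For the lower bound I would construct $2n(p,q)$ pairwise non-isotopic elements of $\Tight_0(C;i)$. Let $B_+$ and $B_-$ be the two basic slices with boundary slopes $\infty$ and $i$; for each $\xi'\in\Tight_0(C;\infty)$ and each sign, glue $B_\pm$ to $C$ along the slope-$\infty$ boundary to get a contact structure $\xi'_\pm$ on $C$ with boundary slope $i$. The points to verify are: (a) each $\xi'_\pm$ is tight; (b) each has no convex Giroux torsion; (c) the $2n(p,q)$ resulting structures are pairwise non-isotopic. For (a) I would attach, along the slope-$i$ boundary of $C\cup B_\pm$, the standard neighborhood $W$ whose meridian slope $\sigma$ is adjacent to $i$ but not to $\infty$ in the Farey graph, say $\sigma=(2i+1)/2$. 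Then $W$ carries a unique tight structure; $B_\pm\cup W$ is a solid torus with meridian $\sigma$ and boundary $\infty$ whose defining Farey path is a single basic slice followed by the meridian edge, so (since gluing $B_\pm$ moves the boundary slope away from the meridian) it is tight and equals one of its two tight contact structures, the two being distinguished by the sign. Hence $\xi'_\pm\cup W=\xi'\cup(B_\pm\cup W)$ is a contact structure on the closed manifold $Y:=C\cup W$, which by Lemma~\ref{lemma:surgerychar} is the Seifert fibered space $M(S^2;q'/q,-q'/q,-2/(2i+1))$. Translating $\xi'_\pm\cup W$ back into a decorated pair of Farey paths via the Ding--Geiges algorithm \cite{dg1}, as in the proof of Lemma~\ref{lem:tw=0distinct}, one checks that these paths are geodesic at the surgery vertex, so $\xi'_\pm\cup W$ is a genuine tight contact structure on $Y$ and therefore $\xi'_\pm$, being contained in it, is tight. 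Part (b) then follows from the splitting $\xi'_\pm=\xi'\cup B_\pm$: $\xi'$ is torsion-free and $B_\pm$ is a single basic slice, so no convex $2\pi$-torsion layer parallel to $\partial C$ can be embedded. For (c), an isotopy between two of these structures would produce, after the surgery above, isotopic tight structures on $Y$; but, again by the bookkeeping of Lemma~\ref{lem:tw=0distinct} together with Lemma~\ref{lem:divide}, the decorated pair of Farey paths describing $\xi'_\pm\cup W$ recovers both the sign $\pm$ (from its outermost basic slice) and the decorated pair $(P_1,P_2)$ describing $\xi'$, so the data must coincide. Combining (a)--(c) yields $|\Tight_0(C;i)|\ge 2n(p,q)$, hence the equality.

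The step I expect to be the main obstacle is (a), the tightness of $\xi'_\pm$. Once Lemma~\ref{lemma:s>0} is available the upper bound is essentially formal, but ruling out an overtwisted disk after attaching a basic slice to a tight $\xi'\in\Tight_0(C;\infty)$ forces one to identify a concrete tight closed manifold containing $\xi'_\pm$ and to check, using Honda's classification \cite{h} on the solid-torus summands and the surgery bookkeeping of Lemma~\ref{lem:tw=0distinct}, that the basic-slice signs can always be arranged consistently across the Farey-graph shortcuts created by the surgery. This is precisely where the hypothesis $i>0$ is used.
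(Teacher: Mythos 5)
Your upper bound is exactly the paper's: Lemma~\ref{lemma:s>0} factors any $\xi\in\Tight_0(C;i)$ into an element of $\Tight_0(C;\infty)$ and a basic slice, and Lemma~\ref{lemma:s=infty} supplies the count $n(p,q)\cdot 2$. The genuine gap is in your lower bound, concentrated in step (a) and, for the same reason, step (c). You fill the slope-$i$ boundary with a solid torus $W$ of meridian slope $(2i+1)/2$, obtaining a contact structure on the small Seifert fibered space $Y=M(S^2;q'/q,-q'/q,-2/(2i+1))$, and then assert that tightness of $\xi'_\pm\cup W$ follows because its ``decorated pair of Farey paths is geodesic at the surgery vertex.'' No such criterion exists for a Seifert fibered space with three exceptional fibers: Honda's Farey-path calculus certifies tightness only on solid tori, thickened tori, and lens spaces (hence, by connected sum, on $L(q,p)\#L(-q,p)$), which is precisely why Lemma~\ref{lem:tw=0distinct} arranges for the surgered manifold to be a lens space. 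Tightness of the three pieces $\xi'$, $B_\pm$, $W$ does not imply tightness of their union, and certifying tightness on $Y$ would require a filling or contact-invariant argument you do not supply. The same defect undermines (c): distinguishing the resulting structures on $Y$ cannot be done by path bookkeeping either.

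The paper sidesteps this by filling with the meridian-slope-$0$ solid torus $S$, so that $C\cup S=L(q,p)\#L(-q,p)$ by Lemma~\ref{lemma:surgerychar} carries the tight structure already produced in Lemma~\ref{lemma:s=infty}; the basic slice with slopes $\infty$ and $i$ is then realized as a layer of $S$ itself (the outer shell cut off by positively stabilizing the core Legendrian), so $\eta\cup\zeta$ embeds in a manifold known to be tight, and absence of torsion follows because torsion in $C$ would thicken to torsion in $S$ and hence an overtwisted disk. Injectivity is handled in the same model: for $i>1$ one attaches a solid torus with all-positive stabilization layers, which keeps $\eta\cup\zeta$ inside a tight $L(q,p)\#L(-q,p)$ but makes $\eta'\cup\zeta'$ overtwisted; for $i=1$ the two signs are separated by the relative Euler class evaluated on a horizontal surface. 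If you replace your filling slope $(2i+1)/2$ by $0$ and run this embedding/stabilization argument, your outline goes through; as written, the tightness claim in (a) is unsupported.
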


\begin{proof}
    By Lemma~\ref{lemma:s>0}, we know that for any integer $i > 0$ and $\xi\in \Tight_0(C; i)$ there is a copy $C'$ inside $C$, such that $\xi|_{C'}\in \Tight_0(C;\infty)$, $C\setminus C'=T^2\times [0,1]$ and $\xi|_{T^2\times [0,1]}\in \Tight^{min}(T^2\times [0,1];i,\infty)$. Then $$|\Tight_0(C;i)|\leq |\Tight_0(C;\infty)|\cdot|\Tight^{min}(T^2\times [0,1];i,\infty)|=2n(p,q).$$
    %To establish the equality of both sides, it suffices to prove that the map 
    %\begin{align*}
    %    \Tight_0(C;\infty)\times \Tight^{min}(T^2\times [1,2]; n,\infty) &\to \Tight_0(C; n)\\(\eta,\zeta)&\mapsto \eta\cup\zeta
    %\end{align*}
    %is injective.
    To establish the equality of both sides, it suffices to show that the following map is injective: 
    \[
        \Tight_0(C;\infty)\times \Tight^{min}(T^2\times [1,2]; i,\infty) \to \Tight_0(C; i),\, (\eta,\zeta)\mapsto \eta\cup\zeta.
    \]
    We first verify that this map is well-defined. Let $\zeta\in \Tight^{min}(T^2\times [1,2]; i,\infty)$ be the positive basic slice (the negative case being analogous). In the proof of Lemma~\ref{lemma:s=infty}, when gluing a solid torus $S=S^1\times D^2$, with meridional slope $0$ and the unique tight structure, to $(C,\eta)$, we obtain a tight contact structure on $L(q,p)\#L(-q,p)$. View $S$ as a standard neighborhood of some Legendrian knot $L$ in $L(q,p)\#L(-q,p)$. We positively stabilize $L$ once, and obtain a standard neighborhood $S'$ of $S_{+}(L)$, where $S\setminus S'$ is a positive basic slice $\tilde{\zeta}\in \Tight^{min}(T^2\times [0,2]; 1,\infty)$. We may assume that the convex torus $T^2\times \{1\}\subset S\setminus S'$ has dividing slope $i$. Then, $\zeta$ is just the restriction of $\tilde{\zeta}$ on $T^2\times [1,2]$.  Therefore, $\eta\cup \zeta$ is contained in a tight contact structure on $L(q,p)\#L(-q,p)$, and hence tight. Next, if $\eta \cup \zeta$ has convex Giroux torsion, we can thicken $S$ to contain convex Giroux torsion, which would produce an overtwisted disk in $L(q,p)\#L(-q,p)$. Thus $\eta \cup \zeta \in \Tight_0(C; i)$.

    Now we prove the injectivity. Let $\zeta$ and $\zeta'$ be the positive and negative basic slices in $\Tight^{min}(T^2\times [1,2]; i,\infty)$, respectively. We will show that $\eta\cup\zeta$ is not isotopic to $\eta'\cup\zeta'$ for any $\eta$ and $\eta'$ in $\Tight_0(C;\infty)$. 
    
    For $i>1$, attach a solid torus $S''$ to $C\cup(T^2\times [1,2])$ with meridional slope $0$, and equip $S''$ with a tight contact structure $\xi$ for which all basic slices are positive. On the thickened solid torus $(T^2\times [1,2])\cup S''$,  $\zeta\cup\xi$ is tight, whereas $\zeta'\cup\xi$ becomes overtwisted. In fact, $\zeta\cup\xi$ is the unique contact structure on $S$. Hence, by the proof of Lemma~\ref{lemma:s=infty}, $\eta\cup\zeta\cup\xi$ is a tight contact structure on $C\cup (T^2\times [1,2])\cup S''=L(q,p)\#L(-q,p)$. However, $\eta'\cup\zeta'\cup\xi$ is overtwisted on $L(q,p)\#L(-q,p)$. Consequently, $\eta\cup\zeta$ and $\eta'\cup \zeta'$ are not isotopic.
    
    When $i=1$, we again glue the solid torus $S''$ to $C\cup (T^2\times [1,2])$ with meridional slope $0$, and take the unique tight contact structure $\xi$ on $S''$. In this case, on the thickened solid torus $(T^2\times [1,2])\cup S''$, both $\zeta\cup\xi$ and $\zeta'\cup\xi$ are isotopic to the unique tight contact structure on $S$. By the proof of Lemma~\ref{lemma:s=infty}, both $\eta\cup\zeta\cup\xi$ and $\eta'\cup\zeta'\cup\xi$ are tight contact structures on $C\cup S=L(q,p)\#L(-q,p)$. Moreover, if $\eta$ and $\eta'$ are not isotopic, then $\eta\cup\zeta\cup\xi$ and $\eta'\cup\zeta'\cup\xi$ are also non-isotopic. It follows that $\eta\cup\zeta$ and $\eta'\cup \zeta'$ are not isotopic. On the other hand, $\eta\cup\zeta$ and $\eta\cup \zeta'$ are not isotopic. This follows as the relative Euler classes evaluate differently on a horizontal surface in $C\cup (T^2\times [1,2])$ which has genus $0$ and has $-q$ boundary components with slope $\infty$.

    It remains to show that if $\eta\cup \zeta$ and $\eta'\cup \zeta$ are isotopic then $\eta$ and $\eta'$ are isotopic (the case for $\zeta'$ being analogous). Gluing a solid torus $S''$ as above, we will obtain the same contact structure on $L(q,p)\#L(-q,p)$. When decomposing it into $(C;\infty)$ and a solid torus $S$ as the proof of Lemma~\ref{lemma:s=infty}, we conclude that $\eta$ is isotopic to $\eta'$.
\end{proof}

%Denote $(A_1,A_3,\ldots,A_{2n-1})$ and $(B_2,B_4, \ldots, B_{2m})$ the subdivisions of $P_1$ and $P_2$ into continued fraction blocks, respectively (here we may assume the first continued fraction block in $P_1$ has length $1$). Suppose $i$ is even (odd case is similar). Following \cite[Section 2.3]{emm}, we call the paths $P_1$ and $P_2$ \dfn{$i$-consistent} if the signs of the decorations on the paths in $A_1,B_2, \ldots,A_{i-1},B_i$ are all the same and we call the paths \dfn{$i$-inconsistent} if $(P_1,P_2)$ is $(i-1)$-consistent but not $i$-consistent. If every decoration of $(A_1, A_3, \ldots, A_{i-1})$ has the same sign and every decoration of $(B_2, \ldots ,B_i)$ has the opposite sign, then the paths are called \dfn{totally $i$-inconsistent}. Moreover, we can convert an $i$-inconsistent pair of paths into a unique $(i-1)$-inconsistent pair of paths, and hence into a $k$-inconsistent one for all $k=2,3,\ldots,i$. All these decorated pairs of paths define the same contact structure on $S^1\times S^2$ and are said to be \dfn{compatible}.

\begin{lemma}\label{lemma:2-inconsistent}
    For any pair of paths $(P_1, P_2)$ representing $q/p$, denote $\xi'_{P_1,P_2}\in \Tight_0(C;0)$ the restriction of $\xi_{P_1,P_2}$ on the complement of $L_{P_1,P_2}$. Then, all convex tori parallel to $\bdy C$ have slope $0$ in $(C,\xi'_{P_1,P_2})$ if and only if $(P_1,P_2)$ is $2$-consistent. Moreover, if $(P_1, P_2)$ is $2$-inconsistent, then there is a copy $C'$ inside $C$ such that $(\xi'_{P_1,P_2})|_{C'}\in \Tight_0(C;\infty)$ and $(\xi'_{P_1,P_2})|_{C\setminus C'}\in \Tight^{min}(T^2\times [0,1];0,\infty)$.
\end{lemma}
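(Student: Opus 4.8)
The plan is to analyze $(C,\xi'_{P_1,P_2})$ in the model $C=V_1\cup N(A)\cup V_2$ set up at the beginning of this section, where $\xi'_{P_1,P_2}$ restricts on each $V_i$ to the tight solid-torus structure recorded by $P_i$, where $N(A)$ is an $I$-invariant neighborhood of a convex $q/p$-sloped annulus carrying the two-arc dividing set forced by $\tw(L_{P_1,P_2})=0$, and where $\bdy C$ has slope $0$. Since we may assume $q/p\neq-2$ (the case $q/p=-2$ is handled in Example~\ref{example:(1,q)}), Lemma~\ref{lem:length1} shows that exactly one of the first continued-fraction blocks of $P_1,P_2$ at $q/p$ has length $1$; say this is the block $A_1$ of $P_1$, the other case being symmetric. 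Write $\epsilon$ for the sign of the single basic slice $A_1$, which is the layer of $V_1$ adjacent to $\bdy V_1$ with boundary slopes $q/p$ and $(q/p)^a$; then $(P_1,P_2)$ is $2$-consistent exactly when every basic slice of $B_2$ -- the first continued-fraction block of $P_2$, i.e.\ the layer of $V_2$ adjacent to $\bdy V_2$ -- also has sign $\epsilon$, and since $(P_1,P_2)$ is automatically $1$-consistent, failing $2$-consistency is the same as being $2$-inconsistent.

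The first step is a reduction to the slope $\infty$. If $(C,\xi'_{P_1,P_2})$ has a convex torus $T$ parallel to $\bdy C$ of slope $s\neq 0$, then $T$ bounds a copy $C_1$ of $C$ with $\xi'_{P_1,P_2}|_{C_1}\in\Tight_0(C;s)$; applying Lemma~\ref{lemma:s>0} when $s>0$, and the analogous statement for $s<0$ (whose proof is obtained from that of Lemma~\ref{lemma:s>0} by interchanging the roles of $V_1$ and $V_2$), produces a further copy $C'$ of $C$ inside $C_1$ whose boundary has slope $\infty$. Conversely, given a slope-$\infty$ convex torus $T'$ parallel to $\bdy C$ that bounds a copy $C'$ of $C$, the region $C\setminus C'$ is a $T^2\times[0,1]$ with boundary slopes $0$ and $\infty$, which are Farey-adjacent, hence a single basic slice and in particular minimally twisting; moreover $\xi'_{P_1,P_2}|_{C'}$ carries no convex Giroux torsion, since a boundary-parallel Giroux torsion layer in $C'$ would be one in $C$, contradicting $\xi'_{P_1,P_2}\in\Tight_0(C;0)$. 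Hence $\xi'_{P_1,P_2}|_{C'}\in\Tight_0(C;\infty)$, which gives the ``moreover'', and it remains to show that $(C,\xi'_{P_1,P_2})$ contains a convex torus of slope $\infty$ parallel to $\bdy C$ if and only if $(P_1,P_2)$ is not $2$-consistent.

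For ``not $2$-consistent $\Rightarrow$ exists'': in this case $B_2$ contains a basic slice of sign $-\epsilon$, so after passing to a compatible pair of paths and shuffling signs within $B_2$, we may take that basic slice $\beta$ (boundary slopes $q/p$ and $(q/p)^c$) to be the layer of $V_2$ adjacent to $\bdy V_2$. Removing $A_1$ from $V_1$ and $\beta$ from $V_2$ gives a sub-complement $C'\subset C$ whose two solid tori have boundary slopes $(q/p)^a$ and $(q/p)^c$ -- exactly the slopes appearing in $\Tight_0(C;\infty)$ by the proof of Lemma~\ref{lemma:s=infty} -- and the edge-rounding computation of Case~4 in the proof of Lemma~\ref{lemma:s>0} then shows $\bdy C'$ has slope $\infty$, provided the intervening region $A_1\cup N(A)\cup\beta$ is minimally twisting; that it is, given the opposite signs of $A_1$ and $\beta$, is read off from Honda's rules for gluing basic slices across the Farey corner at $q/p$ (equivalently, the corner-shortening procedure of \cite[Observation 2.13]{emm}). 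For the reverse implication one runs this backwards: a slope-$\infty$ parallel convex torus gives $C'$ with $\xi'_{P_1,P_2}|_{C'}\in\Tight_0(C;\infty)$, which by the proof of Lemma~\ref{lemma:s=infty} consists of two solid tori of slopes $(q/p)^a,(q/p)^c$ joined by an $I$-invariant piece; re-gluing the complementary basic slice of slopes $0$ and $\infty$ and re-subdividing the resulting slope-$q/p$ solid tori into continued-fraction blocks, via the path-level dictionary of the proof of Lemma~\ref{lemma:tw=0}, forces $A_1$ and the outermost basic slice of $B_2$ to have opposite signs, so $(P_1,P_2)$ is not $2$-consistent.

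The hard part, in each direction, is this last step: making precise how the signs of $A_1$ and $\beta$, seen through the non-product region $N(A)$ and the edge-rounding, control whether the boundary of the peeled-off region lands at slope $0$ or at slope $\infty$. This is exactly the ``$k$-(in)consistency'' bookkeeping of \cite{emm} transplanted to $S^1\times S^2$, and carrying it out requires tracking basic-slice signs through the coordinate changes used to build $C$ together with Honda's classification of tight contact structures on $T^2\times[0,1]$ and on solid tori.
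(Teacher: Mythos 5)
Your overall strategy coincides with the paper's: reduce everything to the existence of a convex torus of slope $\infty$ parallel to $\bdy C$, produce such a torus from a $2$-inconsistent pair by peeling the first basic slices off $V_1$ and $V_2$ and rounding edges, and obtain the converse from the structure of $\Tight_0(C;\infty)$ established in Lemma~\ref{lemma:s=infty}. (Two minor differences: for $s<0$ the paper does not need a mirrored Lemma~\ref{lemma:s>0} --- it simply observes that the layer between $\bdy C$ and a slope-$s$ torus already contains a slope-$\infty$ torus; and the paper runs the whole argument in the $S^1\times P$ model rather than the $V_1\cup N(A)\cup V_2$ model.)

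There is, however, a genuine gap precisely at the step you yourself flag as ``the hard part'': you never establish that \emph{opposite} signs of $A_1$ and $\beta$ are what force the peeled-off region to have boundary slope $\infty$, and this is the actual content of the lemma. Moreover, ``minimal twisting of $A_1\cup N(A)\cup\beta$'' is not the right criterion: what determines the slope after edge-rounding is the dividing set of the convex annulus joining the two inner tori $T_1'$ and $T_2'$ (of slopes $(q/p)^a$ and $(q/p)^c$). The paper's proof supplies two specific ingredients here. First, the sign of a basic slice reverses when the layer is regarded as part of $S^1\times P$ rather than as part of $V_i$; hence $2$-inconsistency of $(P_1,P_2)$ (opposite signs in the paths) translates into the two boundary basic slices $N_1,N_2$ of $S^1\times P$ having the \emph{same} sign. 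Second, with that same-sign hypothesis, \cite[Lemma 2.22]{emm} produces a convex annulus with $0$-sloped boundary on $T_1'$ and $T_2'$ whose two dividing curves run from one boundary component to the other; only for an annulus in this standard form does edge-rounding give dividing slope $1/(0+1-1)=\infty$. If the path-signs agree, the annulus instead acquires boundary-parallel dividing curves, whose bypasses thicken $V_1,V_2$ back to slope $q/p$ and yield no torus of nonzero slope (the paper packages this direction as the contrapositive, via Lemma~\ref{lemma:s=infty}). Without the orientation-reversal observation it is easy to get the sign condition exactly backwards, so this step cannot be delegated to ``Honda's rules'' or to \cite[Observation 2.13]{emm} without being carried out.
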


\begin{proof}
    We first show that if $P_1$ and $P_2$ are $2$-inconsistent, then there exists a copy $C' \subset C$ such that $(\xi'_{P_1,P_2})|_{C'} \in \Tight_0(C;\infty)$. Recall from Section~\ref{section:paths} that the contact structure $\xi_{P_1,P_2}$ is constructed using solid tori $V_1$ and $V_2$ with convex boundaries $T_1$ and $T_2$, each of dividing slope $q/p$. Inside $V_1$ and $V_2$, we choose convex tori $T_1'$ and $T_2'$ with dividing slopes $(q/p)^a$ and $(q/p)^c$, respectively. Then,  in the model $C = V_1 \cup (S^1 \times P) \cup V_2$, the torus $T_1'$ has dividing slope $-1$, $T_2'$ has dividing slope $\infty$, and each $T_i$ (for $i=1,2,3$) has dividing slope $0$. The thickened tori $N_i$ cobounded by $T_i$ and $T_i'$ form basic slices for $i=1,2$. Since $P_1$ and $P_2$ are $2$-inconsistent, we may arrange their first basic slices to carry opposite signs. Note that reversing the orientation of a thickened torus changes its sign. Therefore, within $S^1 \times P$, the basic slices $N_1$ and $N_2$ actually carry the same sign. By \cite[Lemma 2.22]{emm}, there exists a convex annulus $A$ whose boundary consists of $0$-sloped ruling curves on $T_1'$ and $T_2'$, and which contains exactly two dividing curves running from one boundary component to the other. Rounding the edges of $T_1' \cup N(A) \cup T_2'$ yields a convex torus parallel to $\bdy C$ with dividing slope $\frac{1}{0+1-1}=\infty.$

    Next, we prove that if there exists a convex torus $T$ parallel to $\bdy C$ with dividing slope $s\neq 0$ in $(C,\xi'_{P_1,P_2})$, then $(P_1,P_2)$ is $2$-inconsistent. If $s<0$,  then there exists a convex torus between $\bdy C$ and $T$ with dividing slope $\infty$, which cuts off a copy $C'\subset C$ such that $(\xi'_{P_1,P_2})|_{C'}\in \Tight_0(C;\infty)$. By Lemma~\ref{lemma:s>0}, the same conclusion holds for $s>0$. It follows from the proof of Lemma~\ref{lemma:s=infty} that the restriction $(\xi'_{P_1,P_2})|_{C'}$ has the structure as described in the previous paragraph, and hence $(P_1,P_2)$ is $2$-inconsistent.
\end{proof}

\begin{lemma}
    For any contact structure in $\Tight_0(C;\infty)$, every convex torus parallel to $\bdy C$ in $C$ must have dividing slope $\infty$.
\end{lemma}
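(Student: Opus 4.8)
The plan is to argue by contradiction. Suppose $\xi\in\Tight_0(C;\infty)$ and that $T\subset C$ is a convex torus parallel to $\bdy C$ whose dividing slope is some $s\neq\infty$. Write $C=C'\cup R$, where $C'$ is the closure of the component of $C\setminus T$ not containing $\bdy C$ and $R$ is the closure of the component that does; since $T$ is parallel to $\bdy C$, the piece $C'$ is a copy of $C$ with $\bdy C'=T$, and $R\cong T^2\times[0,1]$ is the collar between $T$ and $\bdy C$. Any convex Giroux torsion layer of $\xi|_{C'}$ (resp.\ of $\xi|_R$) along tori parallel to $T$ would also be one of $\xi$ along tori parallel to $\bdy C$, so $\xi|_{C'}\in\Tight_0(C;s)$ and $\xi|_R$ is minimally twisting. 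I will show each of the cases $s=0$, $s<0$, $s>0$ is impossible.

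\emph{Cases $s=0$ and $s<0$.} If $s=0$, a Legendrian divide on $T$ is a vertical Legendrian circle (slope $0$ is the regular-fiber slope) whose contact framing equals the Seifert framing, i.e.\ a $0$-twisting vertical Legendrian curve; but the proof of Lemma~\ref{lemma:s=infty} shows $(C,\xi)$ contains no such curve. If $s<0$, then $\xi|_R$, being minimally twisting with boundary slopes $\infty$ and $s$, realizes a convex torus parallel to $\bdy C$ of some integer slope $n\le 0$ (take the first interior torus, whose slope is a neighbor of $\infty$ hence an integer, and which lies between $\infty$ and $s$; or take $T$ itself if $s$ is already a negative integer). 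If $n=0$ we are back in the previous situation. If $n<0$, this torus cobounds a copy $C_n\subset C$ with $\xi|_{C_n}\in\Tight_0(C;n)$, so $C_n$ is the complement of a non-loose Legendrian $(p,q)$-torus knot $L$ with $\tw(L)=n<0$ and $\tor(L)=0$. By Lemma~\ref{lemma:destabilization}, $L$ destabilizes; the destabilization is again a Legendrian $(p,q)$-torus knot that is non-loose (its complement lies inside $C_n$) and torsion-free (its complement lies inside $C$), now with twisting number $n+1$. Iterating $|n|$ times yields such a knot with twisting number $0$, whose complement $C_0\subseteq C$ has $\bdy C_0$ a convex torus of dividing slope $0$ parallel to $\bdy C$; its Legendrian divides are $0$-twisting vertical Legendrian curves in $(C,\xi)$, again contradicting the proof of Lemma~\ref{lemma:s=infty}.

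\emph{Case $s>0$.} Apply Lemma~\ref{lemma:s>0} to $(C',\xi|_{C'})\in\Tight_0(C;s)$: there is a copy $C''\subset C'$ with $\xi|_{C''}\in\Tight_0(C;\infty)$ and $\xi|_{C'\setminus C''}\in\Tight^{min}(T^2\times[0,1];s,\infty)$; in particular $\bdy C''$ is a convex torus parallel to $\bdy C$ of dividing slope $\infty$. Now $C\setminus C''$ is a thickened torus both of whose boundary components $\bdy C''$ and $\bdy C$ carry dividing slope $\infty$, yet it contains the convex torus $T$ of dividing slope $s\neq\infty$. A minimally twisting tight contact structure on $T^2\times[0,1]$ with equal boundary slopes is $I$-invariant, in which case all of its parallel convex tori share that slope; hence $\xi|_{C\setminus C''}$ is not minimally twisting, so $C\setminus C''$ contains a half Giroux torsion layer along tori parallel to $\bdy C$. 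Then $\tor(\xi)\ge\tfrac12>0$, contradicting $\xi\in\Tight_0(C;\infty)$.

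\textbf{Main obstacle.} The conceptual crux is the final step of the $s>0$ case: extracting half Giroux torsion from a boundary-parallel convex torus of a ``wrong'' slope that is sandwiched between two slope-$\infty$ tori, using the classification of tight $T^2\times[0,1]$. The remaining points are routine: that a minimally twisting $T^2\times[0,1]$ from $\infty$ to a negative slope realizes a nonpositive integer slope, and that destabilization preserves non-looseness, vanishing torsion and the torus-knot type while keeping all the complements nested inside $C$ (so that the argument of Lemma~\ref{lemma:destabilization} may be iterated without passing to a closed manifold).
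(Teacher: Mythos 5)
Your proof is correct and runs on the same machinery as the paper's, but it is organized along a genuinely different route. The paper argues uniformly in $s$: it finds a convex torus of negative integer slope in the collar between $\bdy C$ and $T$, pushes it to slope $0$ by the destabilization argument, describes the resulting inner copy of $C$ by a pair of paths, and then gets the contradiction from the dichotomy of Lemmas~\ref{lemma:2-inconsistent} and~\ref{lemma:overtwisted} ($2$-inconsistent gives Giroux torsion, $2$-consistent gives overtwistedness). You instead split by the sign of $s$: for $s\le 0$ you reduce to a slope-$0$ torus and then quote the ``no $0$-twisting vertical Legendrian curve'' fact established in the first paragraph of the proof of Lemma~\ref{lemma:s=infty} --- which is precisely where the paper's dichotomy argument already lives, so this is a legitimate shortcut with no circularity --- while for $s>0$ you use Lemma~\ref{lemma:s>0} to produce an inner slope-$\infty$ torus and then sandwich $T$ between two slope-$\infty$ tori to extract half Giroux torsion; this neatly sidesteps having to identify which Farey arc of slopes the outer collar realizes. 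Two minor points: your claim that a minimally twisting tight $T^2\times[0,1]$ with equal boundary slopes is $I$-invariant is stronger than needed and not literally true (non-rotative layers need not be $I$-invariant), but the consequence you actually use --- that every boundary-parallel convex torus shares the boundary slope --- is just the definition of minimally twisting, so nothing is lost; and your ``local'' iteration of Lemma~\ref{lemma:destabilization} inside $C$ is exactly how the paper itself invokes that lemma in its own proof of this statement.
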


\begin{proof}
    Take any $\xi\in\Tight_0(C;\infty)$. Suppose that there exists a convex torus $T$ parallel to $\partial C$ in $(C,\xi)$ whose dividing slope $s$ differs from $\infty$. Then, inside the thickened torus cobounded by $\partial C$ and $T$, there is a convex torus $T'$ parallel to $\bdy C$ with dividing slope $n$, where $n$ is a negative integer. By Lemma~\ref{lemma:destabilization}, there exists a convex torus $T''$ parallel to $T'$ in $C$ with dividing slope $0$. Hence $T''$ separates off a copy of $C$, denoted by $C'$, with $\xi|_{C'}\in \Tight_0(C;0)$. 
    
    As in the proof of Lemma~\ref{lemma:tw=0}, $\xi|_{C'}$ can be described by some pair of paths $(P_1,P_2)$, that is, \( \xi|_{C'} = \xi_{P_1,P_2}|_{C'} = \xi'_{P_1,P_2}\). If $(P_1,P_2)$ is $2$-inconsistent, by Lemma~\ref{lemma:2-inconsistent}, there exists a convex torus parallel to $\bdy C'$ in $C'$ with dividing slope $\infty$. This would imply that $C$ has Giroux torsion, contradicting the assumption that $\xi\in\Tight_0(C;\infty)$.
    
    Now assume $(P_1,P_2)$ is $2$-consistent. In this case, note that $(C,\xi)$ is the union of $(C',\xi'_{P_1,P_2})$ and a basic slice in $\Tight^{min}(T^2\times [0,1];\infty,0)$. It then follows from Lemma~\ref{lemma:overtwisted} that $\xi$ is overtwisted, again contradicting the assumption that $\xi \in \Tight_0(C; \infty)$. 
\end{proof}

\begin{lemma}\label{lemma:overtwisted}
    For any pair of paths $(P_1, P_2)$ representing $q/p$ that is not totally $2$-inconsistent, denote by $\xi'_{P_1,P_2}\in \Tight_0(C;0)$ the restriction of $\xi_{P_1,P_2}$ on the complement of $L_{P_1,P_2}$. Then, gluing any basic slice in $\Tight^{min}(T^2\times [0,1];\infty,0)$ to $(C,\xi'_{P_1,P_2})$ will result in an overtwisted contact structure.
\end{lemma}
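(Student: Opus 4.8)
Throughout I use the model $C = V_1\cup (S^1\times P)\cup V_2$ from Section~\ref{section:paths}: here $\xi'_{P_1,P_2}$ restricts on $V_i$ to the structure prescribed by $P_i$ and on $S^1\times P$ to the unique tight structure with all three boundary slopes $0$; write $T_i=\partial V_i$ ($i=1,2$), $T_3=\partial C$, all of dividing slope $0$. Gluing $\zeta\in\Tight^{min}(T^2\times[0,1];\infty,0)$ along its slope-$0$ end to $T_3$ gives $C\cup\zeta = V_1\cup\big((S^1\times P)\cup_{T_3}\zeta\big)\cup V_2$, whose boundary now has dividing slope $\infty$. By Lemma~\ref{lem:length1} and the $V_1\leftrightarrow V_2$ symmetry used in the proof of Lemma~\ref{lemma:lower0} we may assume $|A|=1$ (the case $q/p=-2$ being the one underlying Example~\ref{example:(1,q)}); let $N_1\subset V_1$ and $N_2\subset V_2$ be the basic slices cobounded by $T_i$ and convex tori $T_i'$ of slope $(q/p)^a$, $(q/p)^c$ — i.e.\ slopes $-1$ and $\infty$ in the $S^1\times P$-coordinates — so that $\operatorname{sign}(N_1)$ is the sign of the single basic slice of $A=A_1$ and $\operatorname{sign}(N_2)$ that of the first basic slice of $B_2$.

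The plan is to produce an overtwisted disk in $C\cup\zeta$ directly. Since $(P_1,P_2)$ is not totally $2$-inconsistent, $B_2$ contains a basic slice whose sign equals $\operatorname{sign}(A_1)$; shuffling $B_2$ — a move that affects neither $\xi_{P_1,P_2}$ nor $\xi'_{P_1,P_2}$ — we normalize the first basic slice of $B_2$ to that sign. The core of the argument is then a sequence of convex-annulus computations near the Seifert piece: using \cite[Lemma 2.22]{emm} for the pair of pants $S^1\times P$, one combines the basic slice $\zeta$ together with $N_1$ and $N_2$ across $S^1\times P$ and rounds corners, rewriting $C\cup\zeta$ as a union $V_1'\cup(S^1\times P)\cup V_2'$ of solid tori (or, in the symmetric sub-case, with the roles of $1,2$ exchanged) whose boundary slopes and whose internal decorated paths are completely determined by $P_1$, $P_2$ and $\operatorname{sign}(\zeta)$. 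With $\operatorname{sign}(N_2)$ normalized as above, the decorated path controlling one of these solid tori — say $V_2'$ — necessarily backtracks at the vertex $(q/p)^c$ \emph{without} cancelling; equivalently, the convex annulus produced by \cite[Lemma 2.22]{emm} for $V_2'$ carries four dividing curves, two of which are $\partial$-parallel. Rounding then yields a convex torus inside $V_2'$ whose dividing slope equals the meridian slope of $V_2'$, and a convex meridian disk of the solid torus it bounds is disjoint from the dividing set, hence carries a homotopically trivial dividing curve; that solid torus — and therefore $C\cup\zeta$ — is overtwisted. Both signs of $\zeta$ are covered simultaneously: the hypothesis on $(P_1,P_2)$ does not involve $\zeta$, and the normalizing shuffle of $B_2$ absorbs $\operatorname{sign}(\zeta)$ in either case.

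I expect the main obstacle to be exactly the sign bookkeeping and edge-rounding at the heart of this: verifying that, after the coordinate changes between the $V_i$-framings and the $S^1\times P$-framing and after shuffling $B_2$, the hypothesis ``not totally $2$-inconsistent'' really forces the relevant composite path to backtrack rather than cancel, and tracking the dividing curves through \cite[Lemma 2.22]{emm}. The borderline is instructive: if $(P_1,P_2)$ \emph{were} totally $2$-inconsistent, one sign of $\zeta$ would make the backtracking cancel, recovering an element of $\Tight_0(C;\infty)$, in line with Lemma~\ref{lemma:2-inconsistent}. A second route — which avoids some of this but imports the classification of tight contact structures on Seifert fibered spaces — is to cap $C\cup\zeta$ off along its slope-$\infty$ boundary by a standard tight solid torus, obtaining by Lemma~\ref{lemma:surgerychar} a closed Seifert fibered space (a connected sum of lens spaces for a suitable capping), translate it into a decorated pair of paths via the Ding--Geiges algorithm exactly as in the proof of Lemma~\ref{lem:tw=0distinct}, note that the hypothesis makes the concatenated path non-shortest and non-cancelling, and then localize the resulting overtwisted disk back inside $C\cup\zeta$.
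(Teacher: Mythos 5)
Your setup (the decomposition $C=V_1\cup(S^1\times P)\cup V_2$, the tori $T_i'$ of slopes $-1$ and $\infty$, and the observation that ``not totally $2$-inconsistent'' lets you arrange $N_1$ and $N_2$ to have opposite signs inside $S^1\times P$) matches the paper's proof exactly. But the core of your argument is asserted rather than carried out, and you say so yourself: the claim that the relevant composite path ``necessarily backtracks at $(q/p)^c$ without cancelling,'' producing an annulus with four dividing curves two of which are $\partial$-parallel and hence a meridional-slope convex torus, is precisely the computation that would constitute the proof, and it is never verified. It is also not the mechanism that actually works here. The paper never needs boundary-parallel dividing curves: it splits into two cases according to whether the sign of the glued slice $N_3$ agrees with $N_1$ or with $N_2$, and in each case applies \cite[Lemma 2.22]{emm} in its \emph{same-sign} form to get an annulus with exactly two dividing curves running across; cutting and rounding then produces a convex torus of slope $\infty$ (resp.\ $-1$) parallel to $T_2'$ (resp.\ $T_1'$), so that a slope-$0$ torus is trapped between two tori of equal slope inside a solid torus --- a convex Giroux torsion layer in a solid torus, hence an overtwisted disk.

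Two further points in your sketch would fail as stated. First, the claim that ``the normalizing shuffle of $B_2$ absorbs $\operatorname{sign}(\zeta)$ in either case'' is not true: if $B_2$ is monochromatic with the same sign as $A_1$ (the $2$-consistent case, which is allowed by the hypothesis), there is no shuffling freedom at all, so the sign of $N_3$ relative to $N_1$ and $N_2$ is genuinely a case distinction. Moreover the two cases are not symmetric under exchanging $V_1$ and $V_2$: since $T_1'$ has slope $-1$ while $T_2'$ has slope $\infty$ in the $S^1\times P$ coordinates, the paper's second case requires an extra auxiliary torus $T_2''$ of slope $-1$ and an extension of the annulus, with a different edge-rounding computation. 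Second, your fallback route (cap off along the slope-$\infty$ boundary, show the closed manifold is overtwisted, then ``localize the overtwisted disk back inside $C\cup\zeta$'') has an unjustified step: overtwistedness of the capped-off manifold does not by itself yield an overtwisted disk contained in $C\cup\zeta$, and making that localization rigorous would require essentially the tightness classification of Lemma~\ref{lemma:s=infty}, risking circularity. To repair the proof, carry out the paper's two-case annulus-and-rounding argument explicitly rather than the backtracking heuristic.
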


\begin{proof}
Based on the construction of the contact structure $\xi_{P_1,P_2}$ in Section~\ref{section:paths}, we have solid tori $V_1$ and $V_2$ with convex boundaries $T_1$ and $T_2$, each of dividing slope $q/p$. Inside these, we choose convex tori $T_1' \subset V_1$ and $T_2' \subset V_2$ with dividing slopes $(q/p)^a$ and $(q/p)^c$, respectively. In the model $C = V_1 \cup (S^1 \times P) \cup V_2$, the torus $T_1'$ has dividing slope $-1$, $T_2'$ has dividing slope $\infty$, and each $T_i$ (for $i=1,2,3$) has dividing slope $0$. Since $(P_1, P_2)$ is not totally $2$-inconsistent, we may arrange the basic slices $N_i$ cobounded by $T_i$ and $T_i'$ to have opposite signs for $i = 1, 2$. When attaching a basic slice $N_3$ from $\Tight^{\min}(T^2 \times [0,1]; \infty, 0)$ along $\partial C = T_3$, the sign of $N_3$ agrees with either $N_1$ or $N_2$. Denote $\partial N_3 = T_3 \cup T_3'$.

%From the construction of the contact structure $\xi_{P_1,P_2}$ in Section~\ref{section:paths}, we have solid tori $V_1$ with convex boundary $T_1$ of dividing slope $q/p$ and $V_2$ with convex boundary $T_2$ of dividing slope $q/p$. We can take convex tori $T_1'$ in $V_1$ and $T_2'$ in $V_2$ with dividing slopes $(q/p)^a$ and $(q/p)^c$, respectively.  Then, in the model $C=V_1\cup(S^1\times P)\cup V_2$, $T_1'$ has dividing slope $-1$, $T_2'$ has dividing slope $\infty$, and $T_i$ has dividing slope $0$ for $i=1,2,3$. Since $(P_1,P_2)$ is not totally $2$-inconsistent, we can arrange the basic slices $N_i$ cobounded by $T_i$ and $T_i'$ to have opposite signs for $i=1,2$. When gluing a basic slice $N_3$ in $\Tight^{min}(T^2\times [0,1];\infty,0)$ to $\bdy C=T_3$, $N_3$ has the same sign as either $N_1$ or $N_2$. Denote $\bdy N_3=T_3\cup T_3'$.

    If $N_1$ and $N_3$ have the same sign, then by \cite[Lemma 2.22]{emm}, there exists a convex annulus $A$ whose boundary consists of ruling curves of slope $0$ on $T_1'$ and $T_3'$, with exactly two dividing curves running from one boundary component to the other. Cutting $S^1 \times P$ along $A$ and rounding edges yields a convex torus $T$ parallel to $T_2'$ with dividing slope $\frac{1}{1+0-1}=\infty$. The thickened torus cobounded by $T$ and $T_2'$ contains convex Giroux torsion, since a torus $T_2$ with dividing slope $0$ lies inside it. Therefore, $V_2$ can be thickened to include a convex Giroux torsion layer, implying that the resulting contact structure is overtwisted.
    
    %The thickened torus cobounded by $T$ and $T_2'$ has convex Giroux torsion, since there is also a torus $T_2$ with dividing slope $0$ inside of it. Hence, $V_2$ can be thickened to have a convex Giroux torsion layer, which means that the resulting contact structure is overtwisted.

     If $N_2$ and $N_3$ have the same sign, we consider a convex torus $T_2''$ with dividing slope $-1$ inside the thickened torus bounded by $T_2$ and $T_2'$. According to \cite[Lemma 2.22]{emm}, there exists a convex annulus $A$ whose boundary consists of ruling curves of slope $0$ on $T_2''$ and $T_3'$, with exactly two dividing curves running from one boundary component to the other. Extend $A$ along the $0-$sloped ruling curves from $T_2''$ to $T_2'$. Cutting $S^1 \times P$ along $A$ and rounding the edges yields a convex torus $T$ parallel to $T_1'$ with dividing slope $\frac{1}{0+0-1}=-1$. Thus, $V_1$ can be thickened to contain a convex Giroux torsion layer, which also implies that the resulting contact structure is overtwisted.
\end{proof}

\begin{lemma}\label{lemma:addtorsion}
For any pair of paths $(P_1, P_2)$ representing $q/p$ that is totally $2$-inconsistent, denote by $\xi'_{P_1,P_2}\in \Tight_0(C;0)$ the restriction of $\xi_{P_1,P_2}$ on the complement of $L_{P_1,P_2}$. Gluing one basic slice $(T^2\times [0,1]; \infty, 0)$ to $(C, \xi'_{P_1,P_2})$ will result in a tight contact structure $\xi$ on $C$ with dividing slope $\infty$ on $\partial C$, while gluing the other basic slice will result in an overtwisted
contact structure. Moreover, adding any amount of convex Giroux torsion to $(C, \xi)$ will result in a
tight contact structure.
\end{lemma}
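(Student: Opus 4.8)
The plan is to analyze the two choices of sign for the attached basic slice separately, using the concrete model $C=V_1\cup(S^1\times P)\cup V_2$ behind the construction of $\xi_{P_1,P_2}$ together with the splitting furnished by Lemma~\ref{lemma:2-inconsistent}. Recall from Section~\ref{section:paths} and the proofs of Lemmas~\ref{lemma:2-inconsistent} and \ref{lemma:overtwisted} that $\xi'_{P_1,P_2}$ is assembled from solid tori $V_1,V_2$ of boundary slope $q/p$, inside which one selects convex tori $T_1'\subset V_1$ and $T_2'\subset V_2$ that, in the model $C=V_1\cup(S^1\times P)\cup V_2$, have slopes $-1$ and $\infty$ respectively, while each $T_i$ ($i=1,2,3$) has slope $0$; write $N_1\subset V_1$ and $N_2\subset V_2$ for the first basic slices of $P_1$ and $P_2$. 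The totally $2$-inconsistent hypothesis forces $N_1$ and $N_2$ to occur inside $S^1\times P$ with the \emph{same} sign, and this cannot be changed by shuffling signs within continued fraction blocks. Let $N_3$ be a basic slice in $\Tight^{min}(T^2\times[0,1];\infty,0)$, glued along $\partial C=T_3$ with its slope-$0$ end meeting $T_3$; its sign either agrees with that of $N_1=N_2$ or is opposite, and these are the two cases of the statement.

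If $N_3$ has the same sign as $N_1$ (equivalently $N_2$), I would run the argument of Lemma~\ref{lemma:overtwisted} essentially verbatim: by \cite[Lemma 2.22]{emm} there is a convex annulus in $(S^1\times P)\cup N_3$ whose two boundary circles are slope-$0$ ruling curves, one on $T_1'$ and one on the outer boundary $T_3'$ of $N_3$, carrying exactly two dividing arcs running between them; cutting along it and rounding corners produces a convex torus $T$ parallel to $T_2'$ of slope $\frac{1}{1+0-1}=\infty$, and since the torus $T_2$ of slope $0$ lies between $T$ and $T_2'$ the region between them is a convex Giroux torsion layer inside $V_2$, so the glued contact structure is overtwisted. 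This settles one of the two gluings.

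For the remaining sign of $N_3$ I would establish tightness together with the ``Moreover'' clause in one step. By Lemma~\ref{lemma:2-inconsistent} write $\xi'_{P_1,P_2}=\eta\cup N$ with $\eta\in\Tight_0(C;\infty)$ supported on a copy $C'\subset C$ and $N\in\Tight^{min}(T^2\times[0,1];0,\infty)$ the intervening basic slice; then $\xi=\eta\cup(N\cup N_3)$, and adding any amount of convex Giroux torsion to $(C,\xi)$ merely enlarges the thickened-torus block $N\cup N_3$ sitting between $\partial C'$ and $\partial C$, which in every case has both boundary slopes $\infty$. First one checks that the sign of $N_3$ not treated above is exactly the one making $N\cup N_3$ a tight (and, one verifies, universally tight) thickened torus, so that $\xi$ indeed has dividing slope $\infty$ on $\partial C$; the opposite sign makes $N\cup N_3$ overtwisted, consistently with the previous paragraph. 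Tightness of $\xi$ and of all its Giroux torsion augmentations then follows by viewing each as a union of $\eta$ with a universally tight thickened-torus block glued along the torus $\partial C'$: the structure $\eta$ is universally tight, being the restriction of a (universally) tight contact structure on the lens space $L(q,p)\#L(-q,p)$ supplied by Lemma~\ref{lemma:s=infty}, and $\partial C'$ is incompressible both in $C'$ — the complement of a nontrivial torus knot in $S^1\times S^2$, Seifert fibered over a disk with two exceptional fibers of multiplicity $|q|\ge2$ — and in the adjoining thickened torus; Colin's gluing theorem for universally tight pieces along incompressible tori then yields universal, hence plain, tightness. (Alternatively one may appeal to the classification of tight contact structures on the Seifert fibered manifold $C$, whose two exceptional fibers carry opposite Seifert invariants and which therefore supports tight structures of arbitrary convex Giroux torsion along $\partial C$.)

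The step I expect to be the main obstacle is the sign bookkeeping in the last paragraph — reconciling the orientation conventions under which $C'$ and the basic slice $N$ were produced (by rounding edges in the proof of Lemma~\ref{lemma:2-inconsistent}) with those governing the basic slices of $P_1$ and $P_2$, so as to be certain that it is the sign of $N_3$ \emph{opposite} to $N_1=N_2$ that yields the tight block $N\cup N_3$ rather than the overtwisted one; a single stray orientation reversal here would interchange the tight and overtwisted conclusions of the lemma. Once this is settled, checking the hypotheses of the gluing step — incompressibility of $\partial C'$ and universal tightness of $\eta$ — is routine given the Seifert description of $C$ and the lens-space origin of $\Tight_0(C;\infty)$.
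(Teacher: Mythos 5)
Your treatment of the overtwisted gluing is essentially the paper's: the paper also reduces to the annulus construction of Lemma~\ref{lemma:overtwisted}, except that it first cuts along the annulus between $T_1'$ and $T_2'$ to exhibit a basic slice $N\subset C$ adjacent to $\partial C$ with slopes $0$ and $\infty$ and a definite sign (read off from the relative Euler class), and then observes that attaching the oppositely signed slice of $\Tight^{min}(T^2\times[0,1];\infty,0)$ produces an overtwisted $T^2\times I$. The sign bookkeeping you flag is real but resolvable, and is not where your proposal breaks down.

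The genuine gap is in your tightness argument. You glue $\eta\in\Tight_0(C;\infty)$ to the half-torsion block $N\cup N_3$ via Colin's gluing theorem, asserting that $\eta$ is universally tight ``being the restriction of a (universally) tight contact structure on $L(q,p)\#L(-q,p)$.'' Tight contact structures on these lens space summands are frequently only virtually overtwisted: by Lemma~\ref{lemma:s=infty} the elements of $\Tight_0(C;\infty)$ are parametrized by decorated paths on the two solid tori $V_1, V_2$ with boundary slopes $(q/p)^a$ and $(q/p)^c$, and the totally $2$-inconsistent hypothesis constrains only the \emph{first} continued fraction block of each of $P_1,P_2$ --- the remaining blocks may carry mixed signs (this is exactly what $l(p,q)>1$ counts). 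For such $\eta$ the hypotheses of Colin's theorem fail, and there is no general gluing statement for a virtually overtwisted piece against a torsion layer; indeed, whether adding torsion preserves tightness is precisely the delicate point of the lemma. Your parenthetical fallback --- appealing to the classification of tight structures of arbitrary torsion on the Seifert fibered space $C$ --- is circular, since that classification (Lemma~\ref{lemma:tor>0}) is derived from the present lemma. The paper avoids all of this with a direct argument in the style of \cite[Lemma 6.15]{emm}: after attaching the torsion layers, it splits the manifold along a convex annulus $A'$ with slope-$0$ boundary on $T_3$ into two solid tori, each tight because it carries a non-rotative outer layer over $V_i$; a putative overtwisted disk can be pushed off an isotoped annulus $A''$, and Isotopy Discretization together with the $I$-invariance of the intermediate annuli shows the complement of $A''$ is still a union of tight solid tori, a contradiction. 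You would need to replace your gluing step with an argument of this kind.
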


\begin{proof}
    Use the same notation in the proof of Lemma~\ref{lemma:overtwisted}. Since the pair of paths $(P_1,P_2)$ is totally $2$-inconsistent, we can arrange the basic slices $N_1$ and $N_2$ to have the same sign, say, positive. By \cite[Lemma 2.22]{emm}, there exists a convex annulus $A$ with boundary $0$-sloped ruling curves on $T'_1$ and $T'_2$ whose dividing curves run from one boundary component to the other. Cutting along the annulus $A$ and rounding edges, one obtain a thickened torus $N$ with one boundary component $\partial C$ and the other an paralleled $\infty$-sloped convex torus $T$. Thus $N$ is a positive basic slice (the sign can be determined by the relative Euler class). Therefore, gluing a negative basic slice $B\in \Tight^{min}(T^2\times [0,1];\infty,0)$ would result in an overtwisted contact structure.

    Now we need to prove that gluing a positive basic slice $B'\in \Tight^{min}(T^2\times [0,1];\infty, 0)$ to $C$ results in a tight contact structure. Suppose we continue to glue sufficient amount of Giroux torsion layer. We will show that the resulting contact structure is tight, by a strategy analogous to the proof of \cite[lemma 6.15]{emm}. Assume that it is not true and then there exists an overtwisted disk $D$. Take a convex annulus $A'$ whose boundary consists of two $0$-sloped ruling curves on $T_3$ such that the complement of $A'$ are two solid torus $V'_1$ and $V'_2$. Since there exist $0$-twisting Legendrian curves on $A'$, we see that $V'_i$ ($i=1,2$) is tight for $V'_i\setminus V_i$ is a non-rotative outer layer. The annulus $A'$ can be isotoped into another annulus $A''$ such that $A''$ is disjoint with $D$. By Isotopy Discretization Lemma \cite{colin}, there is a sequence of convex annuli $A_0=A',\dots, A_n=A''$ such that $A_{i+1}$ is obtained from $A'_i$ by a single bypass attachment. As shown in the proof of \cite[Lemma 6.16]{emm} and the paragraph before Lemma 6.16, one can see that any $A_i$ is contained in an $I$-invariant neighborhood that is contact isotopic to an $I$-invariant neighborhood of $A'$. Therefore, each complement of $A''$ must be tight, which contradicts to the assumption that $A''$ is disjoint with $D$.
\end{proof}

\begin{lemma} \label{lemma:tor>0}
For any $k\in\mathbb{Z}$ and $t\in \frac{1}{2}\mathbb{N}$,   $|\Tight_{t}(C;k)|=2l(p,q).$  This number coincides with  the number of totally $2$-inconsistent pairs of paths representing $q/p$.
\end{lemma}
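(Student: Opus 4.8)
The plan is to establish the two assertions in turn: that $|\Tight_t(C;k)|$ equals the number of totally $2$-inconsistent pairs of paths representing $q/p$ (a geometric statement, proved by exhibiting a bijection), and that this number is $2l(p,q)$ (a purely combinatorial count).

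\emph{Reduction to boundary slope $\infty$.} Since $t\in\frac12\N$, any $\xi\in\Tight_t(C;k)$ contains an embedded convex Giroux torsion layer, hence convex tori parallel to $\bdy C$ of every slope, in particular one of slope $\infty$; as $k\in\Z$ is joined to $\infty$ by an edge of the Farey graph, cutting along such a torus splits $\xi$ as an element of $\Tight_t(C;\infty)$ together with one of the two basic slices in $\Tight^{min}(T^2\times[0,1];\infty,k)$. Exactly as in the proof of Lemma~\ref{lemma:s=n>0}, gluing either basic slice preserves tightness and the torsion, the two gluings are distinguished by the relative Euler class on the genus-$0$ horizontal surface with $-q$ slope-$\infty$ boundary circles, and a cut-and-reglue argument makes this assignment a bijection; so $|\Tight_t(C;k)|=2\,|\Tight_t(C;\infty)|$, and it suffices to analyze $\Tight_t(C;\infty)$.

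\emph{Describing $\Tight_t(C;\infty)$.} Given $\xi\in\Tight_t(C;\infty)$, I would push its torsion layer inward: using a slope-$0$ convex torus inside it, together with the case analysis of Lemma~\ref{lemma:s>0} and Lemma~\ref{lemma:2-inconsistent}, one isolates a copy $C_0\subset C$ with $\bdy C_0$ convex of slope $0$, with $\xi|_{C_0}\in\Tight_0(C;0)$, and with $\xi|_{C\setminus C_0}$ a minimally twisting layer running from slope $0$ around to slope $\infty$ and carrying all of the torsion. By Lemma~\ref{lemma:tw=0}, $\xi|_{C_0}=\xi'_{P_1,P_2}$ for a pair $(P_1,P_2)$ representing $q/p$; by Lemma~\ref{lemma:overtwisted}, tightness of $\xi$ forces $(P_1,P_2)$ to be totally $2$-inconsistent; and by Lemma~\ref{lemma:addtorsion}, once $(P_1,P_2)$ is totally $2$-inconsistent the sign of the adjacent basic slice is forced and arbitrarily much Giroux torsion may be inserted while staying tight. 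Conversely, from a totally $2$-inconsistent pair one builds $\xi'_{P_1,P_2}$, applies Lemma~\ref{lemma:addtorsion}, inserts a torsion-$(t-\frac12)$ layer, and attaches the slope-$k$ basic slice whose sign is dictated by the sign data of the pair; Lemma~\ref{lem:tw=0distinct}, together with recovering $\xi'_{P_1,P_2}$ from $\xi$ by stripping off the canonical torsion-plus-basic-slice collar, shows distinct pairs give non-isotopic results. This yields the desired bijection between $\Tight_t(C;k)$ and the set of totally $2$-inconsistent pairs representing $q/p$, uniformly in $t$ and $k$; the ``overall sign'' of such a pair is precisely the datum carried by the outermost slope-$k$ basic slice from the previous step. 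The delicate point throughout is controlling \emph{all} convex tori parallel to $\bdy C$ inside a structure that already carries torsion, so as to be sure every tight structure genuinely arises from the totally $2$-inconsistent construction and from nothing else, and keeping this sign bookkeeping consistent---this is what makes the answer $2l(p,q)$ rather than a larger multiple. I expect this to be the main obstacle.

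\emph{The count.} By Lemma~\ref{lem:length1}, exactly one of the first continued-fraction blocks $A\subset P_1$, $B\subset P_2$ has length $1$ (unless $q/p=-2$, the case $p=1$, handled in Example~\ref{example:(1,q)}); say $|A|=1$, the symmetric case being analogous. A totally $2$-inconsistent pair is then a choice of sign for the single basic slice $A_1$ (two choices), the opposite common sign on the first $P_2$-block $B$ (forced), and an arbitrary decoration---up to shuffling within blocks---of all remaining blocks. Since decorations of a continued-fraction block up to shuffling are counted by its length, the number of these is $|\Tight(S_0;q/p)|\cdot|\Tight(S^0;q/p)|\big/\big((|A_1|+1)(|B|+1)\big)=m(p,q)/(|B|+1)$; using $|B|=|b_n+1|$ from the proof of Lemma~\ref{lem:length1} (as $a_m=-2$ forces $b_n<-2$ here) together with Lemmas~\ref{lemma:upper0},~\ref{lemma:lower0},~\ref{lemma:L(q,p)}, this equals $m(p,q)/|b_n|=2l(p,q)$. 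Combined with the bijection, $|\Tight_t(C;k)|=2l(p,q)$, and the combinatorial step, though fiddly, is routine once the block-length dictionary is in place.
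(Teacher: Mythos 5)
Your combinatorial count of totally $2$-inconsistent pairs is essentially correct, but the geometric part of the argument has a genuine error and a genuine gap. The error is the opening reduction $|\Tight_t(C;k)|=2\,|\Tight_t(C;\infty)|$, justified by claiming that "gluing either basic slice preserves tightness \dots exactly as in the proof of Lemma~\ref{lemma:s=n>0}". That mechanism is special to the torsion-free case, where either sign of basic slice embeds into a tight $L(q,p)\#L(-q,p)$ by choosing the solid-torus filling compatibly. Once the core is forced to be totally $2$-inconsistent (as it is here, by Lemma~\ref{lemma:overtwisted}), Lemma~\ref{lemma:addtorsion} shows that exactly one sign of attached basic slice stays tight and the other produces an overtwisted disk; this is also what makes $S_{\mp}(L^{i,t}_{\pm,k})$ loose in Theorem~\ref{thm:tor>0}. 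Your own third paragraph, where the sign of the slope-$k$ slice is "dictated by the sign data of the pair", contradicts the factor of $2$ you extracted in the first paragraph. The bookkeeping also fails numerically: $(P_1,P_2)$ and $(-P_1,-P_2)$ already give non-isotopic slope-$\infty$ structures (their relative Euler classes on the horizontal surface are opposite and generically nonzero; for $p=1$ they even sit in ambient structures of opposite nonzero Euler class), so the $2l(p,q)$ totally $2$-inconsistent pairs yield $2l(p,q)$ distinct elements with slope-$\infty$ boundary, and your product formula would then give $4l(p,q)$. The factor $2$ in $2l(p,q)$ is the overall sign of the pair, not an independent choice of outermost basic slice.

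The gap is in distinctness: "recovering $\xi'_{P_1,P_2}$ from $\xi$ by stripping off the canonical torsion-plus-basic-slice collar" presupposes that adding convex Giroux torsion is injective on isotopy classes of contact structures. It is not, in general, and this is precisely the point the paper's proof isolates. The paper defers the structural part to \cite[Lemma 6.19]{emm} and supplies only the missing Claim: if the torsion-added structures $\xi_t$ and $\xi'_t$ were isotopic they would be homotopic as plane fields; adding torsion \emph{is} injective on plane fields, so $\xi$ and $\xi'$ would be homotopic; performing contact $(-\tfrac12)$-surgery on $L_{P_1,P_2}$ and $L_{P'_1,P'_2}$ then produces non-isotopic tight contact structures on a lens space that would be homotopic, contradicting \cite[Proposition 4.24]{h} (with the half-integer case reduced to the integer case by adding another half twist). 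Without this, or some substitute for it, your map from totally $2$-inconsistent pairs to $\Tight_t(C;k)$ is only shown to be surjective, not injective.
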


\begin{proof}
This is simlilar to the proof of \cite[Lemma 6.19]{emm}, except the proof of the following claim.

Claim: Suppose that $\xi$ and $\xi'$ are two contact structures on $\Tight_{0}(C; 0)$ associated to totally 2-inconsistent pairs of decorated paths $(P_1, P_2)$ and $(P'_1, P'_2)$ representing $q/p$ such that $(P_1, P_2)\neq (P'_1, P'_2)$. Adding convex $t$ Giroux torsion to $\xi$ and $\xi'$ results in distinct contact structures $\xi_{t}$ and $\xi'_{t}$, respectively. 

%By Lemma~\ref{lemma:tor>0}, we can assume that the first continued fraction block of $P_1$ and $P'_1$ have $+$ sign, while the first continued fraction block of $P_2$ and $P'_2$ have $-$ sign.

Suppose $t \in \mathbb{N}$. If the resulting contact structures $\xi_{t}$ and $\xi'_{t}$ represent the same element in $\Tight_{t}(C;k)$, then they must be homotopic as 2-plane fields, which in turn implies that $\xi$ and $\xi'$ are also homotopic. In general, the operation of adding $t$ convex Giroux torsion is injective for plane fields, unlike for contact structures, where it is not injective. Now, consider performing contact $(-\frac{1}{2})$-surgery along $L_{P_1,P_2}$ and $L_{P'_1,P'_2}$. This yields two non-isotopic tight contact structures on the same lens space. Since $\xi$ and $\xi'$ are homotopic, these two resulted tight contact lens spaces are homotopic. However, according to \cite[Proposition 4.24]{h}, any two non-isotopic tight contact structures on a lens space are not homotopic. This leads to a contradiction.

Suppose $t \in \frac{1}{2}\mathbb{N}\setminus\mathbb{N}$. If the resulting contact structures $\xi_{t}$ and $\xi'_{t}$ represent the same element in $\Tight_{t}(C;k)$, then $\xi_{t+\frac{1}{2}}$ and $\xi'_{t+\frac{1}{2}}$ represent the same element in $\Tight_{t+\frac{1}{2}}(C;k)$. This is impossible by the previous argument.
\end{proof}

\section{Classification of non-loose Legendrian torus knots in \texorpdfstring{$S^1\times S^2$}{S1 x S2}}\label{section:proofs}

%\begin{lemma}\label{lemma:surgerychar}
%The $r\lambda+s\mu$ surgery along the torus knot $T_{p,q}$ in $S^1\times S^2$ is diffeomorphic to the closed Seifert fibered space $M(S^2; \frac{q'}{q}, -\frac{q'}{q}, -\frac{r}{s})$ for $s\neq0$, and $L(q,p)\#L(q,-p)$ for $s=0$.
%\end{lemma}

%\begin{proof}
%The complement $C$ of $T_{p,q}$ in $S^1\times S^2$ is diffeomorphic to $M(D^2; \frac{q'}{q}, -\frac{q'}{q})$. It can be decomposed into $C=V_{1}\cup(S^1\times P)\cup V_2$, where $P$ is a pair of pants, $\partial(S^1\times P)=T_{1}\cup T_{2}\cup T_{3}$, $T_{i}$ is identified with $\partial V_i$ for $i=1,2$ and $T_3=\partial C$. By \cite[Page 40]{Ha}, $M(S^2; \frac{q'}{q}, -\frac{q'}{q})$ is diffeomorphic to $S^1\times S^2$, $T_3\cap \{\theta\}\times P$ can be identified with the meridian $\mu$ of the knot $T_{p,q}$. Recall that $\lambda$ corresponds to the regular fiber, i.e., $S^{1}\times\{p\}$ in $T_3$. Since $-T_3=\partial N$, the $r\lambda+s\mu$ curve in $\partial N$ is identified to a $-\frac{s}{r}$ curve on $T_3$. 

%If $s\neq0$, then the lemma follows from the construction of Seifert fibered space in \cite[Section 2.1]{Ha}. If $s=0$, then it is easy to see that there is a separating sphere which cuts the surgered manifold into two punctured lens spaces $L(q,q')$ and $L(q,-q')$ which are $L(q,p)$ and $L(q,-p)$, respectively.  
%\end{proof}

\begin{proof}[Proof of Theorem~\ref{thm:tw>0}]
Let $i$ be a positive integer. By Lemma~\ref{lemma:s=n>0}, there are at most $2n(p,q)$ non-loose Legendrian $(p,q)$-torus knots with $\tw=i$ and $\tor=0$. 

We show that all $L_{\pm, k}^{i}$ can be realized as Legendrian knots in Figure~\ref{Figure:tw>0}. First, a Kirby calculus argument -changing the coefficient $(-\frac{p}{p'})$ in \cite[Figure 26]{emm} to $(-\frac{q}{q'})$- shows that $L_{\pm}$ is a smooth torus knot $T_{p,q}$ in $S^1 \times S^2$. Consider the case $\tw = i = 1$. Performing Legendrian surgery along $L_{\pm}$ in the first row of Figure~\ref{Figure:tw>0} yields a tight $L(q,p) \# L(-q,p)$, so $\tor(L_{\pm}) = 0$. By Lemma~\ref{lemma:surgerychar}, the smooth surgery coefficient with respect to $\lambda$ must be $0$, which equals $\tw - 1$. Hence, $\tw = 1$. Now suppose $\tw = i > 1$. Performing Legendrian surgery along $L_{\pm}$ in the second row of Figure~\ref{Figure:tw>0} yields a tight contact 3-manifold $M(S^2; \frac{q'}{q}, -\frac{q'}{q}, -\frac{1}{i-1})$, so $\tor(L_{\pm}) = 0$. By Lemma~\ref{lemma:surgerychar}, the smooth surgery coefficient with respect to $\lambda$ must be $i - 1$, which equals $\tw - 1$. Therefore, $\tw = i$.

Each diagram in Figure~\ref{Figure:tw>0} corresponds to $n(p,q)$ distinct non-loose Legendrian $(p,q)$ torus knots. Indeed, after performing Legendrian surgery and coverting the negative rational contact surgeries to  sequences of contact $-1$ surgeries, we obtain $n(p,q)=|\Tight(L(q,q')|\cdot|\Tight(L(q, -q')|$ contact surgery diagrams in which every contact surgery coefficient is $-1$; all of these contact structures are Stein fillable. So according to \cite{lm},  the rotation numbers distinguish $n(p,q)$ pairwise non-isotopic tight contact structures  on either  $L(q,p) \# L(-q,p)$ or $M(S^2; \frac{q'}{q}, -\frac{q'}{q}, -\frac{1}{i-1})$. As in the proof of Lemma~\ref{lem:tw=0distinct}, each diagram corresponds to $n(p,q)$ non-loose Legendrian $(p,q)$ torus knots which are pairwise coarse inequivalent.

%We show that all $L_{\pm, k}^{i}$ can be realized as a Legendrian knot in Figure~\ref{Figure:tw>0}. At first, a Kirby calculus show that $L_{\pm}$ is a smooth torus knot $T_{p,q}$ in $S^1\times S^2$. Consider the case $\tw=i=1$. If we perform a Legendrian surgery along $L_{\pm}$ in the first row of Figure~\ref{Figure:tw>0}, then we obtain a tight $L(q,p)\#L(-q,p)$. Hence $\tor(L_{\pm})=0$. By Lemma~\ref{lemma:surgerychar}, the smooth surgery coefficient with respect to $\lambda$ must be $0$, which is $\tw-1$. Thus $\tw=1$.

%Now consider the case $\tw=i>1$. If we perform a Legendrian surgery along $L_{\pm}$ in the second row of Figure~\ref{Figure:tw>0}, then we obtain a tight contact 3-manifold $M(S^2; \frac{q'}{q}, -\frac{q'}{q}, -\frac{1}{i-1})$. Hence $\tor(L_{\pm})=0$. By Lemma~\ref{lemma:surgerychar}, the smooth surgery coefficient with respect to $\lambda$ must be $i-1$, which is $\tw-1$. Thus $\tw=i$.

Recall that $n(p,q) = |\Tight(V_1; (q/p)^a)| \cdot |\Tight(V_2; (q/p)^c)|$. According to the proof of Lemma~\ref{lemma:2-inconsistent}, every element in $\Tight(V_1; (q/p)^a) \times \Tight(V_2; (q/p)^c)$ can be extended to two elements in $\Tight(C; 0)$ corresponding to two distinct $2$-inconsistent pairs of paths.
Now, consider any $2$-inconsistent pair of paths $(P_1, P_2)$ representing $q/p$, and let $C$ be the complement of $L_{P_1,P_2}$. By Lemma~\ref{lemma:2-inconsistent}, there exists a copy $C' \subset C$ such that $(\xi_{P_1,P_2})|_{C'} \in \Tight_0(C; \infty)$ and $(\xi_{P_1,P_2})|_{C \setminus C'} \in \Tight^{\text{min}}(T^2 \times [0,1]; 0, \infty)$. This implies that $C \setminus C' = T^2 \times [0,1]$ is a basic slice, which we may assume to be negative. % (The positive case corresponds to another type in $L^i\pm$.)
For any integer $i > 0$, we factor $T^2 \times [0,1]$ into negative basic slices whose boundaries correspond to slopes $0, 1, \dots, i, \infty$ in the Farey graph. Then, there exists a convex torus in $T^2 \times [0,1]$ with two dividing curves of slope $i$, which cuts off the complement of a Legendrian $(p,q)$-torus knot $L^i_-$ with $\tw(L^i_-) = i$, as desired. Furthermore, attaching a negative basic slice to this complement yields the complement of $L^{i-1}_-$, i.e., $S_{-}(L^i_-) = L^{i-1}_-$. On the other hand, attaching a positive basic slice (corresponding to a positive stabilization) results in an overtwisted contact structure, so $S_+(L^i_-)$ is loose. By iteratively attaching negative basic slices, the boundary slope eventually becomes $0$, meaning $S_-^{i}(L^i_-) = L_{P_1,P_2}$. Furthermore, since we have assumed that $T^2\times [0,1]$ is a negative basic slice, $S_+(L_{P_1,P_2})$ is loose, and consequently, it follows from Proposition~\ref{prop:wings} that $S_{-}^{j}(L_{-}^{1})$ is non-loose for any $j>0$. Analogously, if we instead assume the basic slice $T^2\times [0,1]$ is positive, then we obtain  $L_+^i$ with corresponding stabilization properties. %if we assume the basic slice $T^2\times [0,1]$ is positive. 
%$S_+^{j}(L^i_+)$ is non-loose for any $j>0$, and $S_-(L^i_+)$ is loose. 
Moreover, $L_{+, k}^i$ and $L_{-, k}^i$ are not coarsely equivalent.

Additionally, we can apply Corollary~\ref{coro:signoftw>0} and Remark~\ref{rmk:sign} to determine which diagram in Figure~\ref{Figure:tw>0} corresponds to \( L_+ \) or \( L_- \).
%Additionally, to determine which diagram in Figure~\ref{Figure:tw>0} corresponds to \( L_+ \) or \( L_- \), we observe that the two diagrams on the left represent contact structures with negative Euler classes, and thus correspond to \( L_+ \); this will be confirmed at the end of Section~\ref{section:Euler}. In contrast, the two diagrams on the right represent contact structures with positive Euler classes, corresponding to \( L_- \).
\end{proof}

%By a similar argument as the last paragraph in \cite[Proof of Theorem 1.3]{emm}, $S_{\mp}(L_{\pm})$ is loose and $S_{\pm}(L_{\pm})$ is non-loose. In particular, $L_{+}$ and $L_{-}$ are not coarsely equivalent.

From the proof of Theorem~\ref{thm:tw>0}, we have
\begin{corollary} \label{coro:tw>0stabilize}
    Any non-loose Legendrian $(p,q)$-torus knot with $\tw>0$ and $\tor=0$ can be stabilized to one with $\tw=0$ and $\tor=0$, which can be described as $L_{P_1,P_2}$ for some $2$-inconsistent pair of paths $(P_1,P_2)$ representing $q/p$.
\end{corollary}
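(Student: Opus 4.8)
The plan is to deduce the statement directly from Theorem~\ref{thm:tw>0} and the geometric construction carried out in its proof. First I would fix a non-loose Legendrian $(p,q)$-torus knot $L$ with $\tw(L)=i>0$ and $\tor(L)=0$. By Theorem~\ref{thm:tw>0} it is coarsely equivalent to one of the model knots $L^i_{+,k}$ or $L^i_{-,k}$ with $1\le k\le n(p,q)$, so it suffices to verify the claim for these models and to recall how each of them was produced.

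Next I would unpack that construction. Each $L^i_{\pm,k}$ is built from a $2$-inconsistent pair of paths $(P_1,P_2)$ representing $q/p$: by Lemma~\ref{lemma:2-inconsistent}, the complement $C$ of the $\tw=0$ Legendrian divide $L_{P_1,P_2}$ splits as $C'\cup(T^2\times[0,1])$ with $(\xi'_{P_1,P_2})|_{C'}\in\Tight_0(C;\infty)$ and $(\xi'_{P_1,P_2})|_{T^2\times[0,1]}$ a single basic slice in $\Tight^{min}(T^2\times[0,1];0,\infty)$, negative for the $-$ family and positive for the $+$ family. Factoring that basic slice through the minimal chain of slopes $\infty,i,i-1,\dots,1,0$ and cutting along the convex torus of slope $i$ produces the complement of $L^i_{\pm,k}$, together with the identities that $S_\mp(L^i_{\pm,k})$ is loose and $S_\pm(L^i_{\pm,k})=L^{i-1}_{\pm,k}$; iterating the latter gives $S_{\pm}^{i}(L^i_{\pm,k})=L_{P_1,P_2}$. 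Since $L_{P_1,P_2}$ has $\tw=0$ and $\tor=0$ and is by construction the Legendrian divide associated to the $2$-inconsistent pair $(P_1,P_2)$, this is exactly the asserted stabilization; combining with the coarse equivalence of the first step finishes the proof.

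Alternatively, and more self-containedly, one could argue without first invoking the classification: starting from $\xi|_C\in\Tight_0(C;i)$, Lemma~\ref{lemma:s>0} (applied as in Lemma~\ref{lemma:s=n>0}) gives $C'\subset C$ with $\xi|_{C'}\in\Tight_0(C;\infty)$ and $\xi|_{C\setminus C'}$ a basic slice between slopes $i$ and $\infty$. Performing $i$ stabilizations of $L$ of the sign of that basic slice enlarges the complement by the chain of basic slices with dividing slopes $i,i-1,\dots,1,0$, which together with $C'$ forms a minimally twisting, torsion-free structure in $\Tight_0(C;0)$; by the proof of Lemma~\ref{lemma:tw=0} together with Lemma~\ref{lem:divide}, this structure is $\xi'_{P_1,P_2}$ for some pair of paths $(P_1,P_2)$ representing $q/p$, so the stabilized knot is $L_{P_1,P_2}$. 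Finally, because the enlarged complement still contains the convex torus $\partial C'$ of slope $\infty\ne 0$, Lemma~\ref{lemma:2-inconsistent} forces $(P_1,P_2)$ to be not $2$-consistent, and the splitting it provides identifies it as a $2$-inconsistent pair.

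The only genuinely delicate point, and hence the main obstacle, is tracking the sign of the basic slice $C\setminus C'$—and therefore the direction ($S_+$ versus $S_-$) of the stabilizations—and verifying that attaching $i$ basic slices of that one sign yields a torsion-free rather than an overtwisted structure on the enlarged complement. This is exactly the minimally twisting bookkeeping already carried out in the proof of Theorem~\ref{thm:tw>0}, so no new work is required, and the statement is indeed a corollary of that proof.
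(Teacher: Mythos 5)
Your proposal is correct and follows essentially the same route as the paper: the corollary is stated there with the preamble ``From the proof of Theorem~\ref{thm:tw>0}, we have,'' and the intended argument is precisely your first one --- every such knot is one of the models $L^i_{\pm,k}$, which by construction satisfy $S_{\pm}^{i}(L^i_{\pm,k})=L_{P_1,P_2}$ for a $2$-inconsistent pair $(P_1,P_2)$ via the basic-slice factorization of Lemma~\ref{lemma:2-inconsistent}. Your alternative self-contained argument via Lemma~\ref{lemma:s>0} is a reasonable variant but reproduces the same decomposition already used in that proof.
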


\begin{proof}[Proof of Theorem~\ref{thm:tw=0}]
By Lemma~\ref{lemma:tw=0}, there are exactly $m(p,q)$ coarse equivalences of Legendrian $(p,q)$-torus knots in contact $S^1\times S^2$ with tight complements. Since there are exactly $2$ coarse equivalences of Legendrian $(p,q)$-torus knots with $\tw=0$ in the unique tight contact $S^1\times S^2$,  there are exactly $m(p,q)-2$ coarse equivalences of non-loose Legendrian $(p,q)$-torus knots with $\tw=0$  in contact $S^1\times S^2$.  By the proof of Lemma~\ref{lemma:tw=0},  these knots come from the surgery diagram in Figure~\ref{Figure:tw=0}.
\end{proof}

Here we present the stabilization properties of non-loose Legendrian torus knots with $\tw=0$. Consider any decorated pair of paths $(P_1,P_2)$ representing $q/p$. Suppose that the total number of continued fraction blocks in $P_1$ and $P_2$ is $N$. We may denote $P_1=(A_2,A_4,\ldots,A_{2n})$ and $P_2=(B_1,B_3, \ldots, B_{2m-1})$, where $B_1$ has length $1$ and $m+n=N$. Let $s_j$ be the slope in $A_j$ or $B_j$ which is farthest from $q/p$, and denote $n_j= |s_j\bigcdot \frac{q}{p}|$ for $1\leq j\leq N$. Let $m_j$ be half the number of $j$-inconsistent pairs of paths representing $q/p$ for $2\leq j\leq N$. According to the proof of Lemma~\ref{lemma:s=infty}, $m_2=n(p,q)$.

%Denote $m'_j=m_j-m_{j+1}$ for $2\leq j\leq N$, where $m_{N+1}=0$. Set $n_j'=n_{j}-n_{j-1}$ for $2\leq j \leq N$.

Assume that $(P_1,P_2)$ is $k$-inconsistent for some $2\leq k \leq N$ and not compatible with any $(k+1)$-inconsistent pair of paths.
Observe that the total number of such pairs is $2(m_k-m_{k+1})$. Note that $(P_1,P_2)$ can be converted into the unique compatible $k$-inconsistent pair of paths $(P_1^r,P_2^r)$ for $2\leq r\leq k$, where $(P^k_1,P^k_2)=(P_1,P_2)$. 

% $T_k$ the convex torus in $V_1$ or $V_2$ with two dividing curves of slope $s_k$, and $L_k$ a Legendrian ruling curve on $T_k$ of slope $\frac{q}{p}$

\begin{proposition}\label{prop:wings}
Assume that the ambient contact structure is not $\xi_{std}$. Given an $k$-inconsistent pair of paths $(P_1,P_2)$ representing $q/p$, assume that $k$ is even and all the the basic slices in the continued fractions blocks $A_2,\ldots, A_{k-2}, B_1, \ldots, B_{k-1}$ are positive while some in $A_k$ are negative. Then we have 
$$S_-^{n_{k-1}}(L_{P_1,P_2}) ~\text{is loose and}~ S_+^lS_-^i(L_{P_1,P_2}) ~\text{is non-loose for any}~ i < n_{k-1} ~\text{and}~ l \geq 0;$$
$$S_+^{n_{k-1}}(L_{-P_1,-P_2}) ~\text{is loose and}~ S_+^iS_-^l(L_{-P_1,-P_2}) ~\text{is non-loose for any}~ i < n_{k-1} ~\text{and} ~l \geq 0.$$

Moreover, $S_+^{n_{r-1}-n_{r-2}}(L_{P_1^r,P_2^r})$ and $S_-^{n_{r-1}-n_{r-2}}(L_{P_1^{r-1},P_2^{r-1}})$ are coarsely equivalent for $2<r\leq k$.
%Moreover, for any pair of paths $(P_1',P_2')$ compatible with $(P_1,P_2)$, every Legendrian knot in $W(L_{P_1,P_2})\cap W(L_{P_1',P_2'})$ is unique up to coarse equivalence.

When $i$ is odd, the same result holds if all basic slices in the continued fraction blocks $A_2, \ldots, A_{i-1}$, $B_1, \ldots, B_{i-2}$ are negative and some in $B_i$ are positive.
\end{proposition}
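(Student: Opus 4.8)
The plan is to read the contact structure $\xi'_{P_1,P_2}$ on the complement $C$ of $L_{P_1,P_2}$ off the decorated pair $(P_1,P_2)$, to detect looseness of a stabilization as overtwistedness of the corresponding extended complement, and to detect coarse equivalence as a boundary-respecting contactomorphism of extended complements. It suffices to prove the even case and the first display for $L:=L_{P_1,P_2}$: the statement for $L_{-P_1,-P_2}$ follows by reversing all signs on the decorated paths, i.e. passing to the conjugate contact structure, which interchanges $S_+$ and $S_-$ and turns the first display into the second; and the odd-$i$ case follows from the symmetry interchanging $V_1$ with $V_2$ (hence $P_1$ with $P_2$), composed if necessary with sign reversal. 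Recall that $\xi'_{P_1,P_2}\in\Tight_0(C;0)$, that $\tw$ equals the dividing slope on $\partial C$, and that $S_+^l S_-^i(L)$ has complement obtained from $C$ by gluing, from $\partial C$ outward, $i$ negative and then $l$ positive basic slices, with successive dividing slopes $0,-1,\dots,-(i+l)$; the hypothesis that the ambient structure is not $\xi_{std}$ makes $L$ itself non-loose.

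The central step is an explicit structural model for $(C,\xi'_{P_1,P_2})$. Since the basic slices in the blocks $A_2,\dots,A_{k-2},B_1,\dots,B_{k-1}$ all carry the positive sign, I would run the convex-annulus and edge-rounding argument of Lemmas~\ref{lemma:2-inconsistent} and \ref{lemma:overtwisted} inside $C=V_1\cup(S^1\times P)\cup V_2$, sliding these basic slices across $S^1\times P$ to $\partial C$. This should exhibit $(C,\xi'_{P_1,P_2})$, up to isotopy, as $(C_0,\xi_0)\cup_\Sigma R$, where $\Sigma$ is a convex torus parallel to $\partial C$ with dividing slope $\infty$, $\xi_0\in\Tight_0(C;\infty)$ is governed by the truncated, $2$-inconsistent pair obtained from $(P_1,P_2)$, and $R$ is the $T^2\times[0,1]$ between $\partial C$ (slope $0$) and $\Sigma$ (slope $\infty$), which is a chain of positive basic slices, non-minimally twisting once $k>2$. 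Tracking the dividing slopes along the minimal path determined by $s_{k-1}$ should then show that precisely $n_{k-1}$ negative basic slices must be glued to $\partial C$ before the configuration is forced to be overtwisted. Verifying this Farey-graph bookkeeping --- that the slices of $R$ really correspond to positive stabilizations rather than negative ones, and that the overtwistedness threshold is exactly $n_{k-1}$ --- is what I expect to be the \textbf{main technical obstacle}.

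Granting the model, the first display follows. For $i=n_{k-1}$, the $n_{k-1}$ negative basic slices glued to $\partial C$, together with the positive chain $R$ sharing the slope-$0$ torus $\partial C$, create the opposite-sign configuration across a Farey triangle --- equivalently a convex Giroux torsion layer carrying clashing signs on its two halves --- that forces the extended complement to be overtwisted (alternatively one caps off to obtain an overtwisted $L(q,p)\#L(-q,p)$, as in Lemma~\ref{lemma:overtwisted}); hence $S_-^{n_{k-1}}(L)$ is loose. For $i<n_{k-1}$ this clash has not appeared, so the extended complement stays tight, and gluing further positive basic slices merely lengthens the positive consistent part, which remains tight by the capping-and-uniqueness argument in the proof of Lemma~\ref{lemma:s=n>0} (cap off by a tight solid torus of meridional slope $0$ to a tight contact structure on a Seifert fibered space inside which the extra positive stabilizations live). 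Therefore $S_+^l S_-^i(L)$ is non-loose for all $i<n_{k-1}$ and $l\ge 0$; taking $i=0$ shows in particular that every positive stabilization of $L$ is non-loose, and the second display follows by passing to the conjugate structure.

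Finally, for the coarse equivalences $S_+^{N}(L^{(r)})\simeq S_-^{N}(L^{(r-1)})$ with $N=n_{r-1}-n_{r-2}$ and $L^{(r)}:=L_{P_1^r,P_2^r}$: passing from the $r$-inconsistent representative to the $(r-1)$-inconsistent one moves a single basic slice between the outermost consistent-block structure and the boundary region, so the same sliding argument shows that gluing $N$ positive basic slices to the complement of $L^{(r)}$ produces a contact manifold contactomorphic, rel the identification of the boundary tori, to the one obtained by gluing $N$ negative basic slices to the complement of $L^{(r-1)}$. To upgrade this equivalence of complements to coarse equivalence of the knots, I would cap both complements off with the unique tight solid torus of meridional slope $0$, observe that the two resulting closed contact manifolds are tight and isotopic and that the capping cores are smoothly isotopic copies of the same torus knot, and invoke the uniqueness of the tight extension exactly as in Lemma~\ref{lem:tw=0distinct} to conclude that the contactomorphism extends to one of $S^1\times S^2$ smoothly isotopic to the identity and carrying $S_+^{N}(L^{(r)})$ to $S_-^{N}(L^{(r-1)})$. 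The odd-$i$ case is obtained by the same arguments after interchanging $+\leftrightarrow-$ and $P_1\leftrightarrow P_2$.
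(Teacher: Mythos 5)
The paper's own ``proof'' of this proposition is a single sentence deferring to the proofs of Propositions 7.5 and 7.10 of \cite{emm}, so your proposal is in effect an attempt to reconstruct that argument. Your overall strategy --- read $\xi'_{P_1,P_2}$ off the decorated pair, detect looseness of $S_+^lS_-^i(L)$ as overtwistedness of the enlarged complement, and obtain the coarse equivalences from uniqueness of tight extensions as in Lemma~\ref{lem:tw=0distinct} --- is the intended one. But the central structural model you build the proof on is wrong for exactly the cases where the proposition has content. You decompose $(C,\xi'_{P_1,P_2})$ as $(C_0,\xi_0)\cup_\Sigma R$ with $\Sigma$ a convex torus of dividing slope $\infty$ and $R$ a ``chain of positive basic slices, non-minimally twisting once $k>2$.'' For $k>2$ the pair $(P_1,P_2)$ is by definition $(k-1)$-consistent, hence $2$-consistent, and Lemma~\ref{lemma:2-inconsistent} then says that \emph{every} convex torus in $C$ parallel to $\partial C$ has dividing slope $0$; no torus $\Sigma$ of slope $\infty$ exists, so the proposed splitting does not exist. (For $k=2$ the splitting does exist, but then $R$ lies in $\Tight^{min}(T^2\times[0,1];0,\infty)$, a single minimally twisting basic slice --- $0$ and $\infty$ are Farey-adjacent --- and a non-minimally twisting layer there would force convex Giroux torsion, contradicting $\tor(L_{P_1,P_2})=0$. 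In that case $n_1=|s_1\bigcdot\frac{q}{p}|=1$ and the statement reduces to the one-step sign clash you describe.) The depth $n_{k-1}$ for $k>2$ is not recorded in any outer layer of $C$; it measures how far the externally attached negative basic slices can be propagated, via convex annuli and the Imbalance Principle, into the nested solid tori $V_1,V_2$ through the positively signed blocks $B_1,A_2,\dots,B_{k-1}$ before they meet the first oppositely signed slice in $A_k$. That is the mechanism in \cite[Propositions 7.5 and 7.10]{emm}, and it is qualitatively different from a clash at a fixed slope-$\infty$ interface.

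Even setting the model aside, the decisive quantitative content of the proposition --- that the looseness threshold is \emph{exactly} $n_{k-1}$, i.e.\ that $n_{k-1}$ negative stabilizations produce an overtwisted disc while $n_{k-1}-1$ of them followed by arbitrarily many positive ones do not --- is precisely the step you flag as ``the main technical obstacle'' and do not carry out. Note also that your tightness argument for $S_+^lS_-^i(L)$ cannot simply invoke the capping argument of Lemma~\ref{lemma:s=n>0}: that lemma concerns $\Tight_0(C;i)$ for $i>0$, whereas the complements here have boundary slope $-(i+l)\le 0$, so one must cap off by Legendrian surgery to a small Seifert fibered space and verify tightness there, which again is the content of the cited EMM propositions. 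As written, the proposal identifies the right framework but neither establishes the correct structural decomposition for $k>2$ nor proves either inequality defining the threshold $n_{k-1}$.
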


\begin{proof}
    This is similar to the proof of \cite[Proposition 7.5 and 7.10]{emm}.
\end{proof}

Denote by $W(L_{P_1,P_2})$ (respectively $W(L_{-P_1,-P_2})$) the set $\{S_+^lS_-^i(L_{P_1,P_2}) \text{ for } i < n_{k-1} \text{ and } l \geq 0\}$ (respectively $\{S_+^iS_-^l(L_{-P_1,-P_2}) \text{ for } i < n_{k-1} \text{ and } l \geq 0\}$).  The value $n_{k-1}$ is then defined as the \dfn{depth} of $W(L_{P_1,P_2})$ (respectively $W(L_{-P_1,-P_2})$) along the line of slope $1$ (respectively $-1$). 

Assume that $(P_1, P_2)$ is $k$-inconsistent for some $2 \leq k \leq N$ and is not compatible with any $(k+1)$-inconsistent pair of paths. Let $(P_1, P_2) = (P_1^k, P_2^k)$. The set $\bigcup_{r=2}^k W(L_{P_1^r, P_2^r})$ (respectively, $\bigcup_{r=2}^k W(L_{-P_1^r, -P_2^r})$) is called a \dfn{wing} of $L_{P_1, P_2}$ (respectively, $L_{-P_1, -P_2}$). The value $n_{k-1}$ is then referred to as the \dfn{depth} of the wing of $L_{P_1, P_2}$ (respectively, $L_{-P_1, -P_2}$). The set $$\bigcup_{r=2}^k W(L_{P_1^r,P_2^r})\bigsqcup \bigcup_{r=2}^k W(L_{-P_1^r,-P_2^r})$$ is called a \dfn{pair of wings}. Note that $\bigcup_{r=2}^k W(L_{P_1^r,P_2^r})$ and $\bigcup_{r=2}^k W(L_{-P_1^r,-P_2^r})$ are symmetric with respect to a vertical line.

\bigskip
\begin{figure}[htb]
\begin{overpic}%[grid,tics=10]%[scale=0.85]
{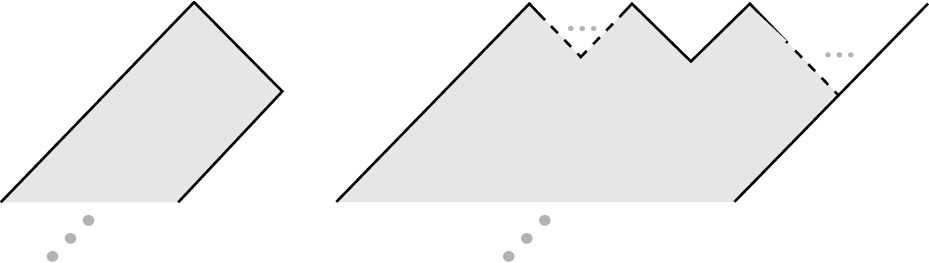} 
\put(75, 130){$L_{-P_1,-P_2}$}
\put(277, 130){$L_{-P_1^r,-P_2^r}$}
\put(335, 130){$L_{-P_1^{r-1},-P_2^{r-1}}$}
\put(230, 130){$L_{-P_1,-P_2}$}
\put(410, 130){$L_{-P_1^2,-P_2^2}$}
\end{overpic}
\caption{The left subfigure is $W(L_{-P_1,-P_2})$. The right subfigure is the wing $\bigcup_{r=2}^k W(L_{-P_1^r,-P_2^r})$. The distance between the two peaks corresponding to \(L_{-P_1^r,-P_2^r}\) and \(L_{-P_1^{r-1},-P_2^{r-1}}\) is \(2(n_{r-1}-n_{r-2})\).}
\label{Figure:wing}
\end{figure}

By Proposition~\ref{prop:wings}, we have 
\begin{corollary}\label{coro:numberofwings}
There are exactly $m_2$ pairs of wings for non-loose Legendrian $(p,q)$-torus knots with $\tor=0$. Precisely, 
For $2\leq j\leq N$, there are exactly $m_j-m_{j+1}$ pairs of wings each of which have depth $n_{j-1}$, where $m_{N+1}=0$.
\end{corollary}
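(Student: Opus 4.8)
The plan is to set up a bijection between pairs of wings of depth $n_{k-1}$ and the orbits of the global sign-flip involution $(P_1,P_2)\mapsto(-P_1,-P_2)$ acting on the \emph{maximal} $k$-inconsistent pairs of paths representing $q/p$, and then to count those orbits from a telescoping identity among the numbers $m_j$. Here I call a $k$-inconsistent pair maximal if it is not compatible with any $(k+1)$-inconsistent pair; as recalled just before Proposition~\ref{prop:wings}, such a pair $(P_1,P_2)$ is the top representative of its compatibility class, and that class contains exactly one $r$-inconsistent representative $(P_1^r,P_2^r)$ for each $2\le r\le k$. Because $k$-inconsistency forces $(k-1)$-consistency, all basic slices in the first $k-1$ continued fraction blocks carry one common sign while the $k$-th block contains a slice of the opposite sign, so $(P_1,P_2)$ or its flip --- whichever has the all-positive shape up to level $k-1$ --- meets the hypotheses of Proposition~\ref{prop:wings}; note that the ambient contact structure is overtwisted, hence never $\xi_{std}$, so the proposition does apply. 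It then assembles the wing $\bigcup_{r=2}^{k}W(L_{P_1^r,P_2^r})$, of depth $n_{k-1}$, and together with the mirror wing $\bigcup_{r=2}^{k}W(L_{-P_1^r,-P_2^r})$ the pair of wings attached to $(P_1,P_2)$; conversely every pair of wings of depth $n_{k-1}$ arises in this way. Two maximal $k$-inconsistent pairs produce the same pair of wings exactly when they differ by the flip: this is where Lemma~\ref{lem:tw=0distinct} enters, since the $\tw=0$ peaks of a wing, being $\tw=0$ non-loose torus knots, recover the pairs of paths that produced them, so wings coming from pairs that are not flip-related are genuinely distinct and disjoint.

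Next I would count the maximal $k$-inconsistent pairs for each fixed $k$ with $2\le k\le N$. By definition of $m_k$ there are $2m_k$ $k$-inconsistent pairs in all. Converting each $(k+1)$-inconsistent pair to its unique compatible $k$-inconsistent representative gives a bijection of the $(k+1)$-inconsistent pairs onto the non-maximal $k$-inconsistent pairs --- injectivity and surjectivity both flowing from the uniqueness of the representative at each level inside a compatibility class --- so there are $2m_{k+1}$ non-maximal and hence $2(m_k-m_{k+1})$ maximal $k$-inconsistent pairs; for $k=N$ there is no $(N+1)$-th continued fraction block, so $m_{N+1}=0$ and all $2m_N$ are maximal. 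The flip acts freely on the maximal $k$-inconsistent pairs, since $(k-1)$-consistency with $k-1\ge 1$ fixes a definite sign in the first continued fraction block which the flip reverses. Combined with the bijection of the previous paragraph, the number of pairs of wings of depth $n_{k-1}$ equals $\tfrac{1}{2}\cdot 2(m_k-m_{k+1})=m_k-m_{k+1}$, which is the second assertion of the corollary. Summing over $k$ then telescopes, $\sum_{k=2}^{N}(m_k-m_{k+1})=m_2-m_{N+1}=m_2$, and $m_2=n(p,q)$ was recorded in the proof of Lemma~\ref{lemma:s=infty}, which gives the first assertion.

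I expect the main obstacle to be the structural bookkeeping rather than the arithmetic: one has to verify carefully that every non-totally-consistent compatibility class meets each inconsistency level $2,\dots,k$ in exactly one decorated pair, that the shuffling and conversion moves of Section~\ref{section:paths} genuinely realize the claimed bijection between the $(k+1)$-inconsistent pairs and the non-maximal $k$-inconsistent ones, and that distinct flip-orbits cannot accidentally yield the same pair of wings among wings of equal depth. All of these are consequences of material already developed in Section~\ref{section:paths} and of Lemma~\ref{lem:tw=0distinct}, so the argument should need no new machinery, only care with the combinatorics of decorated paths.
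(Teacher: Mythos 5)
Your proposal is correct and follows essentially the same route as the paper: the paper derives the corollary directly from Proposition~\ref{prop:wings} together with the counting already set up in the surrounding text (namely that the number of $k$-inconsistent pairs not compatible with any $(k+1)$-inconsistent pair is $2(m_k-m_{k+1})$, halved by the sign-flip symmetry, with the total telescoping to $m_2=n(p,q)$). Your additional care about the freeness of the flip action and the distinctness of wings via Lemma~\ref{lem:tw=0distinct} only makes explicit what the paper leaves implicit.
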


\begin{proof}[Proof of Theorem~\ref{thm:Legmountain}]
By Lemma~\ref{lemma:tw=0}, any such knots with $\tw=0$ can be realized as $L_{P_1,P_2}$, corresponding to the peaks in Figure~\ref{Figure:mountain}. The case $\tw>0$ follows from Theorem~\ref{thm:tw>0} and Corollary~\ref{coro:tw>0stabilize}, corresponding to the portion of two lines of slopes $1$ and $-1$ that lie above the peaks in Figure~\ref{Figure:mountain}. The case $\tw<0$ is covered by Theorem~\ref{lemma:destabilization} and Proposition~\ref{prop:wings}, corresponding to the pair of wings in Figure~\ref{Figure:mountain}. The number of such pairs of components is given by $n(p,q)$, as stated in Corollary~\ref{coro:numberofwings}.   
\end{proof}

\begin{proof}[Proof of Theorem~\ref{thm:tor>0}]
It follows from Lemma~\ref{lemma:addtorsion} and Lemma~\ref{lemma:tor>0}.
\end{proof}

\begin{proof}[Proof of Corollary~\ref{coro:destabilization}]
%The case of $\tw<0$ follows from Lemma~\ref{lemma:destabilization} and the case of $\tw>0$ follows from Lemma~\ref{lemma:s>0}. Whether a non-loose Legendrian torus knot with $\tw=0$ destabilizes depends on whether its corresponding pair of paths is $2$-inconsistent, according to Lemma~\ref{lemma:2-inconsistent}.
It follows from Theorem~\ref{thm:Legmountain} and \ref{thm:tor>0}. A non-loose Legendrian torus knot with $\tw=0$ destabilizes if and only if its corresponding pair of paths is $2$-inconsistent, according to Lemma~\ref{lemma:2-inconsistent}.
\end{proof}

\section{Classification of non-loose transverse torus knots in \texorpdfstring{$S^1\times S^2$}{S1 x S2}}
We need the following lemma which implies that classifying transverse knots is equivalent to classifying Legendrian knots up to negative stabilization.

\begin{lemma}\label{lemma:stablesimple}\cite{eh1}
    Let $K_1$ and $K_2$ be two transverse knots with a given knot type $\mathcal{K}$. The knots $K_1$ and $K_2$ are transversely isotopic if and only if there exist two Legendrian approximations $L_1$ and $L_2$ of $K_1$ and $K_2$ respectively, such that for some integers $r_1$ and $r_2$, two negative stabilization $S^{r_1}_-(L_1)$ and $S^{r_2}_-(L_2)$ are Legendrian isotopic.
\end{lemma}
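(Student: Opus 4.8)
The plan is to exploit the two standard constructions relating the Legendrian and transverse worlds: the \emph{transverse push-off} $T(L)$ of a Legendrian knot $L$, which is well defined up to transverse isotopy, and a \emph{Legendrian approximation} of a transverse knot $K$, a Legendrian knot obtained from a $C^{0}$-small perturbation of $K$ and well defined only up to negative stabilization. The heart of the matter is the claim $(\star)$: any two Legendrian approximations $L$ and $L'$ of a fixed transverse knot $K$ satisfy that $S_{-}^{a}(L)$ and $S_{-}^{b}(L')$ are Legendrian isotopic for suitable $a,b\ge 0$. I expect $(\star)$ to be the main obstacle; everything else is formal.

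First I would record two local facts. (i) A negative stabilization does not change the transverse push-off: for every Legendrian $L$, the transverse knots $T(S_{-}(L))$ and $T(L)$ are transversely isotopic. This is verified in a front chart, where $S_{-}(L)$ is obtained from $L$ by inserting a single downward zig-zag; after taking the push-off in the positive transverse direction this zig-zag is removed by a transverse isotopy supported in the chart. (By contrast a positive stabilization raises the self-linking number, so it genuinely changes the push-off.) Iterating, $T(S_{-}^{r}(L))$ is transversely isotopic to $T(L)$ for all $r\ge 0$. (ii) If $L$ is a Legendrian approximation of $K$, then $T(L)$ is transversely isotopic to $K$; this is immediate from the construction, since the push-off reverses the approximating perturbation up to a $C^{0}$-small transverse isotopy.

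Granting $(\star)$ together with (i) and (ii), the lemma follows quickly. For the ``if'' direction, if $S_{-}^{r_{1}}(L_{1})$ and $S_{-}^{r_{2}}(L_{2})$ are Legendrian isotopic, then their transverse push-offs are transversely isotopic, and combining this with (i) and (ii) gives that $K_{1}$ and $K_{2}$ are transversely isotopic. For the ``only if'' direction, a transverse isotopy from $K_{1}$ to $K_{2}$ can be discretized and, on each short subinterval, $C^{0}$-approximated by a Legendrian curve; after a generic perturbation this produces a Legendrian isotopy between some Legendrian approximation $L_{1}'$ of $K_{1}$ and some Legendrian approximation $L_{2}'$ of $K_{2}$. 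Applying $(\star)$ to the pairs $\{L_{1},L_{1}'\}$ and $\{L_{2},L_{2}'\}$, stabilizing the isotopy from $L_{1}'$ to $L_{2}'$ negatively enough times, and concatenating yields that $S_{-}^{r_{1}}(L_{1})$ and $S_{-}^{r_{2}}(L_{2})$ are Legendrian isotopic for appropriate $r_{1},r_{2}$.

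Finally, to prove $(\star)$ I would argue inside a standard contact neighborhood of $K$: both $L$ and $L'$ can be isotoped to lie in such a neighborhood arbitrarily $C^{0}$-close to the core transverse curve, and one then shows that any Legendrian curve sufficiently close to the core is, up to Legendrian isotopy, a negative stabilization of a fixed model Legendrian approximation. The cleanest implementation is a discretization argument — take an annulus cobounded by the two Legendrian curves, put it in convex position, and analyze how the dividing curves change as bypasses are attached — or, equivalently, a bookkeeping argument in the front projection using only the Legendrian Reidemeister moves. Since this is precisely the content of the cited reference, I would either reproduce that analysis or simply appeal to \cite{eh1}.
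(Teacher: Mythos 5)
Your proposal is correct and follows essentially the same route as the paper, which simply cites \cite[Theorem 2.10]{eh1}: the three ingredients you isolate (invariance of the positive transverse push-off under negative stabilization, push-off inverting Legendrian approximation, and the key claim $(\star)$ that two Legendrian approximations of the same transverse knot agree after negative stabilizations) are exactly the content of that reference. Since the paper offers no independent argument beyond the citation and a remark that the proof carries over to coarse equivalence, your sketch is an accurate reconstruction of the intended proof.
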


\begin{remark}
    The above lemma was originally proved in \cite[Theorem 2.10]{eh1} for transverse isotopy but the proof still works for coarse equivalence.
\end{remark}

%Now we classify the non-loose transverse $(p,q)$-torus knots in $S^1\times S^2$. 
%\[t(p,q)=\sum_{k=2}^{N}(m_k-m_{k+1})n_{k-1}.\]

%Let $N$ be the total number of continued fraction blocks in the minimal path $P_1$ from $q/p$ anti-clockwise to $\infty$ and the minimal path $P_2$ from $q/p$ clockwise to $-1$ in the Farey graph. Denote the subdivisions of $P_1$ and $P_2$ by a single sequence of continued fraction blocks $(C_1,C_2,C_3,\ldots,C_{N})$, where $|C_1|=1$. Let $s_k$ be the slope in $C_k$ which is farthest from $q/p$ and denote $n_k= |s_k\bigcdot \frac{q}{p}|$ for $1\leq k\leq N$. Let $m_k$ be half the number of $k$-inconsistent pairs of paths representing $q/p$ for $2\leq k\leq N$. Denote $m'_k=m_k-m_{k+1}$ for $2\leq k< N$, and $m'_N=m_N$. Let \[t(p,q)=\sum_{k=2}^{N}m'_k\,n_{k-1}.\]

\begin{corollary} \label{coro:transversetor=0}
    Suppose $-q>p>0$. Let $t(p,q)=\sum_{k=2}^{N}(m_k-m_{k+1})n_{k-1}$. Then the number of non-loose transverse $(p,q)$-torus knots in $S^1\times S^2$ with $\tor=0$ is exactly $t(p,q)$. These knots are denoted by 
    \[T_{i,j,k} ~\text{ for }~ 1\leq i \leq n_{k-1},~ 1\leq j \leq m_k-m_{k+1}, ~\text{ and } 1\leq k \leq N.\]
    Moreover, 
    \[S(T_{i,j,k})=T_{i-1,j,k} ~\text{ for }~ 1<i\leq n_{k-1} ~\text{ and } S(T_{1,j,k}) ~\text{ is loose}.\]
    In particular, given $j$ and $k$, these knots can be described by an interval as Figure~\ref{figure:transverse}. There are exactly $n(p,q)$ such intervals.
\end{corollary}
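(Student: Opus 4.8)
The plan is to transfer the Legendrian classification of Theorem~\ref{thm:Legmountain}, Proposition~\ref{prop:wings} and Corollary~\ref{coro:numberofwings} to the transverse setting by means of Lemma~\ref{lemma:stablesimple}. By that lemma, coarse equivalence classes of transverse $(p,q)$-torus knots are in bijection with coarse equivalence classes of Legendrian $(p,q)$-torus knots modulo negative stabilization, and a transverse knot is non-loose exactly when it is the transverse push-off of a non-loose Legendrian knot $L$ for which $S_{-}^{r}(L)$ is non-loose for every $r\ge 0$. So the task is to describe the Legendrian knots with $\tor=0$ that survive all negative stabilizations, and then to record the negative-stabilization classes.

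First I would isolate the relevant Legendrian knots. By Theorem~\ref{thm:Legmountain} the $\tor=0$ non-loose Legendrian $(p,q)$-torus knots form $n(p,q)$ pairs of mountain-range components; by Corollary~\ref{coro:numberofwings} a component whose maximal inconsistency level is $k$ has depth $n_{k-1}$, and for each $2\le k\le N$ there are $m_{k}-m_{k+1}$ pairs of such components. On the component with Euler class $-e<0$ one has $S_{-}^{n_{k-1}}(L_{P_1^{k},P_2^{k}})$ loose by Proposition~\ref{prop:wings}, so every transverse push-off of a knot in it is eventually loose and contributes nothing. On the component with Euler class $e>0$, every non-loose Legendrian knot is either one with $\tw=i>0$ (which negatively destabilizes to the peak $L_{-P_1^{2},-P_2^{2}}$ by Corollary~\ref{coro:tw>0stabilize}) or a knot $S_{+}^{i}S_{-}^{l}(L_{-P_1^{r},-P_2^{r}})$ with $i<n_{r-1}$ in the wing $\bigcup_{r=2}^{k}W(L_{-P_1^{r},-P_2^{r}})$; in either case Proposition~\ref{prop:wings} guarantees that all further negative stabilizations remain in the wing, hence non-loose, so each such knot has a non-loose transverse push-off.

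Next I would count these transverse knots component by component. Inside one wing, two Legendrian knots $S_{+}^{i}S_{-}^{l}(L_{-P_1^{r},-P_2^{r}})$ and $S_{+}^{i'}S_{-}^{l'}(L_{-P_1^{r},-P_2^{r}})$ with $i,i'<n_{r-1}$ share a common negative stabilization iff $i=i'$, so $W(L_{-P_1^{r},-P_2^{r}})$ accounts for exactly $n_{r-1}$ transverse knots, forming a chain under transverse stabilization (which corresponds to positive stabilization of a Legendrian approximation). The coarse equivalences $S_{+}^{n_{r-1}-n_{r-2}}(L_{-P_1^{r},-P_2^{r}})\simeq S_{-}^{n_{r-1}-n_{r-2}}(L_{-P_1^{r-1},-P_2^{r-1}})$ of Proposition~\ref{prop:wings} identify the chain coming from $W(L_{-P_1^{r-1},-P_2^{r-1}})$ with a terminal segment of the chain coming from $W(L_{-P_1^{r},-P_2^{r}})$; telescoping over $r=2,\dots,k$ (using $n_{1}=1$ and $\sum_{r=3}^{k}(n_{r-1}-n_{r-2})=n_{k-1}-1$) shows the whole wing yields precisely the $n_{k-1}$ transverse knots of $W(L_{-P_1^{k},-P_2^{k}})$. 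Writing $T_{n_{k-1},j,k}$ for the transverse push-off of $L_{-P_1^{k},-P_2^{k}}$ and $T_{i-1,j,k}=S(T_{i,j,k})$, the knot $S(T_{1,j,k})$ is the transverse push-off of $S_{+}^{n_{k-1}}(L_{-P_1^{k},-P_2^{k}})$, which is loose by Proposition~\ref{prop:wings}; and $T_{n_{k-1},j,k}$ does not destabilize because its push-off class contains no Legendrian representative with $\tw>0$. Summing over the $m_{k}-m_{k+1}$ components of each depth $n_{k-1}$ gives $\sum_{k=2}^{N}(m_{k}-m_{k+1})\,n_{k-1}=t(p,q)$ transverse knots, organized into $\sum_{k=2}^{N}(m_{k}-m_{k+1})=m_{2}=n(p,q)$ intervals, each of the form shown in Figure~\ref{Figure:mountain1}.

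Finally I would check distinctness: if $T_{i,j,k}$ and $T_{i',j',k'}$ were coarsely equivalent, then by Lemma~\ref{lemma:stablesimple} some negative stabilizations of their Legendrian approximations would be coarsely equivalent non-loose Legendrian $(p,q)$-torus knots, which by Theorem~\ref{thm:Legmountain}, Proposition~\ref{prop:wings} and Lemma~\ref{lem:tw=0distinct} forces $(i,j,k)=(i',j',k')$. I expect the delicate step to be the third paragraph: one must verify that each wing collapses to a single chain of length exactly $n_{k-1}$, neither shorter (no accidental coincidences inside $W(L_{-P_1^{k},-P_2^{k}})$) nor longer (the pieces $W(L_{-P_1^{r},-P_2^{r}})$ with $r<k$ contribute nothing new), which is exactly where the precise form of the identifications in Proposition~\ref{prop:wings} and the monotonicity of the integers $n_{j}$ enter, following the transverse analysis of \cite{emm}.
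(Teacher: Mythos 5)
Your proposal is correct and follows essentially the same route as the paper: reduce to Legendrian knots modulo negative stabilization via Lemma~\ref{lemma:stablesimple}, observe that only the $L_{-P_1,-P_2}$ side of each pair of wings survives all negative stabilizations, and use Proposition~\ref{prop:wings} together with the wing/depth count of Corollary~\ref{coro:numberofwings} to identify the surviving classes with the push-offs of $S_+^i(L_{-P_1,-P_2})$ for $0\leq i\leq n_{k-1}-1$, giving $t(p,q)$ knots in $n(p,q)$ intervals. Your telescoping of the chains via the coarse equivalences $S_{+}^{n_{r-1}-n_{r-2}}(L_{-P_1^{r},-P_2^{r}})\simeq S_{-}^{n_{r-1}-n_{r-2}}(L_{-P_1^{r-1},-P_2^{r-1}})$ and the distinctness check are in fact spelled out in more detail than in the paper's own (rather terse) proof; the only blemish is the phrase ``negatively destabilizes to the peak,'' which should read ``negatively stabilizes.''
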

%In particular, in the mountain range of the non-loose transverse $(p,q)$-torus knots with $\tor=0$, there are exactly $n(p,q)$ components of the shape shown in Figure~\ref{figure:transverse}.
\begin{proof}
    By Lemma~\ref{lemma:stablesimple}, it suffices to consider those non-loose Legendrian torus knots with $\tw\leq 0$ and $\tor=0$ that remain non-loose after arbitrary many negative stabilizations. By Lemma~\ref{lemma:destabilization} and \ref{lemma:tw=0}, every such knot is coarsely equivalent to either $L_{-P_1,-P_2}$ for some decorated pair of paths $(P_1,P_2)$ representing $q/p$, or a stabilization of $L_{-P_1,-P_2}$. (Here $(P_1,P_2)$ is described as Proposition~\ref{prop:wings}.)
    
    For $2\leq k\leq N$, let $(P_1,P_2)$ be a $k$-inconsistent pair of paths that is not compatible with any $(k+1)$-inconsistent pair. By Corollary~\ref{coro:numberofwings}, the wing of $L_{-P_1,-P_2}$ has depth $n_{k-1}$, and the total number of such wings is $m_k-m_{k+1}$. Therefore, by Proposition~\ref{prop:wings}, the non-loose transverse torus knots in $(S^1\times S^2, \xi_{-P_1,-P_2})$ are exactly the transverse push-offs of $S^{i}_+(L_{-P_1,-P_2})$ for $0\leq i\leq n_{k-1}-1$. An interval for non-loose representatives of transverse torus knots with $\tor=0$ is shown in Figure~\ref{figure:transverse}.
\end{proof}

\begin{figure}[htb]
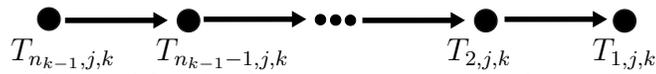

\begin{overpic} %[grid,tics=10]
{transverse.eps}
\put(210,-10){$T_{1,j,k}$}
\put(155,-10){$T_{2,j,k}$}
\put(45,-10){$T_{n_{k-1}-1,j,k}$}
\put(-10,-10){$T_{n_{k-1},j,k}$}
\end{overpic}
\vspace{1mm}
\caption{An interval for non-loose transverse torus knots. For the case $\tor=0$, each dot represents a unique non-loose transverse representative. For the case $\tor>0$, the interval reduces to a single dot, which represents an infinite family of transverse representatives distinguished by convex Giroux torsion.}
\label{figure:transverse}
\end{figure}

\begin{proof}[Proof of Corollary~\ref{coro:transversetor>0}]
    This follows from Lemma~\ref{lemma:stablesimple} and Theorem ~\ref{thm:tor>0}. Note that for any totally $2$-inconsistent pair of paths, there exists no $k$-inconsistent pair of paths for $k>2$ that is compatible with it. Then each interval consists of a single dot in Figure~\ref{figure:transverse}.
\end{proof}

%\begin{proof}[Proof of Theorem~\ref{thm:transversetor=0}]
%This follows from  Lemma~\ref{lemma:stablesimple}  and Theorem ~\ref{thm:tw=0}.
%\end{proof}

%\begin{proof}[Proof of Theorem~\ref{thm:transversetor>0}]
%This follows from  Lemma~\ref{lemma:stablesimple}  and Theorem ~\ref{thm:tor>0}.
%\end{proof}

\section{Euler classes of contact structures supporting non-loose torus knots}\label{section:Euler}

Suppose $-q>p>0$. Let $(P_1,P_2)$ be a decorated pair of paths representing $q/p$. Suppose the vertices of \(P_1\) are \(p_1, \dots, p_k\), where \(p_1 = q/p\) and \(p_k = \infty = -1/0\), and those of \(P_2\) are \(q_1, \dots, q_l\), where \(q_1 = q/p\) and \(q_l = -1\).
From \cite[Proof of Proposition 3.22]{h}, we can compute the Euler class of the contact structure $\xi_{P_1,P_2}$ on $S^1\times S^2$ by
\begin{align}
e(\xi_{P_1,P_2})=\sum_{i=1}^{k-1}\epsilon_i\left((p_{i+1}\ominus p_i)\bigcdot \frac{0}{1}\right)~+~\sum_{i=1}^{l-1}\epsilon'_i\left((q_i\ominus q_{i+1})\bigcdot \frac{0}{1}\right),\label{formula:e}
\end{align}
where $\epsilon_i$ is the sign of the edge from $p_{i+1}$ to $p_i$ and $\epsilon'_i$ is the sign of the edge from $q_i$ to $q_{i+1}$.

We may denote $P_1=(A_1,A_3,\ldots,A_{2n-1})$ and $P_2=(B_2,B_4, \ldots, B_{2m})$. Let $s_k$ be the slope in $A_k$ or $B_k$ which is farthest from $q/p$, and denote $n_k= |s_k\bigcdot \frac qp|$.

\begin{lemma}\label{lem:Euler} Let $e(\xi_{P_1, P_2})$ be the Euler class of the contact structure $\xi_{P_1, P_2}$ on $S^1\times S^2$. Then
\begin{enumerate}
\item $e(\xi_{P_1, P_2})+e(\xi_{-P_1, -P_2})=0$, 
\item $e(\xi_{P_1, P_2})=0$  if and only if the signs of the basic slices in both $P_1$ and $P_2$ are all positive or all negative, and
\item $e(\xi_{P_1, P_2})=0$  if and only if $\xi_{P_1, P_2}$ is the standard tight contact structure on $S^1\times S^2$.
\end{enumerate}
\end{lemma}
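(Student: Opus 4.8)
The plan is to prove the three statements in order, using formula \eqref{formula:e} as the main computational tool. For part (1), I would observe that replacing $(P_1,P_2)$ by $(-P_1,-P_2)$ flips the sign of every $\epsilon_i$ and every $\epsilon'_i$ in \eqref{formula:e} while leaving the slopes $p_i$, $q_i$ (and hence the dot-product terms $(p_{i+1}\ominus p_i)\bigcdot\frac{0}{1}$ and $(q_i\ominus q_{i+1})\bigcdot\frac{0}{1}$) unchanged. Therefore $e(\xi_{-P_1,-P_2})=-e(\xi_{P_1,P_2})$, which is exactly (1).

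For part (2), the ``if'' direction is the easier half: if all basic slices in $P_1\cup P_2$ have the same sign $\epsilon$, then \eqref{formula:e} becomes $\epsilon\bigl(\sum_i (p_{i+1}\ominus p_i)\bigcdot\frac{0}{1}+\sum_i (q_i\ominus q_{i+1})\bigcdot\frac{0}{1}\bigr)$, and one recognizes this bracketed sum as a telescoping quantity measuring the total ``winding'' of the concatenated path $\overline{P_1}\cup P_2$ from $\infty$ to $-1$ against the slope $0/1$; since $\overline{P_1}\cup P_2$ runs from $\infty$ to $q/p$ to $-1$ and $q/p<-1<0<\infty$ in the relevant cyclic order, this total contribution vanishes. (Concretely: the numerators of $p_1,\dots,p_k$ are strictly increasing by Lemma~\ref{lemma:paths}, as are those of $q_1,\dots,q_l$, and a direct check using $p_i=p_{i+1}\oplus q_j$ and $q_j=q_{j+1}\oplus p_i$ from the continued-fraction-block structure shows the two sums cancel.) For the ``only if'' direction, I would argue contrapositively: if some basic slice in $P_1\cup P_2$ is positive and another is negative, I want to show $e(\xi_{P_1,P_2})\neq 0$. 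Here I would use the compatibility and ``inconsistency'' framework: any such pair is compatible with a $2$-inconsistent (in fact, after reduction, totally $2$-inconsistent or $k$-inconsistent for appropriate $k$) pair of paths, and for those the relative Euler class computation carried out in the (commented-out) portion of the proof of Lemma~\ref{lemma:tor>0} shows the evaluation takes $2|\,\cdots|$ distinct nonzero values. Alternatively, and more cleanly, I would invoke part (3) together with the uniqueness of the tight contact structure on $S^1\times S^2$: the contact structures $\xi_{P_1,P_2}$ with a mixed-sign decoration are overtwisted (this is essentially the content of Lemma~\ref{lemma:overtwisted} — gluing mismatched basic slices produces Giroux torsion, hence overtwistedness, or the surgery description yields an overtwisted filling), and an overtwisted contact structure on $S^1\times S^2$ cannot have Euler class $0$ because the homotopy class of the tight structure is the unique plane field with $e=0$ realized by $\xi_{P_1,P_2}$-type diagrams. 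This shows (2) modulo (3) and the overtwistedness claim.

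For part (3): the forward direction ($e=0\Rightarrow$ standard tight) follows by combining (2) with the classification. When all basic slices carry the same sign, the pair of paths is ($N$-)consistent, and in that case the surgery diagram of Figure~\ref{Figure:tw=0} can be reduced — via the Ding--Geiges algorithm and cancellation of the canceling $(+1)$-$(-1)$ pair — to a diagram exhibiting $\xi_{P_1,P_2}$ as Stein fillable, in fact as the standard tight $S^1\times S^2$; more directly, a consistent pair of paths describes, on each of $V_1$ and $V_2$, the unique contact structure whose relative Euler class is extremal, and gluing two such solid tori along an $I$-invariant layer reproduces $(S^1\times S^2,\xi_{std})$. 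The reverse direction is immediate: $e(\xi_{std})=0$ as noted in the paragraph preceding the lemma, and by \cite[Theorem 4.10.1]{g} the tight contact structure is unique, so if $\xi_{P_1,P_2}=\xi_{std}$ then $e(\xi_{P_1,P_2})=0$.

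\textbf{Main obstacle.} The delicate point is the ``only if'' direction of (2), i.e. showing that a \emph{genuinely mixed} decoration forces a \emph{nonzero} Euler class rather than merely an overtwisted (but possibly still $e=0$) structure. The safest route is to pin down the relative/absolute Euler class computation explicitly: evaluate \eqref{formula:e} on a $k$-inconsistent representative, show the contributions from the ``consistent prefix'' $A_2,\dots,A_{k-2},B_1,\dots,B_{k-1}$ telescope away while the first mismatched block contributes a strictly nonzero even integer (with the sign of the contribution controlled, as in Proposition~\ref{prop:wings} and the commented computation in Lemma~\ref{lemma:tor>0}). I expect this to reduce to the same finite-set-of-$2|\cdots|$-values calculation already sketched there, so the real work is bookkeeping with continued-fraction blocks and the dot product $\bigcdot\frac{0}{1}$ rather than any new idea.
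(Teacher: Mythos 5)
Part (1) and the ``if'' direction of part (2) match the paper's proof. The problems are in the ``only if'' direction of (2) and in the forward direction of (3), and they are genuine gaps rather than stylistic differences.

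For (2), your ``cleaner'' route is circular and rests on a false premise. You propose to deduce (2) from (3) plus the claim that an overtwisted contact structure on $S^1\times S^2$ cannot have Euler class $0$; but by Eliashberg's classification there are infinitely many overtwisted contact structures on $S^1\times S^2$ with $e=0$, so ``overtwisted $\Rightarrow e\neq 0$'' is exactly the statement you are trying to prove for the $\xi_{P_1,P_2}$, not something you can quote. Worse, your proof of (3) invokes (2), so the two parts would each depend on the other. Your fallback route (direct evaluation of formula~\ref{formula:e}) is the right idea, but the sketch ``the consistent prefix telescopes away and the first mismatched block contributes a nonzero even integer'' does not close the argument: the blocks \emph{after} the first mismatched one carry arbitrary signs, and a priori their contributions (which have the opposite sign to the first mismatch, since negative slices in $P_1$ contribute negatively and those in $P_2$ positively) could cancel it. The paper's proof is precisely a worst-case inequality ruling this out: subtracting the all-positive baseline, if the first negative slice lies in some $B_j$ one bounds $e(\xi_{P_1,P_2})$ below by the value obtained when \emph{every} subsequent slice of $P_1$ is also negative, and the Farey-sum identity $\bigl((\tfrac{-1}{0}\ominus s_{j-1})\oplus s_{j-1}\bigr)\bigcdot\tfrac{0}{1}=\tfrac{-1}{0}\bigcdot\tfrac{0}{1}$ collapses that worst case to exactly $2>0$ (symmetrically $\leq -2$ when the first negative slice lies in $A_j$, $j>1$, with the $A_1$ case handled by passing to $(-P_1,-P_2)$ and using (1)). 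The commented-out computation you point to only treats $p=1$ and $q/p=[a_1,a_2]$, so it cannot substitute for this general estimate.

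For (3), your forward direction asserts that an all-positive decoration glues to $(S^1\times S^2,\xi_{std})$, justified either by Stein fillability of the reduced diagram or by ``gluing two extremal solid tori along an $I$-invariant layer.'' Neither is immediate: the diagram of Figure~\ref{Figure:tw=0} contains two contact $(+1)$-surgeries, so Stein fillability is not apparent, and gluing tight pieces does not in general produce a tight manifold. The paper avoids this entirely with a counting argument: by Lemma~\ref{lemma:tw=0} every Legendrian $(p,q)$-torus knot with $\tw=0$ and tight complement arises as some $L_{P_1,P_2}$, by Remark~\ref{rmk:tw=0intight} exactly two such knots live in $\xi_{std}$, and since $e(\xi_{std})=0$ part (2) forces those two to be $L_{\pm P_1',\pm P_2'}$ with all signs equal; hence $\xi_{\pm P_1',\pm P_2'}=\xi_{std}$. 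You should adopt that argument (or supply an actual tightness proof for the all-positive gluing).
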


\begin{proof}
(1) This follows immediately from the formula~\ref{formula:e}, since it only changes the terms $\epsilon_i$ and $\epsilon'_i$ to $-\epsilon_i$ and $-\epsilon'_i$ from $e(\xi_{P_1,P_2})$ to $e(\xi_{-P_1,-P_2})$.

(2) Let $(P_1',P_2')$ be a decorated pair of paths with all basic slices positively signed. Then $$e(\xi_{P'_1,P'_2})=(\frac{-1}{0}\ominus\frac{q}{p})\bigcdot \frac{0}{1}+(\frac{q}{p}\ominus\frac{-1}{1})\bigcdot \frac{0}{1}=(-1-q)+(q+1)=0.$$ By (1), $e(\xi_{-P'_1,-P'_2})=e(\xi_{P'_1,P'_2})=0$. Now let $(P_1,P_2)$ be any decorated pair of paths in which the signs of basic slices are not all the same. To compute the Euler class of  $\xi_{P_1,P_2}$, it suffices to consider only the negative basic slices in $(P_1,P_2)$, since $e(\xi_{P_1,P_2})=e(\xi_{P_1,P_2})-e(\xi_{P'_1,P'_2})$. In the computation, because $$(p_{i+1}\ominus p_i)\bigcdot \frac{0}{1}>0 ~\text{for}~ i=1,\ldots k-1,$$ and $$(q_i\ominus q_{i+1})\bigcdot \frac{0}{1}<0~ \text{for}~ i=1,\ldots, l-1,$$ negative basic slices in $P_1$ contribute negative terms, while those in $P_2$ contribute positive terms. Consider the sequence of continued fraction blocks $(A_1,B_2,A_3,\ldots)$.

First, assume that the first negative basic slice in the sequence appears in $B_j$ for some $j\in\{2,4,\ldots,2m\}$. Denote the two vertices of this basic slice by $q_t$ and $q_{t+1}$, and the first vertex in $A_{j+1}$ by $p_r=s_{j-1}$. By assumption, all basic slices in $A_1,A_3,\ldots,A_{j-1}$ are positive and can therefore be omitted in the computation. Note that there is an edge between $s_{j-1}$ and every vertex in $B_j$, and hence $q_{t}\ominus q_{t+1}=s_{j-1}$. Then we have
\begin{align*}
    e(\xi_{P_1,P_2})\geq &\sum_{i\geq r}^{k-1}(-2)\left((p_{i+1}\ominus p_i)\bigcdot \frac{0}{1}\right)+(-2)\left((q_{t}\ominus q_{t+1})\bigcdot \frac{0}{1}\right)\\
    =&-2\left((\frac{-1}{0}\ominus s_{j-1})\bigcdot \frac{0}{1}\right)-2(s_{j-1}\bigcdot\frac{0}{1})\\
    =&-2\left(((\frac{-1}{0}\ominus s_{j-1})\oplus s_{j-1})\bigcdot \frac{0}{1}\right)=-2(\frac{-1}{0}\bigcdot\frac{0}{1})=2>0.
\end{align*}

Next, assume that the first negative basic slice appears in $A_j$ for some $j\in\{1,3,\ldots,2n-1\}$. If $j>1$, by the same reasoning as above, we have 
\begin{align*}
    e(\xi_{P_1,P_2})\leq &~ 2(s_{j-1}\bigcdot \frac{0}{1})+(-2)\left((s_{j-1}\ominus\frac{-1}{1})\bigcdot \frac{0}{1}\right)\\
    =&-2\left(((s_{j-1}\ominus\frac{-1}{1})\ominus s_{j-1})\bigcdot \frac{0}{1}\right)=-2(\frac{1}{-1}\bigcdot\frac{0}{1})=-2<0.
\end{align*}
Suppose $j=1$. Note that $A_1$ consists of a single negative basic slice, as it has length $1$. We then consider $(-P_1,-P_2)$, whose first negative basic slice must appear in some continued fraction block within $(B_2, A_3, B_4, \dots)$. From the preceding results, we know that $e(\xi_{-P_1,-P_2})\neq0$, and hence $e(\xi_{P_1,P_2})\neq 0$ by (1).

(3) For any pair of paths $(P_1,P_2)$, there is a Legendrian knot $L_{P_1,P_2}$ in $(S^1\times S^2, \xi_{P_1,P_2})$. By Lemma~\ref{lemma:tw=0}, any Legendrian $(p,q)$-torus knot with $\tw=0$ and tight complement in $S^1\times S^2$ can be constructed as such $L_{P_1,P_2}$. By Remark~\ref{rmk:tw=0intight}, there are exactly $2$ Legendrian $(p,q)$-torus knots with $\tw=0$ in $(S^1\times S^2, \xi_{std})$. Since $e(\xi_{std})=0$, these two knots must correspond to $L_{\pm P'_1,\pm P'_2}$ with the signs of basic slices in $(P'_1,P'_2)$ all positive, according to (2). Hence $\xi_{\pm P'_1,\pm P'_2}$ is the standard tight contact structure on $S^1\times S^2$.
\end{proof}

\begin{lemma}\label{lem:numberEuler}
    For all decorated pairs of paths representing $q/p$, we have $e(\xi_{P_1,P_2})=\pm2k$ for all $k\in\{0,1,2,\ldots,-q-1\}$.
\end{lemma}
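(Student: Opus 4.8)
The plan is to convert formula~(\ref{formula:e}) into an explicit description of the set $\{\,e(\xi_{P_1,P_2})\,\}$ as $(P_1,P_2)$ ranges over all decorated pairs representing $q/p$, and to identify this set with $\{\pm 2k:0\le k\le -q-1\}$. The first step is bookkeeping with ingredients already established in the proof of Lemma~\ref{lem:Euler}: each term $(p_{i+1}\ominus p_i)\bigcdot\frac{0}{1}$ is a positive integer $w_i$ (indeed the difference of the numerators of $p_{i+1}$ and $p_i$), each term $-(q_j\ominus q_{j+1})\bigcdot\frac{0}{1}$ is a positive integer $w'_j$, and the telescoping identities there give $\sum_i w_i=\sum_j w'_j=-q-1$. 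Writing the decorations as $\epsilon_i,\epsilon'_j\in\{\pm1\}$ and letting $S$ (resp.\ $T$) be the sum of the $w_i$ over the negatively decorated edges of $P_1$ (resp.\ of the $w'_j$ over the negatively decorated edges of $P_2$), formula~(\ref{formula:e}) reduces to
\[
e(\xi_{P_1,P_2})=\bigl((-q-1)-2S\bigr)-\bigl((-q-1)-2T\bigr)=2(T-S).
\]
Since $0\le S,T\le -q-1$, this already shows $e(\xi_{P_1,P_2})$ is even with $|e(\xi_{P_1,P_2})|\le -2q-2$; this is the inclusion ``$\subseteq$'' and also re-proves the last assertion of Theorem~\ref{thm:Eulerrange}.

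For the reverse inclusion, observe that as the decorations vary, $S$ and $T$ run independently over all subset sums of the multisets $\mathcal W_1=\{w_i\}$ and $\mathcal W_2=\{w'_j\}$. By Lemma~\ref{lem:Euler}(1) it suffices to realize every value $2k$ with $0\le k\le -q-1$, i.e.\ to prove that the Minkowski difference $\Sigma(\mathcal W_2)-\Sigma(\mathcal W_1)$ of the two subset-sum sets contains $\{-(-q-1),\dots,-q-1\}$. Since each $\Sigma(\mathcal W_i)$ is invariant under $x\mapsto(-q-1)-x$ (complementation of subsets), we have $-\Sigma(\mathcal W_1)=\Sigma(\mathcal W_1)-(-q-1)$, hence
\[
\Sigma(\mathcal W_2)-\Sigma(\mathcal W_1)=\Sigma(\mathcal W_1)+\Sigma(\mathcal W_2)-(-q-1)=\Sigma(\mathcal W_1\cup\mathcal W_2)-(-q-1),
\]
and, as $\mathcal W_1\cup\mathcal W_2$ has total sum $2(-q-1)$, the claim becomes the assertion that $\mathcal W_1\cup\mathcal W_2$ is a \emph{complete} multiset of positive integers: every integer in $[0,2(-q-1)]$ occurs as a subset sum. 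Equivalently, after listing $\mathcal W_1\cup\mathcal W_2$ in nondecreasing order $u_1\le\dots\le u_L$, one needs $u_1=1$ and $u_{r+1}\le 1+\sum_{i\le r}u_i$ for all $r$.

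Proving this completeness from the combinatorics of $P_1$ and $P_2$ is the crux, and the hard part. The condition $u_1=1$ comes from casework on the first continued-fraction coefficient of $q/p=[a_1,\dots,a_m]$: the second-to-last vertex of $P_1$ is $a_1/1=[a_1]$, so the last edge of $P_1$ has weight $|a_1+1|$, which equals $1$ precisely when $a_1=-2$; and if $a_1\le-3$ then $P_2$ runs through $-2$ just before reaching $-1$, so in that case the last edge of $P_2$ has weight $1$. For the inequality $u_{r+1}\le 1+\sum_{i\le r}u_i$ I would induct on the complexity of $q/p$ (the number of continued-fraction blocks of $(P_1,P_2)$, equivalently the depth of $q/p$ in the Farey--Stern--Brocot subdivision of the edge from $\infty$ to $-1$), using that all edge weights inside a single continued-fraction block are equal and that block lengths and weights are governed by the continued fraction of $q/p$ and by Lemma~\ref{lem:length1}. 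In these terms, $\mathcal W_1\cup\mathcal W_2$ for $q/p$ is obtained from the analogous multiset of a slope of strictly lower complexity by a single Farey subdivision, which replaces one weight $|a-c|$ (for Farey-adjacent points $a/b$, $c/d$ on the same side of $q/p$, with mediant $(a+c)/(b+d)$) by the two weights $|a|$ and $|c|$; the base case is $q/p\in\mathbb Z$, where $\mathcal W_1\cup\mathcal W_2=\{-q-1\}\cup\{\,1,\dots,1\,\}$ (with $-q-1$ ones) is visibly complete. The one genuinely delicate point is verifying that a single such subdivision preserves completeness; once that is done, the two inclusions together give $\{\,e(\xi_{P_1,P_2})\,\}=\{\pm 2k:0\le k\le -q-1\}$, as claimed.
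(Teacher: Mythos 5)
Your reduction is correct and is essentially a clean repackaging of the paper's starting point: writing each edge weight as a numerator gap, the telescoping identities $\sum_i w_i=\sum_j w'_j=-q-1$ and the resulting formula $e(\xi_{P_1,P_2})=2(T-S)$ are right, as is the reformulation of the lemma as the statement that the multiset $\mathcal{W}_1\cup\mathcal{W}_2$ of edge weights of $\overline{P_1}\cup P_2$ is complete (every integer in $[0,-2q-2]$ occurs as a subset sum). The base case $q/p\in\mathbb{Z}$ and the verification that some weight equals $1$ are also fine, and the description of how the path for $q/p$ arises from that of its Stern--Brocot parent by one subdivision is accurate.

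The gap is the inductive step, which you yourself flag as ``the crux'' and do not carry out --- and, as formulated, it cannot be closed, because completeness of the parent's multiset is not a strong enough inductive hypothesis. The subdivision replaces a weight $w=\bigl||a|-|c|\bigr|$ by $|a|$ and $|c|$, i.e.\ by $x$ and $x+w$ with $x=\min(|a|,|c|)$; an abstract move of this shape can destroy completeness (for instance $\{1,1\}$ is complete, but replacing a $1$ by $\{3,4\}$ gives $\{1,3,4\}$, which is not, since $3>1+1$). What saves the actual Farey situation is a quantitative relation between $\min(|a|,|c|)$ for the subdivided edge and the total weight of the edges lying beyond it, and this is exactly the content of the paper's computation $\Delta_{2j-1}=\sum_{i=j}^{n}e_{2i}+2$ (and its analogue for $\Delta_{2j-2}$): the common weight of the edges in each continued fraction block exceeds by exactly $2$ the total weight of all blocks farther from $q/p$ on the opposite path, which is precisely the interleaving inequality $u_{r+1}\le 1+\sum_{i\le r}u_i$ you need, block by block. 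So to finish your argument you must either prove this identity --- at which point you have essentially reproduced the paper's proof, which establishes the lemma by showing directly that the values contributed by successive blocks interlace with step size $2$ --- or strengthen your inductive invariant to record, for each edge of the path, the numerators of its endpoints rather than merely its weight.
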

%Then there are exactly $-2q-1$ distinct Euler classes.

\begin{proof}
We may label the continued fraction blocks as $P_1=(A_1,A_3,\ldots,A_{2n-1})$ and $P_2=(B_2,B_4,\ldots, B_{2n})$,  noting that several other cases can be handled analogously. To determine all possible values of $e(\xi_{P_1,P_2})$, we apply formula~\ref{formula:e}, starting from the decoration where the signs of basic slices in $(P_1,P_2)$ are all positive, i.e. $e(\xi_{P_1,P_2})=0$. We then proceed along the sequence $(B_{2n}, A_{2n-1},\ldots, B_2, A_1)$, sequentially analyzing the contributions to the Euler class resulting from changing the signs of basic slices from positive to negative in each continued fraction block. Denote 
\[\Delta_j=
    \begin{cases}
    2|s_{j-1}\bigcdot\frac{0}{1}|, & \text{if }j\geq2,\\
    2|(s_1\ominus \frac{q}{p})\bigcdot\frac{0}{1}|, & \text{if }j=1,
    \end{cases}
\]
and
\[e_j=
    \begin{cases}
    2|(s_j\ominus s_{j-2})\bigcdot \frac{0}{1}|, &\text{if } j\geq3, \\
    2|(s_j\ominus \frac{q}{p})\bigcdot \frac{0}{1}|, &\text{if } j=1,2.
    \end{cases}
\]
In particular, we recall that $s_{2n}=-1/1$, $s_{2n-1}=-1/0$, and $s_1=(q/p)^a=q''/p''.$

In each block $B_j$, changing the sign of any basic slice from positive to negative contributes a positive term $\Delta_j$ when computing the Euler class. This is because any two adjacent vertices $q_t$ and $q_{t+1}$ in $B_j$ satisfy $q_{t}\ominus q_{t+1}=s_{j-1}$. Similarly, in each $A_j$, changing the sign of any basic slice from positive to negative contributes a negative term $-\Delta_j$. As a result, the set of all possible values generated by negative basic slices in  $B_j$ ranges from $0$ to $e_j$, with step size $\Delta_j$, while for $A_j$, the range is from $-e_j$ to $0$, also with step size $\Delta_j$. Here, the value $0$ is achieved by keeping all basic slices positive in every continued fraction block, whereas the value $e_j$ (or $-e_j$) corresponds to changing all basic slices in the continued fraction block $B_j$ (or $A_j$) from positive to negative, while keeping all others  positive.
%all possible terms generated by negative basic slices in $B_j$ and $A_j$ are $\{0,\Delta_j,2\Delta_j,\ldots,e_j\}$ and $\{-e_j,-e_j+\Delta_j,\ldots,-\Delta_j,0\}$, respectively. 

We start with the case in which all basic slices are positive to compute the Euler class. The initial continued fraction block $B_{2n}$ can generate terms ranging from $0$ to $e_{2n}$ with step size $$\Delta_{2n}=2|s_{2n-1}\bigcdot\frac{0}{1}|=2|\frac{-1}{0}\bigcdot\frac{0}{1}|=2.$$  We then proceed to $A_{2n-1}$, which can generate terms ranging from $-e_{2n-1}$ to $0$, with step size $\Delta_{2n-1}$. %If all basic slices in $A_{2n-1}$ are negative and the rest positive, then the Euler class is $-e_{2n-1}$. %When combining the terms generated by $B_{2n}$ and $A_{2n-1}$, 
Note that $\Delta_{2n-1}=e_{2n}+2$. For each term $-r\Delta_{2n-1}$ generated by basic slices in $A_{2j-1}$, combining with all terms generated by basic slices in  $B_{2n}$ yields the value set $$\{-r\Delta_{2n-1},-r\Delta_{2n-1}+2,\ldots,-r\Delta_{2n-1}+e_{2n}=-(r-1)\Delta_{2n-1}-2\}.$$ That is, the terms generated by all basic slices in $B_{2n}$  exactly fill, with step size $2$, the interval between two consecutive terms generated by basic slices in $A_{2j-1}$. Therefore,  the Euler class can take all values from $-e_{2n-1}$ to $e_{2n}$ with step size $2$. %(For example, when all basic slices in $B_{2n}$ are negative and all others are positive, the Euler class is $e_{2n}$; when all basic slices in $A_{2n-1}$ are negative and all others are positive, it is $-e_{2n-1}$.)
%From the initial continued fraction block $B_{2n}$ to $A_{2n-1}$, the Euler class can take all values from $-e_{2n-1}$ to $e_{2n}$ with step size $2$, since $\Delta_{2n}=2|s_{2n-1}\bigcdot\frac{0}{1}|=2|\frac{-1}{0}\bigcdot\frac{0}{1}|=2$ and $\Delta_{2n-1}=e_{2n}+2$. 

By induction on the index of the continued fraction blocks in the sequence, assume that for \( j \geq 2 \), the Euler class generated by negative basic slices in \( (B_{2n}, A_{2n-1}, \dots, B_{2j}) \) takes all values from 
$-\sum_{i=j+1}^{n}e_{2i-1}$ to $\sum_{i=j}^{n}e_{2i}$
with step size \( 2 \).
Now consider the computation proceeding to $A_{2j-1}$, which can generate terms from $-e_{2j-1}$ to $0$ with step size $\Delta_{2j-1}$. Note that $$\Delta_{2j-1}=2|s_{2j-2}\bigcdot\frac{0}{1}|=2|(s_{2j-2}\ominus \frac{-1}{1})\bigcdot\frac{0}{1}-1|=2|\sum_{i=j}^{n}(s_{2i-2}\ominus s_{2i})\bigcdot\frac{0}{1}|+2=\sum_{i=j}^{n}e_{2i}+2.$$
The positive terms $\{2, 4, \ldots, \sum_{i=j}^{n}e_{2i}\}$ generated by $(B_{2n},A_{2n-1},\ldots,B_{2j})$ therefore exactly fill, with step size $2$, each interval $[-r\Delta_{2j-1}, -(r-1)\Delta_{2j-1}]$ between two consecutive terms generated by basic slices in $A_{2j-1}$. Consequently, the Euler class take all values from $-e_{2j-1}$ to $0$ with step size $2$. Combining this with the term  $-\sum_{i=j+1}^{n}e_{2i-1}$ generated by $(B_{2n},A_{2n-1},\ldots,\\B_{2j})$, the Euler class takes all values from $-\sum_{i=j}^{n}e_{2i-1}$ to $-\sum_{i=j+1}^{n}e_{2i-1}$ with step size $2$. Together with the earlier range, it follows that the Euler class attains all values from $-\sum_{i=j}^{n}e_{2i-1}$ to $\sum_{i=j}^{n}e_{2i}$ with step size $2$. Note that $$\Delta_{2j-2}=2|s_{2j-3}\bigcdot\frac{0}{1}|=2|(s_{2j-3}\ominus \frac{-1}{0})\bigcdot\frac{0}{1}-1|=2|\sum_{i=j}^{n}(s_{2i-3}\ominus s_{2i-1})\bigcdot\frac{0}{1}|+2=\sum_{i=j}^{n}e_{2i-1}+2.$$
So a similar argument applies when passing from  $(B_{2n},A_{2n-1},\ldots,A_{2j-1})$ to $B_{2j-2}$ for $j\geq 2$. Therefore, when the computation reaches $B_2$, the Euler classes can take all values from $-\sum_{i=2}^{n} e_{2i-1}$ to $\sum_{i=1}^{n}e_{2i}$ with step size $2$. 

Finally, we proceed to $A_1$, whose basic slice can  generate only the values $-e_1$ or $0$. Then the Euler class then takes values either from $-\sum_{i=1}^n e_{2i-1}$ to $-e_1+\sum_{i=1}^ne_{2i}$, or from $-\sum_{i=2}^n e_{2i-1}$ to $\sum_{i=1}^ne_{2i}$, with step size $2$. In fact, we have
\[-\sum_{i=1}^n e_{2i-1}=-2|(\frac{-1}{0}\ominus\frac{q}{p})\bigcdot\frac{0}{1}|=2q+2, \sum_{i=1}^ne_{2i}=2|(\frac{-1}{1}\ominus\frac{q}{p})\bigcdot\frac{0}{1}|=-2q-2,\]
\[-e_1+\sum_{i=1}^ne_{2i}=2q'+(-2q-2)=-2(q''+1)>0, \text{ and } -\sum_{i=2}^n e_{2i-1}<0.\]
Therefore, the Euler class takes all values from $2q+2$ to $-2q-2$ in steps of 2, thereby establishing that this process exhausts all possible distributions of negative basic slices and, consequently, all possible values of the Euler class.
\end{proof}

%Suppose $\frac{q}{p}=[a_1,\ldots,a_m]$ and $(-1-\frac{p}{q})^{-1}=[b_1, \ldots, b_n]$, where $a_i,b_i\leq-2$. Then \[k_e=|(s_1\ominus \frac{q}{p})\bigcdot \frac{0}{1}|+|(s_2\ominus \frac{q}{p})\bigcdot \frac{0}{1}|=\begin{cases}
  %  -(q'+|a_m+1|q''), &\text{if }a_m<-2,\\
  %  -(|b_n+1|q'+q''), &\text{if }a_m=-2.
  %  \end{cases}\]
\begin{lemma}\label{lem:numberEuler2}
    For all totally $2$-inconsistent decorated pairs of paths representing $q/p$, we have $e(\xi_{P_1,P_2})=\pm2k$ for $k\in\{k_e+q+1,k_e+q+2,\ldots,-q-1\}$, where \[k_e=|(s_1\ominus \frac{q}{p})\bigcdot \frac{0}{1}|+|(s_2\ominus \frac{q}{p})\bigcdot \frac{0}{1}|=
    \begin{cases}
    -(q'+|a_m+1|q''), &\text{if }a_m<-2,\\
    -(|b_n+1|q'+q''), &\text{if }a_m=-2.
    \end{cases}\]
     % $k_e=|(s_1\ominus \frac{q}{p})\bigcdot \frac{0}{1}|+|(s_2\ominus \frac{q}{p})\bigcdot \frac{0}{1}|$. 
\end{lemma}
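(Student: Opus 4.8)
The plan is to adapt the argument of Lemma~\ref{lem:numberEuler} to the restricted class of totally $2$-inconsistent pairs of paths. Recall that a pair $(P_1,P_2)$ is totally $2$-inconsistent precisely when all basic slices in the first continued fraction block of one path are positive and all basic slices in the first block of the other are negative (and the remaining blocks carry arbitrary signs). So the first step is to fix, without loss of generality by Lemma~\ref{lem:Euler}(1), the convention that the first block of $P_2$ (that is, $B$, of length $1$ by Lemma~\ref{lem:length1}) is negative while the first block $A_1$ of $P_1$ is positive; the opposite choice yields the sign-reversed Euler class and hence the $\pm$ in the statement. With this convention fixed, the only freedom lies in the signs of the basic slices in the blocks $A_3, A_5, \dots, B_4, B_6, \dots$ — exactly the blocks indexed $3$ through $N$ in the sequence $(A_1, B_2, A_3, \dots)$ — together with the forced contributions from $A_1$ (positive, contributing nothing relative to the all-positive baseline $\xi_{P'_1,P'_2}$) and from $B_2$ (negative, contributing the fixed amount $+\Delta_2 = +2|(s_2\ominus\frac{q}{p})\bigcdot\frac01|$... wait, actually the first block of $P_2$ is $B$ itself, so here I must be careful: I should re-index so that $B$ of length $1$ plays the role analogous to $A_1$).

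Concretely, the second step is to run the same filling/induction argument as in Lemma~\ref{lem:numberEuler}, but now starting from the baseline in which every basic slice in $A_3, A_5, \dots$ and $B_4, B_6, \dots$ is positive, and with the two shortest initial blocks forced to opposite signs. Changing a basic slice from positive to negative in a block $B_j$ (with $j\geq 4$) still contributes a fixed positive step $\Delta_j = 2|s_{j-1}\bigcdot\frac01|$, and in a block $A_j$ ($j\geq 3$) a fixed negative step $-\Delta_j$; the key combinatorial fact from Lemma~\ref{lem:numberEuler} — that the terms generated by earlier blocks exactly fill, in steps of $2$, the gap $\Delta_j$ between consecutive terms of a later block — carries over verbatim, since it depended only on the identity $\Delta_j = \big(\sum (\text{later } e\text{'s})\big)+2$ and on $s_j\ominus s_{j-2}$ being an edge. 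Running this induction from block $N$ down to block $3$ shows the achievable offsets form an arithmetic progression of step $2$ from $-\sum_{i\geq 2} e_{2i-1}$-type lower bound to $\sum_{i\geq 2}e_{2i}$-type upper bound — i.e. exactly the range of Lemma~\ref{lem:numberEuler} but with the two extreme contributions $-e_1$ (from $A_1$) and the $B$-block contribution removed from the floating part and replaced by the fixed offset coming from the forced opposite-sign pair. The third step is then purely arithmetic: add the fixed offset from the two forced blocks, namely $\pm(|(s_1\ominus\frac{q}{p})\bigcdot\frac01| + |(s_2\ominus\frac{q}{p})\bigcdot\frac01|) = \pm k_e$ in the notation of the statement (with $k_e$ evaluating to $-(q'+|a_m+1|q'')$ when $a_m<-2$ and $-(|b_n+1|q'+q'')$ when $a_m=-2$, using $(q/p)^c=q'/p'$, $(q/p)^a=q''/p''$ and the continued-fraction description of the first blocks from Lemma~\ref{lem:length1}), to the baseline value $0$, and observe that the resulting range runs from $2(k_e+q+1)$ to $2(-q-1)$ in steps of $2$, where the endpoints are computed exactly as in Lemma~\ref{lem:numberEuler} using $\sum e_{2i-1}$ and $\sum e_{2i}$ telescoping to $-1/0\bigcdot 0/1$ and $-1/1\bigcdot 0/1$ against $q/p$.

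The main obstacle I anticipate is bookkeeping the indexing correctly: the statement's formula for $k_e$ splits on whether $a_m<-2$ or $a_m=-2$, and this is precisely the dichotomy of Lemma~\ref{lem:length1} about which of $A$, $B$ has length $1$. In the case $a_m<-2$ the block $A_1$ has length $|a_m+1|>1$ and $B$ has length $1$, so it is $B$ that is forced to a single fixed sign and $A_1$ that floats; in the case $a_m=-2$ it is the reverse. One must therefore verify that in each case the two "farthest" slopes $s_1,s_2$ that enter $k_e$ are the ones whose basic-slice signs are rigidly determined by total $2$-inconsistency, and that the continued-fraction bracket evaluations $|(s_1\ominus\frac qp)\bigcdot\frac01|$ and $|(s_2\ominus\frac qp)\bigcdot\frac01|$ indeed reduce to $q'$ and $|a_m+1|q''$ (resp. $|b_n+1|q'$ and $q''$) — this uses $(q/p)^c\oplus(q/p)^a = q/p$ together with the explicit form of the first continued fraction block from the proof of Lemma~\ref{lem:length1}. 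Once the case analysis is set up correctly, the remainder is a direct transcription of the argument of Lemma~\ref{lem:numberEuler}, so I would keep the exposition brief and refer back to that proof for the filling/induction mechanics rather than repeating them.
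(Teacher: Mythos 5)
Your plan is essentially the paper's proof: run the filling induction of Lemma~\ref{lem:numberEuler} over the blocks of index $3,\dots,N$ (which are unconstrained), observe that total $2$-inconsistency leaves exactly two admissible decorations on the first two blocks, add their contributions, and evaluate $k_e$ from the continued-fraction data; the final range you state is the correct one. Two slips in your write-up would, however, derail the computation if followed literally. First, in your closing paragraph you claim that the longer of the two initial blocks ``floats'' while only the length-one block is forced. This contradicts the definition of totally $2$-inconsistent (every basic slice of $A_1$ carries one sign and every basic slice of $B_2$ the opposite sign, so both initial blocks are rigid once the global sign is chosen) and also contradicts your own earlier, correct statement that the only freedom lies in the blocks of index at least $3$; if the longer block really floated, the resulting set of Euler classes would be strictly larger than claimed. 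The two forced decorations contribute $-e_1$ or $+e_2$, where $e_j=2|(s_j\ominus \frac{q}{p})\bigcdot\frac{0}{1}|$ --- not ``$\pm k_e$'' as you write; the symmetric interval $\{2(k_e+q+1),\dots,2(-q-1)\}$ and its negative only emerge after combining these asymmetric offsets with the likewise asymmetric floating range from the remaining blocks, exactly as in the paper's two-range computation. Second, your case bookkeeping is inverted: when $a_m<-2$ it is the first block of $P_1$ (namely $A_1$, with far slope $(q/p)^a=q''/p''$) that has length one, while $B_2\subset P_2$ consists of $|a_m+1|$ basic slices each contributing a jump $\ominus$-equal to $q''/p''$; this is precisely what produces $k_e=-(q'+|a_m+1|q'')$, whereas your assignment would yield $-(|a_m+1|q'+q'')$ in that case. (The literal text of the proof of Lemma~\ref{lem:length1} appears to interchange $A$ and $B$ relative to the convention ``$A\subset P_1$, $B\subset P_2$,'' which is likely the source of your confusion, but the usage in Remark~\ref{rmk:sign}, in Section~\ref{Section:(1,q)}, and in the statement of the present lemma is the one just described.) With these two points corrected, the rest of your argument goes through as in the paper.
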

\begin{proof}
Following the same  strategy as in the proof of Lemma~\ref{lem:numberEuler}, until we proceed to $A_3$, the Euler class takes all values from $-\sum_{i=2}^{n}e_{2i-1}$ to $\sum_{i=2}^{n}e_{2i}$ with step size $2$. We now consider the two remaining blocks $B_2$ and $A_1$.
There are exactly two decorations on $B_2$ and $A_1$ that are totally $2$-inconsistent: all basic slices in $B_2$ are positive and all basic slices in $A_1$ are negative, contributing the term $-e_1$; all basic slices in $B_2$ are negative and all basic slices in $A_1$ are positive, contributing the term $e_2$. Combining the two values with the previous range, the Euler class can take all values from $-\sum_{i=1}^{n}e_{2i-1}=2q+2$ to $\sum_{i=2}^{n}e_{2i}-e_1=-2q-2-(e_1+e_2)$, or from $-\sum_{i=2}^{n}e_{2i-1}+e_2=2q+2+(e_1+e_2)$ to $\sum_{i=1}^{n}e_{2i}=-2q-2$, with step size $2$. %Here we substitute $-\sum_{i=1}^n e_{2i-1}=2q+2$ and $\sum_{i=1}^ne_{2i}=-2q-2$.

Let $k_e=\frac{1}{2}(e_1+e_2)$. To transform this expression, note that $s_1=(q/p)^a=q''/p''$ and there are $|a_m+1|$ basic slices in $B_2$. Then this parameter can be written as  
\begin{align*}
    k_e=&~|(s_1\ominus \frac{q}{p})\bigcdot \frac{0}{1}|+|(s_2\ominus \frac{q}{p})\bigcdot \frac{0}{1}|=-(\frac{q}{p}\ominus s_1)\bigcdot\frac{0}{1}+(s_2\ominus\frac{q}{p})\bigcdot\frac{0}{1}\\
    =&~(\frac{q}{p}\ominus s_1)\bigcdot\frac{0}{1}+|a_m+1|(-(s_1\bigcdot\frac{0}{1}))\\
    =&-\frac{q'}{p'}\bigcdot\frac{0}{1}+|a_m+1|(-q'')=-q'-|a_m+1|q''.
\end{align*}
Here we only discuss the case that the first continued fraction block with length $1$ appears in $P_1$ as $A_1$, which occurs when $a_m<-2$ (see the proof of Lemma~\ref{lem:length1}). The case for $B_1$ in $P_2$, which occurs when $a_m=-2$, is analogous.
\end{proof}

\begin{remark} \label{remark:rangeofke} We have $-q\leq k_e\leq -2q-2$ by the following inequalities:
\[k_e=
    \begin{cases}
    -(q'+|a_m+1|q'')=-q+(a_{m}+2)(q-q')\geq -q, &\text{if }a_m<-2,\\
    -(|b_n+1|q'+q'')=-q+(b_{n}+2)q'\geq -q, &\text{if }a_m=-2.
    \end{cases}\]
\end{remark}

%Following notations in \cite[Section 2]{eko},
We introduce another formula to compute the Euler class of contact structures on $S^1\times S^2$, via contact surgery diagrams. %Denote by $K(r)$ the contact manifold obtained by contact $r$-surgery along a Legendrian knot $K$ in $(S^3,\xi_{std})$. Denote by $K\pushoff K$ the Legendrian link consisting of $K$ and a Legendrian push-off of $K$. Denote by $K_n$ a copy of $K$ with $n$ extra stabilizations, and also $K_{n,m}$ a copy of $K_n$ with $m$ extra stabilizations. 
Let $L_1\sqcup \cdots\sqcup L_n\subset S^3$ be an oriented link in a contact surgery diagram, with topological surgery coefficients $p_i/q_i$ on $L_i$. Write $\tb_i$ for the Thurston–Bennequin invariant of $L_i$, $\rot_i$ for the rotation number of $L_i$, and $l_{ij}$ for the linking number between $L_i$ and $L_j$. We have a generalized linking matrix: 
    \begin{align*}
	Q:=\begin{pmatrix}
	p_1&q_2 l_{12} &\cdots&q_n l_{1n}\\
	q_1 l_{21} & p_2&&\\
	\vdots&&\ddots\\
	q_1 l_{n1}&&& p_n
	\end{pmatrix}.
    \end{align*}
%the first homology group of the resulted manifold is generated by $[\mu_i]$ for $i=1,\ldots,n$, and the relations are $Q\boldsymbol{\mu}=\mathbf{0}$, where $\boldsymbol{\mu}=([\mu_1],\ldots,[\mu_n])^T$. %By the Poincar\'e Duality Theorem, the second cohomology group is generated by the dual of $[\mu_i]$ for $i=1,\ldots,n$, but the relations are $Q\boldsymbol{\mu}=\mathbf{0}$, which will be actually used in the computation of Euler class.
Let $\mu_i$ denote a right-handed meridian of $L_i$, $i=1,\ldots,n$. Suppose that each $L_i$ is assigned with a contact $(\pm 1/n_i)$-surgery, where $n_i\in\N$. By \cite[Theorem 5.1]{dk}, we have the formula for the Poincar\'{e} dual of the Euler class:
    \begin{align} 
	\PD\big(e(\xi)\big)=\sum_{i=1}^k n_i\rot_i[\mu_i].\label{formula:PD(e)}
    \end{align}
where $[\mu_i], i=1,\ldots n$, are subject to the linear relation $Q\boldsymbol{\mu}=\mathbf{0}$, with $\boldsymbol{\mu}=([\mu_1],\ldots,[\mu_n])^T$. 
%where $\mu_i$ is the positive meridian of $L_i$.
    %\begin{align}
	%\dd_3(\xi) = \frac{1}{4} \left(\sum_{i=1}^n n_i b_i \rot_i  +  (3-n_i)\operatorname{sign}_i\right)   - \frac{3}{4} \sigma (Q), \label{formula:d3}
    %\end{align}
%where $\mathbf{b}=(b_1,\ldots,b_n)^T$ is a rational solution (if it exists) of $Q\mathbf{b}=\mathbf{rot}$, $\operatorname{sign}_i$ denotes the sign of the contact surgery coefficient of $L_i$ and $\sigma(Q)$ denotes the signature of $Q$.

\begin{remark}\label{rmk:meridian}
    In practice, we use the contact surgery diagrams as shown in Figure~\ref{Figure:tw=0} to compute the Euler class of contact structures on $S^1\times S^2$. (See the proof of Theorem~\ref{thm:Eulerrange}.) To fix a convention, let $\mu$ be the right-handed meridian of the topmost surgery component in Figure~\ref{Figure:tw=0}. We always identify $H^2(S^1\times S^2; \Z)$ with $\Z$ by mapping the Poincar\'e dual of $[\mu]\in H_{1}(S^1\times S^2; \Z)$ to $1\in\Z$.
\end{remark}

\begin{comment}
\begin{proof}
    We claim that the meridians of the two upper components in Figure~\ref{Figure:tw=0} are identical in the homology group. This contact surgery diagram can be written as $$U(+1){\def\svgwidth{1,6ex}\,\,\input{PushOff.pdf_tex}\,\,} U(+1){\def\svgwidth{1,6ex}\,\,\input{PushOff.pdf_tex}\,\,} U(-\frac{q}{q'-q}){\def\svgwidth{1,6ex}\,\,\input{PushOff.pdf_tex}\,\,} U(-\frac{q}{q'}).$$
    Denote them as $L_1,L_2,L_3,L_4$ in order. The generalized linking matrix is 
     \begin{align*}
	Q=\begin{pmatrix}
	0 & -1 & q-q' & q' \\
	-1 & 0 & q-q' & q' \\
        -1 & -1 & 2q-q' & q' \\
        -1 & -1 & q-q' & q+q'
	\end{pmatrix}.
    \end{align*}
    Solving $Q\boldsymbol{\mu}=\mathbf{0}$, we obtain $\mu_1=\mu_2=q\mu_3=q\mu_4$. Since $H^2(S^1\times S^2)=\Z$, we have $\mu_3=\mu_4$, as claimed.
\end{proof}
\end{comment}

\begin{comment}
\begin{proof}[Proof of Remark~\ref{rmk:mountain}]
   It follows from Step 2 of the algorithm. Moreover, we know that the two subfigures in Figure~\ref{Figure:mountain} correspond to nonzero Euler classes with opposite signs by Lemma~\ref{lem:Euler}. 
\end{proof}
\end{comment}

\begin{lemma}\label{lem:halfLutz}
Suppose $L_{-P_1,-P_2}$ is a Legendrian knot which remains non-loose after arbitrarily many negative stabilizations. 
If $\tilde{\xi}_{-P_1, -P_2}$ is the contact structure obtained from $\xi_{ -P_1, -P_2}$ by the half Lutz twist on the positive transverse push-off of $L_{-P_1, -P_2}$, then \[e(\tilde{\xi}_{-P_1,-P_2})=e(\xi_{-P_1,-P_2})+2q.\]
For another Legendrian knot $L_{P_1,P_2}$ which remains non-loose after arbitrarily many positive stabilizations, if $\tilde{\xi}_{ P_1, P_2}$ is obtained from $\xi_{ P_1, P_2}$ by the half Lutz twist on the negative transverse push-off of $L_{ P_1, P_2}$, then \[e(\tilde{\xi}_{ P_1, P_2})=e(\xi_{ P_1, P_2})-2q.\]
\end{lemma}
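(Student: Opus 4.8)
The plan is to compute the change in the Euler class directly from the effect of a half Lutz twist on the Poincar\'e dual of the Euler class, using the surgery-diagram formula~\ref{formula:PD(e)} together with the description of $L_{-P_1,-P_2}$ in Figure~\ref{Figure:tw=0}. Recall that a half Lutz twist along a transverse knot $T$ replaces a standard neighborhood of $T$ by a suitable contact solid torus; at the level of the surgery presentation this amounts to adding a new surgery component linking once around $T$ (equivalently, performing an appropriate modification of the framing data along a parallel copy of $L_{-P_1,-P_2}$). The key point is that such a modification changes $\PD(e(\xi))$ by an amount which, evaluated against the horizontal surface (or, in the convention of Remark~\ref{rmk:meridian}, read off as an integer), equals $\pm 2q$ depending on the sign of the transverse push-off.

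First I would set up the surgery diagram for $\xi_{-P_1,-P_2}$ as in Figure~\ref{Figure:tw=0}, and identify the positive transverse push-off of $L_{-P_1,-P_2}$ with a transverse curve isotopic to $T_{p,q}$ in the complement of the surgery link. Then I would realize the half Lutz twist on this transverse knot as a contact surgery operation: it introduces a new $(\pm1)$-type contact surgery component $L_0$ along a Legendrian knot smoothly isotopic to $T_{p,q}$, with a controlled rotation number and linking numbers with the existing components dictated by the homology class $pm_0+ql_0$. The self-linking/framing data of $L_0$ determines exactly the shift. I would then recompute $\PD(e)$ via formula~\ref{formula:PD(e)}: the new contribution $n_0\rot_0[\mu_0]$, after solving the enlarged system $Q\boldsymbol{\mu}=\mathbf 0$ and expressing $[\mu_0]$ in terms of the old meridian $\mu$, contributes exactly $2q$ (resp.\ $-2q$) to the integer identified with $e(\xi)$. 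The sign asymmetry between the positive and negative push-off cases comes from the orientation of the meridian of $T$ and hence the sign of $\rot_0$, and the factor $q$ is precisely the intersection number of a $(p,q)$-curve with the meridian disk, i.e.\ it is the coefficient with which $[\mu_0]$ enters $[\mu]$.

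Alternatively — and this may be cleaner — I would use the comparison with the already-established transverse classification. By Corollary~\ref{coro:transversetor>0} and the stabilization structure in Proposition~\ref{prop:wings}, the contact manifold $\tilde\xi_{-P_1,-P_2}$ still supports a non-loose transverse $(p,q)$-torus knot (namely the image of the push-off of $L_{-P_1,-P_2}$, now with $\tor=\frac12$), so by Theorem~\ref{thm:Eulerrange} its Euler class lies in the prescribed list for the $\tor\in\frac12\N\setminus\N$ case, while $e(\xi_{-P_1,-P_2})$ lies in the $\tor=0$ list; together with Lemma~\ref{lem:Euler}(1) and the monotonicity of the Euler-class computation in Lemma~\ref{lem:numberEuler} under adding a basic slice / Giroux torsion layer, the only consistent value of the shift is $2q$. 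The negative push-off case follows by the $(P_1,P_2)\leftrightarrow(-P_1,-P_2)$ symmetry, switching the roles of positive and negative stabilizations and flipping the sign via Lemma~\ref{lem:Euler}(1).

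The main obstacle I anticipate is pinning down the exact sign and the exact coefficient $q$ in the shift. Specifically, the half Lutz twist must be translated precisely into the contact-surgery language so that the relevant rotation number $\rot_0$, surgery coefficient, and the linking numbers of the new component with the components of Figure~\ref{Figure:tw=0} are unambiguous; an error in any of these propagates directly into the claimed formula. I would handle this by carefully tracking the framing conventions fixed in Section~\ref{section:pre} (the $\lambda,\mu$ convention on $\partial N$ and the identification in Remark~\ref{rmk:meridian}), verifying the shift in the simplest case $p=1$ against Example~\ref{example:(1,q)transverse} (where the full and half Lutz twists are explicitly recorded), and only then asserting the general formula, the general case being formally identical once the homological bookkeeping is set up.
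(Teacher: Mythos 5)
Your primary strategy is the same as the paper's: realize the half Lutz twist as a contact-surgery modification of the diagram in Figure~\ref{Figure:tw=0}, enlarge the generalized linking matrix, solve $Q\boldsymbol{\mu}=\mathbf{0}$ to express the new meridians in terms of the reference meridian $\mu$ of Remark~\ref{rmk:meridian}, and read off the shift from formula~\ref{formula:PD(e)}. However, there is a genuine gap at exactly the point you flag as your "main obstacle": you never supply the precise surgery realization of the half Lutz twist, and your placeholder description (a single new $(\pm1)$-type component $L_0$ "linking once around $T$", with unspecified rotation number) is not the correct one. The input the paper uses is the main theorem of Ding--Geiges--Stipsicz \cite{dgs}: a half Lutz twist along the positive (resp.\ negative) transverse push-off of a Legendrian $L$ is effected by contact $(+1)$-surgery on \emph{two} new components, namely $L$ itself and a Legendrian push-off of $L$ carrying two additional zigzags of the appropriate sign. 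Both the factor $2$ and the factor $q$ in the shift $\pm 2q$ come precisely from this: the doubly-stabilized push-off has rotation number $\mp 2$, and solving the enlarged linking system gives $[\mu_1]=-[\mu_2]=q[\mu]$, so the extra contribution to $\PD(e)$ is $0\cdot[\mu_1]-2[\mu_2]=2q[\mu]$ (and $-2q[\mu]$ in the other case). Without citing or proving this surgery description, the rotation number, coefficient, and number of new components are all undetermined, and the computation cannot be completed; "carefully tracking conventions" and checking $p=1$ against Example~\ref{example:(1,q)transverse} does not substitute for it.

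Your proposed alternative route is also circular: the half-integer-torsion case of Theorem~\ref{thm:Eulerrange} is itself deduced \emph{from} Lemma~\ref{lem:halfLutz} in the paper, so you cannot invoke that theorem (nor Corollary~\ref{coro:transversetor>0}, which records no Euler classes) to pin down the value of the shift. Incidentally, the lemma also admits a diagram-free proof via \cite[Proposition 4.3.3]{g} (the paper notes this), which would be a legitimately different route if you wanted one.
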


This result can be derived from \cite[Proposition 4.3.3]{g}. Here we present an alternative proof via contact surgery diagrams.
\begin{proof}
    The Legendrian knot $L_{-P_1,-P_2}$ and contact structure $\xi_{-P_1,-P_2}$ can be represented via the contact surgery diagram as Figure~\ref{Figure:tw=0}. By the main theorem in \cite{dgs}, the contact structure $\tilde{\xi}_{-P_1,-P_2}$ can also be obtained from $\xi_{-P_1,-P_2}$ by contact $(+1)$-surgery on both $L_{-P_1,-P_2}$ and a Legendrian push-off of $L_{-P_1,-P_2}$ with two additional up-zigzags, as the right diagram in Figure~\ref{Figure:halfLutz}. 

    \begin{figure}[htb]
    \begin{overpic} %[grid,tics=10,scale=0.7]
    [scale=0.7]
    {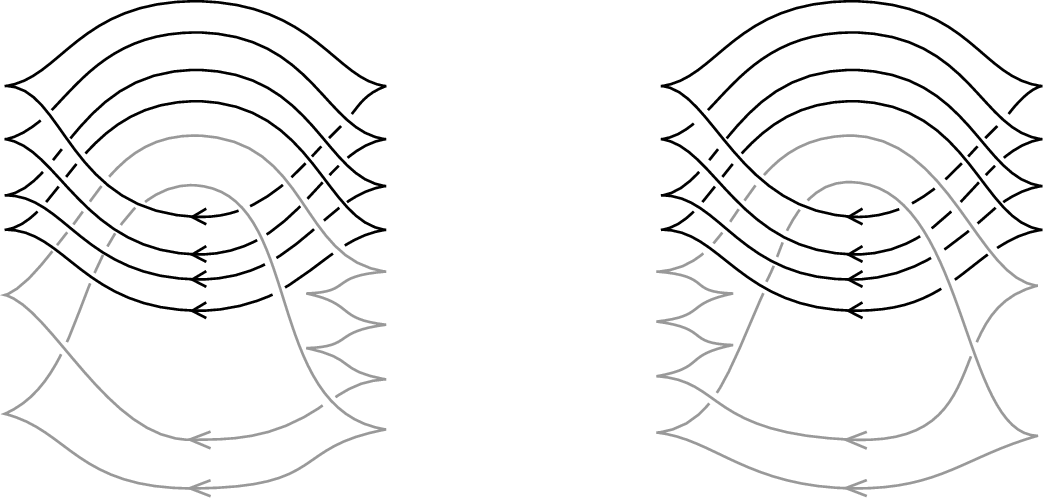}
    \put(-30,25){$(+1)$}
    \put(-30,62){$(+1)$}
    \put(-30,85){$(+1)$}
    \put(-30,100){$(+1)$}
    \put(-40,120){$(-\frac{q}{q-q'})$}
    \put(-30,138){$(-\frac{q}{q'})$}

    \put(140,20){$L_1$}
    \put(140,50){$L_2$}
    \put(140,85){$L_3$}
    \put(140,100){$L_4$}
    \put(140,116){$L_5$}
    \put(140,135){$L_6$}

    \put(188,18){$(+1)$}
    \put(188,55){$(+1)$}
    \put(188,85){$(+1)$}
    \put(188,100){$(+1)$}
    \put(178,119){$(-\frac{q}{q-q'})$}
    \put(188,138){$(-\frac{q}{q'})$}

    \put(360,20){$L_1$}
    \put(360,67){$L_2$}
    \put(360,87){$L_3$}
    \put(360,102){$L_4$}
    \put(360,118){$L_5$}
    \put(360,135){$L_6$}
    
    \end{overpic}
    \caption{Left: half Lutz twist on the positive transverse push-off of $L_{P_1,P_2}$. Right: half Lutz twist on the negative transverse push-off of $L_{-P_1,-P_2}$.}
    \label{Figure:halfLutz}
    \end{figure}
    
    Now consider the change of Euler class induced by the half Lutz twist. 
    Denote the surgery components in the left diagram of Figure~\ref{Figure:halfLutz} by $L_1,L_2,\ldots,L_6$ from bottom to top. To find the relations between $\mu_1,\mu_2$ and the meridian $\mu$ chosen in Remark~\ref{rmk:meridian}, the generalized linking matrix is 
    {\small
     \begin{align*}
	Q=\begin{pmatrix}
        0 & -1 & -1 & -1 & q-q' & q'\\
        -1 & -2 & -1 & -1 & q-q' & q'\\
	-1 & -1 & 0 & -1 & q-q' & q' \\
	-1 & -1 & -1 & 0 & q-q' & q' \\
        -1 & -1 & -1 & -1 & 2q-q' & q' \\
        -1 & -1 & -1 & -1 & q-q' & q+q'
	\end{pmatrix}.
    \end{align*}
    }\noindent
    Solving $Q\boldsymbol{\mu}=\mathbf{0}$, we obtain $[\mu_1]=-[\mu_2]=q[\mu_6]$ (here $[\mu_6]=[\mu]$). 
    
    Note that when applying formula~\ref{formula:PD(e)} to compute the Euler class, the surgery diagram in Figure~\ref{Figure:halfLutz} must first be converted into a diagram consisting of $(\pm1/n_i)$-surgery components. We then determine the relations between meridians of these components and $\mu$, and ultimately express them in terms of $\mu$.
    However, this conversion process leaves unchanged the two additional $(+1)$-surgery components $L_1$ and $L_2$, as well as the relations between their meridians and $\mu$. Therefore, to determine the change in the Euler class, it suffices to compute the contribution from these two extra components. Using the relations $[\mu_1]=q[\mu]$ and $[\mu_2]=-q[\mu]$, formula~\ref{formula:PD(e)} gives the Euler class change as $0[\mu_1]-2[\mu_2]=2q[\mu]$, which implies $e(\tilde{\xi}_{-P_1,-P_2})=e(\xi_{-P_1,-P_2})+2q$.

    According to \cite[Remark]{dgs}, performing the half Lutz twist on the negative transverse push-off of \( L_{P_1,P_2} \) yields a contact structure that can also be obtained via contact \((+1)\)-surgery on both \( L_{P_1,P_2} \) and a Legendrian push-off of \( L_{P_1,P_2} \) with two additional down-zigzags, as illustrated in the left diagram of Figure~\ref{Figure:halfLutz}. Similarly, we find that $e(\tilde{\xi}_{ P_1, P_2})=e(\xi_{ P_1, P_2})-2q$.
\end{proof}

\begin{remark}\label{rmk:sign}
    For any $2$-inconsistent pair of paths $(P_1,P_2)$, subdivide $P_1$ and $P_2$ into continued fraction blocks. By Proposition~\ref{prop:wings},  
    the $L_{P_1,P_2}$ in Lemma~\ref{lem:halfLutz} corresponds to the decoration as follows.
    \begin{itemize}
    \item When the first continued fraction block with length $1$ appears in $P_1$ as $A_1$, its sign is taken to be negative;
    \item When the first continued fraction block with length $1$ appears in $P_2$ as $B_1$, its sign is taken to be positive.
    \end{itemize}
    Hence we also know that $e(\xi_{P_1,P_2})$ is negative, by the proof of Lemma~\ref{lem:Euler}. Such a knot $L_{P_1,P_2}$ will be denoted as $L_+$ in the classification, which is represented in the left component of Figure~\ref{Figure:mountain}.
    Similarly, the $L_{-P_1,-P_2}$ is assigned the decoration with signs opposite to those described above. Then $e(\xi_{-P_1,-P_2})$ is positive and such a knot $L_{-P_1,-P_2}$ will be denoted as $L_-$ in the classification, represented in the right component of Figure~\ref{Figure:mountain}.
    
    %Assume that the first continued fraction block with length $1$ is $B_1$ in $P_2$ (respectively $A_1$ in $P_1$). Then $L_{-P_1,-P_2}$ in Lemma~\ref{lem:halfLutz} corresponds to the decoration where $B_1$ has negative sign (respectively $A_1$ has positive sign), by Proposition~\ref{prop:wings}. Hence we also know that $e(\xi_{-P_1,-P_2})$ is positive, by the proof of Lemma~\ref{lem:Euler}. Such a knot $L_{-P_1,-P_2}$ will be denoted as $L_-$ in the classification algorithm. Similarly, the $L_{P_1,P_2}$ corresponds to the decoration where $B_1$ has positive sign (respectively $A_1$ has negative sign), and hence $e(\xi_{P_1,P_2})$ is negative. Such a knot $L_{P_1,P_2}$ will be denoted as $L_+$ in the classification algorithm.
\end{remark} 

Now we give a proof of Theorem~\ref{thm:Eulerrange}, which determines all possible Euler classes of overtwisted contact structures that support non-loose torus knots. 

\begin{proof}[Proof of Theorem~\ref{thm:Eulerrange}]
We begin by reducing the classification of Euler classes for contact structures supporting non-loose Legendrian torus knots without convex Giroux torsion to the case where $\tw=0$. By Lemma~\ref{lemma:destabilization}, any such knot with $\tw<0$ can be destabilized to one with $\tw=0$. On the other hand, by Corollary~\ref{coro:tw>0stabilize}, any such knot with $\tw>0$ stabilizes to one with $\tw=0$. Therefore, both types of knots can be realized in the contact structures supporting such knots with $\tw = 0$.

According to the proof of Lemma~\ref{lemma:tw=0}, the Legendrian $(p,q)$-torus knots with $\tw=0$, $\tor=0$ and tight complement are precisely the Legendrian knots $L_{P_1,P_2}$. In Lemma~\ref{lem:numberEuler}, we determined the value set of Euler classes of contact structures $\xi_{P_1,P_2}$ supporting $L_{P_1,P_2}$. From this value set, the zero Euler class must be excluded, as it corresponds to the standard contact structure $\xi_{\text{std}}$ by Lemma~\ref{lem:Euler}.

We now consider non-loose Legendrian torus knots with convex Giroux torsion. By Lemma~\ref{lemma:addtorsion} and Lemma~\ref{lemma:tor>0}, the relevant contact structures arise from totally $2$-inconsistent path pairs by adding $l$ layers of convex Giroux torsion in the complement of $L_{P_1,P_2}$. As shown in Lemma~\ref{lem:numberEuler2}, the Euler classes corresponding to totally $2$-inconsistent path pairs are given by  
\[
e(\xi_{P_1,P_2}) = \pm 2k \quad \text{for} \quad k \in \{k_e + q + 1, k_e + q + 2, \dots, -q - 1\}.
\]
Following the convention in Remark~\ref{rmk:sign}, these contact structures may also be obtained by performing an $l$-fold Lutz twist along a negative transverse push-off of $L_{P_1,P_2}$ or a positive transverse push-off of $L_{-P_1,-P_2}$. Denote the resulting contact structures by $\tilde{\xi}_{\pm P_1,\pm P_2}$, respectively. Since a full Lutz twist preserves the Euler class, we have  
\[
e(\tilde{\xi}_{\pm P_1,\pm P_2}) = \mp 2k \quad \text{for} \quad k \in \{k_e + q + 1, k_e + q + 2, \dots, -q - 1\}.
\]
On the other hand, the effect of a half Lutz twist on the Euler class is described in Lemma~\ref{lem:halfLutz}. In this case,  
\[
e(\tilde{\xi}_{\pm P_1,\pm P_2}) = \mp(2k + 2q) \quad \text{for} \quad k \in \{k_e + q + 1, k_e + q + 2, \dots, -q - 1\}.
\]
That is, \[
e(\tilde{\xi}_{\pm P_1,\pm P_2}) = \mp2k \quad \text{for} \quad k \in \{k_e + 2q + 1, k_e + 2q + 2, \dots, - 1\}.
\]
By Remark~\ref{remark:rangeofke}, $-q\leq k_e\leq  -2q-2$. So the set $\{k_e + 2q + 1, k_e + 2q + 2, \dots, -1\}$ is a subset of  $\{q+1, q+2, \dots, -1\}$.

Finally, consider non-loose transverse torus knots. According to the proof of Corollary~\ref{coro:transversetor=0}, the contact structures supporting non-loose transverse torus knots with $\tor=0$ coincide with the contact structures $\xi_{-P_1,-P_2}$ for $2$-inconsistent pairs of paths $(P_1,P_2)$. Then, the corresponding Euler classes are always positive and the range is 
\[e(\xi_{-P_1,-P_2})= 2k \quad \text{for}\quad k\in\{1,2,\ldots,-q-1\}.\]
By Corollary~\ref{coro:transversetor>0}, the contact structures supporting non-loose transverse torus knots with $\tor>0$ coincide with $\tilde{\xi}_{-P_1-P_2}$ for totally $2$-inconsistent pairs of paths $(P_1,P_2)$. Hence the range of Euler classes is
\[
e(\tilde{\xi}_{- P_1,- P_2}) = 2k ~\text{ or }~ 2k + 2q \quad \text{for} \quad k \in \{k_e + q + 1, k_e + q + 2, \dots, -q - 1\}.
\]

In summary, if a contact structure $\xi$ on $S^1\times S^2$ contains a non-loose $(p,q)$-torus knot, then $|e(\xi)|\leq -2q-2$.
 \end{proof}

\begin{corollary}\label{coro:signoftw>0}
    The Euler class is negative for the contact structures in the left two diagrams of Figure~\ref{Figure:tw>0}, and positive for those in the right two diagrams.
\end{corollary}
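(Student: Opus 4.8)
The plan is to reduce the statement to the $\tw=0$ case, where the sign of the Euler class is already understood, using that a stabilization does not change the ambient contact manifold.

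Concretely, by Corollary~\ref{coro:tw>0stabilize} a non-loose Legendrian $(p,q)$-torus knot with $\tw=i>0$ and $\tor=0$ stabilizes to one with $\tw=0$, namely some $L_{P_1,P_2}$ associated to a $2$-inconsistent pair $(P_1,P_2)$; since a stabilization is performed inside a Darboux ball, the resulting contact $S^1\times S^2$ --- hence its Euler class --- is unchanged by it. Tracing the construction in the proof of Theorem~\ref{thm:tw>0}, the Legendrian knot drawn in a left diagram of Figure~\ref{Figure:tw>0} is the $L^i_+$ obtained by factoring the outermost layer $C\setminus C'$ of its complement into \emph{positive} basic slices, so it destabilizes positively to the $\tw=0$ knot that remains non-loose under all further positive stabilizations; that is precisely the knot $L_{P_1,P_2}$ singled out in Remark~\ref{rmk:sign}, whose describing pair carries the decoration with the length-$1$ first continued fraction block of $P_1$ negative (case $a_m<-2$) or that of $P_2$ positive (case $a_m=-2$). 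By the proof of Lemma~\ref{lem:Euler}(2) this decoration forces $e(\xi_{P_1,P_2})<0$, so the contact structure underlying a left diagram has negative Euler class. For a right diagram the same reasoning applies to the orientation-reversed pair $(-P_1,-P_2)$, and $e(\xi_{-P_1,-P_2})=-e(\xi_{P_1,P_2})>0$ by Lemma~\ref{lem:Euler}(1); this also identifies which diagram deserves the label $L_+$ and which $L_-$, as promised in the proof of Theorem~\ref{thm:tw>0}.

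The delicate point is the sign bookkeeping: one must keep straight the sign of a \emph{stabilization} of the Legendrian knot, the sign of a \emph{basic slice} in the complement $C$, and the $\pm$ decoration on a continued fraction block, since the identifications used along the way (Lemma~\ref{lem:halfLutz}, Remark~\ref{rmk:sign}, and the orientation-reversing coordinate changes of Section~\ref{section:tori}) flip some of them, and the whole conclusion rests on the decoration attributed to a left diagram being genuinely the one with $e<0$. As a check that sidesteps this, one can compute $e$ directly from the diagram via formula~\ref{formula:PD(e)}: after converting the coefficients $\frac{q'}{q'-q}$, $\frac{q'-q}{q'}$ and the chain of $(-1)$'s into contact $(\pm1)$-surgeries by the Ding-Geiges algorithm, the only free data are the rotation numbers coming from the zig-zag choices, and solving $Q\boldsymbol{\mu}=\mathbf{0}$ and evaluating $\sum_i n_i\rot_i[\mu_i]$ with the normalization of Remark~\ref{rmk:meridian} should yield a strictly negative integer for every admissible choice; the mirror symmetry then handles the right diagram.
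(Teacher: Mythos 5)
Your main argument is circular. You assert that ``the Legendrian knot drawn in a left diagram of Figure~\ref{Figure:tw>0} is the $L^i_+$ obtained by factoring the outermost layer $C\setminus C'$ of its complement into positive basic slices,'' and hence that it stabilizes down to the $L_{P_1,P_2}$ of Remark~\ref{rmk:sign}, whose Euler class is negative by Lemma~\ref{lem:Euler}. But the matching between the concrete diagrams of Figure~\ref{Figure:tw>0} and the abstractly constructed knots $L^i_\pm$ is exactly what the paper establishes \emph{by means of} Corollary~\ref{coro:signoftw>0} together with Remark~\ref{rmk:sign} (see the last line of the proof of Theorem~\ref{thm:tw>0}). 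The abstract construction in that proof produces $L^i_\pm$ from a choice of sign of basic slice in $C\setminus C'$, and separately the diagrams realize $2n(p,q)$ distinct knots; nothing in the construction tells you which diagram carries which sign. The left and right diagrams differ only in the zig-zag on the $(+1)$-surgery component, which is not one of the components whose stabilization choices encode the decorations of $(P_1,P_2)$ in the dictionary of Section~\ref{section:paths}, so the identification cannot be read off and must be computed. Reducing to $\tw=0$ by stabilization is fine in itself, but it does not tell you \emph{which} $\tw=0$ knot you land on.

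Your fallback --- computing $e$ directly from the diagram via formula~\ref{formula:PD(e)} --- is the paper's actual route, but as stated it is only an intention, and the missing step is the one that makes the computation feasible. After the Ding--Geiges conversion the rotation numbers of the components other than $L_1$ range over many admissible zig-zag choices, so ``evaluating $\sum_i n_i\rot_i[\mu_i]$ should yield a strictly negative integer for every admissible choice'' is not something you can conclude term by term. The paper instead (i) tracks the Kirby moves to get $[\mu_{i+2}]=(-1)^i[\mu]$ and solves $Q\boldsymbol{\mu}=\mathbf{0}$ to find $[\mu_1]=q[\mu]$, so that the $(+1)$-component contributes exactly $\rot(L_1)\,q=\pm q$; and (ii) observes that the sublink $L\setminus L_1$ is identical in all four diagrams, so the set of values of $e_{L\setminus L_1}$ is the same for left and right and must lie in $(E-q)\cap(E+q)=\{q+2,\dots,-q-2\}$, where $E$ is the known range of Euler classes from Theorem~\ref{thm:Eulerrange}. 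This uniform bound $|e_{L\setminus L_1}|\le -q-2<-q$ is what forces $e=q+e_{L\setminus L_1}<0$ on the left and $e=-q+e_{L\setminus L_1}>0$ on the right. Without step (ii) (or an equivalent explicit estimate of the other rotation-number contributions), your computation does not close.
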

\begin{proof}
In each diagram of Figure~\ref{Figure:tw>0} where $\tw(L_\pm)=i$, let $L=L_1\sqcup\cdots\sqcup L_{i+2}$ denote the oriented link with all surgery components oriented clockwise. Specifically, $L_1$, $L_{i+1}$, and $L_{i+2}$ are the contact $(+1)$-, $q'/(q'-q)$-, and $(q'-q)/q'$-surgery components, respectively. Let $\mu_i$ be the right-handed meridian of $L_i$. To compute the Euler class $e(\xi)$ of each diagram, we follow the method in  Lemma~\ref{lem:halfLutz}: First convert the diagram into a sequence of contact $(\pm1/n_k)$-surgeries, then apply Formula~\ref{formula:PD(e)}, and finally express the result in terms of $[\mu]$, which is conventionally mapped to $1\in\Z$ as per Remark~\ref{rmk:meridian}. 

Consider the diagrams in \cite[Figure 26]{emm} with the coefficient \((-\frac{p}{p'})\) replaced by \((-\frac{q}{q'})\). The final diagram in that figure corresponds to the smooth surgery description of the contact surgery diagrams shown in Figure~\ref{Figure:tw>0}. Here, \(\mu\) denotes the right-handed meridian of the surgery component in the first diagram of \cite[Figure 26]{emm}, which carries the coefficient \((-\frac{q}{q'})\) now. By tracking the Kirby moves through to the last diagram, we obtain the relation \([\mu_{i+2}] = (-1)^{i} [\mu]\).

The conversion  from the contact surgery diagrams in Figure~\ref{Figure:tw>0} to  contact $(\pm1/n_k)$-surgeries preserves the component $L_1$, whose topological surgery coefficient is $1+\tb(L_1)=-1$, as well as the relation between $\mu_1$ and $\mu$. 

Now we decompose the Euler class $e(\xi)$ into two parts: $e(\xi)=e_{L_1}+e_{L\setminus L_1}$. Here $e_{L_1}=\rot(L_1)[\mu_1]$ corresponds to the component $L_1$, and $e_{L\setminus L_1}$ is contributed by the remaining components in $L\setminus L_1$. 
To compute $e_{L_1}$, we solve the equation $Q\boldsymbol{\mu}=\mathbf{0}$, where $Q$ is the generalized linking matrix of $L$. This yields $[\mu_1]=(-1)^{i}q[\mu_{i+2}]=q[\mu]$. Consequently $e_{L_1}=\rot(L_1)q$, with $\rot(L_1)=+1$ for the left two diagrams and $\rot(L_1)=-1$ for the right two diagrams in Figure~\ref{Figure:tw>0}. Thus we have \[e(\xi)=e_{L_1}+e_{L\setminus L_1}=\rot(L_1)q+e_{L\setminus L_1}\in E=\{2q+2,2q+4,...,-2,2,...,-2q-2\},\]
where the range $E$ is derived from the proof of Theorem~\ref{thm:Eulerrange} and every value in $E$ is attainable. Observe that the range of $e_{L\setminus L_1}$ is the same for both the left and right diagrams, because the sublink $L\setminus L_1$ is identical in all four diagrams. Hence the possible values of $e_{L\setminus L_1}$ must lie in the intersection of the two shifted sets $E-q=\{e-q~|~e\in E\}$ and $E+q=\{e+q~|~e\in E\}$; that is, $$e_{L\setminus L_1}\in\{q+2,q+4,...,-q-4,-q-2\}.$$ Therefore, for the two left diagrams we have $e(\xi)=q+e_{L\setminus L_1}<0$, while for the two right diagrams $e(\xi)=-q+e_{L\setminus L_1}>0$.
\end{proof}

\section{Non-loose \texorpdfstring{$(1,q)$}{(1,q)}-torus knots} \label{Section:(1,q)}

In this section, we classify non-loose Legendrian and transverse $(1,q)$-torus knots in $S^1\times S^2$.  When $p=1$, we have $p'=1$ and $q'=q+1$.  Recall that $m(1,q)=-2q$, $n(1,q)=-q-1$, $l(1,q)=1$, $t(1,q)=-q-1$ and $k_e=-2q-2$. Then, by Theorem~\ref{thm:Eulerrange}, for all potential contact structures $\xi$ and $\xi^T$ on $S^1\times S^2$ supporting non-loose Legendrian and transverse $(1,q)$-torus knots, respectively, we have \[ e(\xi)\in\{2q+2,2q+4,\ldots,-2,2,\ldots,-2q-4,-2q-2\};\]
\[ e(\xi^T)\in\{-2,2,4,\ldots,-2q-4,-2q-2\}.\]
%For the transverse knots, we have $N=2$, $s_1=\infty$, $s_2=-1$, $n_1=1$, $n_2=-q-1$, and $m_2=-q-1$. Hence $t(1,q)=-q-1$.

\subsection{Non-loose Legendrian \texorpdfstring{$(1,q)$}{(1,q)}-torus knots.}

In this subsection, we explain Example~\ref{example:(1,q)}.

First, we use the formula~\ref{formula:PD(e)} to compute Euler classes of contact structures corresponding to the contact surgery diagrams. %Let $U$ be a a Legendrian unknot with $\tb=-1$ in $(S^3,\xi_{std})$. 
Using Ding-Geiges algorithm in \cite{dg}, the contact structure in Figure~\ref{Figure:tw=0} can be transformed to those in Figure~\ref{Figure:p=1tw=0}.
%written as 
%\begin{align*}
%&U(+1)\pushoff U(+1)\pushoff U(q)\pushoff U(-\frac{q}{q+1})\\
%=&U(+1)\pushoff U(+1)\pushoff U_{|q+1|}(-1)\pushoff U_1(\frac{1}{q+1}).
%\end{align*}
%\[U(\frac{q}{q+1}) = U(+1)\pushoff U(q) = U(+1)\pushoff U_{|q+1|}(-1)\] and
%\[U(-q)=U(+1)\pushoff U(-\frac{q}{q+1})=U(+1)\pushoff U_1(\frac{1}{q+1}).\]
For the surgery components $L_1, L_2, L_3, L_4$, % with $L_1=L_2=U(+1)$, $L_3=U_{|q+1|}(-1)$ and $L_4=U_1(\frac{1}{q+1})$, 
the generalized linking matrix is 
    \begin{align*}
	Q=\begin{pmatrix}
	0 & -1 & -1 & q+1 \\
	-1 & 0 & -1 & q+1 \\
        -1 & -1 & q-1 & q+1 \\
        -1 & -1 & -1 & 2q+1
	\end{pmatrix}.
    \end{align*}
Solving $Q\boldsymbol{\mu}=\mathbf{0}$, we obtain $[\mu_1]=[\mu_2]=q[\mu_3]$ and $[\mu_3]=[\mu_4]$. Then we can compute the Euler class:
\begin{align*}
    \PD(e(\xi))=\left(r_3 \pm |q+1|\right)[\mu_4],
\end{align*}
where $r_3\in\{q+1, q+3,\ldots,-q-1\}$ is the rotation number of $L_3$ and the sign $\pm$ corresponds to the sign of stabilization of $L_4$. Notice that $[\mu_4]$ is identical to the meridian $[\mu]$ chosen in Remark~\ref{rmk:meridian}. Hence we have $e(\xi)=r_3\pm |q+1|$.

\begin{figure}[htb]
\begin{overpic} %[grid,tics=10]
{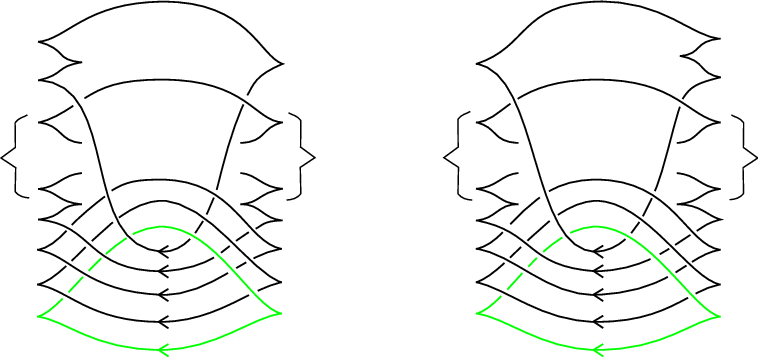}
\put(145, 10){$L_+$}
\put(-15, 33){$(+1)$} \put(145, 30){$L_1$}
\put(-15, 49){$(+1)$} \put(145, 48){$L_2$}
\put(-15, 64){$(-1)$} \put(145, 65){$L_3$}
\put(-15, 140){$(\frac{-1}{-q-1})$} \put(145, 140){$L_4$}
\put(32, 92){$\vdots$}
\put(117, 92){$\vdots$}
\put(-13, 93){$k$}
\put(158, 93){$s$}
\put(-10, 0){$e=-2k.$}

\put(360, 10){$L_-$}
\put(200, 33){$(+1)$} \put(360, 30){$L_1$}
\put(200, 49){$(+1)$} \put(360, 48){$L_2$}
\put(200, 64){$(-1)$} \put(360, 64){$L_3$}
\put(195, 140){$(\frac{-1}{-q-1})$} \put(360, 140){$L_4$}
\put(245, 92){$\vdots$}
\put(330, 92){$\vdots$}
\put(202, 93){$s$}
\put(368, 93){$k$}
\put(210, 0){$e=2k.$}
\end{overpic}

%\medskip 
  %\hspace{2cm}$(-,\overbrace{+,\ldots,+}^{k},\overbrace{-,\ldots,-}^{s})$ \hfill $(+,\overbrace{-,\ldots,-}^{k},\overbrace{+,\ldots,+}^{s})$\hspace{2cm}

%\bigskip 
%\hspace{3.5cm}$e=-2k$ \hfill $e=2k$\hspace{3.2cm}

\caption{Non-loose Legendrian $(1,q)$-torus knots in some contact $S^1\times S^2$ with $\tw=0$ and $\tor=0$. The numbers of stabilization satisfy $k+s=-q-1$, where $k\in\{0,1,2,\ldots,-q-1\}$.}
%and $r_4=y-x$. The left subfigure corresponds to the $+$ sign in the computation of Euler class, while the right subfigure corresponds to the $-$ sign.
%The surgery components from bottom to top correspond to the columns of $Q$ in order.
\label{Figure:p=1tw=0}
\end{figure}

If $r_3=\pm(q+1)$, then $e(\xi)=0$. By Lemma~\ref{lem:Euler}, we know that it corresponds to the standard contact structure $\xi_{std}$ on $S^1\times S^2$. It also means that we take $k=0$ in Figure~\ref{Figure:p=1tw=0}. %We use the formula~\ref{formula:d3} to compute the $\dd_3$-invariant. Solving the equation $Q\mathbf{b}=\pm(0,0,q+1,1)^T$, we can take a solution $\mathbf{b}=\pm(-1,-1,1,0)^T$. Hence $\dd_3(\xi)=\frac{1}{2}$.

Now we classify non-loose Legendrian $(1,q)$-torus knot. The pair of paths $(P_1,P_2)$ representing $q$ and its subdivisions into continued fraction blocks are
$$P_1=A_1=\{q,\infty\}\text{ and }P_2=B_2=\{q, q+1, \ldots, -2,-1\}. $$
%$$A_1=\{q,\infty\}\text{ and }B_2=\{q, q+1, \ldots, -2,-1\}.$$
Then we can list all non-loose decorations of $(\overline{P_1},P_2)$ as follows:
\[\pm~(-,\overbrace{+,\ldots,+}^{k},\overbrace{-,\ldots,-}^{s}),\]
where $k\in\{1,2,\ldots,-q-1\}$ and $k+s=-q-1$. The two decorations $\pm(-,-,\ldots,-)$ for which $e=0$ are excluded, since they represent $\xi_{std}$. 

We can associate these decorations with the contact surgery diagrams in Figure~\ref{Figure:p=1tw=0} by the method in Section~\ref{section:paths}. Under the convention that $A_1$ has negative sign in $(\overline{P_1},P_2)$ as described in Remark~\ref{rmk:sign}, $(\overline{P_1},P_2)$ corresponds to $L_+$ with negative Euler class in the left diagram, while $(-\overline{P_1},-P_2)$ corresponds to $L_-$ with positive Euler class in the right diagram. From the computation above, we obtain the Euler classes of contact structures described by the two types of decorations with opposite signs listed above:
$$e=\mp2k~\text{ for }~k\in\{1,2,\ldots,-q-1\}.$$
When $q<-2$, consider those non-loose decorations with $s\geq1$, all of which are $2$-inconsistent but not totally $2$-inconsistent. Then we apply Theorem~\ref{thm:Legmountain}.
%Then we apply Theorem~\ref{thm:destabilization}, Corollary~\ref{coro:tw>0stabilize}, Lemma~\ref{lemma:tw=0}, Theorem~\ref{thm:tw>0}, and Proposition~\ref{prop:wings}.

For each $e=\mp2k$ with $k\in\{1,2,\ldots,-q-2\}$, there are non-loose Legendrian torus knots $L_{\pm,k}^{i}$ for $i\in\Z$, with
$$\tw(L_{\pm,k}^{i})=i\text{ and }\tor(L_{\pm,k}^{i})=0 ,$$
such that 
$$S_{\pm}(L^{i}_{\pm,k})=L^{i-1}_{\pm,k}\text{ and }S_{\mp}(L^{i}_{\pm,k})~\text{is loose} .$$

Finally, we consider the two totally $2$-inconsistent decorations $\pm(-,+,+,\ldots,+)$. These two decorations correspond to $L^{i,0}_{\pm,-q-1}$ with $\tw=i$ and $\tor=0$ in contact structures with $e=\pm 2(q+1)$. Then we apply Lemma~\ref{lemma:addtorsion} and Lemma~\ref{lemma:tor>0}, where the corresponding Euler classes are determined in the proof of Theorem~\ref{thm:Eulerrange}.
%As Step 3 of the algorithm, we add convex Giroux torsion to the complements of $L^{i,0}_{\pm,-q-1}$. 

For each $e=\pm 2(q+1)$, there are non-loose Legendrian torus knots $L_{\pm, -q-1}^{i,t}$ for $i\in \Z$ and $t\in \N\cup\{0\}$, with
$$\tw(L_{\pm, -q-1}^{i,t})=i\text{ and }\tor(L_{\pm, -q-1}^{i,t})=t,$$
such that
 $$S_{\pm}(L^{i,t}_{\pm,-q-1})=L^{i-1, t}_{\pm,-q-1}\text{ and }S_{\mp}(L^{i, t}_{\pm,-q-1})~\text{is loose}.$$ 

For each $e=\pm 2$, there are non-loose Legendrian torus knots $L_{\pm, -q-1}^{i,t+\frac{1}{2}}$ for $i\in \Z$ and $t\in\N\cup\{0\}$, with
$$\tw(L_{\pm, -q-1}^{i,t+\frac{1}{2}})=i\text{ and }\tor(L_{\pm, -q-1}^{i,t+\frac{1}{2}})=t+\frac{1}{2},$$
such that
 $$S_{\pm}(L^{i,t+\frac{1}{2}}_{\pm,-q-1})=L^{i-1, t+\frac{1}{2}}_{\pm,-q-1}\text{ and }S_{\mp}(L^{i, t+\frac{1}{2}}_{\pm,-q-1})~\text{is loose}.$$

%\end{proof}

\begin{comment}
\begin{figure}[htb]
\begin{overpic} %[scale=0.7,grid,tics=10]
[scale=0.7]
{p=1non-loose.eps}
\put(-10, 40){$0$}
\put(-15, 22){$-1$}
\end{overpic}
\caption{The mountain ranges of non-loose Legendrian $(1,q)$-torus knot. For $e(\xi)=\pm 2k$,$k\in{1,2,\ldots,-q-3}$, each dot represents a unique non-loose Legendrian representative. For $e(\xi)=\pm2|q+1|$, each dot represents an infinite family of Legendrian representatives distinguished by convex Giroux torsion.}
\end{figure}
\end{comment}

\subsection{Non-loose transverse \texorpdfstring{$(1,q)$}{(1,q)}-torus knots} %We explain Example~\ref{example:(1,q)transverse} as follows.
%\begin{proof}[Proof of Theorem~\ref{example:(1,q)transverse}]
The Example~\ref{example:(1,q)transverse} follows immediately from Example~\ref{example:(1,q)}, Corollary~\ref{coro:transversetor=0} and Corollary~\ref{coro:transversetor>0}. In particular, $T_k$ is the positive transverse push-off of $L^0_{-,k}$ for $k\in\{1,2,\ldots,-q-2\}$ when $q<-2$, and $T^t_{-q-1}$ is the positive transverse push-off of $L_{-,-q-1}^{0,t}$ for $t\in\frac{1}{2}\N\cup\{0\}$.
%By Lemma~\ref{lemma:stablesimple}, the classification of transverse knots is equivalent to the classification of Legendrian knots up to negative stabilization. Then the theorem follows immediately from Theorem~\ref{example:(1,q)}.
%\end{proof}

%\section{Tight contact structures on the surgered manifolds}

\end{document}